\documentclass[11pt]{amsart}

\usepackage[margin=1.5in]{geometry}
\usepackage{cite}
\usepackage[utf8]{inputenc}
\usepackage[T1]{fontenc}
\usepackage[english]{babel}
\usepackage{amsmath,amsfonts,amsthm, mathrsfs}
\usepackage[linktocpage=true]{hyperref}
\usepackage{tikz-cd}
\usepackage{enumitem}
\usepackage{caption}
\usepackage{subcaption}

\theoremstyle{plain}
\newtheorem{theorem}{Theorem}[section]
\newtheorem{corollary}[theorem]{Corollary}
\newtheorem{lemma}[theorem]{Lemma}
\newtheorem{proposition}[theorem]{Proposition}
\newtheorem{conjecture}[theorem]{Conjecture}

\theoremstyle{definition}
\newtheorem{definition}[theorem]{Definition}

\newtheorem{remark}[theorem]{Remark}
\numberwithin{equation}{section}
\numberwithin{figure}{section}
\numberwithin{table}{section}

\DeclareMathOperator{\PP}{\operatorname{\mathbb P}}
\newcommand{\R}[0]{\mathbb R}

\newcommand{\Z}[0]{\mathbb Z}

\newcommand{\N}[0]{\mathbb N}
\newcommand{\HH}[0]{\mathbb H}
\newcommand{\Ab}[0]{\mathbf A}
\newcommand{\Bb}[0]{\mathbf B}
\newcommand{\Cb}[0]{\mathbf C}
\newcommand{\Db}[0]{\mathbf D}
\newcommand{\id}[0]{\mathrm{id}}
\newcommand{\deq}[0]{\overset{(d)}{=}}

\DeclareMathOperator{\rot}{180^\circ}
\DeclareMathOperator{\rev}{rev}
\DeclareMathOperator{\wt}{wt}
\DeclareMathOperator{\sat}{SAT}
\DeclareMathOperator{\Ht}{Ht}
\DeclareMathOperator{\Li}{Li_2}
\DeclareMathOperator{\Beta}{Beta}

\title{Shift invariance of half space integrable models}

\author{Jimmy He}
\address{Department of Mathematics, MIT, Cambridge, MA  02139}
\email{jimmyhe@mit.edu}
%\keywords{Shift invariance, six-vertex model, half space, Hecke algebra, oriented swap process}
%\date{\today}
%\subjclass{82C23 (Primary), 60K35, 82C22(Secondary)}
\begin{document}
\maketitle
\begin{abstract}
We formulate and establish symmetries of certain integrable half space models, analogous to recent results on symmetries for models in a full space. Our starting point is the colored stochastic six vertex model in a half space, from which we obtain results on the asymmetric simple exclusion process, as well as for the beta polymer through a fusion procedure which may be of independent interest. As an application, we establish a distributional identity between the absorption time in a type $B$ analogue of the oriented swap process and last passage times in a half space, establishing the Baik--Ben Arous--P\'ech\'e phase transition for the absorption time. The proof uses Hecke algebras and integrability of the six vertex model through the Yang--Baxter and reflection equations.
\end{abstract}
\tableofcontents
\section{Introduction}
Many integrable stochastic models related to the full space KPZ universality class have been shown to exhibit remarkable symmetries. In particular, it has been shown that the joint distribution of certain collections of observables for these models are invariant under shifting of some, but not all, of these observables. These symmetries are non-trivial and in particular are not known to be a consequence of some bijection of the underlying model, except in some special cases.

While shadows of these symmetries appeared earlier (see e.g. work of Borodin and Wheeler \cite{BW18}), shift invariance in the generality considered in this paper was first formulated and established by Borodin, Gorin, and Wheeler \cite{BGW22}, who proved it for the stochastic six vertex model, and obtained results for other models including certain polymer models and the KPZ equation through various limits and specializations. Subsequently, these results were extended to other models and generalized \cite{D22,G21,K22, Kor22}. They have already found some applications, see \cite{BGR20, Z22}.

This paper initiates the study of these symmetries in a half space setting. While it is natural to hope that these symmetries continue to hold, even formulating them in the half space setting is a non-trivial task, since the presence of the boundary complicates the way in which the observables may be shifted. In particular, we restrict our study to observables which ``cross the boundary'' in a certain sense. It is unclear if this restriction is completely necessary, although our proof only seems to work in this setting.

Half space integrable models related to the KPZ universality class have been of interest due to the rich phase diagram that is expected to hold for limiting distributions based on the boundary strength. Unfortunately, the analysis of half space models is usually much more complicated, and although much is now known (see e.g. \cite{BC11, BBC20, BBCW18}), even the limiting one point distribution at large times was only recently established by Imamura, Mucciconi, and Sasamoto \cite{IMS22} (with earlier results in the critical case \cite{BBCW18} or at zero temperature \cite{BR01b, BBCS18}). For some additional recent works on half space models, see \cite{BFO20, BC22, CS18, P19, W20}, and also relevant work in the physics literature \cite{KD20, BD21, BKD22, BBC16}.

We begin by studying shift invariance for the colored half space stochastic six vertex model. We prove this by establishing a different symmetry, a version of flip invariance (introduced in the full space setting by Galashin \cite{G21}), from which shift invariance can be derived. The proof follows the same strategy as \cite{G21}, although additional difficulties arise due to the boundary. We then take a limit of the six vertex model to the ASEP in a half space as well as the half space beta polymer, or equivalently random walk in beta random environment, recently introduced by Barraquand and Rychnovsky \cite{BR22}. The limit to the beta polymer goes through a fusion procedure which we carry out and may be of independent interest. Finally, we use shift invariance for the TASEP to give a distributional identity between the absorption times in a type $B$ (or half space) analogue of the oriented swap process and certain point to line exponential last passage percolation times. The latter are known to undergo the Baik--Ben Arous--P\'ech\'e (BBP) phase transition, and so we derive the same asymptotics for the former.

We begin with possibly the simplest consequence of our shift invariance, for exponential last passage percolation (LPP) in a half space setting. Note that while the statement is only for a single last passage time, it is still non-trivial, unlike in the full space setting.

Let $X_{x,y}$ for $x,y\in \N_0$\footnote{We use $\N$ to denote the positive integers, and $\N_0$ to denote the non-negative integers} be a symmetric collection of independent exponential random variables, of rate $1$ for $x>y$, rate $\mu$ for $x=y$, and set $X_{y,x}=X_{x,y}$. For two points $(x,y)$ and $(x',y')$ with $x'\geq x$ and $y'\geq y$, let $L((x,y),(x',y'))=\max_\gamma \sum_{\gamma_i}X_{\gamma_i}$, where the maximum is over up-right paths from $(x,y)$ to $(x',y')$, and $\gamma_i$ are the vertices in $\gamma$. We call $L$ the \emph{point-to-point} half space LPP time.

\begin{corollary}
\label{cor: lpp shift}
For $x,y,z\in \N$ with $x\geq y+z$, we have
\begin{equation*}
    L((0,0),(x,y))\deq L((0,z),(x,y+z)).
\end{equation*}
\end{corollary}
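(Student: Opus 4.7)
The plan is to deduce the identity from the shift invariance for half-space TASEP that will be established as the main result of the paper, via the classical duality between exponential last passage percolation and TASEP with step initial condition.

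First I would invoke the standard correspondence identifying $\{L((a,b),(c,d)) \leq t\}$ with an event of the form $\{H(s,t) \geq r\}$ for the half-space TASEP height function $H$ with step initial condition; here $(s,r)$ is a linear function of $(a,b,c,d)$, and the diagonal LPP rate $\mu$ matches the boundary jump rate of half-space TASEP. Under this dictionary, both $L((0,0),(x,y))$ and $L((0,z),(x,y+z))$ become height function observables of a single half-space TASEP at a common time $t$, say $\{H(s_1,t) \geq r_1\}$ and $\{H(s_2,t) \geq r_2\}$ with $(s_2,r_2)$ obtained from $(s_1,r_1)$ by the shift of size $z$ induced by shifting the LPP endpoints by $(0,z)$.

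Next I would check that this pair of observables is of the type covered by the shift invariance theorem, and that the hypothesis $x \geq y+z$ is exactly the ``crossing the boundary'' condition alluded to in the introduction. The shift invariance theorem then yields that the two events have equal probability for every $t \geq 0$, and inverting the dictionary gives the desired distributional identity of $L((0,0),(x,y))$ and $L((0,z),(x,y+z))$.

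The main obstacle is the bookkeeping in the first two steps: setting up the LPP-to-TASEP dictionary in the half-space precisely, matching the boundary rate $\mu$ correctly against the boundary jump rate of half-space TASEP, and confirming that the condition $x \geq y+z$ on the LPP side translates into exactly the boundary-crossing hypothesis of the shift invariance. I expect the first of these (off-by-one conventions and the identification of $\mu$ with the appropriate TASEP parameter) to be where most of the care is required; once the dictionary is in place, the corollary is an immediate application of the shift invariance theorem.
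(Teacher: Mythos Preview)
Your approach is correct and matches the paper's: the corollary is obtained exactly by specializing the half-space ASEP shift invariance (Theorem~\ref{thm: shift inv asep}) to $q=\beta=0$ and using the standard TASEP--LPP coupling, with the color cutoff $i$ in $h^{\geq i}$ playing the role of the shifted starting height $z$ in $L((0,z),(x,y+z))$, and the hypothesis $x\geq y+z$ translating to the requirement that the height function observable crosses the origin. The only point to be careful about is that the LPP with shifted starting point $(0,z)$ corresponds to a \emph{colored} height function (with cutoff $z+1$), not the uncolored one; your write-up should make explicit that the dictionary goes through the colored half-space TASEP so that Theorem~\ref{thm: shift inv asep} applies directly.
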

\begin{remark}
While Corollary \ref{cor: lpp shift} is a simple statement, and could possibly be proven by elementary means, let us note that it is not completely trivial, in that the statement does not hold if the random variables are not exponential, but say Gaussian, as can be seen from simulations. This is unlike a diagonal shift, which is immediate from the definition of the model and would hold for any distribution on the $X_{x,y}$.

The simplest non-trivial case is the following. Let $A,B,C,D,E$ be independent exponential random variables of rate $1$, and $F,G$ be independent exponential random variables of rate $\mu$. Then we have a distributional equality between the last passage times
\begin{equation*}
    L\left(\:\begin{tabular}{|c|c|c|c|}
         \hline A&B&E&F\\
         \hline C&D&G&E\\
         \hline
    \end{tabular}\:\right)\deq L\left(\:\begin{tabular}{|c|c|c|c|}
         \hline A&E&F&B\\
         \hline C&G&E&D\\
         \hline
    \end{tabular}\:\right),
\end{equation*}
where the notation means the maximum of the sums over up-right paths in the rectangles.
\end{remark}

\subsection{Shift invariance for the half space six-vertex model}
\label{sec: 6vm intro}
We begin with the statement of shift invariance for the colored half space stochastic six vertex model. We will only give an informal description of the model, see Section \ref{sec: 6vm} for the formal definitions.

\begin{figure}
    \centering
    \includegraphics{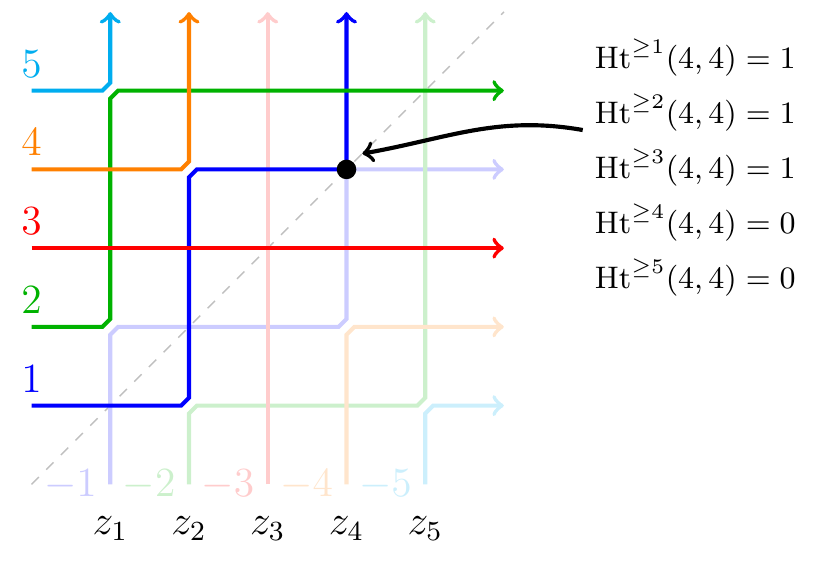}
    \caption{A configuration of the colored half space stochastic six vertex model. Positive and negative colored edges are represented by dark and light shades of the same color. Note that by symmetry, the positive lines determine the negative lines, and at the diagonal, a positive line always meets its negative counterpart. The rapidities associated to rows/columns (which are the same) are indicated, and the values of the height function at $(4,4)$ are given.}
    \label{fig:6vm}
\end{figure}

The model is a probability distribution on certain configurations of colored arrows (with colors in $\Z$) in the lattice $\N^2$ (see Figure \ref{fig:6vm}), which can be described with the following sampling procedure. We begin with an arrow of color $x$ entering the vertex $(1,x)$ from the right, and an arrow of color $-x$ entering the vertex $(x,1)$ from the bottom. We then sample the outcome of vertices $(x,y)$ with $x\geq y$ in a Markovian fashion, starting with vertices whose left and bottom arrows are determined, using certain integrable stochastic weights for the outcome. The outcome of vertices $(y,x)$ are determined by what occurs at $(x,y)$ in a symmetric manner. In particular, a color $i$ crosses an edge if and only if $-i$ crosses the corresponding edge given by reflection across $x=y$.

To define the weights, whose formulas are given in Section \ref{sec: 6vm}, we allow ourselves a parameter $z_i$ attached to the $i$th column/row, an asymmetry parameter $q$ for vertices $(x,y)$ with $x<y$, and both an asymmetry parameter $t$ and an intensity parameter $\nu$ for the vertices $(x,x)$ at the boundary. These weights are chosen so that the Yang--Baxter and reflection equations hold (see Propositions \ref{prop:YB} and \ref{prop:refl}). 

The observables we will consider are height functions. We let $\Ht^{\geq i}(x,y)$ denote the number of colored arrows passing at or under the vertex $(x,y)$, whose color is at least $i$ (referred to as a \emph{color cutoff}). When the dependence on the rapidities $\mathbf{z}=(z_1,z_2,\dotsc)$ is important, we write $\Ht^{\geq i}(x,y;\mathbf{z})$. See Figure \ref{fig:6vm} for an example. We say that a height function $\Ht^{\geq i}(x,y)$ \emph{crosses the diagonal} if $x\geq y$ and $i\geq 1$. We say that a collection of height functions is \emph{ordered} if all points $(x,y)$ are either weakly north-west or south-east of each other, i.e. the vertices $(x,y)$ lie on a down-right path. With this, we may now state a simplified version of shift invariance in this setting. See Figure \ref{fig:6vm shift} for an example in the case of two height functions.

\begin{figure}
    \centering
    \begin{subfigure}{0.45\textwidth}
    \includegraphics[scale=0.9]{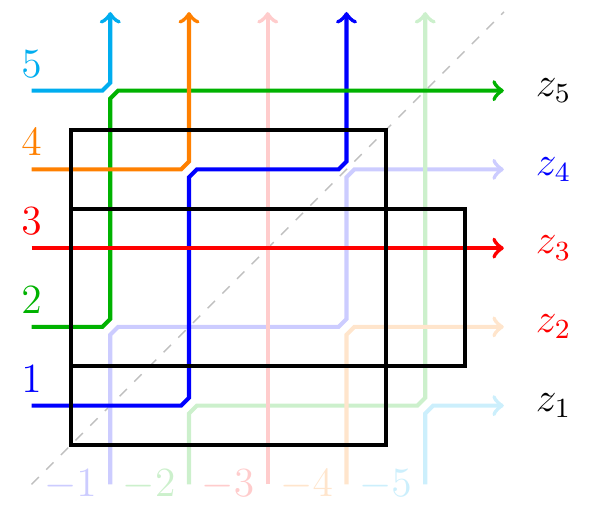}
    \caption{Unshifted height functions}
    \end{subfigure}
    \begin{subfigure}{0.45\textwidth}
    \includegraphics[scale=0.9]{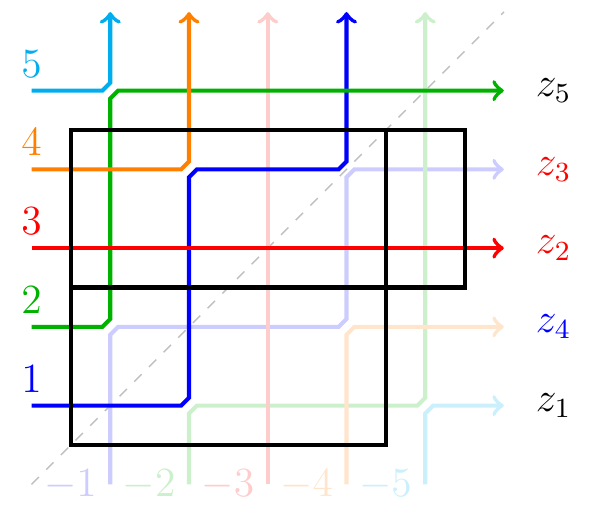}
    \caption{Shifted height functions}
    \end{subfigure}
    \caption{An example of shift invariance for the colored half space stochastic six-vertex model with two height functions. The height functions count the number of positive colors entering the rectangles from the left and exiting from the right. Theorem \ref{thm: 6vm shift simple} states that $\left(\Ht^{\geq 1}(4,4;\mathbf{z}),\Ht^{\geq 2}(5,3;\mathbf{z})\right)\deq\left(\Ht^{\geq 1}(4,4;\mathbf{z}'),\Ht^{\geq 3}(5,4;\mathbf{z}')\right)$.}
    \label{fig:6vm shift}
\end{figure}

\begin{theorem}
\label{thm: 6vm shift simple}
Let $i_k$, $i'_l$, $(x_k,y_k)$, and $(x_l',y_l')$ be two collections of color cutoffs and positions for height functions such that all height functions cross the diagonal, are ordered, $i_k\leq i_l'$ for all $k,l$, and $(x_k,y_k)$ is weakly northwest of $(x_l',y_l'+1)$ for all $k,l$. Then we have a joint distributional equality
\begin{equation*}
    \big\{\Ht^{\geq i_k}(x_k,y_k;\mathbf{z})\big\}_{k}\cup \big\{\Ht^{\geq i'_l}(x'_l,y'_l;\mathbf{z})\big\}_{l}\deq \big\{\Ht^{\geq i_k}(x_k;\mathbf{z}')\big\}_{k}\cup \big\{\Ht^{\geq i'_l+1}(x'_l,y'_l+1;\mathbf{z}')\big\}_{l},
\end{equation*}
where $\mathbf{z}'=(z_1',\dotsc)$ is obtained from $\mathbf{z}$ by setting $z_i'=z_i$ for $i< \min_l i_l'$ or $i>\max_l y_l'+1$, $z_{i+1}'=z_{i}$ if $\min_l i_l'\leq i\leq \max_l y_l'$, and $z_{\min_l i_l'}=z_{\max_l y_l'+1}$ (in other words, we move $z_{\max_l y_l'+1}$ below $z_{\min_l i_l'}$ and shift what is in between up).
\end{theorem}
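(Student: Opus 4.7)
The plan is to follow the strategy of Galashin \cite{G21} from the full space case, as signaled in the introduction: one first establishes a half-space analogue of flip invariance and then extracts shift invariance as a corollary. The central algebraic tools are the Yang--Baxter equation (Proposition \ref{prop:YB}) at interior vertices and the reflection equation (Proposition \ref{prop:refl}) at boundary vertices on the diagonal. Together these allow one to slide auxiliary rapidity lines through the lattice and bounce them off the diagonal, in either case producing an equality in distribution of the two configurations on the complement of the swept region.

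First I would formulate flip invariance for a region $R$ in the half space that is symmetric under reflection across the diagonal $x=y$. For height function observables sitting on the boundary of $R$, the claim is that flipping $R$---that is, reversing the order of the rapidities $z_i$ assigned to the rows/columns intersecting $R$ while geometrically reflecting $R$---preserves the joint distribution of those observables. The proof proceeds inductively by decomposing a general flip into a sequence of elementary transpositions of adjacent rapidities. Each elementary swap is effected by inserting an auxiliary rapidity line, dragging it diagonally across $R$ via repeated Yang--Baxter moves at interior crossings, and, each time the auxiliary line meets the boundary, applying the reflection equation. This is the natural half-space lift of the ``X-shaped'' intertwiner construction of \cite{G21}, and once the uncolored version is in hand the passage to the colored model is the standard Hecke-algebra argument exploiting the color/position symmetry of the weights.

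Next I would deduce Theorem \ref{thm: 6vm shift simple} from this flip invariance. The permutation of rapidities described in the theorem---taking $z_{\max_l y_l'+1}$ and reinserting it just below $z_{\min_l i_l'}$, with the intermediate $z_i$ shifting up by one---is precisely the cyclic shift realised by a single carefully chosen flip whose row/column range is the interval $[\min_l i_l',\,\max_l y_l'+1]$. The hypotheses that the primed observables cross the diagonal, that $i_k\leq i_l'$, and that $(x_k,y_k)$ lies weakly northwest of $(x_l',y_l'+1)$ together guarantee that the unshifted observables $\Ht^{\geq i_k}(x_k,y_k;\mathbf{z})$ lie strictly outside the flipped region (so they are unaffected), while the shifted observables $\Ht^{\geq i_l'}(x_l',y_l';\mathbf{z})$ sit on its boundary, where the flip turns them into $\Ht^{\geq i_l'+1}(x_l',y_l'+1;\mathbf{z}')$. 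The color cutoff jumps from $i_l'$ to $i_l'+1$ precisely because reversing the rapidity order on that interval shifts by one the label of the row whose crossings the cutoff is counting.

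The principal obstacle I anticipate is the boundary. In the full space setting one can flip any rectangle, but in the half space $R$ must be compatible with the diagonal symmetry, so any auxiliary line swept through $R$ must itself cross the diagonal and interact with its own mirror image via the reflection equation. Tracking this bookkeeping carefully---in particular, verifying that the negative-color mirror of the auxiliary line behaves consistently and that the parameters $t$ and $\nu$ in the reflection equation produce exactly the transposition the flip requires---is where the real technical effort lies, and this is also what forces the ``crosses the diagonal'' hypothesis on the observables: only height functions with $x\geq y$ and $i\geq 1$ are measured on the part of the lattice where the reflection-based flip argument applies cleanly.
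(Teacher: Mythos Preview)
Your high-level strategy---prove a half-space flip invariance via Yang--Baxter and reflection moves, then extract shift invariance---is exactly the paper's, and your sense that the Hecke-algebra/intertwiner machinery from \cite{G21} lifts with the reflection equation handling the diagonal is on target.

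There is, however, a concrete gap in the second step. A single flip on the interval $[\min_l i_l',\,\max_l y_l'+1]$ \emph{reverses} the rapidities on that interval; it does not produce the cyclic shift $(z_a,\dotsc,z_b)\mapsto(z_b,z_a,\dotsc,z_{b-1})$ described in the theorem. What the paper actually does (Theorem~\ref{thm: shift inv}) is apply flip invariance \emph{twice}: once about a cut $C$ built from the $C_l'$, and then again about the cut $C'$ obtained from $C$ by deleting its bottom row. The composition $\rot_{C'}\circ\rot_C$ is what shifts each $C_l'$ by $(0,1)$ while fixing the $C_k$, and on rapidities the composition $\rev_{C'}\circ\rev_C$ of two reversals on intervals differing by one endpoint is precisely the required cyclic shift. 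Your account of why the unprimed observables are unaffected is also slightly off: they are not ``outside the flipped region'' but rather correspond to cuts $C_k$ with $C\preceq C_k$ that happen to be fixed by both rotations (this is the content of $\rot_C(\mathcal{C})=\mathcal{C}$ in the proof). Likewise, the jump in the color cutoff from $i_l'$ to $i_l'+1$ is not a relabeling effect of a single reversal but comes out of the geometry of the two-flip composition acting on the rectangles $C_l'$.
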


\begin{remark}
For the six vertex model, we are able to prove a more general statement where the observables we consider do not have to all start $y$-axis, see Theorem \ref{thm: shift inv}. However, we still require that the height functions we study cross the diagonal in a suitable sense. This seems to be necessary (at least in the inhomogeneous case), see Remark \ref{rmk: bd necc}.
\end{remark}

\subsection{Shift invariance for the half space beta polymer}
\label{sec: intro beta}
The half space beta polymer, which is equivalent to a half space random walk in beta random environment, was recently introduced in \cite{BR22}. Here, we give a slightly different formulation which is more natural from the polymer point of view, but the connection to the random walk is not hard to see, and explained in Remark \ref{rmk: beta other form}.

\begin{figure}
    \centering
    \includegraphics[scale=0.7]{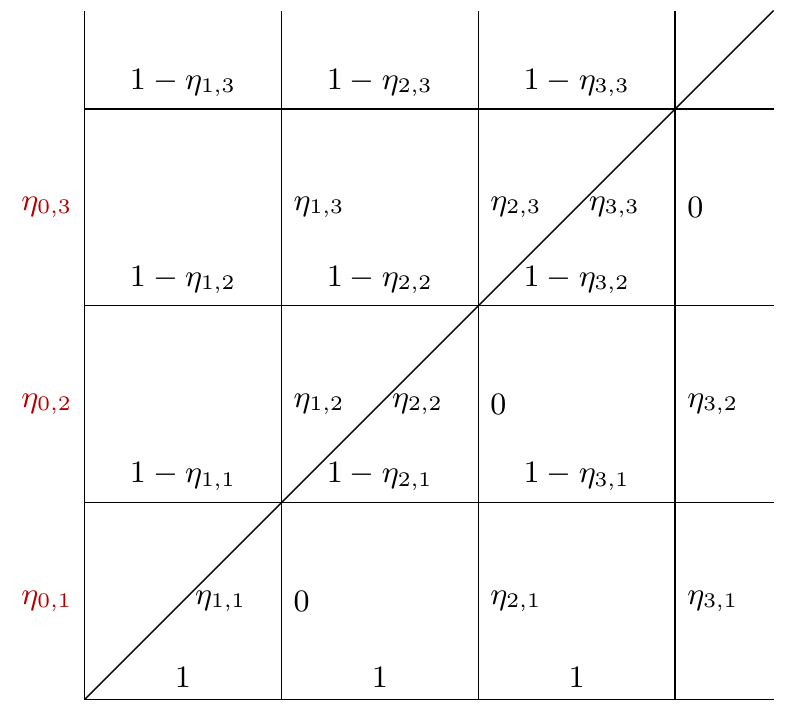}
    \caption{Half space beta polymer. Note that $\eta_{x,y}=1-\eta_{y,x}$ if $x\neq y$, so the random weights are symmetric except on the axes and at the boundary.}
    \label{fig:beta 2}
\end{figure}

We begin with positive parameters $\rho,\omega$, and $\sigma_x$ for $x\geq 1$. To each edge of $\N^2_0$, along with the additional edges of the form $(x,x)\to (x+1,x+1)$, we attach random edge weights in the following manner, see Figure \ref{fig:beta 2}.
\begin{itemize}
    \item For $x,y\geq 1$, let $\eta_{0,y}\sim\Beta(\rho,\sigma_y)$, let $\eta_{x,y}\sim \Beta(\sigma_x,\sigma_y)$ if $x<y$, $\eta_{x,x}\sim\Beta(\omega,\sigma_x)$, and $\eta_{y,x}=1-\eta_{x,y}$.
    \item To edges $(0,y-1)\to (0,y)$, we attach $\eta_{0,y}$. To edges $(x-1,0)\to (x,0)$, we attach $1$.
    \item To edges $(x-1,y)\to (x,y)$, we attach $\eta_{x,y}$, and to edges $(x,y-1)\to (x,y)$, we attach $1-\eta_{x,y}$ unless $x=y$, in which case we attach $0$. To edges $(x-1,x-1)\to(x,x)$, we attach $\eta_{x,x}$.
\end{itemize}
We let $Z^{\geq i}(x,y;\sigma)=\sum_{\gamma:(0,i)\to(x,y)} \wt(\gamma)$ denote the partition function starting at $(0,i)$ and ending at $(x,y)$, where the sum is over up right paths (with the edges $(x,x)\to(x+1,x+1)$ allowed to be taken), and $\wt(\gamma)$ is the product of all edge weights along $\gamma$.

By using a fusion procedure on the half space stochastic six vertex model, and then taking a continuous limit, which we identify with this beta polymer, we obtain the following shift invariance result. We use the same definitions as for height functions (see Section \ref{sec: 6vm intro}) for partition functions of crossing the diagonal and ordered.
\begin{theorem}
\label{thm: shift inv beta}
Let $i_k$, $i'_l$, $(x_k,y_k)$, and $(x_l',y_l')$ be two collections of color cutoffs and positions for partition functions such that all partition functions cross the diagonal, are ordered, $1\leq i_k\leq i_l'$ for all $k,l$, and $(x_k,y_k)$ is weakly northwest of $(x_l',y_l'+1)$ for all $k,l$. Then we have a joint distributional equality
\begin{equation*}
    \big\{Z^{\geq i_k}(x_k,y_k;\sigma)\big\}_{k}\cup \big\{Z^{\geq i'_l}(x'_l, y'_l;\sigma)\big\}_{l}\deq \big\{Z^{\geq i_k}(x_k,y_k;\sigma')\big\}_{k}\cup \big\{Z^{\geq i'_l+1}(x_l',y_l'+1;\sigma')\big\}_{l},
\end{equation*}
where $\sigma'=(\sigma_1',\dotsc)$ is obtained from $\sigma$ by setting $\sigma_i'=\sigma_i$ for $i< \min_l i_l'$ or $i>\max_l y_l'+1$, $\sigma_{i+1}'=\sigma_{i}$ if $\min_l i_l' \leq i\leq \max_l y_l'$, and $\sigma_{\min_l i_l'}=\sigma_{\max_l y_l'+1}$ (in other words, we move $\sigma_{\max_l y_l'+1}$ below $\sigma_{\min_l i_l'}$ and shift what is in between up).
\end{theorem}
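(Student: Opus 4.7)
The plan is to deduce Theorem \ref{thm: shift inv beta} from the six-vertex shift invariance (Theorem \ref{thm: 6vm shift simple}) in two stages: first perform a fusion procedure on the colored half space stochastic six vertex model, obtaining a higher-spin half space vertex model with beta-like weights, then take a continuous ($q\to 1$) limit identifying the result with the beta polymer of \cite{BR22}. Shift invariance transfers along both stages because it is formulated as a distributional identity that is stable under specialization of rapidities and under weak limits.

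For the fusion step, I would specialize the rapidities of the six vertex model in geometric progressions, replacing each $z_i$ with a block $(z_i, qz_i, \dotsc, q^{J-1}z_i)$. For bulk vertices this is the classical Kulish--Reshetikhin--Sklyanin fusion: $J$ consecutive rows (and, by the model's symmetry, columns) collapse into a single row capable of carrying up to $J$ arrows per edge, with stochastic weights expressible in terms of $q$-beta distributions. For vertices on the diagonal one must additionally fuse the boundary $K$-matrix; this amounts to choosing a scaling of the boundary parameter $\nu$ and restricting the fused weights to the symmetric tensor component, with the reflection equation guaranteeing that the resulting boundary weights are again stochastic. Shift invariance of Theorem \ref{thm: 6vm shift simple}, applied to these specialized rapidities, moves whole blocks $(z_i,qz_i,\dotsc,q^{J-1}z_i)$ together, and this descends to shift invariance for the fused model expressed in the original variables $z_i$.

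For the continuous limit, I would scale $q\to 1$ jointly with the rapidities, boundary parameters, and fusion parameter so that normalized occupation numbers at each edge converge to continuous random variables; the bulk weights then become $\Beta(\sigma_x,\sigma_y)$ weights, the boundary weights become $\Beta(\omega,\sigma_x)$ weights, and the incoming boundary data produces the $\Beta(\rho,\sigma_y)$ edges along the $y$-axis, matching Figure \ref{fig:beta 2}. Under this scaling the colored height function $\Ht^{\geq i}(x,y;\mathbf{z})$ (appropriately exponentiated and normalized) converges to the partition function $Z^{\geq i}(x,y;\sigma)$, with rapidities $z_i$ going to column parameters $\sigma_i$, $\nu$ going to $\omega$, and the boundary color data encoding $\rho$. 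The hypothesis that all height functions cross the diagonal translates exactly to the requirement that all partition functions cross the diagonal in the beta polymer, and the joint distributional identity of Theorem \ref{thm: 6vm shift simple} passes to the limit to give the required identity.

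The main obstacle will be the fusion at the boundary: bulk fusion is classical, but carrying out the analogous construction for the $K$-matrix in a way that (i) respects the reflection equation, (ii) remains stochastic, and (iii) degenerates in the $q\to 1$ limit to a $\Beta(\omega,\sigma_x)$ weight, requires identifying the correct symmetric projector and the correct scaling of $\nu$ in terms of $\omega$. The author already flags this fusion as of independent interest, confirming it is the technical heart of the argument. Once the boundary fusion is established and the weak convergence of the fused model to the beta polymer is verified by direct comparison of transition kernels, the desired shift invariance follows by applying Theorem \ref{thm: 6vm shift simple} to the fused specialization and taking the limit.
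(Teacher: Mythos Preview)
Your proposal is correct and follows essentially the same route as the paper: fuse the colored half space six vertex model using geometric-progression rapidities, carry shift invariance through to the fused model, then take a $q\to 1$ continuous limit and identify the result with the beta polymer via $\exp(-\mathcal{H}^{\geq i})=Z^{\geq i}$. You also correctly flag boundary fusion as the main technical obstacle.

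Two intermediate steps in the paper deserve mention, since you gloss over them. First, before fusing the paper sets the boundary asymmetry parameter $t=0$; this is what makes the fused boundary weights tractable (Theorem~\ref{thm: fused bd wt} is derived by solving an explicit recurrence rather than by an abstract symmetric-projector argument), and the $t=0$ choice is ultimately responsible for the weight-$0$ edges at the diagonal in the polymer. Second, between fusion and the $q\to 1$ limit the paper degenerates the spectral parameters to $z_x=q^{-L_x+1/2}$ and then \emph{analytically continues} in $q^{-L_x}$, replacing the integer fusion widths by complex parameters $\mathfrak{l}_x$; this is what allows the limiting $\sigma_x$ to be arbitrary positive reals rather than integers, and shift invariance must be re-established at this analytically continued stage (Theorem~\ref{thm: shift inv analytic}) by a holomorphicity argument. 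With these two refinements your outline matches the paper's proof.
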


\subsection{Shift invariance for the ASEP}
\label{sec: intro asep}

The \emph{colored half space asymmetric simple exclusion process} (ASEP) is a continuous time Markov process on the set of bijections from $\N\cup -\N$ to itself with the following description. At each position $x$, we place a particle of color $x$. Formally, configurations are given by functions $\eta$, with $\eta(x)$ being the color of the particle at position $x$, so we begin with the function $\eta(x)=x$.

The dynamics for the model (originally defined by Harris \cite{H77}) are given by assigning exponential clocks of rate $1$ and $q$ to each edge $(x,x+1)$ for $x\geq 1$, and exponential clocks of rate $\alpha$ and $\beta$ to the edge $(-1,1)$. When a clock of rate $1$ rings, we swap the particles at positions $x$ and $x+1$ if $\eta(x)<\eta(x+1)$, swapping particles at $-x$ and $-x-1$ as well, and when a clock of rate $q$ rings, we swap the particles at $x$ and $x+1$ if $\eta(x)>\eta(x+1)$ (and also $-x$ and $-x-1$). Similarly, if the clock of rate $\alpha$ rings, we swap $\eta(-1)$ and $\eta(1)$ if $\eta(-1)<\eta(1)$, and if the clock of rate $\beta$ rings, we swap those particles if $\eta(-1)>\eta(1)$. For any fixed time $t$, we can almost surely find a sequence $i_k\in\N$ where no clock associated to the edge $(i_k,i_k+1)$ rings up to time $t$, and given this, we can simply sample the process within the finite intervals $[i_k+1,i_{k+1}]$, so this process is well defined. When $q=\beta=0$, we call this the \emph{totally asymmetric simple exclusion process} (TASEP).

Note that it is equivalent to view the system as a coupled system of particle configurations, with particles of types $-1,0,1$, by projecting particles of color at least $i$ to $1$, at most $-i$ to $-1$, and the remaining as $0$. This is similar to what occurs in a full space, except that we are forced to introduce second class particles (the particles of color $0$) even in these projections (except when $i=1$). 

\begin{figure}
    \centering
\includegraphics[scale=0.5]{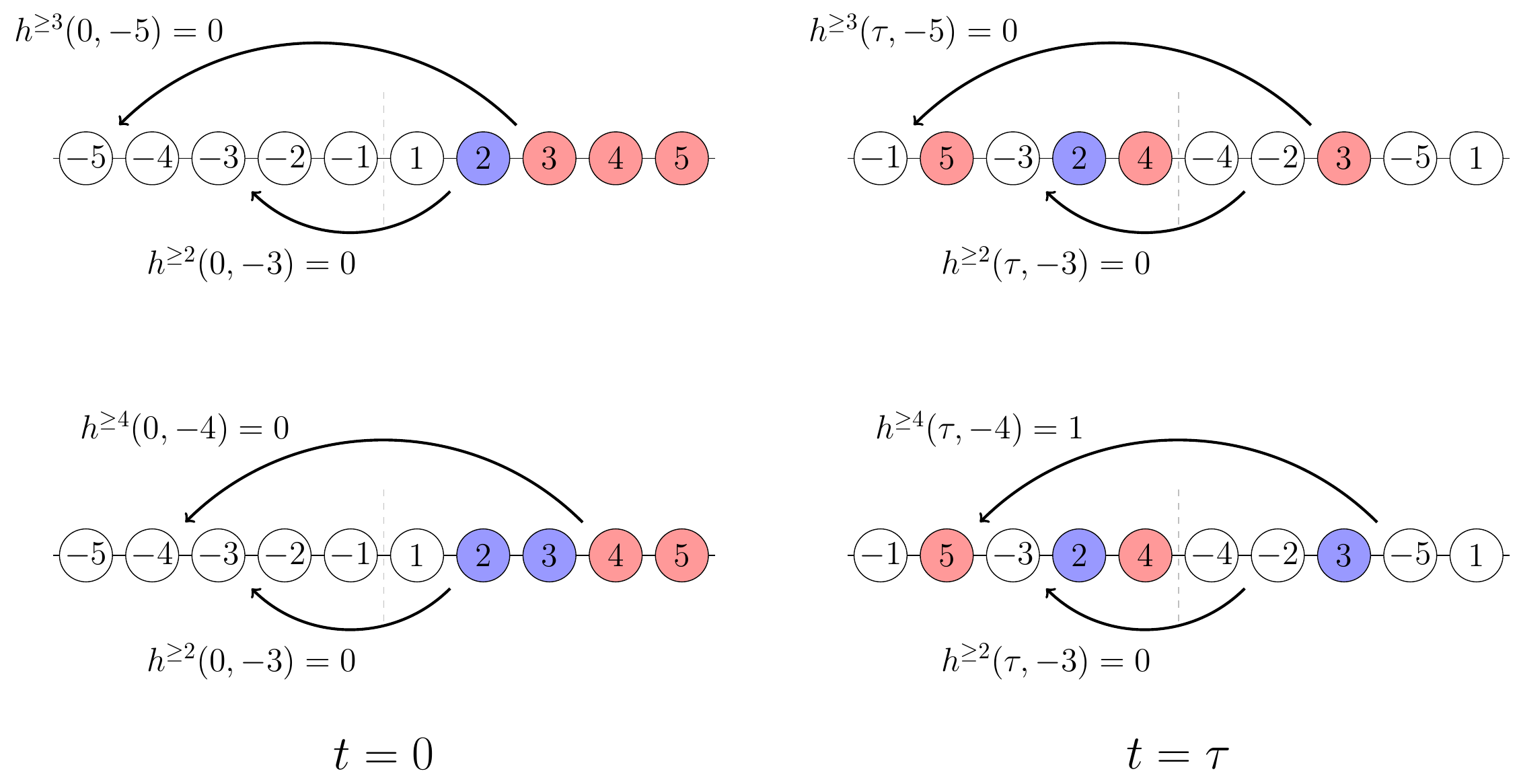}
    \caption{An example of shift invariance for the half space ASEP with two height functions. The top row shows, at times $0$ and $\tau$, the height functions $h^{\geq 3}(t,-5)$, which counts red particles at or to the left of $-5$, and $h^{\geq 2}(t,-3)$, which counts blur or red particles at or to the left of $-3$. In the bottom row, the height function $h^{\geq 3}(t,-5)$ is shifted to $h^{\geq 4}(t,-4)$. Shift invariance states that the joint distribution of the two height functions before and after the shift are equal, as long as the arrows cross $0$, and one starts at or before and ends at or after the other.}
    \label{fig:asep shift}
\end{figure}

We will now define the analogue of the height functions in this setting. Given a positive integer $i$, $x\in \N\cup -\N$, and $\tau\geq 0$, we let $h^{\geq i}(\tau,x)$ denote the number of particles of color at least $i$, at or to the left of position $x$ at time $\tau$. See Figure \ref{fig:asep shift} for some examples.

By taking a limit of the colored half space stochastic six vertex model, we obtain a version of shift invariance for the colored half space ASEP. See Figure \ref{fig:asep shift} for an example of shift invariance with two height functions.

\begin{theorem}
\label{thm: shift inv asep}
Fix a time $\tau\geq 0$ and collections of color cutoffs and positions $i_k$, $i_l'$, $x_k$, and $x_l'$, such that $1\leq i_k\leq  i'_l$, and $-1\geq  x_k>x'_l$. Then we have a joint distributional equality
\begin{equation*}
    \big\{h^{\geq i_k}(\tau, x_k)\big\}_{k}\cup\big\{h^{\geq i_l'}(\tau,x_l')\big\}_{l}\deq \big\{h^{\geq i_k}(\tau, x_k)\big\}_{k}\cup\big\{h^{\geq i_l'+1}(\tau,x_l'+1)\big\}_{l}.
\end{equation*}
\end{theorem}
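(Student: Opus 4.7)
The plan is to derive Theorem \ref{thm: shift inv asep} from shift invariance for the colored half space stochastic six vertex model (Theorem \ref{thm: 6vm shift simple}, or more likely its general form Theorem \ref{thm: shift inv}) by means of a scaling limit. The colored ASEP is well known to arise as a limit of the colored stochastic six vertex model in the full space setting, and essentially the same procedure, adapted to take into account the boundary, yields the half space ASEP from the half space six vertex model. Crucially, the diagonal wall $x = y$ of the six vertex model degenerates in this limit to the wall between positions $-1$ and $1$ of the ASEP, so that ``crossing the diagonal'' in six vertex corresponds exactly to ``crossing the wall'' in ASEP.

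More concretely, I would parametrize the six vertex model by $z_j = 1 - \epsilon$ for all $j$ and tune the boundary parameters $t, \nu$ so that the effective rates at the diagonal vertices converge to $\alpha$ and $\beta$ as $\epsilon \to 0$. Under the appropriate rescaling (with ASEP time proportional to $\epsilon$ times a six vertex row index), the continuous-time colored half space stochastic six vertex model converges in finite-dimensional distribution to the colored half space ASEP. A six vertex height function $\Ht^{\geq i}(X, Y; \mathbf{z})$ with $X \geq Y$ and $i \geq 1$ corresponds in the limit to an ASEP height function $h^{\geq i}(\tau, x)$ for $x \leq -1$, with the coordinates $(X, Y)$ encoding both the ASEP time $\tau$ and position $x$ in the standard way.

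Next I would verify that the hypotheses and the shift of the six vertex theorem translate, under this limit, into those of Theorem \ref{thm: shift inv asep}. The color cutoff ordering $1 \leq i_k \leq i'_l$ is preserved verbatim. The separation/ordering condition on the six vertex positions becomes $x_k > x'_l$ in the ASEP. The six vertex shift, which increments one spatial coordinate by one unit and raises the color cutoff by one, translates (after scaling) into the ASEP shift $(x'_l, i'_l) \mapsto (x'_l + 1, i'_l + 1)$. Since all rapidities $z_j$ coincide in the ASEP limit, the rapidity rearrangement present in the pre-limit identity becomes trivial, leaving only the combinatorial shift on positions and colors.

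The main obstacle is justifying the convergence of the joint finite-dimensional distributions of the height functions, with special care at the boundary to ensure that the half space structure is correctly preserved. Standard techniques should apply: either convergence of the continuous-time generators of the rescaled six vertex dynamics to the ASEP generator, or a direct coupling argument. The essential input is that the boundary weights of the six vertex model, under the chosen parametrization, degenerate to the ASEP boundary rates $\alpha$ and $\beta$; everything else is then a bulk computation parallel to the full space case. Once convergence is established, Theorem \ref{thm: shift inv asep} follows immediately by passing to the $\epsilon \to 0$ limit of the shift invariance identity provided by Theorem \ref{thm: shift inv}.
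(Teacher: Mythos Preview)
Your proposal is correct and follows essentially the same route as the paper: specialize the six vertex rapidities and boundary parameters so that the model converges to the colored half space ASEP, establish convergence of the height functions in finite-dimensional distributions, and then pass the shift invariance identity (Theorem~\ref{thm: 6vm shift simple}/\ref{thm: shift inv}) to the limit, noting that the rapidity permutation becomes trivial because all $z_j$ are equal. The paper carries this out with the explicit choices $z_i = 1 + \tfrac{(1-q)\varepsilon}{2}$, $t = \alpha/\beta$, and $\nu$ determined by an equation in $\alpha,\beta,q$, and records the convergence as Proposition~\ref{prop: 6vm conv asep}; otherwise the argument is exactly as you outline.
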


\begin{remark}
Corollary \ref{cor: lpp shift} follows by taking $q=\beta=0$, and using the standard coupling between the TASEP and LPP. Unfortunately, a more general statement with more than one LPP time cannot be derived from Theorem \ref{thm: shift inv asep}, since it holds only for a single time $\tau$. 

Let us also remark that Theorem \ref{thm: shift inv asep} is non-trivial even for a single height function, whereas in the full space setting, the analogous statement follows from homogeneity of the random environment.
\end{remark}
\subsection{Asymptotics for the type \texorpdfstring{$B$}{B} oriented swap process}
\begin{figure}
    \centering
    \includegraphics[scale=0.7]{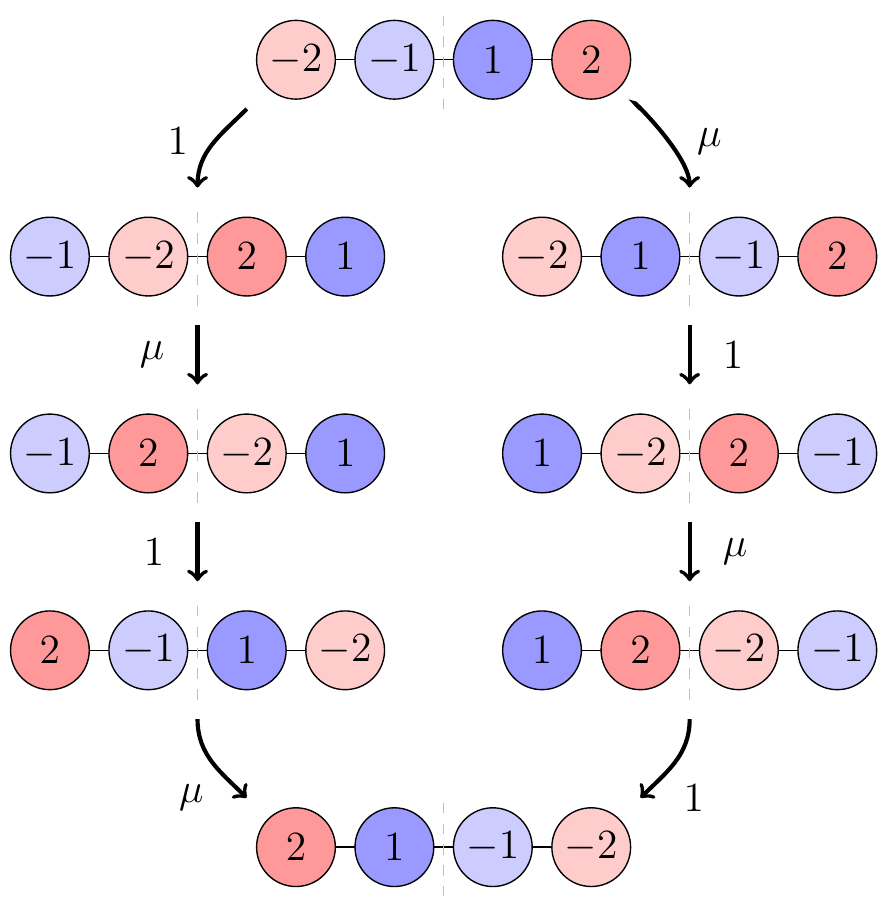}
    \caption{The $n=2$ type $B$ oriented swap process. The possible transitions along with their rates are shown.}
    \label{fig:osp}
\end{figure}

The \emph{type $B$ oriented swap process} is a continuous time absorbing Markov chain with the following description. We start with the numbers $-n,\dotsc, -1,1,\dotsc, n$ in order in a line. We put exponential clocks between positions $i,i+1$ and $-1,1$, of rates $1$ and $\mu$ respectively, and when a clock rings, we swap the numbers at those positions if they are increasing, along with the numbers at $-i,-i-1$ for the rate $1$ clocks. This is another name for the fully colored half space TASEP (restricted to a finite interval). 

Since the initial condition and dynamics are symmetric, the state is always symmetric in the sense that the numbers at positions $i$ and $-i$ always have the same absolute value. Clearly, this chain is absorbed when the order is completely reversed, and we let $T$ denote this time. We prove the following distributional identity between $T$ and a point-to-line half space LPP time. This is the type $B$ analogue of a result of Bufetov, Gorin and Romik on the (type $A$) oriented swap process \cite{BGR20}. Their result answered a question of Angel, Holroyd, and Romik \cite{AHR09}.
\begin{theorem}
Let $T$ denote the absorption time in the type $B$ oriented swap process, and let $L$ denote the point-to-point half space LPP times\footnote{See Corollary \ref{cor: lpp shift} for the definition.}. Then we have the equality of distributions
\begin{equation*}
    T\deq \max_{x+y=2n-2} L((0,0),(x,y)).
\end{equation*}
\end{theorem}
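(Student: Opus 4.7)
I adapt the strategy of Bufetov--Gorin--Romik~\cite{BGR20} for the full space oriented swap process to the half space type $B$ setting, using Theorem~\ref{thm: shift inv asep} as the key new input. Since the type $B$ OSP is the colored half space TASEP with $q = \beta = 0$ restricted to $\{-n, \dots, -1, 1, \dots, n\}$, and requiring $h^{\geq i}(t, -i) = n - i + 1$ for all $i \in \{1, \dots, n\}$ forces inductively each particle of color $i$ to sit at position $-i$ (the absorbing symmetric reverse), monotonicity of each height function in $t$ gives
\begin{equation*}
T = \max_{i = 1}^{n} \tau_i, \qquad \tau_i := \inf\{t \geq 0 : h^{\geq i}(t, -i) \geq n - i + 1\}.
\end{equation*}

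Next, working in the infinite colored half space TASEP, I apply Theorem~\ref{thm: shift inv asep} iteratively to $\{h^{\geq i}(t, -i)\}_{i=1}^n$: at step $k = 1, \dots, n-1$, take the unshifted collection to be $\{(2j-1, -1) : 1 \leq j \leq k\}$ and the shifted collection to be the remaining height functions, which at that step have positions $-2, -3, \dots, -(n-k+1)$ and color cutoffs $2k, 2k+1, \dots, n+k-1$. The hypotheses $1 \leq i_k \leq i_l'$ and $-1 \geq x_k > x_l'$ hold throughout, and after $n - 1$ iterations the joint law of $\{h^{\geq i}(t, -i)\}_{i=1}^n$ equals that of $\{h^{\geq 2i-1}(t, -1)\}_{i=1}^n$. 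Under the standard coupling between the colored half space TASEP and half space exponential LPP (the rate-$\mu$ diagonal $X_{x,x}$ corresponds to the boundary clock, rate-$1$ off-diagonal $X_{x,y}$ to the bulk clocks), the event $\{h^{\geq k}(t, -1) \geq m\}$ is equivalent to $\{L((0,0), (k+m-2, m-1)) \leq t\}$. Applied with $(k, m) = (2i - 1, n - i + 1)$, this gives $\{L((0,0), (n+i-2, n-i)) \leq t\}$ for $i = 1, \dots, n$, whose endpoints lie on $x + y = 2n - 2$ and, via $X_{x,y} = X_{y,x}$, cover the full anti-diagonal. Combining, for every $t \geq 0$,
\begin{equation*}
\PP(T \leq t) = \PP\left(\bigcap_{i=1}^n \{h^{\geq i}(t, -i) \geq n - i + 1\}\right) = \PP\left(\max_{x+y = 2n-2} L((0,0), (x, y)) \leq t\right),
\end{equation*}
which is the claimed identity.

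The main obstacle is justifying this chain between the finite type $B$ OSP and the infinite colored TASEP in which Theorem~\ref{thm: shift inv asep} is formulated: the infinite process contains particles of color $>n$ that can change individual $\tau_i$ distributions, and one must argue---via an explicit coupling of OSP clocks with LPP variables, or by matching generating functions---that the joint distribution of $\max_i \tau_i$ agrees between the two settings. A secondary but more bookkeeping-style challenge is to verify the $(k,m)\mapsto(k+m-2, m-1)$ LPP correspondence jointly for the shifted family $\{h^{\geq 2i-1}(t,-1)\}_{i=1}^n$, rather than only marginally.
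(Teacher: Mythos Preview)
Your overall architecture matches the paper's: express $\{T\le\tau\}$ through finite-system height functions, pass to the infinite colored TASEP, apply shift invariance, and then read off an LPP identity. The finite/infinite passage you flag as the ``main obstacle'' is precisely what the paper handles in Proposition~\ref{prop: fin inf couple}: under the clock-sharing coupling one has $\widehat h^{\geq i+1}(\tau,-i-1)=\min\bigl(h^{\geq i+1}(\tau,-i-1),\,n-i\bigr)$, so the finite event $\widehat h^{\geq i+1}(\tau,-i-1)=n-i$ coincides with the infinite event $h^{\geq i+1}(\tau,-i-1)\geq n-i$. This is a short direct coupling argument, not a generating-function match.

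The genuine gap is the direction in which you shift. You move \emph{up} in color and \emph{right} in position to land on $\{h^{\geq 2i-1}(t,-1)\}_{i=1}^n$, and then assert a ``standard coupling between the colored half space TASEP and half space exponential LPP'' giving $\{h^{\geq k}(t,-1)\geq m\}=\{L((0,0),(k+m-2,m-1))\leq t\}$ jointly in $k$. There is no such coupling. For $k>1$ the projection to colors $\geq k$ introduces second-class particles near the boundary, and the resulting marginal is an uncolored half-space TASEP with a \emph{shifted} step initial condition, which does not have the stated event-level LPP description. Your identity holds only \emph{marginally in distribution}---and that marginal fact is itself a consequence of shift invariance---so invoking it jointly is circular, not ``bookkeeping''.

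The paper avoids this by shifting in the opposite direction (Lemma~\ref{lem: osp TASEP}): it sends $\{h^{\geq i}(\tau,-i)\}_i$ to $\{h^{\geq 1}(\tau,-(2i-1))\}_i$. Now every height function has color cutoff $1$, so the \emph{standard uncolored} TASEP/LPP coupling applies and the identities $\{h^{\geq 1}(\tau,-(2i-1))\geq n-i+1\}=\{L((0,0),(n+i-2,n-i))\leq\tau\}$ hold simultaneously as events in one probability space. Your iterative use of Theorem~\ref{thm: shift inv asep} works equally well in this direction (the hypotheses are symmetric under reading the distributional equality backwards), after which the LPP step is immediate.
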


Since the limiting distribution for these LPP times are known (through an algebraic identity, it is equivalent to a full space point to point LPP time whose asymptotics are known \cite{BC11,BBP05,O08}), this immediately implies the following asymptotics for $T$. For simplicity, we keep $\mu$ fixed, but if $\mu$ varies with $n$, then one can also obtain the BBP transition near the critical point.
\begin{corollary}
Let $F_{GUE}$ and $F_{GOE}$ denote the Tracy--Widom distributions for the GUE and GOE matrix ensembles respectively. Let $\Phi$ denote the distribution function for a standard normal random variable. If $\mu>\frac{1}{2}$,
\begin{equation*}
    \PP\left(\frac{T-4n}{2^{\frac{4}{3}}n^{\frac{1}{3}}}\leq x\right)\to F_{GUE}(x),
\end{equation*}
if $\mu=\frac{1}{2}$,
\begin{equation*}
    \PP\left(\frac{T-4n}{2^{\frac{4}{3}}n^{\frac{1}{3}}}\leq x\right)\to F_{GOE}^2(x),
\end{equation*}
and if $\mu<\frac{1}{2}$,
\begin{equation*}
    \PP\left(\frac{T-(\mu^{-1}+(1-\mu)^{-1})n}{(\mu^{-2}+(1-\mu)^{-2})^{\frac{1}{2}}n^{\frac{1}{2}}}\leq x\right)\to \Phi(x).
\end{equation*}
\end{corollary}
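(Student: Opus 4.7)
The plan is to reduce, via the preceding theorem, to a known asymptotics question for half space exponential LPP, and then invoke the Baik--Ben Arous--P\'ech\'e (BBP) phase transition. The theorem gives $T \deq M_n$ where $M_n := \max_{x+y=2n-2} L((0,0),(x,y))$, so it suffices to establish the three limiting distributions for $M_n$.

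For this, I would use the algebraic identity alluded to in the paragraph preceding the corollary: via symmetric RSK, Pfaffian Schur measures, or zero-temperature limits of identities for half space polymers (see e.g.\ \cite{BBCS18, BC11}), the half space point-to-line LPP time $M_n$ can be rewritten as a classical full space point-to-point exponential LPP time with a BBP-type boundary perturbation whose effective rates are determined by $\mu$. Once this identification is in hand, the corollary reduces to a direct application of the BBP theorem \cite{BBP05, O08, BC11}.

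Specializing to each regime gives the stated limits. In the subcritical phase $\mu > 1/2$ the perturbation lies below threshold, so the fluctuations are unaffected and one recovers the standard $F_{GUE}$ Tracy--Widom limit with centering $4n$ and scale $2^{4/3}n^{1/3}$. At criticality $\mu = 1/2$ the half space geometry produces the $F_{GOE}^2$ limit (first identified for half space LPP by Baik--Rains). In the supercritical phase $\mu < 1/2$ the optimal path hugs the perturbed boundary, so $M_n$ is essentially a sum of $\Theta(n)$ near-independent boundary weights and the CLT yields a Gaussian limit; the appearance of both rates $\mu$ and $1-\mu$ in the centering $(\mu^{-1}+(1-\mu)^{-1})n$ and in the scale $(\mu^{-2}+(1-\mu)^{-2})^{1/2}n^{1/2}$ reflects the two distinct perturbation rates produced by the reduction in the previous step.

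The main technical obstacle is to pin down the precise form of the algebraic identity: one must identify which full space BBP LPP model is equivalent in distribution to $M_n$ and verify that the effective perturbation rates match $\mu$ and $1-\mu$ as the stated Gaussian constants require. Once this identification is established, the three asymptotic statements follow off-the-shelf from the BBP literature.
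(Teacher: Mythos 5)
Your proposal is correct and follows essentially the same route as the paper: reduce via the distributional identity $T\deq \max_{x+y=2n-2}L((0,0),(x,y))$, then invoke the known identity between half space point-to-line LPP and a full space point-to-point LPP with boundary perturbation (the paper cites Baik--Rains, Eqs.~7.59--7.60 of \cite{BR01a}, for the geometric case, followed by an exponential limit), and finally apply the BBP asymptotics of \cite{BBP05,O08,BC11}. The ``technical obstacle'' you flag is resolved in the paper exactly by that citation, so there is no substantive difference between your argument and the paper's.
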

We actually prove a generalization of these results to configurations where the number $0$ is allowed to appear as well, see Theorem \ref{thm: osp lpp same} and Corollary \ref{cor: osp asymp}. The proof uses shift invariance for the half space TASEP in a crucial way.

\begin{remark}
Let us note that the oriented swap process can be defined for any Coxeter group, and for finite Coxeter groups, it is natural to ask for the asymptotics for the absorbing time. For infinite Coxeter groups, this does not make sense but the random walk is still interesting, and in particular for affine Weyl groups, this was studied by Lam \cite{L15} (who viewed the walk equivalently as a non-backtracking walk on the alcoves of the corresponding affine hyperplane arrangement).
\end{remark}

\subsection{Related works and further directions}
In this paper, we introduce the notion of shift invariance for half space models, and give a few applications. We have not attempted to prove the most general or powerful versions of our results, and so we list a few directions where our work could be extended. We hope to address some of these in future work.

Galashin characterized the symmetries of the six vertex model in the full space setting, establishing a conjecture of Borodin, Gorin and Wheeler \cite{G21}. In particular, he showed that as long as a transformation preserves the marginal distributions of height functions, it preserves the joint distribution. It would be interesting to know if a similar result could be proven in the half space case.

In this work, we have completely focused our attention on height functions which cross the diagonal. One can ask whether a version of shift invariance holds for height functions that are close to but do not cross the diagonal. While it is easy to see that vertical or horizontal shifts cannot even preserve the distribution for a single height function (at least in the inhomogeneous model), a diagonal shift parallel to the boundary clearly does. It is unclear if an analogue of shift invariance would hold for these height functions. More generally, one could study an arbitrary collection of height functions, and ask what forms of shift invariance would hold.

While our work covers a broad spectrum of integrable models, it certainly does not cover all of them. A notable omission is the lack of integrable polymer or LPP models. Dauvergne was able to establish shift invariance (and more general symmetries) in integrable polymer and LPP models using the RSK correspondence \cite{D22}. It would be very interesting to know if these techniques could be used in the half space setting. 

Another problem left open is if a version of shift invariance holds for the KPZ equation in a half space, or even how to formulate it. While work of Parekh \cite{P19} showed that the ASEP in a half space with step initial condition converged to the KPZ equation with a Neumann boundary condition and narrow wedge initial data, this only covers the case when there are no $0$ particles present (treating $1$ as a hole and $-1$ as a particle). It would be interesting to understand if a KPZ equation limit could be obtained from the ASEP with macroscopically many $0$ particles. 

Another way to obtain a KPZ limit would be through polymers. Recent work of Barraquand and Corwin \cite{BC22} shows that the point to point partition functions of half space polymers should converge to a solution to the KPZ equation. In addition, the beta polymer that we consider does not seem to be general enough to obtain the half space KPZ limit, see \cite{BR22}. One would need to add an asymmetry parameter into the model first, and so it would be interesting to know whether shift invariance still holds for this model. The work of Korotkikh \cite{K22} shows shift invariance for the full space beta polymer with an extra family of parameters, and so one could hope that something similar can be done in the half space setting.

Finally, while our result on the absorption time of the type $B$ oriented swap process gives the limiting distribution, Zhang was able to prove an even more powerful distributional identity between the TASEP and LPP \cite{Z22}, which was derived through a version of shift invariance for the TASEP which allowed for multiple times. We conjecture that the analogous statement is true in the type $B$ setting (see Conjecture \ref{conj: osp}), and it seems plausible that the shift invariance for the half space TASEP continues to hold with multiple times.

\subsection{Organization}
In Section \ref{sec: prelim}, we review some preliminaries on Hecke algebras and vertex models in the type $B$ or half space setting, and show their connection. In Section \ref{sec: flip}, we establish a version of flip invariance (Theorem \ref{thm: flip}), and use it to prove shift invariance (Theorem \ref{thm: shift inv}). 

Sections \ref{sec: asep} and \ref{sec: osp} are completely independent of Sections \ref{sec: fusion}, \ref{sec: higher spin}, and \ref{sec: beta polymer}, and each collection may be read without reading the other.

In Section \ref{sec: asep}, we show that shift invariance holds for the ASEP on a half-line by taking a limit of the half space six-vertex model. In Section \ref{sec: osp}, we use shift invariance for the TASEP to study the distribution of the absorbing time in a type $B$ analogue of the oriented swap process, proving a type $B$ version of a result of Bufetov, Gorin, and Romik \cite{BGR20}.

In Section \ref{sec: fusion}, we explain how to fuse rows and columns of the half space six-vertex model to obtain a fused vertex model where multiple arrows may occupy a single edge, leaving the formal derivation for Appendix \ref{app: pf}. In Section \ref{sec: higher spin}, we degenerate and analytically continue the fused vertex model before taking a limit to obtain a continuous vertex model. Finally, in Section \ref{sec: beta polymer}, we show that the continuous vertex model generalizes the random walk in beta random environment on a half-line studied by Barraquand and Rychnovsky \cite{BR22}. In each section, we show that the models obtained satisfy shift invariance.

\subsection{Notation}
We use $\N$ to denote the positive integers, and $\N_0$ to denote the positive integers with $0$. If $a<b$ are integers, we let $[a,b]=\{a,\dotsc, b\}$. We will use $\pm [a,b]$ to denote the set $[-b,-a]\cup [a,b]$ (and not $[a,b]$ or $-[a,b]$). We will use $\pm \N$ to denote the set $\N\cup -\N$ (and not $\N$ or $-\N$). If $x\in\N$, we let $x^{(1)}$ and $x^{(2)}$ denote its first and second coordinates.

We will use $(a;q)_n=\prod_{i=0}^{n-1}(1-aq^i)$ to denote the $q$-Pochhammer symbol, and
\begin{equation*}
    {b\choose a}_q=\frac{(q;q)_b}{(q;q)_a(q;q)_{b-a}}
\end{equation*}
to denote the $q$-binomial coefficient.

We use $\deq$ to denote equality in distribution between two random variables.

\section{Hecke algebra and vertex models}
\label{sec: prelim}
In this section, we define the colored stochastic six-vertex model in the half space setting. We also review some needed background on type $B$ Hecke algebras and explain their connection to vertex models.

\subsection{Colored half space stochastic six-vertex model}
\label{sec: 6vm}
\subsubsection{Model definition}
We start with parameters $z_i$ for $i\in\N$, $q$, $t$ and $\nu$. We will sometimes use the notation $q_i=q$ if $i>0$ and $q_0=t$ to unify certain expressions. We call the $z_i$ \emph{rapidities}. Initially, we will assume that they are chosen so all weights defined below are probabilities, but later on we will view the weights as rational functions in these parameters, which will then be treated as formal variables. We let
\begin{equation}
\label{eq: weights}
    \mathbf{p}_{x,y}=\mathbf{p}_{|y-x|}(z_xz_y)=\begin{cases}\frac{z_xz_y-1}{z_xz_y-q}&\text{ if }x\neq y,
    \\\frac{z_x^2-1}{(z_x-\nu t)\left(z_x+\frac{1}{\nu}\right)}&\text{ if }x=y.
    \end{cases}
\end{equation}
We will refer to the argument of $\mathbf{p}_{|y-x|}$, the $z_xz_y$, as the \emph{spectral parameter}.

We consider certain configurations of colored arrows on the edges of $\N^2$. Each edge will contain exactly one arrow, and the boundary will contain an arrow of color $i$ entering vertex $(1,i)$ on the left, and an arrow of color $-i$ entering vertex $(i,1)$ on the bottom. We will refer to vertices $(x,y)$ with $x\neq y$ as \emph{bulk} vertices and vertices $(x,x)$ as \emph{boundary} vertices. Vertices $(1,y)$ and $(x,1)$ will be referred to as \emph{incoming} vertices.

\begin{figure}
    \centering
    \begin{tabular}{l|cc|cc}
    Configuration:&$\vcenter{\hbox{\includegraphics[scale=1]{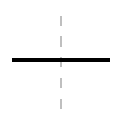}}}$ &$\vcenter{\hbox{\includegraphics[scale=1]{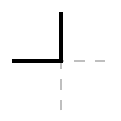}}}$&$\vcenter{\hbox{\includegraphics[scale=1]{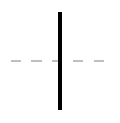}}}$&$\vcenter{\hbox{\includegraphics[scale=1]{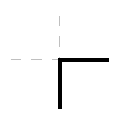}}}$ \\
    Probability:&$\mathbf{p}_{x,y}$&$1-\mathbf{p}_{x,y}$&$q_{y-x}\mathbf{p}_{x,y}$&$1-q_{y-x}\mathbf{p}_{x,y}$ 
    \end{tabular}
    \caption{Probabilities for sampling outgoing colors at a vertex $(x,y)$, with incoming colors $i<j$. The black line represents the larger color $j$ and the dashed line represents the smaller color $i$.}
    \label{fig:vtx wts}
\end{figure}

We now describe a Markovian sampling procedure to generate configurations. At a vertex $(x,y)$ with $x\leq y$ where both the left and bottom incoming edges already have arrows assigned, we sample outgoing arrows at the top and right according to the probabilities given in Figure \ref{fig:vtx wts}. If the vertex is not a boundary vertex (i.e. $x< y$), we then also symmetrically set the outcome in vertex $(y,x)$ by requiring that if an edge is occupied by the color $i$, then its reflection about $x=y$ is occupied by the color $-i$. Repeating this process inductively, we define a random configuration on $\N^2$. Note that as long as we choose the parameters correctly (say $q,t<1$, $z_x>1$, and $\nu\in (0,1)$), we obtain a genuine probability distribution on configurations.

We will let $\Ht^{\geq i}(x,y)$ denote the number of arrows of color at least $i$ leaving a vertex at or below $(x,y)$ to the right. Formally, we can define $\Ht^{\geq i}(x,y)$ using the recurrence $\Ht^{\geq i}(x,y)=\Ht^{\geq i}(x-1,y)-c^{\geq i}(x,y)$, where $c^{\geq i}(x,y)=1$ if a color of at least $i$ leaves vertex $(x,y)$ from the top, and $0$ otherwise. The boundary values are given by $\Ht^{\geq i}(0,y)=y-i+1$ if $i\leq y$, and $0$ otherwise.

We will eventually also wish to consider the model where there are multiple arrows of a given color. We will always consider only ordered incoming arrows, which means that arrows cannot decrease in color as we move from right to left or bottom to top along the incoming vertices. We use the same sampling procedure as above to generate configurations, with the caveat that if two arrows of the same color enter a vertex, they deterministically leave the top and right.

\subsubsection{Finite domains}
We will also wish to consider the vertex model restricted to a finite domain. Let $A\subseteq \Z^2$ be a subset symmetric with respect to reflection across the diagonal $x=y$, which is bounded by two up-left paths $(P,Q)$ (also symmetric under reflection across $x=y$), of length $2n$. We call such a set a \emph{symmetric skew domain}. Here, $P$ is to the bottom-left and $Q$ is to the top-right of $A$. We will number the edges crossing $P$ and $Q$ with $\pm [1,n]$, in increasing order moving up-left along the paths. We let $A^+\subseteq A$ denote the subset lying on or above the diagonal $x=y$.

\begin{remark}
We will generally conflate the domain $A$ with the up-left paths $P$ and $Q$. Strictly speaking, there is some ambiguity, because we wish to allow the degenerate case where $P$ and $Q$ coincide for some portion. Thus, it should be understood that whenever we refer to a symmetric skew domain $A$, we are implicitly assuming that we have chosen some up-left paths $(P,Q)$, although most of the time one can simply take the simplest choice where $P$ and $Q$ share no edges in common.
\end{remark}

We consider incoming colors $\pm [1,n]$ assigned to the edges crossing $P$. These will be given as an \emph{incoming color permutation} $\sigma\in B_n$, with $\sigma(i)$ being the color entering through edge $i$ of $P$. If no assignment is made, it is assumed that the colors are assigned in increasing order as we move up-left along $P$, which means $\sigma=\id$. 

A \emph{configuration} is an assignment of colored arrows to the edges in $A$ compatible with the incoming colors $\sigma$ (and satisfying all other constraints in the infinite model). The \emph{outgoing color permutation} $\pi\in B_n$ is defined so that $\pi(i)$ is the color crossing edge $i$ in $Q$. We let $P^{\sigma,\pi}_A(\mathbf{z})$ (or sometimes $P^{\sigma,\pi}_A$ if the dependence on $\mathbf{z}$ is not relevant) denote the probability of seeing an outgoing color permutation $\pi$ on the domain $A$ with incoming colors $\sigma$ (using the same Markovian sampling procedure as before). Explicitly, this is a sum over all configurations of the weights of each configuration. If $\sigma$ is omitted, we assume that $\sigma=\id$. 

\begin{remark}
We note that our conventions differ from \cite{G21}, whose color permutations are the inverse of ours. However, we also write our permutation composition as $(\pi\sigma)(x)=\pi(\sigma(x))$, and so all formulas essentially remain the same.
\end{remark}

\begin{remark}
Note that due to the symmetry, the model keeps track of more information than is required. In particular, we could either restrict to vertices $(x,y)$ for $x\leq y$, or only keep track of the positive colors, although we cannot do both simultaneously. However, we choose to keep all the data, because it can sometimes clarify certain things which may otherwise be unclear. For example, if we require that the color $1$ travels through a particular edge, we also simultaneously require that $-1$ travels through the reflected edge. By forgetting the negative colors, it is less clear that the reflected edge is not available for other colors to use. 
\end{remark}

We now define the notion of height functions in $A$. Let $A$ lie between $P$ and $Q$. A \emph{$(P,Q)$-cut} is a quadruple $(d,l,u,r)$ such that $l\leq r$, $d\leq u$, $(l,d),(r,u)\in A$, and $(l-1,d-1),(r+1,u+1)\not\in A$. Note that $C$ defines a rectangle with corners $(l,d)$ and $(r,u)$, and these points must lie on $P$ and $Q$ respectively. We will let $\Ht_A(C)$ denote the number of colors entering $C$ from the left and exiting $C$ from the right. Formally, if $\pi$ denotes the color permutation of a configuration, there are $i$ edges between $(l,d)$ and the top-left corner of $A$ on $P$, counting a left edge at $(l,d)$ but not a bottom edge, and $j$ edges between $(r,u)$ and the bottom-right corner of $A$ on $Q$, counting a right edge at $(r,u)$ but not a top edge, we let $\Ht_A(C)=\{k\leq j\mid \pi(k)\geq i\}$ (since $C$ is a $(P,Q)$-cut, this is equivalent to the number of colors crossing $C$). For a family $\mathcal{C}=\{C_j\}_{1\leq j\leq m}$ of $(P,Q)$-cuts, we let $\Ht_A(\mathcal{C})=(\Ht_A(C_1),\dotsc, \Ht_A(C_m))$ denote the vector of height functions for that family. If the dependence on $\mathbf{z}$ is relevant, we will write $\Ht_A(C;\mathbf{z})$ and $\Ht_A(\mathcal{C};\mathbf{z})$.

\subsubsection{Color projection}
An important feature of the six-vertex model is that adjacent colors may be merged to obtain a new six-vertex model. This is well-known in the full space setting, see e.g. Proposition 4.3 of \cite{BGW22}, and remains true in the half space setting. Of course since our colors are assumed to be symmetric, any projection of the colors should also be done in a symmetric way. For example, if the colors $1$ and $2$ are merged, then $-1$ and $-2$ should also be merged.

This property also means that more general ordered incoming arrows where multiple arrows of the same color are allowed can actually be viewed as a special case of the fully colored model. Additionally, we may also allow a color $0$, which can be viewed as an absence of an arrow, by merging $1$ and $-1$. The color $0$ is special because $0=-0$, which means that at the boundary, it will always meet itself.

\subsubsection{Integrability}
The reason for the choice of weights is that these weights are integrable. In the half space setting, this means that they satisfy the Yang--Baxter equation in the bulk and the reflection equation at the boundary. Integrability for half space models goes back to work of Cherednik \cite{C84} and Sklyanin \cite{S88}, and this integrability has been used in related half space six-vertex models, see e.g. \cite{BBCW18, K02, BW16, BWZ15, WZ16}.

In the following two propositions, we use a graphical notation where outside edges are assumed to have some fixed colors going in or out, and internal edges are summed over. The quantities near a vertex indicate the spectral parameter for that vertex. The equalities are equalities of the probability (or partition function) for the given outside configuration, and hold for any choice of what occurs with the outside edges. Both equations can be proven by checking a finite number of cases, and the Yang--Baxter equation and the uncolored versions of the reflection equation have been well understood, see e.g. \cite{ML19, VR93}.
\begin{proposition}[Yang--Baxter equation]
\label{prop:YB}
We have
\begin{equation*}
    \vcenter{\hbox{\includegraphics[scale=1]{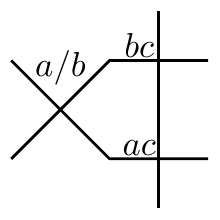}}}=\vcenter{\hbox{\includegraphics[scale=1]{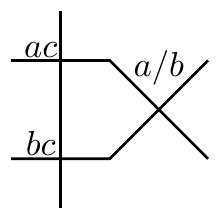}}}
\end{equation*}
where the notation means an equality of partition functions after fixing the colors of external edges and summing over all internal configurations, with spectral parameters indicated.
\end{proposition}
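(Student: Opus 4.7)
The plan is to prove the Yang--Baxter equation by reducing the fully colored statement to a rank-one (two-color) check and then verifying a finite list of local identities. The weights in~(\ref{eq: weights}) are exactly the stochastic $R$-matrix weights in the bulk regime ($x\neq y$), and the claim is the associativity relation $R_{12}R_{13}R_{23}=R_{23}R_{13}R_{12}$ applied to any fixed triple of incoming colors on the three external strands.

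First I would use color merging. For both sides of the asserted equality, the rule ``merge colors $i$ and $i+1$'' commutes with the summation over internal edges, because the vertex weights in Figure~\ref{fig:vtx wts} depend on the two incoming colors only through which is larger. Consequently, if the equation holds for any two-color labelling of the six external edges, it holds when those edges carry an arbitrary triple of distinct colors, since projecting to two colors and then refining gives the general case. This reduces the statement to the standard rank-one colored six-vertex YBE.

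Next, I would use conservation of colors together with the fact that only six local vertex configurations have nonzero weight. Fix the three incoming colors on the left/bottom and the three outgoing colors on the right/top (with matching multisets on both sides of the equality, else both partition functions vanish). On each side, the sum over internal configurations has at most two terms, corresponding to the two possible colorings of the unique internal triangle of edges consistent with the boundary data. Enumerate the essentially different incoming/outgoing patterns: the all-equal case (both sides equal to~$1$ trivially), the ``one minority color'' cases (three inequivalent positions for the minority strand on each side), and the symmetric cases obtained by swapping the roles of the two colors. Each case produces a rational identity in $q$ and the three spectral parameters $z_1z_2,z_1z_3,z_2z_3$, which follows from the definition~(\ref{eq: weights}) of $\mathbf{p}_{|y-x|}$ after clearing denominators; this is exactly the identity that has long been known in the uncolored setting, see~\cite{ML19, VR93}.

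The main obstacle is purely bookkeeping: ensuring no case among the rank-one configurations is missed, and confirming that the weight formulas $\mathbf{p}_{x,y}$ and $q_{y-x}\mathbf{p}_{x,y}$ assemble into the correct coefficients. Since neither the boundary weight ($x=y$) nor the parameters $t,\nu$ enter Proposition~\ref{prop:YB} (all three vertices in the picture are bulk vertices), there is no half-space subtlety here: the identity is really the classical colored stochastic six-vertex Yang--Baxter relation, and the only novelty over~\cite{BGW22} is the relabelling of spectral parameters as products $z_xz_y$.
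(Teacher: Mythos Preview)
Your approach is in spirit the same as the paper's, which simply asserts that the identity ``can be proven by checking a finite number of cases'' and points to the literature. However, the reduction you propose to two colors is not justified. Color merging goes only in one direction: knowing that the projected (two-color) partition functions agree tells you that certain \emph{sums} of three-color partition functions agree, not that the individual three-color entries do. Your phrase ``projecting to two colors and then refining gives the general case'' is exactly the step that does not go through---there is no refining operation that inverts a color projection at the level of partition functions.

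The correct reduction is weaker: because the bulk weights depend only on the relative order of the two colors at a vertex, it suffices to take the three incoming colors to lie in $\{1,2,3\}$. Your enumeration (``all-equal'', ``one minority color'', and their swaps) handles only the cases with at most two distinct colors among the three strands and omits the case of three pairwise distinct colors, e.g.\ incoming $(1,2,3)$ with each of the six outgoing permutations. That case produces its own rational identities in $q$ and the spectral parameters and must be checked separately; it does not follow from the two-color checks you list. The repair is minor---add the three-distinct-color configurations to the finite list---but as written the argument has a gap.
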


\begin{proposition}[Reflection equation]
\label{prop:refl}
We have
\begin{equation*}
    \vcenter{\hbox{\includegraphics[scale=1]{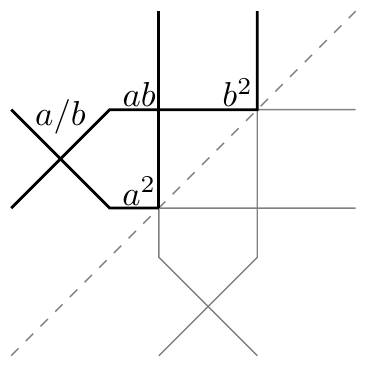}}}=\vcenter{\hbox{\includegraphics[scale=1]{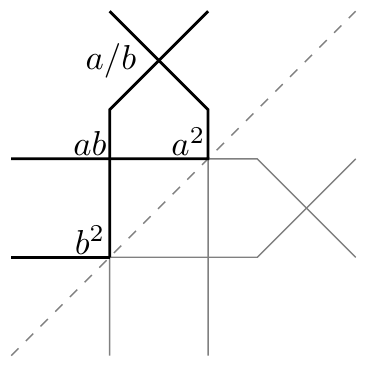}}}
\end{equation*}
where the notation means an equality of partition functions after fixing the colors of external edges and summing over all internal configurations, with spectral parameters indicated. Note that gray lines are determined by symmetry, and do not contribute to the weight of a configuration.
\end{proposition}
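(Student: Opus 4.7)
The plan is to verify the reflection equation by a direct case analysis, combined with the color-merging principle mentioned earlier in the section. The first reduction is that it suffices to prove the identity when all external colors are pairwise distinct: if the equation holds in this case, the fact that the colored model is compatible with merging adjacent colors (the symmetric analogue of Proposition 4.3 of \cite{BGW22}) automatically extends the identity to arbitrary external color data, including repeated colors and the inclusion of the color $0$. This reduction is important because with distinct colors the number of allowed internal configurations on each side shrinks considerably.

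Next, I would exploit the symmetries of the picture to collapse the remaining cases. The external data consists of four incoming/outgoing colors on each side of the equation. Modulo (i) the involution $i \mapsto -i$ forced by the symmetric boundary condition, (ii) the exchange of the two spectral parameters $z_1, z_2$, and (iii) relabeling of the colors (since only the relative ordering matters for the weights in \eqref{eq: weights} and Figure \ref{fig:vtx wts}), there is only a short list of representative external color assignments to check. For each representative, one enumerates the internal configurations compatible with the boundary symmetry (so that each positive color is paired with its negative counterpart across the diagonal), computes both sides as sums of products of the weights $\mathbf{p}_{|y-x|}(z_xz_y)$ and their complements, and verifies equality as rational functions in $z_1, z_2, q, t, \nu$.

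The final ingredient is to avoid reproving anything already known. When the distinct external colors are merged down to one or two colors across the diagonal, the identity becomes the uncolored reflection equation for the standard K-matrix/R-matrix pair, which is established in \cite{ML19, VR93}. Thus the only genuinely new cases are those where three or four external colors are distinct and simultaneously interact at the boundary vertex; these produce a small number of polynomial identities in the spectral parameters that can be checked by hand or by symbolic computation.

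The main obstacle is bookkeeping rather than conceptual: the boundary symmetry forces pairs of internal edges on opposite sides of the diagonal to carry matching $\pm$ colors, and one must be careful that the accounting of weights on the "grayed-out" reflected lines is consistent (they contribute no weight by convention, but they do constrain which internal configurations are allowed). Once this constraint is tracked correctly, the genuinely new multi-colored cases reduce to a handful of rational-function identities, each directly verifiable from the explicit weights in \eqref{eq: weights}.
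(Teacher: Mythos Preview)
Your proposal is correct and matches the paper's own treatment: the paper does not give a detailed proof but simply states that the reflection equation ``can be proven by checking a finite number of cases,'' citing \cite{ML19, VR93} for the uncolored versions. Your plan of reducing to distinct external colors via color merging, exploiting the symmetries (sign involution, rapidity exchange, relabeling), and then directly verifying the remaining rational-function identities is exactly such a finite case check, with reasonable bookkeeping reductions layered on top.

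One small comment: your third ingredient, citing the known uncolored reflection equation to handle the cases where only one or two colors appear externally, is logically fine but somewhat redundant once you commit to the distinct-color case check. The uncolored identity is a \emph{consequence} of the colored one (via merging), so it does not shortcut the verification of the genuinely multi-colored cases; it only saves you from rechecking the low-color cases that you would otherwise recover anyway. This does not affect correctness, just the efficiency of the presentation.
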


\subsection{Type \texorpdfstring{$B$}{B} Hecke algebras}
In this section, we will briefly review the necessary background and properties of the type $B$ Coxeter group and Hecke algebra. We refer the reader to \cite{H90} for more background on Coxeter groups and Hecke algebras in general.

We let $B_n$ denote the hyperoctahedral group, which is the group of signed permutations on $n$ letters (which we will just call permutations). That is, $B_n$ is the set of permutations $\pi$ of $\{-n,\dotsc,-1,1,\dotsc, n\}$ such that $\pi(-i)=-\pi(i)$. We will view this as a Coxeter group, with generators $s_0=(-1,1)$ and $s_k=(k,k+1)$ for $0<k<n$. We define the \emph{length} $l(\pi)$ to be the minimum number of generators needed to write $\pi$. 

Let $q,t,\nu$, and $z_i$ be formal variables, and for convenience we let $q_i=q$ if $i>0$, and $q_0=t$. We define the \emph{type $B$ Hecke algebra} to be the associative algebra $\mathcal{H}=\mathcal{H}_{q,t,\nu}(B_n,\mathbf{z})$ over $\mathbf{C}(q,t,\nu,\mathbf{z})$ defined by the basis $T_w$ for $w\in B_n$ with relations
\begin{alignat*}{2}
    T_uT_w&=T_{uw}&&\qquad \text{ if }l(uw)=l(u)+l(w),
    \\(T_i+q_i)(T_i-1)&=0,&&
\end{alignat*}
where $T_i=T_{s_i}$, with $s_i$ the Coxeter generators for $B_n$ defined above. We have $T_{id}=1$.

Given $[a,b]=\{a,a+1,\dotsc, b\}$, we let $B_{[a,b]}$ be the \emph{parabolic subgroup} of $B_n$ generated by $s_i$ for $i\in [a,b-1]$, and let $\mathcal{H}_{[a,b]}$ be the \emph{parabolic subalgebra} generated by $T_{s_i}$ for $i\in [a,b-1]$. Note that $B_{[0,b]}\cong B_b$ and $B_{[a,b]}\cong S_{b-a}$ if $a>0$. Note that $B_{[a,b]}$ always acts on $[a,b]$ (and by symmetry $-[a,b]$) if $a>0$, and on $\pm [1,b]$ if $a=0$. We will also write $B_{-[a,b]}=B_{[a,b]}$ and $\mathcal{H}_{-[a,b]}=\mathcal{H}_{[a,b]}$ for convenience.

Given $p\in \mathbf{C}(q,\mathbf{z})$, we let
\begin{equation*}
    R_k(p)=pT_k+(1-p).
\end{equation*}
The above rules imply that
\begin{align}
    \label{eq: R_k right} T_\pi R_k(p)&=\begin{cases}pT_{\pi s_k}+(1-p)T_{\pi}&\text{ if }l(\pi s_k)>l(\pi),
    \\pq_kT_{\pi s_k}+(1-pq_k)T_{\pi}&\text{ if }l(\pi s_k)<l(\pi).
    \end{cases}
    \\\label{eq: R_k left} R_k(p)T_\pi&=\begin{cases}pT_{s_k\pi}+(1-p)T_{\pi}&\text{ if }l(s_k \pi)>l(\pi),
    \\pq_kT_{s_k\pi}+(1-pq_k)T_{\pi}&\text{ if }l(s_k\pi)<l(\pi).
    \end{cases}
\end{align}

The elements $R_k(p)$ also satisfy a version of the Yang--Baxter and reflection equations. In fact, the following equations are equivalent to the ones for the six-vertex model by Proposition \ref{prop: hecke 6vm same}. We write
\begin{equation*}
    \mathbf{p}_i(z)=\frac{z-1}{z-q}
\end{equation*}
if $i>0$, and
\begin{equation*}
    \mathbf{p}_0(z)=\frac{z-1}{(z-\nu t)(z+\frac{1}{\nu})}.
\end{equation*}
The Yang--Baxter equation is
\begin{equation*}
\tag{Yang--Baxter equation}
\begin{split}
    &R_{k+1}(\mathbf{p}_{k+1}(a/b))R_{k}(\mathbf{p}_{k}(ac))R_{k+1}(\mathbf{p}_{k+1}(bc))
    \\=&R_{k}(\mathbf{p}_{k}(bc))R_{k+1}(\mathbf{p}_{k+1}(ac))R_{k}(\mathbf{p}_{k}(a/b)),
\end{split}
\end{equation*}
valid for $k>0$, and the reflection equation is
\begin{equation*}
\tag{Reflection equation}
\begin{split}
    &R_1(\mathbf{p}_1(a/b))R_{0}(\mathbf{p}_{0}(a^2))R_1(\mathbf{p}_1(ab))R_0(\mathbf{p}_0(b^2))
    \\=&R_{0}(\mathbf{p}_{0}(b^2))R_{1}(\mathbf{p}_{1}(ab))R_{0}(\mathbf{p}_{0}(a^2))R_1(\mathbf{p}_1(a/b)).
\end{split}
\end{equation*}

Given a symmetric skew domain $A$, we let
\begin{equation*}
    Y_{A}=\prod _{(x,y)\in A^+}R_{y-x}(\mathbf{p}_{x,y}),
\end{equation*}
where the product is taken in the up-right order, so the factor indexed by $(x,y)$ occurs before the factor indexed by $(x',y')$ if $x\leq x'$ and $y\leq y'$, and $(x,y)\neq (x',y')$. This is well-defined due to the Yang--Baxter and reflection equations.

\begin{remark}
The $Y_A$ that we have defined are closely related to the \emph{Yang--Baxter basis} for the type $B$ Hecke algebra. In particular, if $q=t$ and $\nu=1$ or $\nu=t^{-1/2}$, then the $Y_A$ (for certain $A$) exactly coincide with certain elements of the type $B$ or type $C$ Yang--Baxter basis, see \cite{NN15} for example. Thus, the $Y_A$ can be viewed as a deformation.
\end{remark}

\subsection{A Hecke algebra interpretation of the six-vertex model}
We now establish a connection between the six-vertex model and the Hecke algebra.
\begin{proposition}
\label{prop: hecke 6vm same}
Let $A$ denote a symmetric skew domain, and fix an incoming color permutation $\sigma\in B_n$. Then
\begin{equation*}
    T_\sigma Y_A=\sum_{\pi\in B_n} P^{\sigma,\pi}_A T_\pi.
\end{equation*}
\end{proposition}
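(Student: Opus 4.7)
The plan is to prove the identity by induction on the number of vertices $|A^+|$, peeling off one vertex at a time from the up-right corner and matching the Hecke algebra relations (\ref{eq: R_k right}) against the Markovian sampling rules shown in Figure \ref{fig:vtx wts}. This kind of induction is standard in the full-space setting (cf.\ \cite{BGW22,G21}), and the main content here is verifying that the matching works correctly at the boundary vertices $(x,x)$ and that the symmetry across the diagonal is correctly handled by the single factor attached to each $v \in A^+$.

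For the base case, when $A^+ = \emptyset$, the product $Y_A$ is empty, equal to $1$, so $T_\sigma Y_A = T_\sigma$; on the six-vertex side the only allowed configuration is the one in which colors travel straight through, giving $P^{\sigma,\sigma}_A = 1$ and $P^{\sigma,\pi}_A = 0$ otherwise. For the inductive step, pick a vertex $v = (x,y) \in A^+$ that is maximal in the up-right partial order, and let $A'$ be the symmetric skew domain obtained from $A$ by deleting $v$ (together with its mirror image $(y,x)$ when $x < y$). The product defining $Y_A$ respects up-right order, so $Y_A = Y_{A'} \cdot R_{y-x}(\mathbf{p}_{x,y})$, and by the inductive hypothesis
\[
T_\sigma Y_A \;=\; \Bigl(\sum_{\pi \in B_n} P^{\sigma,\pi}_{A'} T_\pi\Bigr) R_{y-x}(\mathbf{p}_{x,y}).
\]

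To close the induction I would expand each $T_\pi R_{y-x}(\mathbf{p}_{x,y})$ using (\ref{eq: R_k right}) and match coefficients with the Markovian sampling at $v$. Here one needs the identification that, after processing the vertices of $A'$, the two arrows incoming to $v$ occupy the positions labeled $y-x$ and $y-x+1$ in the current boundary ordering (with position $0$ replaced by the reflection across the diagonal when $v$ is a boundary vertex). With this dictionary, the condition $l(\pi s_{y-x}) > l(\pi)$ translates to the two arrows at $v$ being in the "increasing" configuration of Figure \ref{fig:vtx wts}, and then the four cases of (\ref{eq: R_k right}) match, line for line, the four probabilities $\mathbf{p}_{x,y}$, $1-\mathbf{p}_{x,y}$, $q_{y-x}\mathbf{p}_{x,y}$, $1-q_{y-x}\mathbf{p}_{x,y}$ in Figure \ref{fig:vtx wts}; recall $q_0 = t$ is exactly the parameter appearing in the boundary weight at $(x,x)$. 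Summing these Markov transition probabilities against the coefficients $P^{\sigma,\pi}_{A'}$ reproduces $P^{\sigma,\pi'}_A$ as the coefficient of $T_{\pi'}$, which is what we want.

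The main obstacle, and the part that requires the most care, is the bookkeeping at boundary vertices: for a vertex $(x,x)$ one uses $R_0$, the length condition involves the sign of $\pi(1)$ rather than a comparison of two entries, and one must verify that adding this single Hecke generator correctly accounts for the fact that in the six-vertex model the boundary vertex couples the color arriving from the left with its negation arriving from below. A secondary subtlety is that when $x < y$ we only include one factor in $Y_A$ for the symmetric pair $\{(x,y),(y,x)\}$; this is consistent because the symmetric sampling rule makes the outcome at $(y,x)$ a deterministic function of the outcome at $(x,y)$, so no additional probabilistic factor is needed. Once these two points are checked, the matching of coefficients at each $\pi'$ is routine, and the identity $T_\sigma Y_A = \sum_{\pi} P^{\sigma,\pi}_A T_\pi$ follows.
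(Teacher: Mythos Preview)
Your proposal is correct and follows essentially the same approach as the paper: induction on $|A^+|$, peeling off a maximal vertex $(x,y)$ to write $Y_A = Y_{A'} R_{y-x}(\mathbf{p}_{x,y})$, applying the inductive hypothesis, and then matching the expansion from \eqref{eq: R_k right} against the Markovian recurrence for $P^{\sigma,\pi}_A$. The paper handles bulk and boundary vertices uniformly via the notation $q_{y-x}$ (with $q_0=t$), so the boundary bookkeeping you flag as the main obstacle is absorbed into that convention and requires no separate argument.
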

\begin{proof}
We induct on $|A^+|$. The base case $|A^+|=0$ is trivial. Now for a symmetric skew domain $A$, there is at least one vertex $(x,y)$ with no vertices in $A$ above or to the right of it. This vertex must lie on $Q$, so let $i$ and $i+1$ denote the outgoing edges on $Q$ to the right and the top. Such a vertex (and its reflection) can be removed, giving a new symmetric skew domain $A'$, with $|(A')^+|=|A^+|-1$. We then note that
\begin{equation*}
    P^{\sigma,\pi}_{A}=q_{y-x}\mathbf{p}_{x,y}P_{\sigma,\pi s_{y-x}}^{A'}+(1-\mathbf{p}_{x,y})P^{\sigma,\pi}_{A'}
\end{equation*}
if $l(\pi s_{y-x})>l(\pi)$, and
\begin{equation*}
    P^{\sigma,\pi}_{A}=\mathbf{p}_{x,y}P_{\sigma,\pi s_{y-x}}^{A}+(1-q_{y-x}\mathbf{p}_{x,y})P^{\sigma,\pi}_{A}
\end{equation*}
if $l(\pi s_{y-x})<l(\pi)$. But
\begin{equation*}
\begin{split}
    T_\sigma Y_A&=T_\sigma Y_{A'} R_{y-x}(\mathbf{p}_{x,y})
    \\&=\sum_{\pi\in B_n}P^{\sigma,\pi}_{A'} T_\pi R_{y-x}(\mathbf{p}_{x,y})
\end{split}
\end{equation*}
and we can compute this using \eqref{eq: R_k right}. If $l(\pi s_{y-x})>l(\pi)$, then the coefficient of $T_\pi$ will be
\begin{equation*}
    q_{y-x}\mathbf{p}_{x,y}P^{\sigma,\pi}_{A'}+(1-\mathbf{p}_{x,y})P_{\sigma,\pi s_{y-x}}^{A'}=P^{\sigma,\pi}_A,
\end{equation*}
and if $l(\pi s_{y-x})<l(\pi)$, then the coefficient will be
\begin{equation*}
    \mathbf{p}_{x,y}P^{\sigma,\pi}_{A'}+(1-q_{y-x}\mathbf{p}_{x,y})P_{\sigma,\pi s_{y-x}}^{A'}=P^{\sigma,\pi}_A,
\end{equation*}
which establishes the claim.
\end{proof}

Because of this result, we will let $P^\pi_{Y}$ denote the coefficient of $T_\pi$ in $Y$, and we have $P^\pi_{T_\sigma Y_A}=P^{\sigma,\pi}_A$.

\subsection{Boundary conditions}
We now turn to the types of boundary conditions we will consider. Unlike in the full space setting, it does not make sense to consider horizontal or vertical conditions, since for example any restriction on the horizontal positive arrows forces a restriction on the vertical negative arrows. In addition, we will require additional constraints for the argument to work.

We will thus call our boundary conditions central and peripheral. We fix $M,N,K\in\N$, and let $n=M+N-K$. We will let $P_{in}=[N-K+1,n]$, $P_{out}=-[M-K+1,n]$, $C_{in}=\pm[1,N-K]$, and $C_{out}=\pm [1,M-K]$. See Section \ref{sec: symm rect} for the motivation and meaning behind these choices, but for now we just view them as parameters.

\begin{definition}[Boundary conditions]
A \emph{central boundary condition} is given by a set $\mathfrak{C}=\{(i_1,o_1),\dotsc, (i_k,o_k)\}$ where the $i_j$ and $o_j$ are distinct, and $i_j\in C_{in}$ and $o_j\in C_{out}$, along with two functions $c_{in}:C_{in}\to\{P_{out},-P_{out},C_{out}\}$ and $c_{out}:C_{out}\to\{P_{in},-P_{in},C_{in}\}$ satisfying $c_{in}(-i)=-c_{in}(i)$ and $c_{out}(-o)=-c_{out}(o)$ (where $-C_{in}=C_{in}$ and $-C_{out}=C_{out}$). A \emph{peripheral boundary condition} is given by a set $\mathfrak{P}=\{(i_1,o_1),\dotsc, (i_k,o_k)\}$ where the $i_j$ and $o_j$ are distinct, and $i_j\in P_{in}$ and $o_j\in P_{out}$. We let $i(\mathfrak{C})$ and $o(\mathfrak{C})$ denote the projections of $\mathfrak{C}$ to the first and second coordinates, and similarly for $\mathfrak{P}$. We will use the notation $\mathfrak{BC}=(\mathfrak{C},c_{in},c_{out},\mathfrak{P})$, and call this a \emph{boundary condition}. 

A permutation $\pi\in B_n$ \emph{satisfies} the boundary conditions $\mathfrak{BC}=(\mathfrak{C},c_{in},c_{out},\mathfrak{P})$, if $\pi(o)=i$ for all $(i,o)\in \mathfrak{C},\mathfrak{P}$, $\pi^{-1}(i)\not\in C_{out}$ if $i\not\in i(\mathfrak{C})$, $\pi^{-1}(i)\not\in P_{out}$ if $i\not\in i(\mathfrak{P})$, $\pi^{-1}(i)\in c_{in}(i)$ for all $i\in C_{in}$, and $\pi(o)\in c_{out}(o)$ for all $o\in C_{out}$. For boundary conditions $\mathfrak{BC}$, define $\sat(\mathfrak{BC})$ to be the set of all $\pi\in B_n$ satisfying those boundary conditions.
 
We say that a boundary condition $\mathfrak{BC}$ is consistent if, $(i,o)\in \mathfrak{C}$ if and only if $(-i,-o)\in\mathfrak{C}$, $c_{in}(i)=C_{out}$ for all $i\in i(\mathfrak{C})$, and $c_{out}(o)=C_{in}$ for all $o\in o(\mathfrak{C})$. We will always assume that the boundary conditions are chosen so that $\sat(\mathfrak{BC})$ is non-empty, which means that they must be consistent.
\end{definition}

\begin{remark}
Let us explain informally the information that the boundary condition tracks. Viewing the signed permutation visually as connecting the numbers $\pm[1,n]$ to themselves, $\mathfrak{C}$ gives the connections between $C_{in}$ and $C_{out}$, $c_{in}$ gives the destination (between $P_{out},-P_{out},C_{out}$) of the numbers in $C_{in}$, $c_{out}$ gives the origin of the numbers in $C_{out}$, and $\mathfrak{P}$ gives the connections between $P_{in}$ and $P_{out}$ (and by symmetry $-P_{in}$ and $-P_{out}$). The conditions for consistency are exactly the ones which ensure that the conditions are symmetric (respecting the fact that $\pi(-i)=-\pi(i)$), and do not contradict each other.

The parameters in the boundary conditions will eventually relate to certain finite domains for the six-vertex model. For now, we view the definition purely formally, but see Figure \ref{fig:sym rect} for a visual guide of what the parameters represent.
\end{remark}

We note the following fact that the total number of connections between any two of the intervals is determined by $\mathfrak{BC}$.
\begin{lemma}
Consider boundary condition $\mathfrak{BC}$. For all $\pi\in\sat(\mathfrak{BC})$, the counts $|\{o\in O\mid \pi(o)\in I\}|$ for $I=P_{in},-P_{in},C_{in}$ and $O=P_{out},-P_{out},C_{out}$ are constant, i.e. the number of connections from $I$ to $O$ is a function of $\mathfrak{BC}$.
\end{lemma}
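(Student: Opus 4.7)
The proof plan is to organize the nine counts into a $3 \times 3$ table indexed by $(I,O) \in \{P_{in},-P_{in},C_{in}\} \times \{P_{out},-P_{out},C_{out}\}$, and show entry by entry that each is determined by $\mathfrak{BC}$. Write $N_{I,O} = |\{o \in O \mid \pi(o) \in I\}|$.

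First, I would read five of the nine entries directly off the component functions of $\mathfrak{BC}$. The conditions $\pi^{-1}(i) \in c_{in}(i)$ for $i \in C_{in}$ and $\pi(o) \in c_{out}(o)$ for $o \in C_{out}$, together with $\pi^{-1}(i) \notin C_{out}$ whenever $i \notin i(\mathfrak{C})$, yield
\begin{equation*}
    N_{C_{in}, S} = |\{i \in C_{in} \mid c_{in}(i) = S\}| \quad \text{for } S \in \{P_{out}, -P_{out}\},
\end{equation*}
\begin{equation*}
    N_{T, C_{out}} = |\{o \in C_{out} \mid c_{out}(o) = T\}| \quad \text{for } T \in \{P_{in}, -P_{in}\},
\end{equation*}
and $N_{C_{in}, C_{out}} = |\mathfrak{C}|$ (the consistency clause ties this cleanly to both $\mathfrak{C}$ and $c_{in}, c_{out}$).

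Next, I would extract a sixth entry from $\mathfrak{P}$. Since $\pi^{-1}(i) \notin P_{out}$ for every $i \notin i(\mathfrak{P})$, every $P_{in} \to P_{out}$ edge is one of the pairs listed in $\mathfrak{P}$, so $N_{P_{in}, P_{out}} = |\mathfrak{P}|$. At this point, invoking the signed-permutation symmetry $\pi(-x) = -\pi(x)$ (which gives $N_{I,O} = N_{-I,-O}$ for every pair, noting that $-C_{in} = C_{in}$ and $-C_{out} = C_{out}$) immediately produces $N_{-P_{in}, -P_{out}} = |\mathfrak{P}|$ and pairs up the two remaining entries via $N_{P_{in}, -P_{out}} = N_{-P_{in}, P_{out}}$.

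Finally, the row-sum identity $\sum_O N_{P_{in}, O} = |P_{in}| = M$ forces
\begin{equation*}
    N_{P_{in}, -P_{out}} = M - |\mathfrak{P}| - N_{P_{in}, C_{out}},
\end{equation*}
which closes the last entry and completes the proof. The only real obstacle is purely notational: carefully unpacking the ``satisfies'' clauses to see that $\mathfrak{P}$ (respectively $\mathfrak{C}$) really captures \emph{all} $P_{in} \to P_{out}$ (respectively $C_{in} \to C_{out}$) connections rather than merely some prescribed subset; once that is clear, the remainder is bookkeeping with three equalities from $c_{in}$, three from $c_{out}$, one from $\mathfrak{P}$, the symmetry involution, and one row sum. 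A brief sanity check via the column-sum $\sum_I N_{I, -P_{out}} = |{-P_{out}}| = N$ confirms that the value obtained for $N_{P_{in}, -P_{out}}$ agrees with the one derived via symmetry from $N_{-P_{in}, P_{out}}$, which is exactly what the assumed consistency of $\mathfrak{BC}$ guarantees.
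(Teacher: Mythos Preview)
Your proof is correct and follows essentially the same approach as the paper: read off the entries with $I=C_{in}$ from $c_{in}$, those with $O=C_{out}$ from $c_{out}$, and $N_{P_{in},P_{out}}$ from $\mathfrak{P}$, then recover the remaining pair $N_{P_{in},-P_{out}}=N_{-P_{in},P_{out}}$ from the row sum $\sum_O N_{P_{in},O}=|P_{in}|$. Your write-up is simply more explicit in organizing the nine counts into a table and invoking the signed-permutation symmetry $N_{I,O}=N_{-I,-O}$, whereas the paper compresses this into two sentences.
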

\begin{proof}
This is clear if $(I,O)=(P_{in},P_{out})$, $I=C_{in}$, or $O=C_{out}$, since $\mathfrak{P}$, $c_{in}$ and $c_{out}$ immediately give the number of connections. The only remaining case is $(I,O)=(P_{in},-P_{out})$ (and $(-P_{in},P_{out})$ by symmetry), but this can be derived by the number of connections between $P_{in}$ and either $C_{out}$ or $P_{out}$, since the total number must be $|P_{in}|$.
\end{proof}

If $Y\in \mathcal{H}$, we let $P_Y^{\mathfrak{BC}}=\sum _{\pi\in\sat(\mathfrak{BC})}P_Y^{\pi}$, and for a symmetric skew domain $A$, we let $P_A^{\mathfrak{BC}}=P_{Y_A}^{\mathfrak{BC}}$. We note that $P_A^{\mathfrak{BC}}$ has the interpretation of being the probability of observing the boundary conditions $\mathfrak{BC}$ in a random configuration on $A$.

Given a set $S=\{(i_1,o_1),\dotsc, (i_k,o_k)\}$ with $i_j,o_j\in \pm [1,n]$, we let $\sigma\in B_n$ act by $\sigma \cdot S=\{(\sigma(i_1),o_1),\dotsc, (\sigma(i_k),o_k)\}$ and $S\cdot \sigma=\{(i_1,\sigma(o_1)),\dotsc, (i_k,\sigma(o_k))\}$. We define certain actions on the set of boundary conditions. For $\sigma$ in either $B_{C_{in}}$ or $B_{P_{in}}$, we write $\sigma\cdot \mathfrak{BC}=(\sigma\cdot \mathfrak{C},c_{in}\circ \sigma,c_{out},\sigma\cdot \mathfrak{P})$. For $\sigma$ in either $B_{C_{out}}$ or $B_{P_{out}}$, we write $\mathfrak{BC}\cdot \sigma=(\mathfrak{C}\cdot \sigma,c_{in},c_{out}\circ \sigma , \mathfrak{P}\cdot \sigma)$.

\begin{lemma}
\label{lem: bd cond trichot}
Let $i\in [N-K+1,n-1]$, and let $o\in [M-K+1,n-1]$. Then for all boundary conditions $\mathfrak{BC}$:
\begin{enumerate}
    \item Either $l(\pi)<l(s_i \pi)$ for all $\pi\in \sat(\mathfrak{BC})$, $l(\pi)>l(s_i\pi)$ for all $\pi\in\sat(\mathfrak{BC})$, or $\mathfrak{BC}=s_i\cdot \mathfrak{BC}$.
    \item Either $l(\pi)<l(\pi s_o)$ for all $\pi\in \sat(\mathfrak{BC})$, $l(\pi)>l(\pi s_o)$ for all $\pi\in \sat(\mathfrak{BC})$, or $\mathfrak{BC}=\mathfrak{BC}\cdot s_o$.
\end{enumerate}
Let $i\in[0,N-K-1]$, and let $o\in[0,M-K-1]$. Then for all boundary conditions $\mathfrak{BC}$:
\begin{enumerate}
    \item Either $l(\pi)<l(s_i \pi)$ for all $\pi\in \sat(\mathfrak{BC})$, $l(\pi)>l(s_i\pi)$ for all $\pi\in\sat(\mathfrak{BC})$, or $\mathfrak{BC}=s_i\cdot \mathfrak{BC}$.
    \item Either $l(\pi)<l(\pi s_o)$ for all $\pi\in \sat(\mathfrak{BC})$, $l(\pi)>l(\pi s_o)$ for all $\pi\in \sat(\mathfrak{BC})$, or $\mathfrak{BC}=\mathfrak{BC}\cdot s_o$.
\end{enumerate}
\end{lemma}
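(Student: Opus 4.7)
The plan is to reduce each of the four claims to the standard length-descent criteria in $B_n$: for $i\geq 1$, $l(s_i\pi)<l(\pi)$ iff $\pi^{-1}(i)>\pi^{-1}(i+1)$ with respect to the linear order $-n<\cdots<-1<1<\cdots<n$ on $\pm[1,n]$, while $l(s_0\pi)<l(\pi)$ iff $\pi^{-1}(1)<0$; the right versions are analogous in terms of $\pi(i),\pi(i+1),\pi(1)$. The key observation is that the boundary condition $\mathfrak{BC}$ constrains each value $\pi^{-1}(j)$ (respectively $\pi(o)$) to lie in a particular ``block'' of $\pm[1,n]$, and for the indices $j$ relevant to the lemma, these blocks are linearly ordered, so the descent sign is forced unless the two values in question lie in the same block---which will turn out to be precisely when $s_i\cdot\mathfrak{BC}=\mathfrak{BC}$.

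For the first assertion, fix $i\in[N-K+1,n-1]$, so that $i,i+1\in P_{in}$. Since $P_{in}\cap C_{in}=\emptyset$, neither index lies in $i(\mathfrak{C})$, and the constraints on $\pi^{-1}$ in the definition of $\sat(\mathfrak{BC})$ force $\pi^{-1}(j)$, for $j\in\{i,i+1\}$, either to equal the pinned value $o_j\in P_{out}$ (when $j\in i(\mathfrak{P})$) or to lie in the block $-P_{out}$ (the unique remaining block after excluding $C_{out}$ and $P_{out}$). Because $P_{out}$ lies entirely below $-P_{out}$ in the order, the comparison of $\pi^{-1}(i)$ and $\pi^{-1}(i+1)$ is determined unless neither $i$ nor $i+1$ lies in $i(\mathfrak{P})$. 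In that ambiguous case $s_i$ fixes $C_{in}$ pointwise and thus acts trivially on $\mathfrak{C}$ and $c_{in}$, and it moves no index of $i(\mathfrak{P})$, so $s_i\cdot\mathfrak{BC}=\mathfrak{BC}$. The third assertion, concerning the right action of $s_o$ for $o\in[M-K+1,n-1]$ on $P_{out}$, is proved identically after swapping the roles of $\pi$ and $\pi^{-1}$ and using the analogous outgoing-side block structure.

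For the second assertion with $i\in[1,N-K-1]$, so that $i,i+1\in[1,N-K]\subseteq C_{in}$, $\pi^{-1}(j)$ is either the pinned value $o_j\in C_{out}$ (when $j\in i(\mathfrak{C})$) or lies in the block $c_{in}(j)\in\{P_{out},-P_{out}\}$, and these three possibilities are linearly ordered as $P_{out}<C_{out}<-P_{out}$. The descent sign is forced whenever $\pi^{-1}(i)$ and $\pi^{-1}(i+1)$ lie in distinct blocks or are both pinned in $C_{out}$; the only ambiguous case is $c_{in}(i)=c_{in}(i+1)\in\{P_{out},-P_{out}\}$, in which $c_{in}\circ s_i=c_{in}$, $\mathfrak{C}$ is preserved (neither $i,i+1\in i(\mathfrak{C})$), and $\mathfrak{P}$ is unaffected (since $i,i+1\notin P_{in}$), giving $s_i\cdot\mathfrak{BC}=\mathfrak{BC}$. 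For $i=0$ the sign of $\pi^{-1}(1)$ is determined either by the specific pinned value $o_1\in C_{out}$ (when $1\in i(\mathfrak{C})$), or by whether $c_{in}(1)$ equals the fully-negative block $P_{out}$ or the fully-positive block $-P_{out}$; the alternative $s_0\cdot\mathfrak{BC}=\mathfrak{BC}$ is ruled out because consistency of $\mathfrak{C}$ would then require $o_1=-o_1$. The fourth assertion follows symmetrically. The main subtlety is the $s_0$ case, whose descent criterion uses a sign rather than a comparison and whose trivial-action alternative must be excluded via the consistency axioms of $\mathfrak{BC}$; the remainder is a mechanical check driven by the ordered-block structure that $\mathfrak{BC}$ imposes on $\pm[1,n]$.
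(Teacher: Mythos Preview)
Your proof is correct and follows essentially the same approach as the paper's: both reduce to the length--descent criteria for $B_n$ and argue that the boundary condition pins $\pi^{-1}(i),\pi^{-1}(i+1)$ (or $\pi(o),\pi(o+1)$) into linearly ordered blocks of $\pm[1,n]$, with the ambiguous same-block case yielding $s_i\cdot\mathfrak{BC}=\mathfrak{BC}$. Your treatment is in fact slightly more careful than the paper's in that you handle the $i=0$ case explicitly via the sign criterion for $s_0$, whereas the paper's written argument phrases everything as a comparison of $\pi^{-1}(i)$ and $\pi^{-1}(i+1)$.
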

\begin{proof}
We will assume that $\sat(\mathfrak{BC})$ is non-empty, as otherwise the result is vacuously true. In particular, we assume that it is possible to satisfy the boundary conditions.

Since the proofs of statements (1) and (2) in each case are very similar to each other (and actually equivalent after inverting $\pi$, which exchanges the incoming and outgoing data), we will prove (1) for each case. Suppose that $i\in [N-K+1,n-1]$, and let $\pi\in\sat(\mathfrak{BC})$. Consider $\pi^{-1}(i)$ and $\pi^{-1}(i+1)$, which determine which of the three possible conclusions hold. If both of $i,i+1\in i(\mathfrak{P}$), then we can determine which of $\pi^{-1}(i)$ and $\pi^{-1}(i+1)$ is larger. If say $i+1\in i(\mathfrak{P})$, we can still determine this, because we must have $\pi^{-1}(i)\not\in P_{out}$, so it must be smaller than $\pi^{-1}(i+1)\in P_{out}$, and similarly if $i\in i(\mathfrak{P})$. In all these cases, either $l(\pi)<l(s_i\pi)$, or $l(\pi)>l(s_i\pi)$. Otherwise, $s_i\cdot \mathfrak{P}= \mathfrak{P}$, and $s_i\cdot \mathfrak{C}=\mathfrak{C}$ and $c_{in}\circ s_i=c_{in}$ since $i\in B_{P_{in}}$. Thus, $s_i\cdot \mathfrak{BC}=\mathfrak{BC}$.

We now assume that $i\in [0,N-K-1]$. As above if $i,i+1\in i(\mathfrak{C})$, we can determine which of $\pi^{-1}(i),\pi^{-1}(i+1)$ is larger. If $c_{in}(i)\neq c_{in}(i+1)$, then we can still determine this. Finally, in the case that $c_{in}(i)=c_{in}(i+1)=P_{out},-P_{out}$, we have $s_i\cdot \mathfrak{P}=\mathfrak{P}$, $c_{in}\circ s_i=c_{in}$, and $s_i\cdot \mathfrak{C}=\mathfrak{C}$, and so in this case we have $s_i\cdot \mathfrak{BC}=\mathfrak{BC}$.

\end{proof}

We will write $s_i\cdot \mathfrak{BC}<\mathfrak{BC}$ and $s_i\cdot \mathfrak{BC}>\mathfrak{BC}$ if either $l(s_i \pi)<l(\pi)$ or $l(s_i\pi)>l(\pi)$ for all $\pi\in\sat(\mathfrak{BC})$, and similarly for $\mathfrak{BC}\cdot s_o$. By Lemma \ref{lem: bd cond trichot}, for $i\in [0,N-K-1],[N-K+1,n-1]$ and $o\in [0,M-K-1],[M-K+1,n-1]$, if neither of the two inequalities hold, then $s_i\cdot\mathfrak{BC}=\mathfrak{BC}$ and similarly $\mathfrak{BC}\cdot s_o=\mathfrak{BC}$. Moreover, let us remark that if $i\in [0,N-K-1]$, then which of the three cases we are in depends only on the central boundary condition $\mathfrak{C}$, $c_{in}$, and $c_{out}$, and similarly if $o\in [0,M-K-1]$. If $i\in [N-K,n-1]$ or $o\in [M-K,n-1]$, then it depends only on $\mathfrak{P}$.

\begin{proposition}
\label{prop: hecke local/nonlocal}
Let $Y\in\mathcal{H}$, fix boundary conditions $\mathfrak{BC}=(\mathfrak{C},c_{in},c_{out},\mathfrak{P})$. Then if $s_i\in B_{C_{in}}$ or $B_{P_{in}}$,
\begin{equation}
\label{eq: hecke loc?}
    P_{R_i(p)Y}^{\mathfrak{BC}}=\begin{cases}P_Y^{\mathfrak{BC}}&\text{if }s_i\cdot \mathfrak{BC}=\mathfrak{BC},
    \\pP_Y^{s_i\cdot\mathfrak{BC}}+(1-q_ip)P_Y^{\mathfrak{BC}}&\text{if }s_i\cdot \mathfrak{BC}<\mathfrak{BC},
    \\q_ipP_Y^{s_i\cdot\mathfrak{BC}}+(1-p)P_Y^{\mathfrak{BC}}&\text{if }s_i\cdot \mathfrak{BC}>\mathfrak{BC},
    \end{cases}
\end{equation}
and if $s_o\in B_{C_{out}}$ or $B_{P_{out}}$,
\begin{equation}
\label{eq: hecke nonloc?}
    P_{YR_o(p)}^{\mathfrak{BC}}=\begin{cases}P_Y^{\mathfrak{BC}}&\text{if }\mathfrak{BC}\cdot s_o=\mathfrak{BC},
    \\pP_Y^{\mathfrak{BC}\cdot s_o}+(1-q_op)P_Y^{\mathfrak{BC}}&\text{if } \mathfrak{BC}\cdot s_o<\mathfrak{BC},
    \\q_opP_Y^{\mathfrak{BC}\cdot s_o}+(1-p)P_Y^{\mathfrak{BC}}&\text{if } \mathfrak{BC}\cdot s_o>\mathfrak{BC}.
    \end{cases}
\end{equation}
\end{proposition}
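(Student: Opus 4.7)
The plan is to reduce each identity in Proposition~\ref{prop: hecke local/nonlocal} to a direct coefficient computation using the Hecke algebra multiplication rules \eqref{eq: R_k left} and \eqref{eq: R_k right}, combined with the trichotomy from Lemma~\ref{lem: bd cond trichot}. I will focus on the left multiplication identity \eqref{eq: hecke loc?}; the right multiplication identity \eqref{eq: hecke nonloc?} follows by the same argument with the roles of incoming and outgoing data exchanged.

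Writing $Y=\sum_{\tau}P_Y^\tau T_\tau$ and expanding $R_i(p)Y$ via \eqref{eq: R_k left}, I collect the contributions to the coefficient of $T_\tau$ from both $\tau'=\tau$ (the ``diagonal'' term) and from $\tau'=s_i\tau$ (which contributes a $T_\tau$ summand). A short bookkeeping step yields
\[
P_{R_i(p)Y}^{\tau}=\begin{cases}(1-p)P_Y^\tau+p q_i P_Y^{s_i\tau} & \text{if } l(s_i\tau)>l(\tau),\\ (1-pq_i)P_Y^\tau+p P_Y^{s_i\tau} & \text{if } l(s_i\tau)<l(\tau).\end{cases}
\]
The goal is then to sum this expression over $\tau\in\sat(\mathfrak{BC})$. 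The indices permitted by the hypothesis $s_i\in B_{C_{in}}\cup B_{P_{in}}$ are precisely $[0,N-K-1]\cup[N-K+1,n-1]$, which are exactly the ranges covered by Lemma~\ref{lem: bd cond trichot}, so that lemma applies and splits the computation into three cases according to whether $s_i\cdot\mathfrak{BC}$ equals, dominates, or is dominated by $\mathfrak{BC}$.

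In the two non-degenerate cases $s_i\cdot\mathfrak{BC}<\mathfrak{BC}$ and $s_i\cdot\mathfrak{BC}>\mathfrak{BC}$, the comparison between $l(s_i\tau)$ and $l(\tau)$ is the same for every $\tau\in\sat(\mathfrak{BC})$, so only one of the two formulas above is used. The sum then collapses using the bijection $\sat(\mathfrak{BC})\to\sat(s_i\cdot\mathfrak{BC})$ given by $\tau\mapsto s_i\tau$, and immediately produces the two stated formulas. The remaining case $s_i\cdot\mathfrak{BC}=\mathfrak{BC}$ is the only one requiring a small observation: left multiplication by $s_i$ is then a fixed-point-free involution on $\sat(\mathfrak{BC})$ (since $l(s_i\tau)\neq l(\tau)$), and summing the contributions of each pair $\{\tau,s_i\tau\}$ gives
\[
[(1-p)+p]P_Y^\tau+[pq_i+(1-pq_i)]P_Y^{s_i\tau}=P_Y^\tau+P_Y^{s_i\tau},
\]
so the grand total equals $P_Y^{\mathfrak{BC}}$ as required. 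I do not anticipate a significant obstacle; this is essentially an organized unpacking of the Hecke algebra action, with the only delicate point being the cancellation in the degenerate case, which hinges on the fact that the two $p$-dependent coefficients multiplying the diagonal terms and the two multiplying the off-diagonal terms sum separately to $1$.
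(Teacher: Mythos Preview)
Your proposal is correct and follows essentially the same approach as the paper: both arguments compute the coefficient of each $T_\tau$ in $R_i(p)Y$ directly from the multiplication rule \eqref{eq: R_k left}, then sum over $\tau\in\sat(\mathfrak{BC})$ using the trichotomy of Lemma~\ref{lem: bd cond trichot} and the bijection $\tau\mapsto s_i\tau$ between $\sat(\mathfrak{BC})$ and $\sat(s_i\cdot\mathfrak{BC})$. The only organizational difference is that the paper first decomposes $Y$ into its components supported on $\sat(\mathfrak{BC})$, on $s_i\cdot\sat(\mathfrak{BC})$, and on the remainder (which it discards), whereas you compute $P_{R_i(p)Y}^\tau$ for general $\tau$ and then sum; these are the same calculation packaged slightly differently.
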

\begin{proof}
We prove \eqref{eq: hecke loc?}, \eqref{eq: hecke nonloc?} being similar. Let $S=\sat(\mathfrak{BC})$, and write $Y=\sum_{\pi\in S}c_\pi T_\pi+\sum_{\pi\in S}c_{s_i\pi} T_{s_i\pi}+S'$, where $S'$ contains only terms $T_\pi$ with neither $\pi,s_i\pi\in S$. Now $P_{R_i(p)T_\pi}^{\mathfrak{BC}}=0$ if $\pi,s_i\pi\not\in\sat(\mathfrak{BC})$, so we may assume $S'=0$.

If $s_i\cdot \mathfrak{BC}<\mathfrak{BC}$, then for all $\pi\in S$, $l(s_i\pi)<l(\pi)$, and so \eqref{eq: R_k left} implies that
\begin{equation*}
\begin{split}
    R_i(p)Y&=\sum_{\pi\in S}c_\pi (pq_iT_{s_i\pi}+(1-pq_i)T_\pi)+\sum_{\pi\in S}c_{s_i\pi}(pT_{\pi}+(1-p)T_{s_i\pi})
    \\&=\sum_{\pi\in S}((1-pq_i)c_\pi + pc_{s_i\pi})T_\pi+\sum_{\pi\in S}(pq_ic_\pi+(1-p)c_{s_i\pi})T_{s_i\pi}.
\end{split}
\end{equation*}
In particular, $P_{R_i(p)T_\pi}^{\mathfrak{BC}}=\sum_{\pi\in S}((1-pq_i)c_\pi + pc_{s_i\pi})=pP_Y^{s_i\cdot \mathfrak{BC}}+(1-q_ip)P_Y^{\mathfrak{BC}}$. Similarly, if $s_i\cdot\mathfrak{BC}>\mathfrak{BC}$, then
\begin{equation*}
\begin{split}
    R_i(p)Y&=\sum_{\pi\in S}c_\pi (pT_{s_i\pi}+(1-p)T_\pi)+\sum_{\pi\in S}c_{s_i\pi}(q_ipT_{\pi}+(1-q_ip)T_{s_i\pi})
    \\&=\sum_{\pi\in S}((1-p)c_\pi + q_ipc_{s_i\pi})T_\pi+\sum_{\pi\in S}(pc_\pi+(1-q_ip)c_{s_i\pi})T_{s_i\pi},
\end{split}
\end{equation*}
and so $P_{R_i(p)T_\pi}^{\mathfrak{BC}}=\sum_{\pi\in S}((1-p)c_\pi + q_ipc_{s_i\pi})=q_ipP_Y^{s_i\cdot\mathfrak{BC}}+(1-p)P_Y^{\mathfrak{BC}}$.

If $s_i\cdot \mathfrak{BC}=\mathfrak{BC}$, we can essentially repeat the same proof except we replace $S$ with the subset containing the shorter of $\pi$ and $s_i\pi$ for each $\pi$, and note that at the end we end up with sums over both $S$ and $s_i S$, and means the coefficients of each $c_\pi$ will sum to $1$.
\end{proof}

We record as a corollary the statement for the partition function of the six-vertex model. We let $A$ be a symmetric skew domain, with $|P|=|Q|=2n$, and we use $P_A^{\mathfrak{BC}}=\sum_{\pi\in\sat(\mathfrak{BC})}P_A^\pi$ to denote the partition function for configurations satisfying $\mathfrak{BC}$. Then as $P_{Y_A}^\pi=P_A^\pi$ for all $\pi$, we have $P_A^{\mathfrak{BC}}=P_{Y_A}^{\mathfrak{BC}}$. Notationally, if it is important to keep track of the dependence on $\mathbf{z}$, we will write $P_A^{\mathfrak{BC}}(\mathbf{z})$ and so on. We will also later consider the case when the incoming colors are not all ordered, so we will use $P_A^{\sigma,\mathfrak{BC}}=\sum_{\pi\in \sat(\mathfrak{BC})}P_A^{\sigma,\pi}$. 

We will let $R_i(p)\cup A$ and $A\cup R_o(p)$ denote the domain $A$ with a vertex connecting edges $i$ and $i+1$ in $P$ or $o$ and $o+1$ in $Q$, of parameter $p$. There are two special cases which we will consider. The first is when the edges $i$ and $i+1$ are both either vertical or horizontal, in which case we will add a \emph{Yang--Baxter vertex}. This should be thought of as a vertex tilted at a $45^\circ$ angle, and these will be used with the Yang--Baxter and reflection equations. The other case is when one edge is vertical and the other horizontal, in which case we are simply adding a regular vertex, growing the domain $A$.

\begin{corollary}
\label{cor: local/nonlocal}
Fix boundary conditions $\mathfrak{BC}$. Then if $s_i\in B_{C_{in}}$ or $B_{P_{in}}$,
\begin{equation}
\label{eq: loc?}
    P_{R_i(p)\cup A}^{\mathfrak{BC}}=\begin{cases}P_A^{\mathfrak{BC}}&\text{if }s_i\cdot \mathfrak{BC}=\mathfrak{BC},
    \\pP_A^{s_i\cdot\mathfrak{BC}}+(1-qp)P_A^{\mathfrak{BC}}&\text{if }s_i\cdot \mathfrak{BC}<\mathfrak{BC},
    \\qpP_A^{s_i\cdot\mathfrak{BC}}+(1-p)P_A^{\mathfrak{BC}}&\text{if }s_i\cdot \mathfrak{BC}>\mathfrak{BC},
    \end{cases}
\end{equation}
and if $s_o\in B_{C_{out}}$ or $B_{P_{out}}$,
\begin{equation}
\label{eq: nonloc?}
    P_{A\cup R_o(p)}^{\mathfrak{BC}}=\begin{cases}P_A^{\mathfrak{BC}}&\text{if }\mathfrak{BC}\cdot s_o=\mathfrak{BC},
    \\pP_A^{\mathfrak{BC}\cdot s_o}+(1-qp)P_A^{\mathfrak{BC}}&\text{if } \mathfrak{BC}\cdot s_o<\mathfrak{BC},
    \\qpP_A^{\mathfrak{BC}\cdot s_o}+(1-p)P_A^{\mathfrak{BC}}&\text{if } \mathfrak{BC}\cdot s_o>\mathfrak{BC}.
    \end{cases}
\end{equation}
\end{corollary}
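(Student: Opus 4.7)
The plan is to recognize that this corollary is just Proposition \ref{prop: hecke local/nonlocal} transported from the Hecke algebra to the six-vertex model via the dictionary furnished by Proposition \ref{prop: hecke 6vm same}. Nothing new needs to be proved; the work is in identifying the correspondence between ``appending a vertex to a domain'' and ``multiplying by $R_i(p)$ in $\mathcal{H}$''.

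First I would unpack the definitions of $R_i(p) \cup A$ and $A \cup R_o(p)$. By definition $Y_A$ is the up-right ordered product $\prod_{(x,y)\in A^+} R_{y-x}(\mathbf{p}_{x,y})$, so prepending a vertex on the incoming side (whether it is a genuine domain-extending vertex or a tilted Yang--Baxter vertex) places the new factor first in the ordering, giving $Y_{R_i(p)\cup A} = R_i(p)\, Y_A$, and symmetrically $Y_{A\cup R_o(p)} = Y_A\, R_o(p)$. The hypothesis that $s_i$ lies in $B_{C_{in}}$ or $B_{P_{in}}$ (respectively $s_o$ in $B_{C_{out}}$ or $B_{P_{out}}$) is exactly what ensures that the appended vertex sits at the incoming (respectively outgoing) boundary of the ordered product and thus multiplies on the correct side.

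Next, specializing Proposition \ref{prop: hecke 6vm same} to the identity incoming permutation gives $P_B^\pi = P_{Y_B}^\pi$ for every symmetric skew domain $B$; summing over $\pi \in \sat(\mathfrak{BC})$ yields $P_B^{\mathfrak{BC}} = P_{Y_B}^{\mathfrak{BC}}$. Applying this to $B = R_i(p) \cup A$ and $B = A \cup R_o(p)$ together with the multiplicativity above gives
\begin{equation*}
P_{R_i(p) \cup A}^{\mathfrak{BC}} = P_{R_i(p)\, Y_A}^{\mathfrak{BC}}, \qquad P_{A \cup R_o(p)}^{\mathfrak{BC}} = P_{Y_A\, R_o(p)}^{\mathfrak{BC}}.
\end{equation*}

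Finally I would invoke Proposition \ref{prop: hecke local/nonlocal} with $Y = Y_A$. Each of the three cases in \eqref{eq: hecke loc?} and \eqref{eq: hecke nonloc?} produces, after the substitutions $P_{Y_A}^{\mathfrak{BC}} = P_A^{\mathfrak{BC}}$ and $P_{Y_A}^{s_i\cdot \mathfrak{BC}} = P_A^{s_i\cdot \mathfrak{BC}}$ (and similarly on the outgoing side), exactly the corresponding case of \eqref{eq: loc?} and \eqref{eq: nonloc?}. There is no genuine obstacle; the only bookkeeping point worth flagging is the parameter $q_i$ from Proposition \ref{prop: hecke local/nonlocal}, which equals $q$ whenever $i > 0$ and should be read as $t$ in the boundary case $i = 0$, consistently with the convention $q_0 = t$ used throughout Section \ref{sec: prelim}.
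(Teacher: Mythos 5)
Your proposal is correct and takes essentially the same route as the paper, which simply cites Proposition \ref{prop: hecke local/nonlocal} with $Y = Y_A$; you have merely spelled out the multiplicativity $Y_{R_i(p)\cup A} = R_i(p)\, Y_A$ and the identification $P_A^{\mathfrak{BC}} = P_{Y_A}^{\mathfrak{BC}}$ that the paper leaves implicit. Your bookkeeping remark about $q_i$ versus $q$ in the boundary case $i=0$ is a fair observation about the corollary's notation as printed, and reading it as $q_i$ (i.e.\ $t$ when $i=0$) is the correct interpretation.
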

\begin{proof}
This follows immediately by taking $Y=Y_A$ in Proposition \ref{prop: hecke local/nonlocal}.
\end{proof}
\begin{remark}
We note that these relations depend only on the choices of parameters $N,M,K$, and have nothing to do with the structure of the domain $A$. As we will see later however, we will wish to choose these parameters in a manner compatible with $A$ in order to obtain useful consequences.
\end{remark}

\section{Flip and shift invariance for the six vertex model}
\label{sec: flip}
In this section, we establish flip and shift invariance for the half space stochastic six-vertex model. We begin with establishing a much simpler symmetry, namely color-position symmetry, which follows immediately from the Hecke algebra viewpoint.

\subsection{Color-position symmetry}
Fix a symmetric skew domain $A\subseteq [L_1,L_2]^2$, and let $\rev_{[L_1,L_2]}(A)$ denote the reflection of $A$ about the antidiagonal of the square $[L_1,L_2]^2$. For $\mathbf{z}=(z_1,\dotsc,z_n)$, we let
\begin{equation*}
    \rev_{[a,b]}(\mathbf{z})=(z_1,\dotsc, z_{a-1},z_b,z_{b-1},\dotsc,z_a,z_{b+1},\dotsc, z_n).
\end{equation*}
We will also let $\rev_{[a,b]}$ denote the involution of $\mathcal{H}$ induced by the corresponding permutation of $\mathbf{z}$.

The following theorem does not appear to have been written down anywhere, but very similar statements have appeared for both the full space six-vertex model \cite{G21}, as well as interacting particle systems in a half space \cite{B20, K20}. The proof is identical in all cases, and uses the anti-involution of $\mathcal{H}$ taking $T_{\pi}$ to $T_{\pi^{-1}}$.
\begin{theorem}[Color-position symmetry]
We have
\begin{equation*}
    P_A^{\pi}(\mathbf{z})=P_{\rev_{[L_1,L_2]}(A)}^{\pi^{-1}}(\rev_{[L_1,L_2]}(\mathbf{z})).
\end{equation*}
\end{theorem}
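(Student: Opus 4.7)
The plan is to reinterpret both sides as coefficients in $\mathcal{H}$ and connect them via an anti-involution. First I would introduce the anti-involution $\Phi : \mathcal{H} \to \mathcal{H}$ defined on the basis by $\Phi(T_\pi) = T_{\pi^{-1}}$ and extended linearly, with $\Phi(XY) = \Phi(Y)\Phi(X)$. To check this is well defined it suffices to verify that the defining relations of $\mathcal{H}$ are preserved: the quadratic relation $(T_i+q_i)(T_i-1)=0$ is fixed because $T_i = T_{s_i}$ and $s_i = s_i^{-1}$, while the braid-type relation $T_uT_w = T_{uw}$ under $l(uw) = l(u)+l(w)$ translates to $T_{w^{-1}}T_{u^{-1}} = T_{(uw)^{-1}}$, which is legitimate because $l$ is invariant under inversion, so $l((uw)^{-1}) = l(u^{-1}) + l(w^{-1})$. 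In particular $\Phi$ fixes each elementary factor $R_k(p) = pT_k + (1-p)$.

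Next I would analyze the geometric effect of reflecting across the antidiagonal of $[L_1,L_2]^2$. Under $(x,y) \mapsto (L_1+L_2-y, L_1+L_2-x)$, the diagonal $x=y$ is preserved, the half $x \le y$ is preserved, and the difference $y-x$ is preserved, so $A^+$ maps bijectively to $(\rev_{[L_1,L_2]}(A))^+$ with the same Hecke index $y-x$. Moreover, since $\rev_{[L_1,L_2]}(\mathbf{z})$ sends position $i$ to $z_{L_1+L_2-i}$, the spectral parameter $z_{x'}'z_{y'}'$ at a reflected vertex $(x',y') = (L_1+L_2-y, L_1+L_2-x)$ equals $z_x z_y$. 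Hence the factor $R_{y'-x'}(\mathbf{p}_{x',y'})$ of $Y_{\rev_{[L_1,L_2]}(A)}(\rev_{[L_1,L_2]}(\mathbf{z}))$ at the reflected vertex equals $R_{y-x}(\mathbf{p}_{x,y})$ of $Y_A(\mathbf{z})$ at $(x,y)$.

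The final point is that reflection across the antidiagonal reverses the up-right partial order on $A^+$: if $(x_1,y_1) \le (x_2,y_2)$ componentwise, then both coordinates of the reflected points decrease. Therefore the up-right ordered product defining $Y_{\rev_{[L_1,L_2]}(A)}(\rev_{[L_1,L_2]}(\mathbf{z}))$ is precisely the down-left reordering of the factors of $Y_A(\mathbf{z})$, which is exactly $\Phi(Y_A(\mathbf{z}))$. Extracting coefficients, by Proposition~\ref{prop: hecke 6vm same},
\begin{equation*}
P_A^{\pi}(\mathbf{z}) = [T_\pi]\, Y_A(\mathbf{z}) = [T_{\pi^{-1}}]\, \Phi(Y_A(\mathbf{z})) = [T_{\pi^{-1}}]\, Y_{\rev_{[L_1,L_2]}(A)}(\rev_{[L_1,L_2]}(\mathbf{z})) = P_{\rev_{[L_1,L_2]}(A)}^{\pi^{-1}}(\rev_{[L_1,L_2]}(\mathbf{z})).
\end{equation*}

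The only mildly subtle step is verifying that $\Phi$ is a well-defined algebra anti-involution; the geometric reflection argument and the matching of Hecke factors are then essentially bookkeeping, since both the index $y-x$ and the product $z_xz_y$ are invariant under the combined reflection of the domain and of the rapidities.
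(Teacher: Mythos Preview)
Your proposal is correct and follows exactly the same approach as the paper: both use the anti-involution $T_\pi \mapsto T_{\pi^{-1}}$ on $\mathcal{H}$, observe that it reverses the order of the factors in $Y_A$, and identify the reversed product with $Y_{\rev_{[L_1,L_2]}(A)}(\rev_{[L_1,L_2]}(\mathbf{z}))$. The paper's proof is a two-line sketch that cites \cite{H90} for the anti-involution and leaves the geometric matching implicit; you have simply spelled out the details of why $\Phi$ is well defined and why the antidiagonal reflection preserves both the Hecke index $y-x$ and the spectral parameter $z_xz_y$ while reversing the up-right order.
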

\begin{proof}
Consider the anti-involution of $\mathcal{H}$ taking $T_\pi$ to $T_{\pi^{-1}}$ (see e.g. \cite[Chapter 7, Exercise 1]{H90}). This takes $Y_A$ to $\rev_{[L_1,L_2]}(Y_{\rev_{[L_1,L_2]}})$ (which is just $Y_A$ with the factors in reverse order), from which the result follows.
\end{proof}

\subsection{Symmetric rectangles}
\label{sec: symm rect}

We will work with special domains $A_{M,N,K}$, which should be thought of as the analogues of rectangles in the half space setting.

\begin{definition}
Given $M,N,K\in \N$, we let $n=M+N-K$,
\begin{equation*}
    A^+_{M,N,K}=\{(x,y)\in\N^2\mid x\leq N-1, N-K\leq y\leq n-1, x\leq y\},
\end{equation*}
and we let $A_{M,N,K}$ denote the symmetrization of $A_{M,N,K}^+$. We call these domains \emph{symmetric rectangles}. We let $P$ and $Q$ denote the bottom left and top right boundaries of $A_{M,N,K}$, each of length $2n$.
\end{definition}

See Figure \ref{fig:sym rect} for an example of a symmetric rectangle. Note that $M,N,K$ are $1$ more than the lengths of the corresponding line segments, and should really be thought of as the number of edges or vertices for that segment.

\begin{figure}
    \centering
    \includegraphics[scale=0.7]{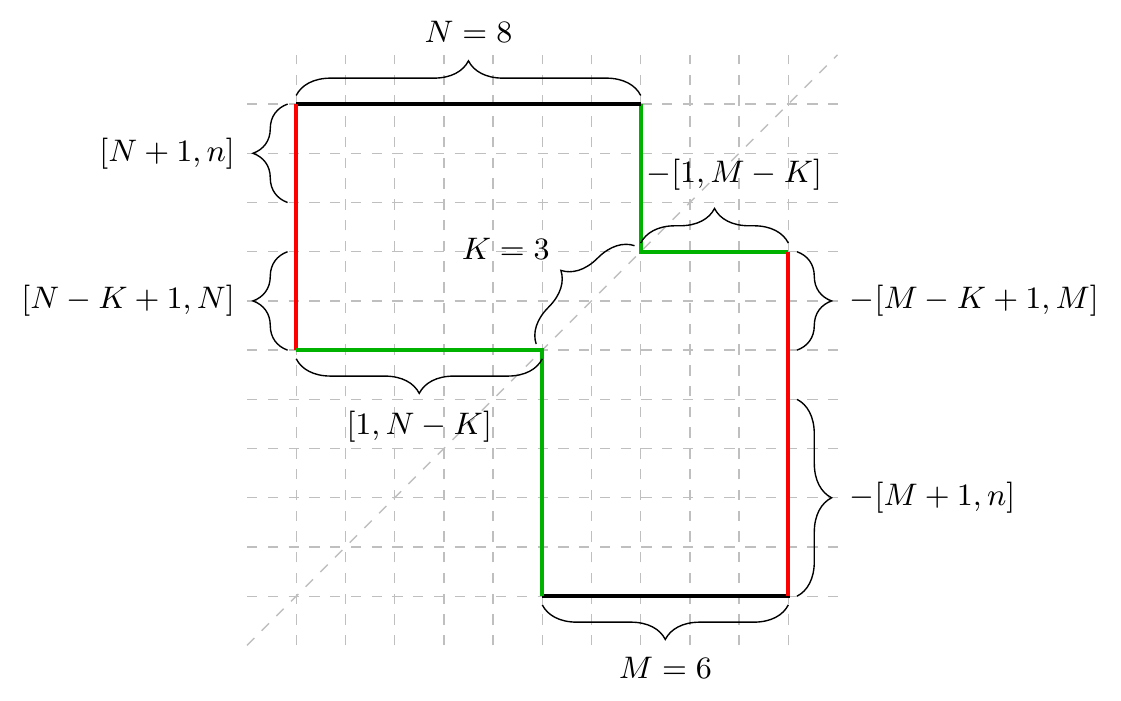}
    \caption{A symmetric rectangle with $M=6$, $N=8$, and $K=3$. Important intervals for the incoming and outgoing colors are also shown. Central boundary conditions correspond to specifying arrows between the green regions. Peripheral boundary conditions correspond to specifying arrows between the red regions (and by symmetry the black regions).}
    \label{fig:sym rect}
\end{figure}

\begin{remark}
These domains are exactly the smallest symmetric skew domains containing a given rectangle intersecting the diagonal. In fact, except when the boundary cuts a corner, each symmetric rectangle is generated by one of two non-symmetric rectangles, one of size $(M-1)\times (N-1)$ with a corner cut off by the diagonal, and one of size $(K-1)\times (n-1)$ cut in the middle by the diagonal.
\end{remark}

For now, we will work in a fixed symmetric rectangle $A=A_{M,N,K}$. We note that $Y_{A}\in B_n$, and so we will assume (for now) that we are working in $\mathcal{H}=\mathcal{H}_{q,t,\nu}(B_n,\mathbf{z})$. We will also fix the incoming colors $\sigma=\id$.

\subsection{Flip invariance}
We are now ready to state and prove flip invariance. We assume that the incoming colors are ordered, i.e. $\sigma=\id$, unless explicitly stated otherwise.

For a peripheral boundary condition $\mathfrak{P}=\{(i_1,o_1),\dotsc, (i_k,o_k)\}$, we let $\rot(\mathfrak{P})=\{(N-M-o_1,N-M-i_1),\dotsc,(N-M-o_k,N-M-i_k)\}$. Note that if $N\neq M$, it is possible that this will not correspond to a proper boundary condition, but as we will see, in all cases we consider this will not be the case (or for any specific $\mathfrak{BC}$, we can extend $P$ and $Q$ until this operation becomes well-defined). We let $\rot(\mathfrak{BC})=(\mathfrak{C},c_{in},c_{out},\rot(\mathfrak{P}))$ (and in particular we do not change $\mathfrak{C}$, $c_{in}$, or $c_{out}$). We note that $\rot(\mathfrak{BC})$ is consistent if $\mathfrak{BC}$ is. Recall that for $\mathbf{z}=(z_1,\dotsc,z_n)$, we let
\begin{equation*}
    \rev_{[a,b]}(\mathbf{z})=(z_1,\dotsc, z_{a-1},z_b,z_{b-1},\dotsc,z_a,z_{b+1},\dotsc, z_n).
\end{equation*}
We will also use the notation
\begin{equation*}
    \rev_{[a,b]^c}(\mathbf{z})=(z_n,\dotsc,z_{b+1},z_a,\dotsc, z_b,z_{a-1},\dotsc,z_1).
\end{equation*}

\begin{theorem}
\label{thm: flip}
We have
\begin{equation*}
    P_A^{\mathfrak{BC}}(\mathbf{z})=P_A^{\rot(\mathfrak{BC})}(\rev_{[N-K+1,N]}(\mathbf{z})).
\end{equation*}
\end{theorem}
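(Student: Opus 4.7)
My plan is to follow Galashin's strategy from the full space setting \cite{G21}, with the Yang--Baxter equation (Proposition \ref{prop:YB}) supplemented by the reflection equation (Proposition \ref{prop:refl}) whenever the braid propagation meets the diagonal. The overall idea is to introduce a ``train track'' of auxiliary $R$-operators attached to the rectangle that, when pushed through, realizes the desired transposition of rapidities, and to use Corollary \ref{cor: local/nonlocal} to interpret the action of these auxiliary operators as a modification of the boundary data.

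First, I would realize the rapidity reversal $\rev_{[N-K+1,N]}$ as a product of adjacent transpositions by fixing a reduced expression $w_0 = s_{j_1}\cdots s_{j_\ell}$ for the longest element of $S_K$ (with each $j_r \in [N-K+1,N-1]$), and reduce the theorem to proving an elementary identity for each individual transposition. For each such elementary step, the claim is that transposing $z_{j_r}$ with $z_{j_r+1}$ can be absorbed by a compensating modification of the boundary data. Since both $j_r$ and $j_r+1$ lie in $P_{in}$, these compensating modifications never touch the central data $\mathfrak{C}, c_{in}, c_{out}$, which is consistent with the fact that $\rot$ fixes the central part of $\mathfrak{BC}$; it remains only to verify that the composition of the elementary modifications along the reduced word produces the rotation $\rot$ on $\mathfrak{P}$.

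The elementary step itself is the main technical content. For a fixed adjacent transposition $s_{j_r}$, I would attach a pair of Yang--Baxter vertices (a ``braid'') at a suitable location near the incoming boundary acting on edges $j_r, j_r+1$ and carrying spectral parameter $z_{j_r}z_{j_r+1}$. Using Corollary \ref{cor: local/nonlocal} on the incoming side, the attached braid can be interpreted as a certain modification of the boundary data. Applying the Yang--Baxter equation repeatedly in the bulk moves the braid through the rectangle until it reaches the diagonal; at that point, the reflection equation allows it to reflect and continue along the mirror-image trajectory into the symmetric half, eventually emerging at the outgoing boundary. A second application of Corollary \ref{cor: local/nonlocal} converts the emerged braid back into a modification of the boundary data, giving the desired identity with the rapidities transposed.

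The chief difficulty compared to the full space case is the delicate geometry of propagating the braid past the diagonal: the braid must cross the boundary at exactly the vertex where the reflection equation applies, and it must be tracked through both halves of the symmetric rectangle in a consistent way. A secondary combinatorial task is verifying that the composition of boundary modifications along a reduced word for $w_0$ really does assemble to $\rot$ acting on $\mathfrak{P}$; this amounts to a computation in the hyperoctahedral group acting on the pairs in $\mathfrak{P}$, which can be checked either directly on a preferred reduced word or by invoking independence of the composition from the reduced decomposition (ultimately a consequence of the braid relations in $\mathcal{H}$).
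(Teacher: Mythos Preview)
Your proposal has a genuine gap. The core issue is the claim that a single Yang--Baxter braid ``can be interpreted as a certain modification of the boundary data'' via Corollary~\ref{cor: local/nonlocal}. That corollary does not give a clean modification: it expresses $P_{R_i(p)\cup A}^{\mathfrak{BC}}$ as a \emph{linear combination} of $P_A^{\mathfrak{BC}}$ and $P_A^{s_i\cdot\mathfrak{BC}}$ (and similarly on the outgoing side). So attaching a braid, pushing it through, and removing it yields a two-term recurrence relating several partition functions, not an identity of the form $P_A^{\mathfrak{BC}}(\mathbf{z})=P_A^{\text{(modified }\mathfrak{BC})}(s_{j_r}\mathbf{z})$. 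Your plan of composing such ``elementary modifications'' along a reduced word therefore does not even get started.

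There is a second, more structural obstruction. The operation $\rot$ sends $(i,o)\in\mathfrak{P}$ to $(N-M-o,\,N-M-i)$: it \emph{swaps the incoming and outgoing labels}. By contrast, any composition of the actions $s_i\cdot\mathfrak{BC}$ (with $s_i\in B_{P_{in}}$) and $\mathfrak{BC}\cdot s_o$ (with $s_o\in B_{P_{out}}$) acts as $(i,o)\mapsto(\sigma(i),\tau(o))$ for fixed permutations $\sigma,\tau$ that do not depend on the pair. For $|\mathfrak{P}|\geq 1$ this cannot reproduce $\rot$ unless the permutations are tailored to each pair, which they are not. So even if you could somehow isolate a single term from the recurrence, the ``combinatorial task'' you describe would fail.

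The paper's proof takes a different route that avoids both problems. It fixes the central data and $|\mathfrak{P}|$, puts a partial order on the possible $\mathfrak{P}$'s (by the sorted outgoing positions), and inducts. The minimal element is a ``packed'' configuration where the peripheral arrows are deterministically forced; there the identity is checked directly via a frozen-region bijection plus a standard Yang--Baxter rapidity-swap argument. For the inductive step, one attaches a single Yang--Baxter vertex, obtains the two-term recurrence from Corollary~\ref{cor: local/nonlocal} on \emph{each side} of the claimed identity, observes that the coefficients match (because $\rot$ intertwines the trichotomy of Lemma~\ref{lem: bd cond trichot}), and concludes by the inductive hypothesis applied to the strictly smaller $\mathfrak{P}$'s appearing in the recurrence. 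The induction is what makes the swapping of in/out labels work: one never tries to realize $\rot$ as a product of generators, only to show that both sides satisfy the same recursion with the same base case.
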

\begin{proof}
We first fix $\mathfrak{C}$, $c_{in}$, and $c_{out}$, as well as $|\mathfrak{P}|=k$. Note that unless $\mathfrak{P}$ only connects the subinterval $[N-K+1,N]$ to $-[M-K+1,M]$, both sides will be $0$, so we may restrict to this case (and in particular, $\rot(\mathfrak{BC})$ makes sense as a boundary condition). Define a partial order on the set of possible peripheral boundary conditions $\mathfrak{P}$ by defining $\mathfrak{P}\preceq \mathfrak{P}'$ if $o(\mathfrak{P})_j\leq  o(\mathfrak{P}')_j$ for all $j$, where $o(\mathfrak{P})_j$ denotes the $j$th smallest element in $o(\mathfrak{P})$. We induct on this partial order.

\begin{figure}
    \centering
    \begin{subfigure}{0.45\textwidth}
    \includegraphics[scale=1]{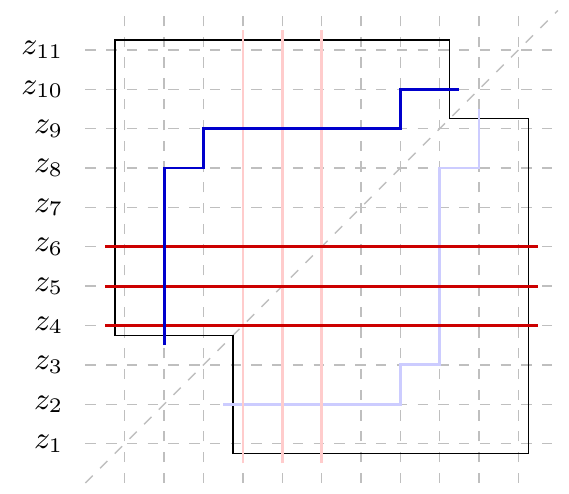}
    \caption{Minimal boundary condition.}
    \label{fig:base case unflip}
    \end{subfigure}
    \begin{subfigure}{0.45\textwidth}
    \includegraphics[scale=1]{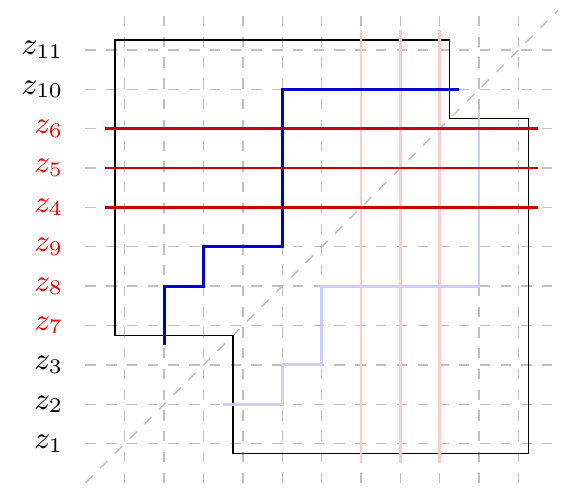}
    \caption{Maximal boundary condition.}
    \label{fig:base case flip}
    \end{subfigure}
    \label{fig: base case}
    \caption{The base case for the induction argument. The red arrows are forced to cross by $\mathfrak{P}$, and freeze the region they pass. An example of how the bijection between configurations for the two boundary conditions is shown (with only one peripheral arrow drawn in blue).}
\end{figure}

The base case is the smallest element, which corresponds to the packed boundary condition $\mathfrak{BC}_0$ with $\mathfrak{P}_0=\{(N-K+1,-M),\dotsc, (N-K+k,-M+k-1)\}$. In this case, the arrows specified by $\mathfrak{P}_0$ are forced to move deterministically, freezing the region they pass. We note that there is a weight preserving bijection between the configurations in $P_A^{\mathfrak{BC}_0}(\mathbf{z})=P_A^{\rot(\mathfrak{BC}_0)}(\mathbf{z}')$, where $\mathbf{z}'$ matches $\mathbf{z}$ outside the interval $[N-K+1,N]$, and swaps the $z_x$ in the intervals $[N-K+1,N-K+k]$ and $[N-K+k+1,N]$, see Figure \ref{fig:base case flip}. Thus, we must show that we can reverse the rapidities $z_x$ for $x$ within each interval $[N-K+1,N-K+k]$, and $[N-K+k+1,N]$, after we've performed this swap. 

The interval $[N-K+1,N-K+k]$ is easy, because it corresponds to the frozen region, whose weights are fixed because the colors entering the frozen region always have absolute value less than the colors frozen by $\mathfrak{C}_0$. To see that we can swap the $z_x$ for $x\in [N-K+k+1,N]$, we use a standard Yang--Baxter argument. In particular, it suffices to show that we can swap the rapidities in adjacent rows. To do so, we introduce a Yang--Baxter vertex on the right between rows $x$ and $x+1$ (and also columns $x$ and $x+1$ by symmetry), which does not change the partition function by Corollary \ref{cor: local/nonlocal}. Using the Yang--Baxter and reflection equations, we may move the vertex to the left, where it still does not change the partition function due to Corollary \ref{cor: local/nonlocal}. However, this has exchanged the rapidities of rows $x$ and $x+1$. Repeating this allows us to reverse the rapidities, and so $P_A^{\mathfrak{BC}_0}(\mathbf{z})=P_A^{\rot(\mathfrak{BC}_0)}(\rev_{[M-K+1,M]}(\mathbf{z}))$.

We now proceed with the inductive step. Suppose we have some arbitrary peripheral boundary condition $\mathfrak{P}$ with $|\mathfrak{P}|=k$. We may assume that $\mathfrak{P}\neq \mathfrak{P}_0$, and so there is a least (negative) $o\geq -M$ such that $o\in o(\mathfrak{P})$ and $o-1\not\in o(\mathfrak{P})$. Let $x$ and $x+1$ denote the corresponding rows, and place a Yang--Baxter vertex between these rows. The Yang--Baxter and reflection equations let us move it to the other side, which gives us an equation
\begin{equation}
\label{eq: unflip rec}
aP_A^{\mathfrak{BC}}(\mathbf{z})+bP_A^{\mathfrak{BC}\cdot s_o}(\mathbf{z})=cP_A^{\mathfrak{BC}\cdot s_o}(\mathbf{z}')+dP_A^{s_i\cdot \mathfrak{BC}\cdot s_o}(\mathbf{z}'),
\end{equation}
where $i=N-M-o$, $\mathbf{z}'$ is just $\mathbf{z}$ with $z_x$ and $z_{x+1}$ swapped, and $a,b,c,d$ are coefficients determined from $\mathfrak{BC}$ by Corollary \ref{cor: local/nonlocal}.

We now do the same for $\rot(\mathfrak{P})$. Let $y$ and $y+1$ denote the rows corresponding to the largest $i$ such that $i+1\not\in i(\rot(\mathfrak{P}))$ (which is just the rotation of $o$). We place a Yang--Baxter vertex between rows $y$ and $y+1$, and move it to the other side. This gives us the equation
\begin{equation}
\label{eq: flip rec}
\begin{split}
&a'P_A^{\rot(\mathfrak{BC})}(\rev_{[N-K+1,N]}(\mathbf{z}))+b'P_A^{\rot(\mathfrak{BC}\cdot s_o)}(\rev_{[N-K+1,N]}(\mathbf{z}))
\\=&c'P_A^{\rot(\mathfrak{BC}\cdot s_o)}(\rev_{[N-K+1,N]}(\mathbf{z}'))+d'P_A^{\rot(s_i\cdot \mathfrak{BC}\cdot s_o)}(\rev_{[N-K+1,N]}(\mathbf{z}')),
\end{split}
\end{equation}
where again $a',b',c',d'$ are coefficients determined from $\rot(\mathfrak{BC})$ by Corollary \ref{cor: local/nonlocal}.

Note that $\mathfrak{BC}\cdot s_o,s_i\cdot \mathfrak{BC}\cdot s_o\prec \mathfrak{BC}$, and so by the inductive hypothesis, the partitions functions appearing in \eqref{eq: unflip rec} and \eqref{eq: flip rec} are equal. Furthermore, the coefficients $a,b,c,d$ and $a',b',c',d'$ are equal as well by Corollary \ref{cor: local/nonlocal}, and the fact that $\rot$ commutes with the actions of $s_i$ and $s_o$ (up to some change of the $i$ and $o$). Thus, we have $P_A^{\mathfrak{BC}}(\mathbf{z})=P_A^{\rot(\mathfrak{BC})}(\rev_{[M-K+1,M]}(\mathbf{z}))$.
\end{proof}

We actually need a slightly more general result, requiring a generalization of our original setup.

First, let us note that there is no harm in extending $P$ and $Q$ together, so we may assume that $P$ and $Q$ extend far away from $A_{M,N,K}$. By shifting our setup if necessary, we can thus always assume we are working in a large square $[1,L]^2$ where $P,Q$ start and end at $(1,L)$ and $(L,1)$ respectively. This in particular means $\rot(\mathfrak{BC})$ is a valid boundary condition. This also means we work in the Hecke algebra $\mathcal{H}_{[0,L]}$. We will not focus on this issue and just assume that $P,Q$ can be extended if needed. We can now shift and reindex everything so that $A_{M,N,K}$ still has the same coordinates as above. This means that we will potentially have negative coordinates and negatively indexed rapidities, but beyond this nothing changes. 

Recall that we use $\rev_{[a,b]}$ to denote the induced involution of $\mathcal{H}$ induced by the permutation on $\mathbf{z}$. We define an anti-homomorphism between $\mathcal{H}_{P_0}$ and $\mathcal{H}_{P_{in}}$ by $(T_{k_1}\dotsm T_{k_m})^*=T_{N-M-k_m}\dotsm T_{N-M-k_1}$ (for $k_i\geq N+1$ or $k_i\leq -M-1$). As with $\rot(\mathfrak{P})$, technically speaking, this is not exactly well-defined, so we need to extend the smaller of $P_{in}$ and $P_{out}$ until they're of the same size. However, we may always pick $P$ and $Q$ (and thus $P_{in}$ and $P_{out}$) large enough so this is not an issue.
\begin{theorem}
\label{thm: gen flip}
Fix boundary conditions $\mathfrak{BC}$, and for $X=P_{in},P_{out},C_{in},C_{out}$, let $Y_{X}\in \mathcal{H}_X$. Then
\begin{equation*}
    P_{Y_{P_{in}}Y_{C_{in}}Y_AY_{C_{out}}Y_{P_{out}}}^{\mathfrak{BC}}=P_{Y_{P_{out}}^*Y_{C_{in}}\rev_{[N-K+1,N]}(Y_A)Y_{C_{out}}Y_{P_{in}}^*}^{\rot(\mathfrak{BC})},
\end{equation*}
where we extend the Hecke algebra we work in if necessary (by extending $P$ and $Q$) to interpret $Y_{P_{in}}^*$ and $Y_{P_{out}}^*$.
\end{theorem}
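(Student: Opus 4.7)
The plan is to induct on the total number of Yang--Baxter generators appearing in $Y_{P_{in}}, Y_{C_{in}}, Y_{C_{out}}, Y_{P_{out}}$, each written as a word $R_{k_1}(p_1)\cdots R_{k_m}(p_m)$ in generators of the appropriate parabolic subalgebra. Since both sides of the identity are multilinear in each $Y_X$, this reduction is legitimate. The base case, in which every $Y_X = 1$, is precisely Theorem \ref{thm: flip}.

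For the inductive step I would peel off an outermost factor and apply Proposition \ref{prop: hecke local/nonlocal}. In the representative case $Y_{P_{in}} = R_i(p)\,Y_{P_{in}}'$, the leftmost factor of the full product is $R_i(p)$, so by Proposition \ref{prop: hecke local/nonlocal},
\begin{equation*}
P_{R_i(p) Y_{P_{in}}' Y_{C_{in}} Y_A Y_{C_{out}} Y_{P_{out}}}^{\mathfrak{BC}} = \alpha\, P_{Y_{P_{in}}' Y_{C_{in}} Y_A Y_{C_{out}} Y_{P_{out}}}^{\mathfrak{BC}} + \beta\, P_{Y_{P_{in}}' Y_{C_{in}} Y_A Y_{C_{out}} Y_{P_{out}}}^{s_i \cdot \mathfrak{BC}},
\end{equation*}
with coefficients $\alpha, \beta$ determined by the trichotomy of Lemma \ref{lem: bd cond trichot}. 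Applying the inductive hypothesis to each summand converts it into a partition function of the flipped word $Y_{P_{out}}^*\, Y_{C_{in}}\, \rev_{[N-K+1,N]}(Y_A)\, Y_{C_{out}}\, (Y_{P_{in}}')^*$ with boundary conditions $\rot(\mathfrak{BC})$ and $\rot(s_i \cdot \mathfrak{BC})$ respectively. Since $*$ is an anti-homomorphism giving $Y_{P_{in}}^* = (Y_{P_{in}}')^*\, R_{N-M-i}(p)$, the goal is to reassemble this linear combination as a single application of Proposition \ref{prop: hecke local/nonlocal} in its right-action form with rightmost factor $R_{N-M-i}(p)$. This works provided $\rot(s_i \cdot \mathfrak{BC}) = \rot(\mathfrak{BC}) \cdot s_{N-M-i}$ and the trichotomy of $\rot(\mathfrak{BC})$ versus $\rot(\mathfrak{BC}) \cdot s_{N-M-i}$ mirrors exactly that of $\mathfrak{BC}$ versus $s_i \cdot \mathfrak{BC}$, forcing the coefficients on both sides to agree.

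The remaining three cases (peeling from $Y_{C_{in}}$, $Y_{C_{out}}$, or $Y_{P_{out}}$) follow the same template. For the central cases, I would first use the commutativity of $\mathcal{H}_{C_{in}}$ with $\mathcal{H}_{P_{in}}$ (and of $\mathcal{H}_{C_{out}}$ with $\mathcal{H}_{P_{out}}$), which holds because their generating indices differ by at least two, to bring the chosen generator to an outermost position of the full product before applying Proposition \ref{prop: hecke local/nonlocal}. Here the reassembly is especially clean: $\rot$ acts trivially on $\mathfrak{C}, c_{in}, c_{out}$ and $Y_{C_{in}}, Y_{C_{out}}$ appear unchanged on the flipped side, so no index substitution is required. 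The case of $Y_{P_{out}}$ is the exact mirror of the $Y_{P_{in}}$ case above, using the right-action form of Proposition \ref{prop: hecke local/nonlocal} to peel and the left-action form to reassemble on the flipped side. The main obstacle throughout is purely combinatorial: carefully verifying that the anti-homomorphism $*$ together with the operation $\rot$ intertwines left actions on $\mathfrak{BC}$ with right actions via the substitution $i \mapsto N-M-i$, and that the trichotomy of Lemma \ref{lem: bd cond trichot} is preserved under this substitution. Once these compatibilities are recorded, the induction runs mechanically.
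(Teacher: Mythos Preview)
Your proposal is correct and follows essentially the same approach as the paper: reduce by linearity to the case where each $Y_X$ is a product of generators $T_i = R_i(1)$, induct on the total length with Theorem \ref{thm: flip} as the base case, and for the inductive step peel off one generator using Proposition \ref{prop: hecke local/nonlocal}, checking that the trichotomy on the two sides matches via the compatibility of $\rot$ with the substitution $i \mapsto N-M-i$ (for the peripheral factors) or trivially (for the central factors). The paper states this more tersely, working directly with $T_\pi$ and length rather than general words in $R_k(p)$, but the argument is the same.
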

\begin{proof}
The statement is linear in the $Y_X$, and so it suffices to prove it when they are all of the form $T_\pi$ for some $\pi$. We then proceed by induction on the total length of the permutations. The base case is when $Y_X=\id$ for all $X$, which is given by Theorem \ref{thm: flip}.

We now assume that it holds for $Y_X=T_{\pi_X}$. Working with $X=C_{in}$ first, we replace the permutation $\pi_{C_{in}}$ with $s_i\pi_{C_{in}}$ for $s_i\in \mathcal{H}_{C_{in}}$ where $l(s_i\pi_{C_{in}})>l(\pi_{C_{in}})$, and note that we can write $T_{s_i\pi_{C_{in}}}=T_iT_{\pi_{C_{in}}}$. Since $T_i$ commutes with $\mathcal{H}_{P_{in}}$, we can move $T_i$ to the left, and use \eqref{eq: loc?} with $p=1$ to express both sides as the same linear combination of expressions which are equal by the inductive hypothesis. A similar argument works for $X=P_{in},C_{out},P_{out}$ (with the definition of $Y^*$ exactly corresponding to the transformation $\rot$).
\end{proof}

\subsection{Invariance for height functions}
We are now ready to give the applications of flip invariance to invariance for height functions. As in the previous section, we continue to assume that the incoming colors satisfy $\sigma=\id$. We first make some definitions regarding crossings which are needed to formulate the various invariance statements.

Given two $(P,Q)$-cuts $C=(d,l,u,r)$ and $C'=(d',l',u',r')$, we write $C\preceq C'$ if $d\leq d'$, $u'\leq u$, $l'\leq l$, and $r\leq r'$. We say that $C$ and $C'$ \emph{cross} if either $C\preceq C'$ or $C'\preceq C$.

We say that a $(P,Q)$-cut $C=(d,l,u,r)$ \emph{crosses the diagonal} if $l\leq d$ and $r\geq u$. Visually, such cuts correspond to rectangles crossing the diagonal $x=y$. Given $C$ crossing the diagonal, we define $A_C$ to be the smallest symmetric skew domain containing $C$. Up to a diagonal shift, this will be a symmetric rectangle $A_{M,N,K}$. For now, we fix a $C$, and shift our coordinates so $A_{M,N,K}$ has the same coordinates as before.

\begin{figure}
    \centering
    \includegraphics[scale=0.7]{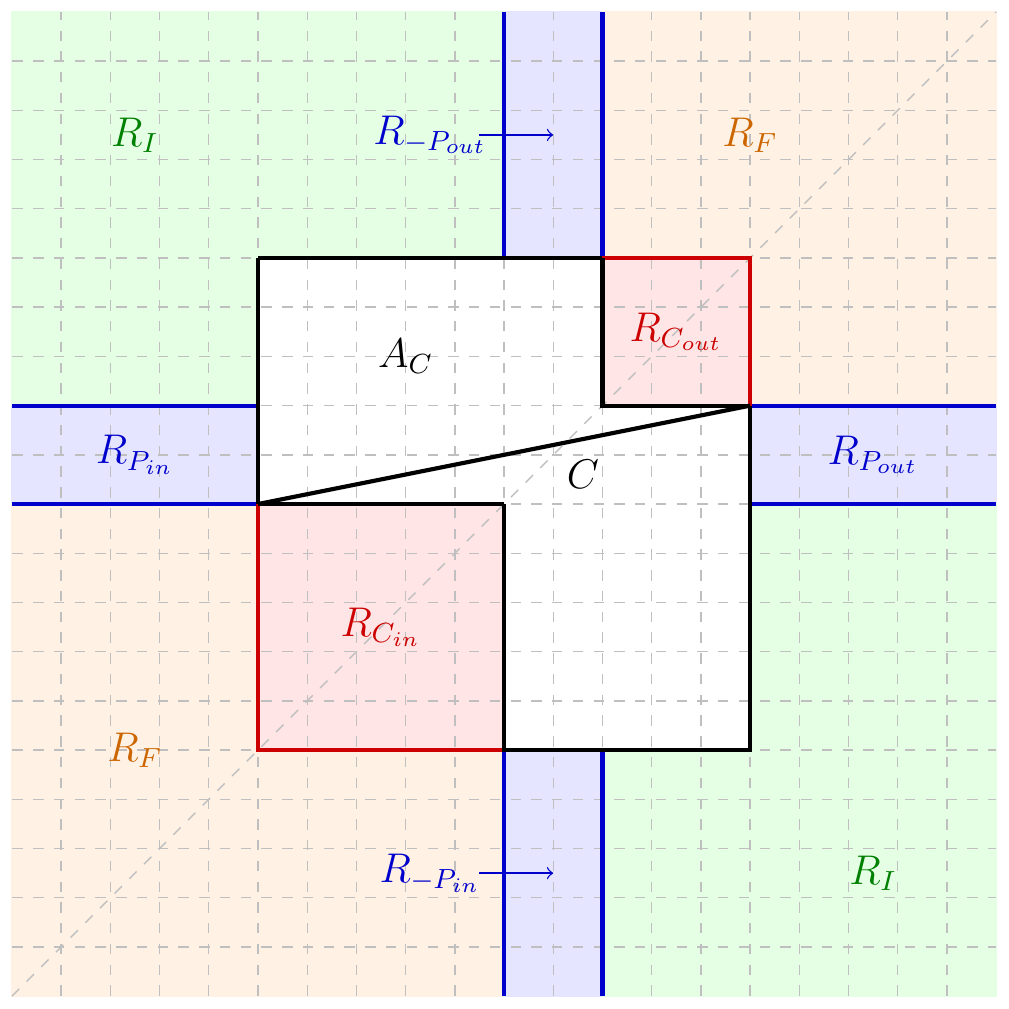}
    \caption{The region $A_C$ defined by the $(P,Q)$-cut $C$ is shown in black and white. For this choice of $C$, the regions $R_{C_{in}}$ and $R_{C_{out}}$ are shown in red, the regions $R_{P_{in}}$, $R_{-P_{in}}$, $R_{P_{out}}$, and $R_{-P_{out}}$ are shown in blue, $R_F$ is shown in orange, and $R_I$ is shown in green. Bold lines of a given color indicate that the line belongs to the corresponding region.}
    \label{fig:regions}
\end{figure}

Given the fixed $C$ and corresponding $M,N,K$ with $A_C=A_{M,N,K}$, we consider the following regions in the plane. We let $R_{P_{in}}$, $R_{C_{in}}$, $R_{P_{out}}$, and $R_{C_{out}}$ be defined by
\begin{align*}
    R_{P_{in}}&=\{(x,y)\mid  N-K\leq y\leq N-1, x< 0\}
    \\R_{C_{in}}&=\{(x,y)\mid 0\leq x,y< N-K\} 
    \\R_{P_{out}}&=\{(x,y)\mid n-M+K-1\leq y\leq N-1, x> n-1\}  
    \\ R_{C_{out}}&=\{(x,y)\mid n-M+K-1< x,y\leq n-1\}.
\end{align*}
We let $R_{-P_{in}}$ and $R_{-P_{out}}$ denote the reflections of $R_{P_{in}}$ and $R_{P_{out}}$ through the diagonal $x=y$, and we let $R_F$ denote the union of the remaining regions to the southwest and northeast, and $R_{I}$ denote the union of the remaining regions to the northwest and southeast ($F$ stands for forbidden and $I$ stands for irrelevant). See Figure \ref{fig:regions} for an example. 

If $A$ is a symmetric skew domain and $R_X$ is one of above regions, we let $A_X=A\cap R_X$. We first show that if $C$ is a $(P,Q)$-cut, then $A$ cannot intersect $R_F$.

\begin{lemma}
If $A$ is a symmetric skew domain and $C$ is a $(P,Q)$-cut crossing the diagonal, then $A\cap R_F=\emptyset$.
\end{lemma}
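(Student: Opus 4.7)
The plan is to argue by contradiction. Suppose $(a,b) \in A \cap R_F$; by the diagonal reflection symmetry of $A$, $P$, $Q$, and the full collection of named regions, it suffices to handle the case where $(a,b)$ lies in the southwest component of $R_F$. A short check of the definitions in the shifted coordinates where $A_C = A_{M,N,K}$ shows that the southwest portion of $R_F$ is contained in $\{x < 0,\, y < N-K\} \cup \{x < N-K,\, y < 0\}$, and after a further diagonal reflection we may assume $a < 0$ and $b < N-K$.

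The heart of the argument is the up-left structure of $P$. In these shifted coordinates, the minimality of $A_C$ together with the crossing-the-diagonal hypothesis pins the corner of $C$ on $P$ in the upper half to $(l,d) = (0, N-K)$. Since $P$ moves only up or left during its traversal from the lower-right endpoint to the upper-left endpoint, its $x$-coordinate is non-increasing and its $y$-coordinate is non-decreasing throughout, so the portion of $P$ before $(l,d)$ lies in the closed quadrant $\{x \ge l,\, y \le d\}$ and the portion after lies in $\{x \le l,\, y \ge d\}$. In particular, no vertex of $P$ lies in the open southwest quadrant $\{x < l,\, y < d\} = \{x < 0,\, y < N-K\}$.

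To close the argument, I would invoke the standard characterization of the skew domain $A$ as the set of lattice vertices lying weakly above-right of $P$: membership of $(a,b)$ in $A$ requires the existence of some $(x_P, y_P) \in P$ with $x_P \le a$ and $y_P \le b$. For our point $(a,b)$ with $a < 0 = l$ and $b < N-K = d$, any such $(x_P, y_P)$ would have to lie in $\{x < l,\, y < d\}$, contradicting the previous paragraph. The northeast portion of $R_F$ is handled symmetrically, using $Q$ and the cut condition $(r+1,u+1) \notin A$ at the corner $(r,u)$ in place of $P$ and the condition at $(l,d)$. The main friction will come from pinning down the coordinate description of $R_F$ from the definitions of the surrounding named regions and formalizing the ``weakly above-right of $P$'' membership criterion for $A$; once these bookkeeping items are in place, the argument reduces to the clean observation that an up-left path through $(l,d)$ avoids the open southwest quadrant of that vertex.
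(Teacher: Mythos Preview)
Your proposal is correct and follows essentially the same route as the paper's proof. The paper compresses the argument to two sentences by directly invoking the cut conditions $(l-1,d-1),(r+1,u+1)\notin A$ together with the skew-domain property (which forces the SW quadrant of $(l-1,d-1)$ and the NE quadrant of $(r+1,u+1)$, along with their diagonal reflections, to be disjoint from $A$); your plan reaches the same conclusion but first pins down $(l,d)=(0,N-K)$ in the shifted coordinates and the coordinate description of $R_F$, which is precisely the bookkeeping the paper leaves implicit.
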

\begin{proof}
Since $C$ is a $(P,Q)$-cut, the points $(l-1,d-1)$ and $(r+1,u+1)$ cannot lie in $A$, and as $A$ is a symmetric skew domain, this means no part of $R_F$ can lie in $A$.
\end{proof}

Next, we establish the key technical lemma that the height functions $\Ht_A(C)$ are in some sense independent of what occurs outside of $C$.
\begin{lemma}
\label{lem: key ind lemma}
If $A$ is a symmetric skew domain and $\mathcal{C}=\{C_i\}$ is a collection of $(P,Q)$-cuts, then $\Ht_A(\mathcal{C})$ is independent of the randomness outside of the rectangles defined by the $C_i$ (and by symmetry their reflections along the line $x=y$). More specifically, for any deterministic choice of assignments of where arrows go outside of $\bigcup _i C_i\cup C'_i$, where $C'_i$ is the reflection of $C_i$ across $x=y$ (which occurs with positive probability), $\Ht_A(\mathcal{C})$ is a function of the outgoing colors in $\bigcup _i C_i\cup C'_i$, and has the same distribution as without this conditioning.
\end{lemma}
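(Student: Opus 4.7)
The plan is to exploit the Markov property of the up-right sampling procedure for the half-space six-vertex model, combined with the color-projection invariance of the height functions. By construction, the configuration in the region $\{y \geq x\}$ (on and above the diagonal) is generated by independent random choices at each vertex, sampled in any up-right order, with the configuration in $\{y < x\}$ determined by symmetry. Thus the model's randomness decomposes into independent per-vertex coin flips at vertices in $\{y \geq x\}$.

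The first assertion is immediate: each $\Ht_A(C_i)$ counts colors traversing the cut $C_i$ from left to right, which is visible on the top and right boundary edges of $C_i$ (i.e., on the outgoing edges of $\bigcup_i C_i \cup C_i'$). So $\Ht_A(\mathcal{C})$ is a function of these outgoing colors once the outside configuration is fixed.

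For the distributional invariance, let $R := \bigcup_i (C_i \cup C_i') \cap \{y\geq x\}$ denote the upper-triangular portion of the combined region. After conditioning on a deterministic outside arrow configuration, the Markov property of up-right sampling says that the configuration inside $R$ is a deterministic function of (a) the colors on the incoming edges to $R$, now fixed by the conditioning, and (b) the coins at vertices of $R$, which remain uniformly random. Hence the conditional distribution of $\Ht_A(\mathcal{C})$ depends on the outside only through the incoming boundary colors of $R$. Using color projection to merge colors above and below each threshold appearing in $\Ht_A(\mathcal{C})$, the relevant content of the incoming colors reduces to a coarse type profile; a flow-conservation argument --- using that colors only move up-right and that the axis-boundary colors are fully specified --- then shows that the features of this type profile relevant to $\Ht_A(\mathcal{C})$ are determined independently of the outside. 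Combining these ingredients yields the equality of the conditional and unconditional distributions.

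The main obstacle is the last step: rigorously verifying that the projected incoming data determines the relevant distribution. I expect this to reduce to a direct inventory using the geometry of cuts crossing the diagonal together with the symmetric boundary conditions on the axes, exploiting the fact that positive colors can enter the model only at the left axis (and negative colors only at the bottom) together with the symmetry identification across the diagonal.
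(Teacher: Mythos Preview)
Your outline matches the paper's approach: Markov property plus a color-projection argument showing that the projected incoming data to $R=\bigcup_i C_i\cup C_i'$ is deterministic. However, you explicitly leave the decisive step unfinished (``the main obstacle is the last step\ldots I expect this to reduce to\ldots''), and the missing observation is exactly what makes the argument work.

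The point you need is purely geometric and does not require any flow-conservation inventory. Because $C_i=(d,l,u,r)$ is a $(P,Q)$-cut, we have $(l-1,d-1)\notin A$. Since $P$ is an up-left path bounding $A$ from below-left, this forces $\{x\le l-1\}\cap A\subseteq\{y\ge d\}$ and $\{y\le d-1\}\cap A\subseteq\{x\ge l\}$. Hence any arrow entering $C_i$ from the left must have entered $A$ along $P$ strictly upper-left of the corner $(l,d)$, and any arrow entering $C_i$ from the bottom must have entered $A$ along $P$ strictly lower-right of that corner. With $\sigma=\id$, the former colors are all larger than the latter. Thus after projecting at each edge to the coarsest partition distinguishing ``entered from the left of $C_i$'' versus ``entered from the bottom of $C_i$'' for every cut containing that edge, the incoming boundary data to $R$ is completely determined, independent of the outside randomness. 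This is the location-dependent projection the paper spells out; once stated, the Markov property finishes the proof immediately.

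Two smaller corrections. First, $\Ht_A(C_i)$ has no explicit color cutoff in its definition; the ``threshold'' you want is the position on $P$ of the corner $(l_i,d_i)$, not a color index, so phrase the projection in terms of intervals along $P$ rather than ``thresholds appearing in $\Ht_A(\mathcal{C})$''. Second, the lemma is stated for arbitrary $(P,Q)$-cuts in a symmetric skew domain, so your appeals to ``axis-boundary colors'' and ``cuts crossing the diagonal'' are not available here; the argument must use only the ordering of incoming colors along $P$ and the cut condition $(l-1,d-1)\notin A$.
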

\begin{proof}
Since the $C_i$ are $(P,Q)$-cuts, all colors entering must be ordered, in the sense that colors entering the bottom are smaller than colors entering the left (and similarly for $C_i'$). We also note that the height function is entirely determined by what happens in $C_i$ (which by symmetry also affects what occurs in $C_i'$, but the point is we may ignore this), and only cares about positive colors, so we may forget the negative colors. 

Thus, we may project to a model where we only keep track of the arrows within $R=\bigcup C_i$, and the fact that arrows entering from the left are bigger than arrows entering from the bottom. When we sample within $R$, we only keep track of whether the particular arrow entered from the bottom or left of any particular $C_i$, and only while that arrow is in $C_i$. Since an arrow cannot leave and then re-enter $C_i$ (it may enter $C_i'$, but this will not matter), the fact that we forgot whether it entered from the left or bottom will not affect how it interacts with other arrows. 

Formally, we may view this as color projection for the model, except that the colors we keep track of change depending on the location of an edge. We divide $P$ into intervals based on where the $C_i$ intersect $P$, and project all colors in an interval to the same color. This is valid because the incoming colors are ordered so we only project adjacent colors. If an edge is to the north or east of all $C_i$ intersecting at a point on $P$, we also project the two intervals neighboring that point to a single color. This is valid because the set of such edges is itself a skew domain, and so once we've projected, we will never cross an edge where we have not projected.

Note that this is not equivalent to a full space model, since there are still symmetry constraints near the boundary, but $\Ht_A(\mathcal{C})$ can be entirely determined from the data encoded in this projection. A method of sampling this model would be to begin as usual with the Markovian sampling procedure at a vertex $(x,y)$, and continue as long as $(y,x)$ is not also in $R$. If $(y,x)\in R$, when we determine what occurs in $(x,y)$, we also by symmetry determine what occurs in $(y,x)$.

This projection is the same for all choices of where arrows go outside of $\bigcup _i C_i\cup C'_i$, and so the distribution of $\Ht_A(\mathcal{C})$ is independent of this conditioning.
\end{proof}
This independence immediately implies the following two useful corollaries.

\begin{lemma}
If $A$ is a symmetric skew domain, and $C$ is a $(P,Q)$-cut crossing the diagonal, and $\mathcal{C}=\{C_i\}$ is a collection of $(P,Q)$-cuts crossing $C$ and the diagonal, then $\Ht_A(\mathcal{C})$ is independent of the randomness in $R_I$, in the sense that we can condition on where the incoming arrows go at the outgoing boundary without changing the distribution of $\Ht_A(\mathcal{C})$.
\end{lemma}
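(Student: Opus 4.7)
The plan is to derive this as an immediate corollary of Lemma \ref{lem: key ind lemma}, which already guarantees that $\Ht_A(\mathcal{C})$ is a function only of the randomness inside $\bigcup_i (C_i \cup C_i')$, where $C_i'$ denotes the reflection of $C_i$ across the diagonal, and that conditioning on any deterministic configuration of arrows in the complement of this union leaves the distribution of $\Ht_A(\mathcal{C})$ unchanged. Accepting this, it suffices to prove the set-theoretic containment $R_I \cap \bigcup_i (C_i \cup C_i') = \emptyset$: any edge lying in $R_I$ then falls in the region one is allowed to condition on, so in particular one can freely condition on where incoming arrows exit at the outgoing boundary inside $R_I$, which is the statement of the lemma.

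So the heart of the proof is the disjointness claim, which I would verify by a short geometric case analysis. Fix $i$; the hypothesis that $C_i$ crosses $C$ gives two sub-cases. If $C_i \preceq C$, unpacking the partial order from the definition yields $l \leq l_i$ and $r_i \leq r$, so the column interval of $C_i$ sits inside that of $C$; dually, if $C \preceq C_i$ then the row interval $[d_i,u_i]$ is contained in $[d,u]$. In either sub-case, $C_i$ is contained in the union of the vertical strip of columns $[l,r]$ with the horizontal strip of rows $[d,u]$ (the "cross" through $C$). Inspecting the definitions of $R_{P_{in}}$, $R_{C_{in}}$, $R_{P_{out}}$, $R_{C_{out}}$ and their reflections, one sees that every connected component of $R_I$ sits in a northwest or southeast quadrant that is strictly separated from this cross by a component of $R_{\pm P_{in}}$, $R_{\pm P_{out}}$, or $R_{C_{in}} \cup R_{C_{out}}$; hence $R_I$ meets neither strip and so meets no $C_i$. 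The reflection $C_i'$ is handled identically: the cross and $R_I$ are both invariant under reflection across $x=y$, so $C_i \cap R_I = \emptyset$ implies $C_i' \cap R_I = \emptyset$.

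The main obstacle is the geometric bookkeeping around the many regions $R_X$, whose index conventions (in terms of $M,N,K$ attached to $A_C$) are a little intricate. One needs to verify carefully that the strips $R_{\pm P_{in}}$ and $R_{\pm P_{out}}$ genuinely shield $R_I$ from the cross through $C$ across the relevant coordinate ranges, rather than accidentally leaving a gap. Figure \ref{fig:regions} makes this transparent at a glance, and once the disjointness is confirmed the lemma follows with no further work from Lemma \ref{lem: key ind lemma}.
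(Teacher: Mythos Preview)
Your proposal is correct and takes essentially the same approach as the paper: reduce to Lemma~\ref{lem: key ind lemma} and then verify the geometric disjointness $R_I \cap \bigcup_i (C_i \cup C_i') = \emptyset$. The paper phrases the containment slightly differently (noting that each $C_i$ lies in $R_{P_{in}} \cup A_C \cup R_{P_{out}}$ or in $R_{C_{in}} \cup A_C \cup R_{C_{out}}$ according to which way it crosses $C$, hence automatically misses $R_I$), but this is the same case split as your cross-through-$C$ argument.
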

\begin{proof}
This follows immediately from Lemma \ref{lem: key ind lemma}, as $R_I$ lies in the complement of $C_i$ and its reflection across $x=y$ for all $i$ (in fact, $C_i$ is contained in either $R_{P_{in}}\cup A_C\cup R_{P_{out}}$ or $R_{C_i}\cup A_C\cup R_{C_{out}}$ depending on how it crosses $C$).
\end{proof}

\begin{lemma}
\label{lem: irrev sq}
Let $A, A'$ be two symmetric skew domains, and let $\mathcal{C}=\{C_i\}$ denote a family of $(P,Q)$-cuts which are also $(P',Q')$-cuts. Then
\begin{equation*}
    \Ht_A(\mathcal{C})\deq \Ht_{A'}(\mathcal{C}).
\end{equation*}
\end{lemma}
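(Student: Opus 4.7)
The plan is to deduce this directly from Lemma \ref{lem: key ind lemma}, since that lemma already does all the real work: it says that $\Ht_A(\mathcal{C})$ is a function of a strictly smaller vertex model supported on $R := \bigcup_i (C_i\cup C_i')$, where $C_i'$ is the reflection of $C_i$ across the diagonal. The strategy is to show that this reduced vertex model is literally the same whether one starts from $A$ or from $A'$, and thus $\Ht_A(\mathcal{C})$ and $\Ht_{A'}(\mathcal{C})$ have identical distributions.

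First I would unpack the hypothesis. Writing $C_i=(d_i,l_i,u_i,r_i)$, the fact that $C_i$ is simultaneously a $(P,Q)$-cut and a $(P',Q')$-cut forces the corners $(l_i,d_i)$ and $(r_i,u_i)$ to lie on both $P\cap P'$ and $Q\cap Q'$ respectively, and the forbidden corners $(l_i-1,d_i-1)$ and $(r_i+1,u_i+1)$ to lie outside both $A$ and $A'$. In particular $R\subseteq A\cap A'$, and the two up-left paths $P$ and $P'$ visit exactly the same sequence of cut corners in the same order (and likewise for $Q$ and $Q'$).

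Next I would apply Lemma \ref{lem: key ind lemma} to each of $A$ and $A'$ to reduce $\Ht_A(\mathcal{C})$ and $\Ht_{A'}(\mathcal{C})$ to the associated projected vertex models on $R$. The proof of that lemma describes the reduced model explicitly: one partitions $P$ into sub-intervals cut by the corners $(l_i,d_i)$ of the $C_i$, projects all incoming colors on a given sub-interval to a single color, and samples Markovianly inside $R$ (with the usual symmetrization when both $(x,y)$ and $(y,x)$ lie in $R$). Crucially, this data depends only on $R$, on the rapidities $\mathbf{z}$ and the boundary parameters $q,t,\nu$ (which are intrinsic to the model and not to the choice of domain), and on the combinatorial partition of incoming arrows into "which inter-cut interval did this arrow enter through". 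Because $P$ and $P'$ visit the same cut corners in the same cyclic order (and similarly for $Q,Q'$), this partition agrees for $A$ and $A'$.

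Therefore the reduced vertex models for $A$ and $A'$ coincide as probability distributions, and hence so do the induced distributions of $\Ht(\mathcal{C})$. The one step requiring a little care is verifying that the projection in Lemma \ref{lem: key ind lemma} is intrinsic to the cuts and not to the surrounding paths $P,Q$ — once this is made precise, the equality of distributions is immediate.
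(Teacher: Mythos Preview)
Your argument is correct and uses the same engine as the paper's proof, namely Lemma \ref{lem: key ind lemma}. The paper's execution is slightly slicker: instead of arguing directly that the projected models on $R$ arising from $A$ and from $A'$ coincide, it introduces the intermediate domain $A''$ defined as the smallest symmetric skew domain containing all the cuts $C_i$, observes that $A''\subseteq A\cap A'$ and that every vertex of $A\setminus A''$ (respectively $A'\setminus A''$) lies outside $\bigcup_i(C_i\cup C_i')$, and then invokes the conditioning clause of Lemma \ref{lem: key ind lemma} to obtain $\Ht_A(\mathcal{C})\deq\Ht_{A''}(\mathcal{C})\deq\Ht_{A'}(\mathcal{C})$. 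This route bypasses exactly the step you flagged as needing care --- verifying that the projection is intrinsic to the cuts and not to $P,Q$ --- because both $A$ and $A'$ are compared to the same canonical third domain rather than to each other.
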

\begin{proof}
We show both are equal to $\Ht_{A''}(\mathcal{C})$, where $A''$ is the smallest symmetric skew domain containing the cuts $\mathcal{C}$. This follows from Lemma \ref{lem: key ind lemma}, as any vertices not in $A''$ are in the complement of $C_i$ and its reflection across $x=y$ for all $i$, and the configurations are related by a deterministic assignment of what happens outside of $A''$.
\end{proof}

\begin{figure}
    \centering
    \begin{subfigure}{0.45\textwidth}
    \includegraphics[scale=0.5]{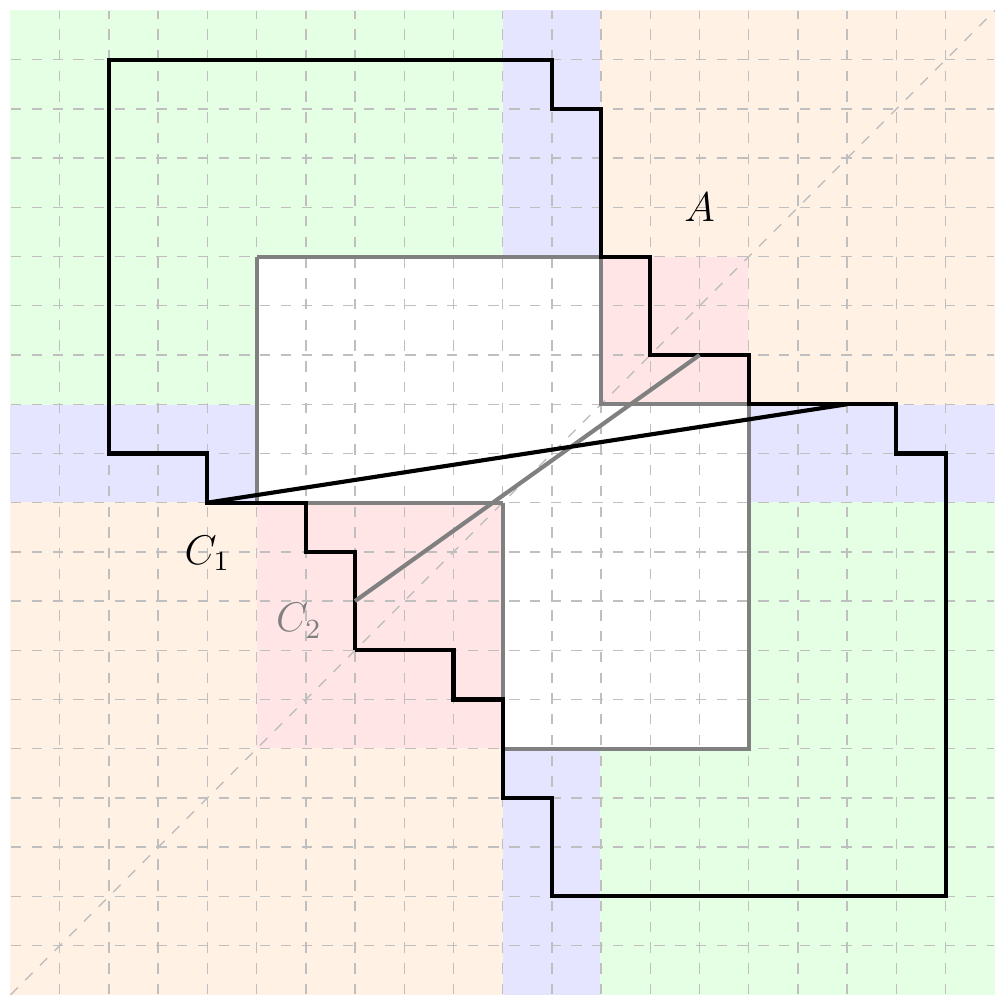}
    \caption{Unrotated domain and cuts.}
    \label{fig: rot ex no rot}
    \end{subfigure}
    \begin{subfigure}{0.45\textwidth}
    \includegraphics[scale=0.5]{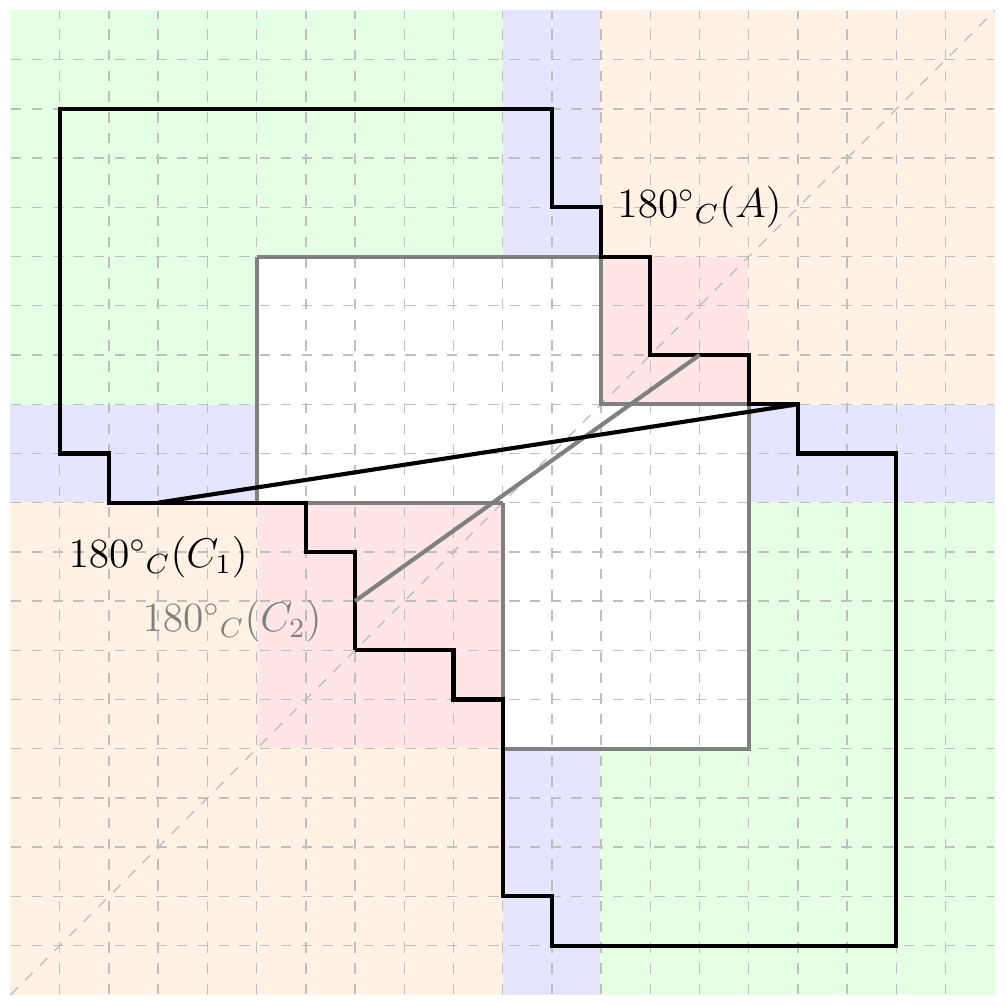}
    \caption{Rotated domain and cuts.}
    \label{fig: rot ex rot}
    \end{subfigure}
    \caption{An example showing how a symmetric skew domain $A$ and $(P,Q)$-cuts $C_1,C_2$ are transformed by $\rot_C$. Here, $C$ is the $(P,Q)$-cut generating the gray symmetric rectangle. The colors correspond to the regions $R_X$ as in Figure \ref{fig:regions}, and the parts of $A$ lying in $R_X$ are the $A_X$ (for $X=C_{in},C_{out},P_{in},-P_{in},P_{out},-P_{out},F,I$). Note the change in the green region to accommodate the difference sizes of $A_{P_{in}}$ and $A_{P_{out}}$.}
    \label{fig: rot ex}
\end{figure}

Given a symmetric skew domain $A$ and a $(P,Q)$-cut $C$ crossing the diagonal, we use $\rot_C(A)$, $\rev_C(\mathbf{z})$, and $\rot_C(\mathfrak{BC})$ to denote the corresponding operations $\rot$ and $\rev$ with respect to this setup (since we will wish to consider multiple choices for $(P,Q)$-cuts). In particular, $\rot_C(A)$ is the smallest symmetric skew domain containing the region defined by taking $A\setminus A_I$, and swapping $A_{P_{in}}$ and $A_{P_{out}}$ through a rotation (and symmetrically $A_{-P_{in}}$ and $A_{-P_{out}}$). If $A_C=A_{M,N,K}$, we have $\rev_C=\rev_{[N-K+1,N]}\circ \rev_{[0,n]^c}$. For any other $(P,Q)$-cut $C'=(d',l',u',r')$ crossing $C$, we let $\rot_C(C')=C'$ if $C'\preceq C$, and we let $\rot_C(C')$ denote the $180^\circ$ rotation of $C'$ around $C$ if $C\preceq C'$. See Figure \ref{fig: rot ex} for an example.

We note the following lemma, which states that the height functions at certain points on $P$ and $Q$ are functions of the boundary data $\mathfrak{BC}$.

\begin{lemma}
\label{lem: rev cut}
Let $A$ be a symmetric skew domain, let $C$ denote a $(P,Q)$-cut, and let $C'$ denote a $(P,Q)$-cut crossing $C$. Then $\Ht_A(C')$ as a function of $\mathfrak{BC}$ is equal to $\Ht_{\rot_C(A)}(\rot_C(C'))$ as a function of $\rot_C(\mathfrak{BC})$.
\end{lemma}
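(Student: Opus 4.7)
The plan is first to argue that $\Ht_A(C')$ is in fact a function of $\mathfrak{BC}$ alone (and not of further randomness in the configuration), and then to verify the claimed equality by case analysis on how $C'$ and $C$ are nested. Since $C'$ is a $(P,Q)$-cut that crosses $C$, each endpoint of $C'$ lies at a breakpoint between the boundary blocks $P_{in}, -P_{out}, C_{in}$ on $P$ and $P_{out}, -P_{in}, C_{out}$ on $Q$. Consequently, the portion of $P$ below $C'$ and the portion of $Q$ above $C'$ are each a union of such blocks, so $\Ht_A(C')$ equals a sum of counts of the form $|\{o \in O \mid \pi(o) \in I\}|$ for $I, O$ chosen among these blocks. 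By the lemma immediately preceding Proposition \ref{prop: hecke local/nonlocal}, each such count is determined by $\mathfrak{BC}$, and hence so is $\Ht_A(C')$.

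Next, split into two cases. If $C' \preceq C$, then by definition $\rot_C(C') = C'$, and moreover the endpoints of $C'$ lie inside the central blocks $A_{C_{in}}, A_{C_{out}}$. The rotated boundary condition $\rot_C(\mathfrak{BC})$ differs from $\mathfrak{BC}$ only in the peripheral part, replacing $\mathfrak{P}$ by $\rot(\mathfrak{P})$, while $\mathfrak{C}, c_{in}, c_{out}$ are unchanged. On the other hand, the counts expressing $\Ht_A(C')$ depend on the peripheral data only through aggregate sizes such as $|\mathfrak{P}|$ and the cardinalities of the preimages $c_{in}^{-1}(\pm P_{out})$, all of which are manifestly invariant under the $180^\circ$ rotation on $\mathfrak{P}$. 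Combining these observations yields the equality in this case.

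The remaining case is $C \preceq C'$, where $\rot_C(C')$ is the genuine $180^\circ$ rotation of $C'$ about the central rectangle. Here a pair $(i,o) \in \mathfrak{P}$ contributes to $\Ht_A(C')$ precisely when $i$ lies weakly below the endpoint of $C'$ on $P$ and $o$ lies weakly above its endpoint on $Q$. Under the map $\rot$ this pair becomes $(N-M-o, N-M-i)$, and one checks directly from the definitions that the inequalities characterizing "contributes to $\Ht_A(C')$" transform into the corresponding inequalities for "contributes to $\Ht_{\rot_C(A)}(\rot_C(C'))$", because the affine index reversal $i \mapsto N-M-i$ is exactly the operation induced on indices by the rotation of cuts about $A_C$. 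The contributions from $\mathfrak{C}$, $c_{in}$, $c_{out}$ are handled as in the first case, since these boundary data are unaltered and the portion of $C'$ meeting the central blocks matches the corresponding portion of $\rot_C(C')$.

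The main obstacle is the bookkeeping in the second case: one must carefully verify the identification between "above/below $C'$" on the peripheral blocks $P_{in}, P_{out}$ and "above/below $\rot_C(C')$" after the index reversal $i \mapsto N-M-i$, including the subcase when an endpoint of $C'$ lies in $-P_{in}$ or $-P_{out}$, which is handled via the symmetry $\pi(-i) = -\pi(i)$ together with the consistency requirements on $\mathfrak{BC}$.
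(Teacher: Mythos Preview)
Your approach matches the paper's: split into the cases $C'\preceq C$ and $C\preceq C'$, identify in each case which pieces of $\mathfrak{BC}$ determine $\Ht_A(C')$, and check that those pieces are unchanged (or transform compatibly) under $\rot_C$. One caution about your opening paragraph: the claim that the endpoints of $C'$ lie at \emph{breakpoints} between the blocks $P_{in}, C_{in}, -P_{out}$ on $P$ (and similarly on $Q$) is not correct in general---the endpoints typically sit strictly inside a central block (when $C'\preceq C$) or a peripheral block (when $C\preceq C'$). Consequently $\Ht_A(C')$ is not literally a sum of full block-to-block counts, and the unnamed lemma before Proposition~\ref{prop: hecke local/nonlocal} does not apply as stated. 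The fix, which is what the paper does, is to use that $\mathfrak{C}, c_{in}, c_{out}, \mathfrak{P}$ record \emph{position-level} data (exact pairs and per-index destinations), not merely block-level totals; this is what makes $\Ht_A(C')$ a function of $\mathfrak{BC}$ even when the cut endpoints are interior to a block. Your subsequent case analysis is consistent with this corrected reading (e.g.\ in the $C'\preceq C$ case you correctly note that only aggregate peripheral quantities like $|\mathfrak{P}|$ enter, while the detailed central data is fixed by $\rot_C$), so the proof goes through once the first paragraph is adjusted.
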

\begin{proof}
To see that $\Ht_A(C')$ is a function of $\mathfrak{BC}$, we note that either $C'\preceq C$, in which case we can determine $\Ht_A(C')$ from $\mathfrak{C}$, $c_{in}$, $c_{out}$, and the number of colors starting in $P_{in}$ and ending in $-P_{out}$ (which can be determined from $|\mathfrak{P}|$ and $c_{out}$), or $C\preceq C'$, in which case we can determine $\Ht_A(C')$ from $\mathfrak{P}$. To check the equality as functions of $\mathfrak{BC}$, we again consider the two cases. If $C'\preceq C$, then $\mathfrak{C}$, $c_{in}$, $c_{out}$, and $|\mathfrak{P}|$ stay the same even after applying $\rot_C$, and so $\Ht_A(C')=\Ht_{\rot_C(A)}(\rot_C(C'))$. Otherwise, if $C\preceq C'$, then we can check that $\Ht_A(C')=\Ht_{\rot_C(A)}(\rot_C(C'))$, since any $(i,o)\in \mathfrak{P}$ contributes to $\Ht_A(C')$ if and only if its rotation in $\rot_C(\mathfrak{P})$ contributes to $\Ht_{\rot_C(A)}(\rot_C(C'))$.
\end{proof}

\begin{theorem}
\label{thm: ht flip}
Let $A$ be a symmetric skew domain. Let $\mathcal{C}=\{C_i\}$ be a collection of $(P,Q)$-cuts, and let $C=(d,l,u,r)$ be a $(P,Q)$-cut crossing all $C_i$. Then
\begin{equation*}
    \Ht_A(\mathcal{C};\mathbf{z})\deq\Ht_{\rot_{C}(A)}(\rot_C(\mathcal{C});\rev_C(\mathbf{z})).
\end{equation*}
\end{theorem}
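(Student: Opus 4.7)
The plan is to reduce the theorem to the flip invariance statements already established (Theorems \ref{thm: flip} and \ref{thm: gen flip}) via the Hecke algebra dictionary, using Lemma \ref{lem: irrev sq} to standardize the domain and Lemma \ref{lem: rev cut} to translate height functions into functions of the boundary data $\mathfrak{BC}$.

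First, by Lemma \ref{lem: irrev sq}, both $\Ht_A(\mathcal{C})$ and $\Ht_{\rot_C(A)}(\rot_C(\mathcal{C}))$ are unchanged if we replace $A$ and $\rot_C(A)$ by any symmetric skew domains containing all of the $C_i$ and their reflections. I will therefore enlarge $A$ to a standard shape consisting of $A_C$ together with rectangular extensions in the four regions $R_{P_{in}}$, $R_{C_{in}}$, $R_{P_{out}}$, $R_{C_{out}}$ large enough to contain all pieces of $\mathcal{C}$; the corresponding shape for $\rot_C(A)$ has its $P_{in}$ and $P_{out}$ extensions swapped by rotation. This standardization makes it possible to write
\begin{equation*}
    Y_A \;=\; Y_{P_{in}}\, Y_{C_{in}}\, Y_{A_C}\, Y_{C_{out}}\, Y_{P_{out}}
\end{equation*}
in the Hecke algebra, with each $Y_X$ lying in the appropriate parabolic subalgebra $\mathcal{H}_X$, and analogously for $Y_{\rot_C(A)}$ with $Y_{P_{in}}^*$ and $Y_{P_{out}}^*$ in place of $Y_{P_{out}}$ and $Y_{P_{in}}$.

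Next, for each cut $C_i$ (which crosses $C$), Lemma \ref{lem: rev cut} says that $\Ht_A(C_i)$ is a deterministic function $f_i(\mathfrak{BC})$ of the boundary data of $A_C$, and moreover that $f_i(\rot_C(\mathfrak{BC})) = \Ht_{\rot_C(A)}(\rot_C(C_i))$. Thus the joint distribution of $\Ht_A(\mathcal{C}; \mathbf{z})$ is the pushforward under $(f_i)_i$ of the distribution of $\mathfrak{BC}$ under the six-vertex measure on $A$ with rapidities $\mathbf{z}$, and likewise on the right-hand side the distribution of $\Ht_{\rot_C(A)}(\rot_C(\mathcal{C}); \rev_C(\mathbf{z}))$ is the pushforward under $(f_i \circ \rot_C)_i$ of the distribution of $\mathfrak{BC}$ under the measure on $\rot_C(A)$ with rapidities $\rev_C(\mathbf{z})$. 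Hence it suffices to show that for every boundary condition $\mathfrak{BC}$,
\begin{equation*}
    P_A^{\mathfrak{BC}}(\mathbf{z}) \;=\; P_{\rot_C(A)}^{\rot_C(\mathfrak{BC})}(\rev_C(\mathbf{z})).
\end{equation*}

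This is precisely Theorem \ref{thm: gen flip} applied to the decomposition above, once one matches up the operations on the two sides. The $*$ operation swaps $Y_{P_{in}}$ and $Y_{P_{out}}$ in the Hecke algebra, which under Proposition \ref{prop: hecke 6vm same} corresponds exactly to the geometric rotation of the peripheral pieces that defines $\rot_C(A)$; the $\rev_{[N-K+1,N]}$ factor on $Y_{A_C}$ reverses the rapidities associated to the rows/columns of $A_C$ that lie in $[N-K+1,N]$, while the positional relabeling forced by swapping the peripheral pieces reverses the remaining rapidities, and together these yield the operation $\rev_C = \rev_{[N-K+1,N]} \circ \rev_{[0,n]^c}$ on $\mathbf{z}$.

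The main obstacle, and the only place where real bookkeeping is needed, is verifying this last matching between the abstract Hecke-algebraic operations in Theorem \ref{thm: gen flip} and the geometric operations $\rot_C$ and $\rev_C$ on the symmetric skew domain. This amounts to checking that the indexing of rapidities along $P$ and $Q$ is transformed the same way by both sides, and that the decomposition of $A$ into standard rectangular pieces in the four regions $R_X$ really does produce Hecke algebra elements in the appropriate parabolic subalgebras $\mathcal{H}_X$, so that Theorem \ref{thm: gen flip} is applicable. Once this dictionary is set up, the theorem follows by combining the three ingredients (Lemmas \ref{lem: irrev sq}, \ref{lem: rev cut}, and Theorem \ref{thm: gen flip}) as above.
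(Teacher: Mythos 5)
Your high-level strategy is the same as the paper's: decompose $Y_A$ into Hecke factors attached to the regions around $A_C$, apply Theorem \ref{thm: gen flip}, and translate height functions into functions of boundary conditions via Lemma \ref{lem: rev cut}. But there is a genuine gap in how you handle (or rather, don't handle) the region $R_I$.

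You claim you can "enlarge $A$ to a standard shape consisting of $A_C$ together with rectangular extensions in the four regions $R_{P_{in}}$, $R_{C_{in}}$, $R_{P_{out}}$, $R_{C_{out}}$," and then decompose $Y_A = Y_{P_{in}} Y_{C_{in}} Y_{A_C} Y_{C_{out}} Y_{P_{out}}$. This decomposition silently assumes $A_I = A \cap R_I = \emptyset$. That assumption is not achievable in general: as soon as $\mathcal{C}$ contains both a cut $C_i \preceq C$ (extending into $R_{C_{in}} \cup R_{C_{out}}$) and a cut $C_j \succeq C$ (extending into $R_{P_{in}} \cup R_{P_{out}}$), any symmetric skew domain containing both cuts is forced to have cells in $R_I$ — the union of $A_C$ with arms in only those four regions is not bounded by a pair of monotone up-left paths. (This is visible in Figure \ref{fig: rot ex}, where the green $A_I$ region is explicitly non-empty and changes shape under $\rot_C$.) Consequently the product $Y_A$ has factors in $Y_{A_I}$ that live in none of the parabolic subalgebras $\mathcal{H}_{P_{in}}, \mathcal{H}_{C_{in}}, \mathcal{H}_{C_{out}}, \mathcal{H}_{P_{out}}$, and Theorem \ref{thm: gen flip} does not directly apply to $Y_A$. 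Relatedly, the reduction you propose — that it suffices to show $P_A^{\mathfrak{BC}}(\mathbf{z}) = P_{\rot_C(A)}^{\rot_C(\mathfrak{BC})}(\rev_C(\mathbf{z}))$ for every $\mathfrak{BC}$ — is stronger than needed and need not hold when $A_I \neq \emptyset$, since the randomness in $A_I$ does influence the full boundary-condition statistics even though it does not influence the height functions.

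The missing ingredient is the earlier lemma that $\Ht_A(\mathcal{C})$ is independent of the randomness in $R_I$ (a consequence of Lemma \ref{lem: key ind lemma}). The paper's proof uses this to condition on the $A_I$ configuration being trivial, which replaces the factors $R_{y-x}(\mathbf{p}_{x,y})$ in $Y_{A_I}$ by $1$; only after that conditioning does $Y_A$ collapse to the form $Y_{P_{in}} Y_{C_{in}} Y_{A_C} Y_{C_{out}} Y_{P_{out}}$ required by Theorem \ref{thm: gen flip}. Your proposal also identifies "the main obstacle" as the bookkeeping of the Hecke dictionary, but in fact the decisive step is precisely this conditioning on $A_I$; without it, the decomposition you write down is false and the application of Theorem \ref{thm: gen flip} is unjustified.
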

\begin{proof}
Since $C$ is a $(P,Q)$-cut, we can write $A$ as $A=A_C\cup A_{P_{in}}\cup A_{C_{in}}\cup A_{P_{out}}\cup A_{C_{out}}\cup A_{I}$, and the distribution of the height function is independent of what occurs in $A_I$. This means we can condition on what occurs, and so we can replace the factors of $R_{y-x}(\mathbf{p}_{x,y})$ in $Y_{A_I}$ with $1$ (we're conditioning on the event that what occurs in $A_I$ corresponds to $\id$), and so we are studying the element $Y=Y_{C_{in}}Y_{C_{out}}Y_{A_C}Y_{C_{out}}Y_{P_{out}}$, written in the manner required by Theorem \ref{thm: gen flip}. Similarly, we have that the height function on $\rot_C(A)$ is independent of what occurs in $A_I$, and so can be studied through the element $Y'=Y_{P_{out}}^*Y_{C_{in}}Y_{A_{C}}Y_{C_{out}}Y_{P_{in}}^*$ (where $Y_{P_{out}}^*$ corresponds exactly to the model in $R_{P_{in}}$ after applying $\rot$ and $\rev_C$, and similarly for $Y_{P_{in}}^*$).

Theorem \ref{thm: gen flip} tells us that the probability of observing certain boundary conditions $\mathfrak{BC}$ in $Y$ and $\rot_C(\mathfrak{BC})$ in $Y'$ is the same, and by Lemma \ref{lem: rev cut}, the height functions across cuts $C_i$ and $\rot_C(C_i)$ are functions of the boundary condition in a compatible manner.
\end{proof}

If $C=(d,l,u,r)$ denotes a rectangle, we will write $C+(x,y)=(d+y,l+x,u+y,r+x)$.
\begin{theorem}[Shift invariance]
\label{thm: shift inv}
Let $A$ and $A'$ denote two symmetric skew domains (with boundaries $(P,Q)$ and $(P',Q')$), and let $\mathcal{C}=\{C_j\}$ and $\mathcal{C}'=\{C_j'\}$ denote two collections of $(P,Q)$-cuts crossing the diagonal, such that $\mathcal{C}$ and $\mathcal{C}'+(0,1)$ are $(P',Q')$ cuts, and $C_j'+(0,1)\preceq C_k$ for all $j$ and $k$. Then
\begin{equation*}
    \Ht_A(\mathcal{C}\cup \mathcal{C}';\mathbf{z})=\Ht_{A'}(\mathcal{C}\cup (\mathcal{C'}+(0,1));\mathbf{z}'),
\end{equation*}
where $\mathbf{z}'$ is obtained from $\mathbf{z}$ by taking the rapidity $z_i$ corresponding to the first row above all the $C_j'$, and moving it to the first row below all the $C_j'+(0,1)$ (and shifting everything in between). 

Similarly, if $\mathcal{C}$ and $\mathcal{C}'+(1,0)$ are $(P',Q')$-cuts, and $C_j'+(1,0)\preceq C_k$ for all $j,k$, then
\begin{equation*}
    \Ht_A(\mathcal{C}\cup \mathcal{C}';\mathbf{z})=\Ht_{A'}(\mathcal{C}\cup (\mathcal{C'}+(1,0));\mathbf{z}'),
\end{equation*}
where $\mathbf{z}'$ is obtained from $\mathbf{z}$ by taking the rapidity $z_i$ corresponding to the first column to the right of all the $C_j'$, and moving it to the first column to the left of all the $C_j'+(1,0)$.
\end{theorem}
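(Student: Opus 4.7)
Following Galashin's strategy in the full space setting \cite{G21}, the plan is to derive shift invariance from the already-established flip invariance (Theorem \ref{thm: ht flip}) by applying the latter twice, with two carefully chosen flip cuts $D_1, D_2$ whose composition realizes the desired shift by $(0,1)$. The $(1,0)$ case will follow by the symmetric argument. As a preparatory step, Lemma \ref{lem: irrev sq} lets me replace both $A$ and $A'$ by a single large symmetric skew domain $A''$ containing all cuts in $\mathcal{C} \cup \mathcal{C}' \cup (\mathcal{C}' + (0,1))$, as well as any auxiliary cuts used in the construction, without affecting the joint height function distribution.

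Set $a = \min_j d'_j$ and $b = \max_j u'_j + 1$, so that the rapidity cyclic shift in the theorem acts on the interval $[a, b]$. I construct $D_1, D_2$ crossing the diagonal so that, after discarding the reversal on irrelevant rapidities via Lemma \ref{lem: key ind lemma}, the reversals $\rev_{D_1}, \rev_{D_2}$ act as $\rev_{[a, b]}$ and $\rev_{[a+1, b]}$. The explicit formula for $\rot_D$ on a cut $C$, namely
\[
\rot_D(C) = (d_D + u_D - u_C,\; l_D + r_D - r_C,\; d_D + u_D - d_C,\; l_D + r_D - l_C),
\]
gives that, with $D_1, D_2$ chosen to have the same $l$ and $u$ coordinates and with $d_{D_2} + u_{D_2} - d_{D_1} - u_{D_1} = 1$ and $l_{D_2} + r_{D_2} - l_{D_1} - r_{D_1} = 0$, the composition $\rot_{D_2} \circ \rot_{D_1}$ acts as the translation $X \mapsto X + (0, 1)$ on any cut rotated by both flips, and as the identity on any cut fixed by both. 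A direct calculation confirms $\rev_{[a+1, b]} \circ \rev_{[a, b]}$ equals the map $\mathbf{z} \mapsto \mathbf{z}'$ described in the theorem.

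Two applications of Theorem \ref{thm: ht flip} then yield
\[
\Ht_{A''}(\mathcal{C} \cup \mathcal{C}'; \mathbf{z}) \deq \Ht_{\rot_{D_2} \rot_{D_1}(A'')}(\rot_{D_2} \rot_{D_1}(\mathcal{C} \cup \mathcal{C}'); \mathbf{z}'),
\]
and the hypothesis $C'_j + (0, 1) \preceq C_k$ is used at this point to verify that $D_1, D_2$ can be placed so that each $C'_j$ is rotated by both flips (and thus becomes $C'_j + (0, 1)$) while each $C_k$ is fixed by both flips (and thus remains $C_k$). A final application of Lemma \ref{lem: irrev sq} then replaces the rotated domain by $A'$.

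The main obstacle is this geometric verification: $D_1, D_2$ must cross the diagonal, cross every cut in $\mathcal{C} \cup \mathcal{C}'$ with the correct direction of the $\preceq$ partial order, and fit into the ``gap'' between the two families of cuts. The diagonal-crossing constraint is new to the half space setting; in the full space setting the analogous construction is essentially unconstrained. The boundary case $u_k = u'_j + 1$ is particularly delicate, as the gap between $\mathcal{C}$ and $\mathcal{C}'$ collapses; careful choice of the free shape coordinates $l_{D_i}, u_{D_i}$ of the flip cuts, together with a sufficient preliminary enlargement of $A''$ to create room, is needed to make the construction go through in all cases.
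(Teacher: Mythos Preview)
Your approach is correct and essentially identical to the paper's: derive shift invariance by composing two applications of flip invariance (Theorem \ref{thm: ht flip}), with Lemma \ref{lem: irrev sq} used at the beginning and end to normalize the domain. The paper resolves what you call the ``main obstacle'' by making the concrete choice $D_1=(d,l,u,r)$ with $d=\min_j d'_j$, $l=\max_j l'_j$, $u=\max_j u'_j+1$, $r=\min_j r'_j$, and $D_2$ equal to $D_1$ with its bottom row removed; this is exactly your ``same $l$ and $u$ coordinates'' specialization, and the required crossing and diagonal-crossing conditions for both flip cuts follow directly from the hypothesis $C'_j+(0,1)\preceq C_k$ without any further enlargement tricks.
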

\begin{proof}
Consider $C=(d,l,u,r)$, where
\begin{align*}
    d&=\min\{d'\mid (d',l',u',r')\in \mathcal{C}'\},
    \\l&=\max \{l'\mid (d',l',u',r')\in \mathcal{C}'\},
    \\u&=\max\{u'+1\mid (d',l',u',r')\in \mathcal{C}'\},
    \\r&=\min \{r'\mid (d',l',u',r')\in \mathcal{C}'\}.
\end{align*}
By construction, $C_j',C_j'+(0,1)\preceq C\preceq C_k$. Replacing $A$ with the smallest symmetric skew domain containing $\mathcal{C}$, $\mathcal{C}'$, and $C$, we have that the $C_j,C_j'$ are still $(P,Q)$-cuts, and by Lemma \ref{lem: irrev sq}, this does not change the distribution of the height functions.

We apply flip invariance (Theorem \ref{thm: ht flip}) twice. First, apply it to $A$ with the cut $C$, noting that $\rot_C(\mathcal{C})=\mathcal{C}$, to obtain
\begin{equation*}
    \Ht_A(\mathcal{C}\cup \mathcal{C}';\mathbf{z})\deq \Ht_{\rot_C(A)}(\mathcal{C}\cup\rot_C(\mathcal{C}');\rev_C(\mathbf{z})).
\end{equation*}
We then apply flip invariance with the cut $C'$, which is $C$ with the bottom row removed (this is a $(P,Q)$-cut by our choice of $C$ and $A$), and note that $\rot_{C'}\circ\rot_C$ exactly corresponds to a shift of the $C_j'$, and leave the remaining $C_i$ alone. Similarly, $\mathbf{z}$ transforms correctly. Moreover, the $\mathcal{C}$ and $\mathcal{C}'+(0,1)$ are cuts for the transformed domain $\rot_{C'}\circ\rot_C(A)$, and the distribution is unchanged by changing this to $A'$ by Lemma \ref{lem: irrev sq}.

For the second part, the proof is identical, except in the definition of $C$, we add $1$ to $r$ instead of $u$, and we apply flip invariance to the cut $C$ and the cut $C'$ where we remove the leftmost column.
\end{proof}

\begin{remark}
By applying Theorem \ref{thm: shift inv} repeatedly (and possibly changing $A$), we obtain that the joint distribution of height functions for any collection of $(P,Q)$-cuts crossing the diagonal and each other is invariant under any shifting of the cuts, as long as they remain crossing the diagonal and each other.
\end{remark}

\begin{remark}
\label{rmk: bd necc}
Note that the condition that the cuts cross the diagonal seems to be necessary, as otherwise even for a single cut, $\Ht_A(C)$ and $\Ht_A(C+(0,1))$ or $\Ht_A(C+(1,0))$ would depend on a subset of the rapidities with a different cardinality. However, this would not be the case for a diagonal shift, and it is clear that for a single height function, $\Ht_A(C)\deq \Ht_{A+(1,1)}(C+(1,1))$ (with the rapidities shifted as well). It is unclear whether a form of shift invariance would hold for height functions near but not crossing the diagonal, and we leave this open.
\end{remark}

Theorem \ref{thm: 6vm shift simple} is an immediate consequence of Theorem \ref{thm: shift inv}.

\begin{proof}[Proof of Theorem \ref{thm: 6vm shift simple}]
For each choice of color cutoff and position, we obtain a rectangle $C_k$ or $C_j'$. Let $A$ be the smallest symmetric skew domain containing all the rectangles, and $A'$ the same but for $C_k$ and $C_j'+(0,1)$. The assumptions guarantee that the $C_k$ and $C_j'$ are all $(P,Q)$-cuts and the $C_k$ and $C_j'+(0,1)$ are all $(P',Q')$-cuts, and that $C_j'+(0,1)\preceq C_k$ for all $k,l$. Thus, we can apply Theorem \ref{thm: shift inv}. Finally, we can extend $A$ and $A'$ to the quadrant by Lemma \ref{lem: irrev sq}, giving the desired conclusion.
\end{proof}

\section{Colored half space ASEP}
\label{sec: asep}
In this section, we explain how to take a limit of the six vertex model to obtain the ASEP, and prove Theorem \ref{thm: shift inv asep}. See Section \ref{sec: intro asep} for the relevant definitions.

We fix parameters $q\in (0,1)$ (the same as for the six-vertex model), and $\beta>\alpha\geq 0$, and a fixed time $\tau>0$. We take a small parameter $\varepsilon>0$, and set
\begin{equation}
\label{eq: spec for asep lim}
z_i=1+\frac{(1-q)\varepsilon}{2},\qquad t=\frac{\alpha}{\beta},\qquad N=\varepsilon^{-1}\tau, 
\end{equation}
and let $\nu\in (-\infty,-1)$ solve
\begin{equation}
\label{eq: nu eq}
    \frac{1-q}{(1-\nu t)(1+1/\nu)}=\beta,
\end{equation}
which has a unique solution since the left hand side is a strictly increasing function going from $0$ to $\infty$. With these choices, we have (up to first order in $\varepsilon$) $\mathbf{p}_{x,y}\approx \varepsilon$ for $x\neq y$, and $\mathbf{p}_{x,x}\approx \beta \varepsilon$. Thus, for bulk vertices, we have
\begin{equation*}
    \PP\left(\vcenter{\hbox{\includegraphics[scale=0.5]{figures/vtx1.pdf}}}\right)\approx \varepsilon,\qquad \PP\left(\vcenter{\hbox{\includegraphics[scale=0.5]{figures/vtx2.pdf}}}\right)\approx 1-\varepsilon,\qquad \PP\left(\vcenter{\hbox{\includegraphics[scale=0.5]{figures/vtx3.pdf}}}\right)\approx q\varepsilon,\qquad \PP\left(\vcenter{\hbox{\includegraphics[scale=0.5]{figures/vtx4.pdf}}}\right)\approx 1-q\varepsilon,
\end{equation*}
and for boundary vertices, we have
\begin{equation*}
    \PP\left(\vcenter{\hbox{\includegraphics[scale=0.5]{figures/vtx1.pdf}}}\right)\approx \beta\varepsilon,\qquad \PP\left(\vcenter{\hbox{\includegraphics[scale=0.5]{figures/vtx2.pdf}}}\right)\approx 1-\beta\varepsilon,\qquad \PP\left(\vcenter{\hbox{\includegraphics[scale=0.5]{figures/vtx3.pdf}}}\right)\approx \alpha\varepsilon,\qquad \PP\left(\vcenter{\hbox{\includegraphics[scale=0.5]{figures/vtx4.pdf}}}\right)\approx 1-\alpha\varepsilon.
\end{equation*}

Let us first see heuristically that with these choices, as $\varepsilon\to 0$, the six-vertex model should converge to the ASEP. Note that as $\varepsilon\to 0$, the arrows in the six-vertex model will essentially always wiggle, alternating turns and almost never going straight. We can thus view each diagonal of the six vertex model as a Bernoulli process, where successes occur when a vertex does not have a turn. While these processes are updated sequentially, and the outcome of one can affect the other, as $\varepsilon\to 0$, this sequential updating will not matter. Thus, we would expect that these Bernoulli processes converge to Poisson processes, which are exactly the Poisson clocks in the ASEP.

This convergence has been shown in the full space setting (for uncolored models) in \cite{A17}, and subsequently extended to other settings, e.g. colored models \cite{Z22}, and half space models (similar but distinct from the ones considered here) \cite{BBCW18}. Here, we also give the main ideas but do not give a full verification, since it would be very similar to the arguments in the full space setting.

\begin{proposition}
\label{prop: 6vm conv asep}
With the specializations of \eqref{eq: spec for asep lim} and \eqref{eq: nu eq}, we have that
\begin{equation*}
    \Ht_\varepsilon^{\geq i}(N-x,N)\xrightarrow{\varepsilon\to 0} h^{\geq i}(\tau,x)
\end{equation*}
jointly in finite dimensional distributions (over all $i$ and $x$).
\end{proposition}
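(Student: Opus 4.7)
The plan is to construct an explicit coupling between the $\varepsilon$-parameterized family of six-vertex models and the colored half space ASEP on a finite interval, and show their height functions agree up to vanishing error. This largely parallels the strategy used in \cite{A17} for the full space uncolored case, \cite{Z22} for the colored extension, and \cite{BBCW18} for related (but distinct) half space models; the argument combines all three ingredients.

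First, I would identify states of the six-vertex model on successive ``waves'' of updates with ASEP states at discrete times $k\varepsilon$. Concretely, after sampling all vertices $(x,y)$ with $x+y \leq k+1$ (with the symmetric reflections handled automatically), one reads off a coupled ASEP configuration by declaring that the color on the horizontal edge exiting $(x,y)$ is the color of the particle at ASEP-position $x-y$ at time $k\varepsilon$. Under the choices \eqref{eq: spec for asep lim} and \eqref{eq: nu eq}, a single wave of updates amounts to: at each bulk pair $(x,y)$ with $x\neq y$, swap the horizontal/vertical colors with probability $\varepsilon+O(\varepsilon^2)$ if they are in increasing order (going up along the diagonal) and with probability $q\varepsilon+O(\varepsilon^2)$ if in decreasing order; at a boundary vertex $(x,x)$, swap with probability $\beta\varepsilon+O(\varepsilon^2)$ or $\alpha\varepsilon+O(\varepsilon^2)$, where the latter follows from $t = \alpha/\beta$ combined with \eqref{eq: nu eq}. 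These are exactly the $\varepsilon$-discretizations of the ASEP swap rates.

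Next, I would control error. On any fixed finite window (which is all that is needed for finite-dimensional distributions), the probability that two swaps at adjacent positions occur in one wave is $O(\varepsilon^2)$, and the number of waves is $N = \tau/\varepsilon$, so the total contribution of ``multiple-swap'' waves is $O(\varepsilon)\to 0$. After conditioning out these bad events, the sequential Markovian updating of the six-vertex model becomes essentially parallel Bernoulli updating at each site, whose joint distribution converges to a collection of independent Poisson processes of rates $1, q, \alpha, \beta$ — precisely the Harris graphical construction of the ASEP. Joint convergence of the height functions then follows from continuity of the map from clock realizations to height functions $h^{\geq i}(\tau,\cdot)$ on a finite window.

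The main obstacle is the bookkeeping for the sequential-to-parallel comparison in the half space, especially at the boundary, since the boundary vertex uses a distinct weight formula governed by $\nu$ and $t$, and the symmetric reflection forces us to keep track of negative colors correctly. The simplification is that by color projection (merging all colors $\geq i$ and all colors $\leq -i$, with $0$ allowed in between), one reduces to a three-species model where the argument of \cite{A17,Z22} carries over nearly verbatim, with the single new input being the check that \eqref{eq: nu eq} produces the claimed boundary swap probabilities. As the author indicates that a full verification is not pursued, I would state the proposition with a reference to these prior works plus the short verification that the boundary rates come out correctly under the stated specializations.
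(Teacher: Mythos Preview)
Your approach is essentially the paper's: restrict to a finite spatial window, recognize that within it both processes are random products in the hyperoctahedral group driven respectively by Bernoulli and Poisson clocks, observe that the probability of two adjacent swaps in one wave is $O(\varepsilon^2)$ so the sequential updating becomes parallel in the limit, and cite \cite{A17,Z22,BBCW18} for the details. The paper phrases the restriction as censoring transitions outside a strip $|x-y|\leq L$, but this is your ``finite window'' in different language.

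The one point the paper handles more explicitly than you do: your assertion that a fixed finite window ``is all that is needed for finite-dimensional distributions'' is not automatic. Both the six-vertex model and the ASEP live on infinite lattices, and one must argue that colors originating far outside the window cannot travel in far enough by time $\tau$ to affect the observed height functions (equivalently, that the censored models approximate the uncensored ones). The paper isolates this as the genuinely remaining step and notes that, after conditioning on what happens at the boundary, it reduces to the full-space estimate of \cite{A17}. Since your plan begins by coupling with the ASEP \emph{on a finite interval}, you owe at least a sentence justifying why the finite-interval height functions approximate those of the infinite-line model; without it there is a gap between what you prove and what the proposition asserts.
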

\begin{proof}
We note that after censoring the effects of transitions outside of an interval $\pm [1,L]$ for the ASEP, and outside of the strip $|x-y|\leq L$ for the six vertex model (in the sense that if a color tries to leave this strip, the vertex will immediately force it to turn back), the convergence is immediate from standard convergence of Bernoulli to Poisson processes (as both the ASEP and the output of the six vertex model can be viewed as a random product in $B_n$, the former with factors appearing through independent Poisson processes, and the latter with factors appearing through independent Bernoulli processes, where technically the Bernoulli processes are updated in a certain order, but since the probability of two successes at the same time goes to $0$, this will not affect the limit). 

Thus, what remains is to show that if we pick a large enough interval $\pm [1,L]$, then this censoring will have essentially no effect, giving the convergence in distribution even without the censoring. Since we only care about finitely many height functions, we can forget about the differences between colors outside of $\pm [1,L]$. Then the only possible issue would be if a color from outside $\pm [1,L]$ traveled far enough into the interval to affect the values of the height function, or a color we track travels outside this interval. But this is a question of the six vertex model and the ASEP away from the boundary (we can simply condition on what happens at the boundary and establish uniform bounds), and in particular can be handled the same way as for the full space models, see \cite{A17}.
\end{proof}

With this, shift invariance for the ASEP (Theorem \ref{thm: shift inv asep}) follows immediately from shift invariance for the six-vertex model.

\section{Absorption times for the type \texorpdfstring{$B$}{B} oriented swap process}
\label{sec: osp}
In this section, we prove a type $B$ analogue of a result of Bufetov, Gorin, and Romik on the asymptotics for the absorption time of the oriented swap process \cite{BGR20}. Our notation and conventions will follow the Coxeter group literature, and so we will refer to the model defined below as a type $B$ model rather than half-space model.

\subsection{Definition of models}
\subsubsection{Type \texorpdfstring{$B$}{B} oriented swap process}
The \emph{type $B$ oriented swap process} is a continuous time absorbing Markov chain on $B_n$. To each Coxeter generator, we associate an exponential clock of rate $1$, except for the generator $s_0$, which we associate with a rate $\mu$ clock. When a clock associated to a generator rings, we move from $\pi$ to $\pi s$ if $l(\pi s)>l(\pi)$.

This Markov chain can alternatively be viewed as a Markov chain on the set of arrangements of the numbers $-n,\dotsc, -1,1,\dotsc,n$. We start with the numbers in increasing order, and associate rate $1$ clocks to edges $(i,i+1)$ and a rate $\mu$ clock to $(-1,1)$. When a clock rings, we swap the numbers at positions $i$ and $i+1$, as well as $-i$ and $-i-1$, if the numbers in those positions are in increasing order. This is equivalent to the fully colored TASEP on a half-open interval, which can be obtained from the ASEP by setting the parameters $q=\alpha=0$. This is a type $B$ analogue of the oriented swap process, which is the same Markov chain but on the numbers $1,\dotsc, n$, and no symmetry constraints (or alternatively fully colored TASEP on a closed interval).

This Markov chain is absorbed at $\pi_0$ the longest element in $B_n$, or the configuration of numbers $n,\dotsc, 1,-1,\dotsc, -n$ in reverse order. Let $T_i$ denote the last time that a swap using $s_i$ occurs, and let $T=\max(T_0,\dotsc, T_{n-1})$ denote the time of absorption. 

More generally, one can define a partially colored model, which corresponds to arrangements of the numbers $-n,\dotsc, -k-1,0,\dotsc, 0,k+1,\dotsc, n$, where there are $2k$ $0$'s in the middle. In the language of Coxeter groups, this corresponds to taking the parabolic quotient with respect to $B_{k}$. In this case, the absorbing state is achieved at time $T^{(k)}=\max(T_{n-1},\dotsc, T_{k})$.

The goal of this section is to relate the distribution of $T$ to certain last passage times in a half space geometry, and study its fluctuations.

\subsubsection{Half space last passage percolation}
Consider a collection of exponential clocks $X_{i,j}$ associated to the vertices in the half quadrant $\{(i,j)\in \N^2\mid i\leq j\}$. When $i<j$, the clocks are rate $1$, and when $i=j$, the clocks are rate $\mu$. We define the \emph{point-to-point last passage time}
\begin{equation*}
    L(x,y)=\max_{\gamma:x\to y}\wt(\gamma),
\end{equation*}
where the maximum is over all up-right paths from $x$ to $y$, and $\wt(\gamma)=\sum_k X_{\gamma_k}$, where $\gamma_k$ are the vertices in the path $\gamma$. We define the \emph{restricted point-to-half-line last passage time} as
\begin{equation*}
    L(x,n)=\max\{L(x,(i,j))\mid i+j=n, i\leq j\}.
\end{equation*}
More generally, we will let $L(x,n,m)=\max\{L(x,(i,j))\mid i+j=n, i\leq j, j\leq m\}$ denote the last passage time between $x$ and the segment of the line $i+j=n$ below $m$. Note that it does not matter whether we restrict our paths to lie below the diagonal or not, because the symmetry constraint ensures that any path going above the diagonal has a weight equal to a path going below the diagonal.

We now recall the following result on the limiting distribution for these point to line LPP times.
\begin{theorem}
\label{thm: LPP asymptotics}
Let $F_{GUE}$ and $F_{GOE}$ denote the Tracy--Widom distributions for the GUE and GOE matrix ensembles respectively. Let $\Phi$ denote the distribution function for a standard normal random variable. Let $m/n$ converge compactly to $\gamma^2\in (0,\infty)$. Then if $\mu>\frac{\gamma}{1+\gamma}$,
\begin{equation*}
    \PP\left(\frac{L((0,0),n+m-2,m-1)-m\alpha}{m^{\frac{1}{3}}\sigma}\leq x\right)\to F_{GUE}(x),
\end{equation*}
if $\mu=\frac{\gamma}{1+\gamma}$,
\begin{equation*}
    \PP\left(\frac{L((0,0),n+m-2,m-1)-m\alpha}{m^{\frac{1}{3}}\sigma}\leq x\right)\to F_{GOE}^2(x),
\end{equation*}
and if $\mu<\frac{\gamma}{1+\gamma}$,
\begin{equation*}
    \PP\left(\frac{L((0,0),n+m-2,m-1)-m\alpha'}{m^{\frac{1}{2}}\sigma'}\leq x\right)\to \Phi(x).
\end{equation*}
Here, the constants $\alpha,\alpha',\sigma,\sigma'$ can be given explicitly as
\begin{alignat*}{4}
    &\alpha &&=(1+\gamma^{-1})^2 &\sigma&=\gamma^{-1}(1+\gamma)^{4/3}
    \\&\alpha'&&=\mu^{-1}+\frac{\gamma^{-2}}{1-\mu}\qquad &\sigma'&=\left(\mu^{-2}+\frac{\gamma^{-2}}{(1-\mu)^2}\right)^{1/2}
\end{alignat*}
\end{theorem}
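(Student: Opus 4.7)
The strategy is to reduce the half space point-to-line LPP time to an ordinary point-to-point LPP time in a full space environment with a single ``spiked'' row, and then quote the Baik--Ben Arous--P\'ech\'e phase transition in that setting.

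For the reduction, I would invoke the classical distributional identity (going back to Baik--Rains and Forrester--Rains, ultimately via the symmetric RSK correspondence) equating the half space exponential LPP with boundary rate $\mu$ to a full space exponential LPP with one inhomogeneous row whose rate is a prescribed function of $\mu$. Concretely, the point-to-line quantity $L((0,0),n+m-2,m-1)$, a maximum over endpoints on the truncated anti-diagonal $\{i+j=n+m-2,\ j\le m-1\}$, should be equal in distribution to a full space point-to-point exponential LPP time
\[
\widetilde L\bigl((0,0),(n-1,m-1)\bigr),
\]
where all clocks of $\widetilde L$ are rate $1$ except for a single distinguished row (or column) carrying rate $\mu$.

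Next I would apply the BBP phase transition for full space LPP with a single spike, originally proved in the random matrix setting in \cite{BBP05} and in the exponential LPP setting in \cite{BC11, O08}. In the direction $m/n\to\gamma^2$ the critical spike value is precisely $\mu_c=\gamma/(1+\gamma)$. For $\mu>\mu_c$ the spike is subcritical and invisible at leading order, so the geodesic behaves as in the homogeneous model and one obtains $F_{GUE}$ fluctuations on scale $m^{1/3}\sigma$ around the bulk LLN constant $m\alpha=m(1+\gamma^{-1})^2$. At $\mu=\mu_c$ the spike is marginal and the limit is $F_{GOE}^2$ on the same scale. For $\mu<\mu_c$ the spike is supercritical: the geodesic spends order $m$ steps on the inhomogeneous row and order $m$ elsewhere, the resulting weight splits into two essentially independent sums of geometric-type variables with means $\mu^{-1}$ and $\gamma^{-2}/(1-\mu)$, and a standard CLT produces Gaussian fluctuations on scale $m^{1/2}\sigma'$ around $m\alpha'$. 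Tracking the constants through the identity above reproduces the exact values of $\alpha,\alpha',\sigma,\sigma'$.

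The main obstacle is to justify the distributional identity carefully in this restricted point-to-line geometry and to match the boundary parameter $\mu$ with the spike parameter so that the critical threshold, LLN constants, and fluctuation scales all line up — in particular, verifying that the restriction $j\le m-1$ localizes the maximum at the corner $(n-1,m-1)$ on the full space side. Once this identification is in hand the three-regime asymptotics follow by direct citation of the classical BBP-type results, with no additional integrable probability input required.
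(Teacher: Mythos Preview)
Your proposal is correct and follows essentially the same route as the paper: the paper does not give a self-contained proof but simply cites the Baik--Rains distributional identity \cite[Eqs.~7.59, 7.60]{BR01a} equating $L((0,0),n,m)$ with a full space point-to-point LPP time, and then cites \cite{BBP05, O08} (and Proposition~2.1 of \cite{BC11}) for the BBP-type asymptotics of the latter. Your outline fills in more of the mechanism behind those citations than the paper itself does.
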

This result follows immediately from a distributional identity between $L((0,0),n,m)$ and a point-to-point last passage time in a full space LPP model (see \cite[Eqs. 7.59, 7.60]{BR01a} for the case of geometric LPP, for which a limit to the exponential model can be taken), whose asymptotics are given in \cite{BBP05, O08} (see also Proposition 2.1 of \cite{BC11}). A similar result in the positive-temperature setting for the log-gamma polymer was recently proven in \cite{BW21}. For simplicity, we keep the boundary parameter $\mu$ fixed. If one varies $\mu$, the BBP phase transition near the critical point can also be observed.

\subsection{Distributional identities and conjectures}
The main result of this section is the following distributional identity for $T$ in terms of point to line passage times in a half space LPP model.
\begin{theorem}
\label{thm: osp lpp same}
We have
\begin{equation*}
    T\deq L((0,0),2n-2).
\end{equation*}
More generally,
\begin{equation*}
    T^{(k)}=\max(T_{n-1},\dots, T_k)\deq L((0,0),2n-2,n-1-k).
\end{equation*}
\end{theorem}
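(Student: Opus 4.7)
The plan is to establish the joint distributional identity
\begin{equation*}
\bigl(T_k, T_{k+1}, \dotsc, T_{n-1}\bigr) \deq \bigl(L((0,0),(n-1-j,\, n-1+j))\bigr)_{j=k}^{n-1},
\end{equation*}
from which the theorem follows by taking the maximum on both sides. The strategy parallels Bufetov--Gorin--Romik \cite{BGR20} for the type $A$ oriented swap process, with shift invariance for the half space TASEP (Theorem~\ref{thm: shift inv asep}) replacing the full-space shift invariance used there.

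First, I would express each $T_j$ as a first-passage event in the half space TASEP. Recall that the type $B$ OSP is the fully colored half space TASEP on $\pm[1,n]$ at $q=\beta=0$, $\alpha=\mu$; by the symmetric coupling of swaps, $T_j$ is also the time of the last swap at the reflected edge $(-j-1,-j)$. Projecting colors $\geq j+1$ to a single ``heavy'' type and colors $\leq j$ to a ``light'' type (via the color projection property of the six vertex model / TASEP), I would identify $T_j$ with the first time $\tau$ at which the height function $h^{\geq j+1}(\tau,-j-1)$ reaches its terminal value $n-j$, i.e.\ the first time that all heavy particles have arrived at positions $\leq -j-1$. A short combinatorial argument, using the explicit form $w_0(j)=-j$ of the absorbing state, is needed to rule out the possibility of a light-light swap at edge $(-j-1,-j)$ occurring strictly after the last heavy-light crossing.

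Next, the standard half space TASEP-to-LPP correspondence at $q=\beta=0$, $\alpha=\mu$ places the exponential clocks driving the TASEP in bijection with the weights of the half space exponential LPP (off-diagonal rate $1$, diagonal rate $\mu$); under this correspondence, the first-passage times of height functions become point-to-point LPP times $L((0,0),(a_j,b_j))$. The naive endpoints $(a_j,b_j)$ will not lie on the antidiagonal $a+b=2n-2$; I would therefore apply shift invariance (Theorem~\ref{thm: shift inv asep}) iteratively to the family $\{h^{\geq j+1}(\tau,-j-1)\}_{j=k}^{n-1}$, at each stage partitioning the observables into ``primed'' and ``unprimed'' and shifting the primed ones by $+1$ in both color cutoff and position. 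The hypotheses of shift invariance are satisfied throughout: each color cutoff $j+1$ is positive, each position $-j-1$ is at most $-1$, and the observables remain in the required order. After a suitable sequence of such shifts the corresponding LPP endpoints become $(n-1-j,n-1+j)$, delivering the joint identity.

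Taking the maximum on both sides then gives
\begin{equation*}
T^{(k)} = \max(T_{n-1},\dotsc,T_k) \deq \max_{j=k}^{n-1} L((0,0),(n-1-j,\, n-1+j)) = L((0,0),2n-2,\, n-1-k).
\end{equation*}
The main difficulty is the first step: pinning down the exact first-passage identification of $T_j$ with a TASEP height function event, and in particular ruling out light-light swaps at edge $(-j-1,-j)$ occurring after the last heavy-light crossing. Once this identification is in place, the shift-invariance argument and TASEP-LPP correspondence deliver the joint identity along the same lines as the type $A$ template.
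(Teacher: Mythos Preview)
Your plan over-reaches: the joint identity $(T_k,\dotsc,T_{n-1})\deq\bigl(L((0,0),(n-1-j,n-1+j))\bigr)_{j}$ you set out to prove is precisely Conjecture~\ref{conj: osp}, which the paper leaves open. The obstacle is that Theorem~\ref{thm: shift inv asep} holds only at a \emph{single} time $\tau$. From $\{h^{\geq j+1}(\tau,-j-1)\}_j\deq\{h^{\geq 1}(\tau,-2j-1)\}_j$ for each fixed $\tau$ you can deduce $\PP(T_j\le\tau\ \forall j)=\PP(L_j\le\tau\ \forall j)$, i.e.\ the law of $\max_j T_j$, but not $\PP(T_j\le\tau_j\ \forall j)=\PP(L_j\le\tau_j\ \forall j)$ for distinct $\tau_j$; a multi-time shift invariance would be needed, and the paper explicitly notes this limitation (see the remark after Theorem~\ref{thm: shift inv asep}). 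Separately, your claim that the TASEP--LPP correspondence turns the first-passage time of $h^{\geq j+1}$ directly into a point-to-point LPP time $L((0,0),(a_j,b_j))$ is not correct for $j\ge 1$: the projection $\ge j+1$ produces $2j$ second-class particles at positions $\pm 1,\dotsc,\pm j$, and such a system has no simple LPP description. Shift invariance is exactly what reduces the color cutoff to $1$ (where the LPP coupling does apply), so it must come \emph{before} the LPP identification, not after.

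The paper's argument uses the same ingredients but aims only at the distribution of the maximum. It first passes from the finite OSP to the infinite half-space TASEP via the coupling of Proposition~\ref{prop: fin inf couple}, which gives $\widehat h^{\geq i+1}(\tau,-i-1)=\min(h^{\geq i+1}(\tau,-i-1),n-i)$; this step is absent from your outline and is needed because Theorem~\ref{thm: shift inv asep} is stated on $\pm\N$, not $\pm[1,n]$. Then single-time shift invariance converts $\{h^{\geq i+1}(\tau,-i-1)\ge n-i\ \forall i\ge k\}$ into $\{h^{\geq 1}(\tau,-2i-1)\ge n-i\ \forall i\ge k\}$ (Lemma~\ref{lem: osp TASEP}), and only then does the TASEP--LPP coupling identify the latter with $\{L((0,0),(n+i-1,n-i-1))\le\tau\ \forall i\ge k\}$. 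This yields $\PP(T^{(k)}\le\tau)=\PP(L((0,0),2n-2,n-1-k)\le\tau)$ directly, without ever touching the joint law of the individual $T_j$.
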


An immediate corollary of Theorem \ref{thm: osp lpp same} and the limit results given in Theorem \ref{thm: LPP asymptotics} are the following limit results for the fluctuations of the absorbing times in the type $B$ oriented swap process. As noted above, for simplicity we fix the boundary parameter $\mu$, but varying $\mu$ would allow us to obtain the BBP phase transition near the critical point as well.

\begin{corollary}
\label{cor: osp asymp}
Let $k$ vary with $n$ such that $\frac{n-k}{n+k}\to \gamma^2\in (0,1]$ compactly away from $0$. Then if $\mu>\frac{\gamma}{1+\gamma}$,
\begin{equation*}
    \PP\left(\frac{T^{(k)}-m\alpha}{m^{\frac{1}{3}}\sigma}\leq x\right)\to F_{GUE}(x),
\end{equation*}
if $\mu=\frac{\gamma}{1+\gamma}$,
\begin{equation*}
    \PP\left(\frac{T^{(k)}-m\alpha}{m^{\frac{1}{3}}\sigma}\leq x\right)\to F_{GOE}^2(x),
\end{equation*}
and if $\mu<\frac{\gamma}{1+\gamma}$,
\begin{equation*}
    \PP\left(\frac{T^{(k)}-m\alpha'}{m^{\frac{1}{2}}\sigma'}\leq x\right)\to \Phi(x).
\end{equation*}
The constants $\alpha,\alpha',\sigma,\sigma'$ are the same as in Theorem \ref{thm: LPP asymptotics}.
\end{corollary}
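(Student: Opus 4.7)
The plan is to combine Theorem~\ref{thm: osp lpp same} with the point-to-line LPP asymptotics stated in Theorem~\ref{thm: LPP asymptotics}. By Theorem~\ref{thm: osp lpp same} we have the distributional identity
\begin{equation*}
T^{(k)} \deq L((0,0),\, 2n-2,\, n-1-k),
\end{equation*}
so it suffices to establish the stated limits for the right hand side.

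Next I would match parameters with the notation of Theorem~\ref{thm: LPP asymptotics}. Setting $\tilde{n} = n+k$ and $\tilde{m} = n-k$, one checks $\tilde{n}+\tilde{m}-2 = 2n-2$ and $\tilde{m}-1 = n-1-k$, so
\begin{equation*}
L((0,0),\, 2n-2,\, n-1-k) = L((0,0),\, \tilde{n}+\tilde{m}-2,\, \tilde{m}-1).
\end{equation*}
The assumption $(n-k)/(n+k) \to \gamma^2 \in (0,1]$ compactly away from $0$ is exactly $\tilde{m}/\tilde{n} \to \gamma^2$ compactly in $(0,\infty)$, which is the hypothesis of Theorem~\ref{thm: LPP asymptotics}. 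The critical threshold $\gamma/(1+\gamma)$ for $\mu$ and the constants $\alpha, \alpha', \sigma, \sigma'$ depend only on $\gamma$ and $\mu$, so they agree in the two statements. Writing $m = \tilde{m} = n-k$ throughout, each of the three cases of Theorem~\ref{thm: LPP asymptotics} translates directly into the corresponding case of Corollary~\ref{cor: osp asymp}.

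There is no real obstacle here; the proof is a direct translation given the two earlier results. The substantive inputs, namely Theorem~\ref{thm: osp lpp same} (proved via shift invariance for the half space TASEP) together with the identity relating half space point-to-line LPP to full space point-to-point LPP and the BBP asymptotics of \cite{BBP05, O08, BC11}, have already been invoked; only the parameter bookkeeping and the observation that compact convergence is preserved under the substitution $(\tilde m, \tilde n) = (n-k,\, n+k)$ remain, both of which are immediate.
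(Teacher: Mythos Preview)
Your proposal is correct and follows exactly the approach the paper intends: the paper states this result as an ``immediate corollary'' of Theorem~\ref{thm: osp lpp same} and Theorem~\ref{thm: LPP asymptotics}, and you have carried out precisely the parameter matching $(\tilde m,\tilde n)=(n-k,n+k)$ that the paper leaves implicit. There is nothing to add.
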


In analogy with what occurs in the type $A$ setting, we make the following conjecture. The type $A$ analogue was originally conjectured in \cite{BCGR21}, and recently proved by Zhang \cite{Z22}.
\begin{conjecture}
\label{conj: osp}
We have the distributional identity
\begin{equation*}
    (T_0,\dotsc, T_{n-1})\deq (L((0,0),(n-1,n-1)),\dotsc, L((0,0),(2n-2,0))).
\end{equation*}
\end{conjecture}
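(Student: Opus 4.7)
The plan is to mimic Zhang's proof of the type $A$ analogue \cite{Z22}, whose input was a multi-time version of shift invariance for the full-space colored TASEP. The conjecture should follow from an analogous multi-time shift invariance for the half-space colored TASEP, combined with the TASEP-to-LPP coupling already used to prove Theorem \ref{thm: osp lpp same}.

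Step 1: Prove a multi-time shift invariance statement for the colored half-space TASEP. The form should be: if $\{(i_k, x_k, \tau_k)\}$ is a collection of color/position/time triples with $1 \leq i_k$ and $x_k \leq -1$, and the triples are ordered in a suitable ``space-time down-right'' sense, then the joint law of $\{h^{\geq i_k}(\tau_k, x_k)\}$ is invariant under simultaneously shifting any subset of them by $(+1,+1)$ in the $(i,x)$-variables. At the six-vertex level, this should follow from a multi-time analogue of Theorem \ref{thm: shift inv}, where the single symmetric skew domain $A$ is replaced by one with several $(P,Q)$-cuts lying at different ``heights,'' and flip invariance (Theorem \ref{thm: ht flip}) is applied iteratively.

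Step 2: Translate each $T_i$ to a half-space LPP time. The argument used for Theorem \ref{thm: osp lpp same} already establishes, via the standard TASEP-LPP correspondence in a half space (Lindstr\"om-type arguments adapted to the type $B$ setting), that the last-swap time $T_i$ equals in distribution the point-to-point LPP time $L((0,0),(n-1+i, n-1-i))$ up to the usual diagonal reflection identifications. The content of the conjecture is therefore the joint statement, which is where multi-time shift invariance enters.

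Step 3: Apply the multi-time shift invariance from Step 1 inductively to deform the joint distribution of $(T_0,\dots,T_{n-1})$, viewed as last-passage times on the TASEP side, into the joint distribution of the LPP times $(L((0,0),(n-1,n-1)),\dots,L((0,0),(2n-2,0)))$. In the type $A$ case, Zhang carried out exactly such a deformation through a carefully chosen sequence of shifts that never leaves the allowed region; the same scheme should apply here because all the rectangles involved begin at the boundary column and hence automatically cross the diagonal, so the ``crosses the diagonal'' hypothesis required by our version of shift invariance is preserved at every intermediate step.

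The main obstacle is Step 1. The proof of Theorem \ref{thm: shift inv} proceeds via flip invariance for a single symmetric skew domain $A$ using the Hecke algebra element $Y_A$, and both the induction in Theorem \ref{thm: flip} and the classification in Lemma \ref{lem: bd cond trichot} are set up for a single peripheral boundary. For a multi-time statement one needs to interleave several such $Y_A$'s with Hecke-algebra factors corresponding to intermediate time slabs, applying the Yang--Baxter and reflection equations simultaneously across multiple layers. In the full-space setting \cite{Z22}, the analogous extension is achieved by a clever coupling argument combined with iterated single-time shift invariance; emulating this in the half-space setting is plausible but requires careful bookkeeping at the reflection equation, since the boundary parameters $t,\nu$ couple height functions on both sides of the diagonal and may obstruct a naive layer-by-layer induction. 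Once this multi-time flip/shift invariance is in place, Steps 2 and 3 should be routine adaptations of the existing arguments.
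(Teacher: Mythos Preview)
The paper does not prove this statement: it is stated as a conjecture and explicitly left open. In the remark following Conjecture~\ref{conj: osp}, the author notes that the marginal equalities follow from the single-time shift invariance already established, that the joint statement is the nontrivial part, and that ``it is unclear whether the techniques developed in \cite{Z22} could also be used to attack Conjecture~\ref{conj: osp}, and we leave this as an open question.'' There is therefore no proof in the paper for your proposal to be compared against.

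Your proposal is not a proof but a research outline, and you correctly identify the gap yourself: Step~1, a multi-time shift invariance for the half-space colored TASEP, is not available. Everything in Steps~2 and~3 is indeed routine once Step~1 is in hand, and the paper effectively agrees with you on this point. But Step~1 is a genuine open problem, not a technicality. The single-time argument in the paper passes through the Hecke-algebra element $Y_A$ for a single symmetric skew domain and the trichotomy of Lemma~\ref{lem: bd cond trichot}; extending this to multiple time layers means handling products of several such elements interleaved with boundary $R$-matrices, and the reflection equation couples the two halves of each layer in a way that does not obviously factor through the induction of Theorem~\ref{thm: flip}. Your sketch of how to do this (``applying the Yang--Baxter and reflection equations simultaneously across multiple layers'') is at the level of a wish rather than an argument. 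In short, your plan matches the natural strategy the paper alludes to, but neither you nor the paper supplies the missing ingredient, so the conjecture remains open.
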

\begin{remark}
Using the standard coupling between TASEP and LPP, as well as shift invariance for a single LPP time, it is not hard to show that Conjecture \ref{conj: osp} holds for the marginal distributions (see proof of Theorem \ref{thm: osp lpp same}). The non-trivial statement is that it holds jointly. Theorem \ref{thm: osp lpp same} is a simple corollary of Conjecture \ref{conj: osp}, and provides some more evidence for the conjecture.

In the type $A$ case, this was established in \cite{Z22}, using shift invariance as a crucial tool. It is unclear whether the techniques developed in \cite{Z22} could also be used to attack Conjecture \ref{conj: osp}, and we leave this as an open question.
\end{remark}

\subsection{Proof of Theorem \ref{thm: osp lpp same}}
Our proof follows the strategy of \cite{BGR20} where the type $A$ analogue was established.
We first note that we have a coupling between the finite and infinite TASEPs, given by using the same clocks within the interval $\pm [1,n]$. A crucial fact is that certain events are equivalent under this coupling. The following result is an analogue of a similar result in the full space setting with a similar proof, see Lemma 3.3 of \cite{AHR09}.

\begin{proposition}
\label{prop: fin inf couple}
Couple the exponential clocks of the finite TASEP on $\pm [1,n]$ and the infinite TASEP on $\pm \N$, by using the same clocks within $\pm [1,n]$. Let $\widehat{h}$ and $h$ denote the height functions for the two models. Then
\begin{equation*}
\widehat{h}^{\geq i+1}(\tau,-i-1)=\min(h^{\geq i+1}(\tau,-i-1),n-i)
\end{equation*}
for all $i=0,\dotsc, n-1$.
\end{proposition}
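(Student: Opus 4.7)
The plan is to pass to a coarser projection of the dynamics and exploit a rank-ordering of particles. Group the colors into three classes $A = \{c : c \ge i+1\}$, $C = \{c : c \in \pm[1,i]\}$, and $B = \{c : c \le -i-1\}$. Using the symmetry $\eta(-x) = -\eta(x)$, one checks that the induced dynamics on the 3-class labels form a Markov process, and both $\widehat{h}^{\ge i+1}(\tau,-i-1)$ and $h^{\ge i+1}(\tau,-i-1)$ are simply the number of $A$-sites at positions $\le -i-1$ in the respective models. The crucial structural feature is that in this 3-class projection, $A$-particles move only leftward and no two of them ever swap, so the $A$-particles are totally ordered by position at every time.

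Write $p_1(\tau) < p_2(\tau) < \dotsb$ for the positions of the $A$-particles, with superscripts $\mathrm{fin}$ or $\infty$ distinguishing the two coupled models. The main technical claim is that under the coupling,
\[
p_k^{\mathrm{fin}}(\tau) \;=\; \max\!\bigl(p_k^\infty(\tau),\; -n+k-1\bigr)\qquad\text{for all } k \in \{1,\dots,n-i\} \text{ and } \tau \ge 0.
\]
Granting this, the proposition follows directly: since $-n+k-1 \le -i-1$ whenever $k \le n-i$, the events $\{p_k^{\mathrm{fin}}(\tau) \le -i-1\}$ and $\{p_k^\infty(\tau) \le -i-1\}$ coincide, so $\widehat{h}^{\ge i+1}(\tau,-i-1) = |\{k \in [1,n-i] : p_k^\infty(\tau) \le -i-1\}|$. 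Meanwhile $h^{\ge i+1}(\tau,-i-1) = |\{k \ge 1 : p_k^\infty(\tau) \le -i-1\}|$, and by the ordering of the ranks, if any $k > n-i$ satisfies $p_k^\infty(\tau) \le -i-1$ then all ranks $1,\dots,n-i$ do as well. Combining the two cases gives $\widehat{h} = \min(h,n-i)$.

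I would prove the main claim by induction on $k$. For $k = 1$, the leftmost $A$-particle is never blocked by another $A$, so its left-moves are triggered precisely by the shared clocks on the edges to its immediate left (rate $1$ in the bulk, rate $\mu$ at the boundary crossing $1 \to -1$). Its trajectory therefore agrees in the two models as long as $p_1 \ge -n$; once $p_1^{\mathrm{fin}}$ reaches $-n$ it is pinned there, while the extra clocks on $(-n-1,-n),(-n-2,-n-1),\dots$, present only in the infinite model, may push $p_1^\infty$ strictly below $-n$. The inductive step uses that rank $k$'s left-motion depends only on the shared clock at its current left edge and on whether rank $k-1$ occupies the site immediately to its left. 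A case analysis based on whether $p_k^\infty \ge -n+k-1$ (the ``free'' regime, in which rank $k$ tracks its infinite counterpart) or $p_k^\infty < -n+k-1$ (the ``pinned'' regime, in which lower ranks form a traffic jam at positions $-n, -n+1,\dots,-n+k-2$), combined with the inductive hypothesis on rank $k-1$, should yield the $\max$-formula for rank $k$.

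The main obstacle is the careful book-keeping in the inductive step at the boundary between the two regimes. When $p_{k-1}^{\mathrm{fin}}$ sits at its wall $-n+k-2$ but $p_{k-1}^\infty$ is strictly further left, a clock ring that moves rank $k$ in the infinite model may correspond to a blocked move in the finite one; verifying that the resulting discrepancy always pins rank $k$ exactly at $-n+k-1$ (rather than at some position strictly above) requires showing that ranks $1,\dots,k-1$ form an unbroken string of $A$'s at positions $-n,\dots,-n+k-2$ in the finite model whenever any of them is pinned. I expect this to follow from the same induction, but it is the cleanest place where the argument could need strengthening.
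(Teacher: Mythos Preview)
Your proposal is correct and follows essentially the same route as the paper. The paper reduces to a two-class projection (using the $B$-particles, i.e.\ colors $\le -i-1$, moving right and getting stuck at the wall $n$) rather than your three-class projection with $A$-particles moving left and getting stuck at $-n$; by the built-in symmetry $\eta(-x)=-\eta(x)$ these are mirror images of one another, and in fact your three classes collapse to two because $A$-particles treat $C$ and $B$ identically. The paper's induction is exactly your ``$j$th particle cannot be blocked until it reaches position $n-j+1$'' (equivalently your $-n+k-1$), and the obstacle you flag is not a real difficulty: once $p_{k-1}^{\mathrm{fin}}$ is pinned the inductive hypothesis already forces $p_1^{\mathrm{fin}},\dots,p_{k-1}^{\mathrm{fin}}$ to form the unbroken string $-n,\dots,-n+k-2$, because each $p_j^\infty$ is monotone and $p_j^\infty < p_{j+1}^\infty$, so the pinning is permanent and the differential blocking of rank $k$ can only ever occur at the single site $-n+k-1$.
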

\begin{proof}
First, notice that it suffices to study a system with particles at $-n$ to $-i-1$ for the finite system, and particles to the left of $-i-1$ in the infinite system, since for the purposes of the height functions in consideration these colors are all equivalent, and the remaining colors (which we view as holes), are also equivalent. It thus suffices to show that the equality is preserved at each jump (and since there are finitely many colors, there will be only finitely many jumps up until time $\tau$).

Since we can view the finite process as the infinite process with jumps between $n$ and $n+1$ censored, the $j$th rightmost particle in the finite process is always weakly to the left of the $j$th rightmost particle in the infinite process, and the only discrepancies are caused by jumps occurring in the infinite process, but blocked in the finite process.

Both events only care about the $n-i$ rightmost particles, which can never move left. We claim that the $j$th particle from the right cannot be blocked in this way until reaching position $n-j+1$. We prove this by induction. For $j=1$, this is clear. For the $j$th particle, any blocking must occur due to a discrepancy with the $j-1$th particle, which by induction must occur after it reaches $n-j+2$. But $\min(h^{\geq i}(\tau,-i),n-i)$ never sees these discrepancies, because they must occur past the point that we're counting the particles.
\end{proof}

With this, we may now apply shift invariance to obtain the following distributional identity.

\begin{lemma}
\label{lem: osp TASEP}
We have
\begin{equation*}
    \PP(T_i\leq \tau\text{ for }i=k,\dotsc, n-1)=\PP(h^{\geq 1}(\tau,-2i-1)\geq n-i \text{ for }i=k,\dotsc, n-1).
\end{equation*}
\end{lemma}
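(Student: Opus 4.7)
The plan is to go in three stages. First, I would translate the event $\{T_i \le \tau : i\in [k,n-1]\}$ in the finite TASEP on $\pm[1,n]$ into a saturation condition on the finite-TASEP height function $\widehat{h}$. Second, I would use Proposition \ref{prop: fin inf couple} to restate this saturation as a lower bound on the infinite-TASEP height function $h$. Third, I would apply shift invariance (Theorem \ref{thm: shift inv asep}) iteratively to transform the resulting family $\{h^{\ge i+1}(\tau,-i-1)\}_i$ into $\{h^{\ge 1}(\tau,-2i-1)\}_i$. I expect the first stage to be the main obstacle, since it converts a future-measurable event into a condition on the state at time $\tau$.

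For the first stage, I would prove
\[
\{T_i\le\tau\text{ for all }i\ge k\}\ \Longleftrightarrow\ \{\eta(\tau,j)=-j\text{ for all }j\in[k+1,n]\}\ \Longleftrightarrow\ \{\widehat{h}^{\ge i+1}(\tau,-i-1)=n-i\text{ for all }i\in[k,n-1]\}.
\]
If $T_i\le\tau$ for all $i\ge k$, no swap occurs on the bonds $(i,i+1)$ with $i\ge k$ after time $\tau$, so the values at positions $k+1,\dots,n$ are frozen from time $\tau$; combined with eventual absorption at $\eta(j)=-j$, this forces $\eta(\tau,j)=-j$ already. Conversely, once $\eta(\tau,j)=-j$ for $j\in[k+1,n]$, every bond $(i,i+1)$ with $i\ge k$ has $\eta(i)>\eta(i+1)$ (using that the remaining value at position $k$ lies in $\{-k,\dots,-1,1,\dots,k\}$ by antisymmetry and hence strictly exceeds $\eta(k+1)=-k-1$), and this ordering is preserved by future dynamics since positions $k+1,\dots,n$ stay frozen. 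The second equivalence holds because $\widehat{h}^{\ge i+1}(\tau,-i-1)$ attains its maximum $n-i$ precisely when all $n-i$ particles of color $\ge i+1$ lie at positions $\le -i-1$; requiring saturation for each $i\in[k,n-1]$ inductively pins particle $j$ to position $-j$ for $j\in[k+1,n]$.

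For the second stage, Proposition \ref{prop: fin inf couple} gives $\widehat{h}^{\ge i+1}(\tau,-i-1)=\min(h^{\ge i+1}(\tau,-i-1),n-i)$ under the coupling, so jointly in $i$ the event $\widehat{h}=n-i$ coincides with $h\ge n-i$, yielding
\[
\PP(T_i\le\tau\text{ for }i\in[k,n-1])=\PP(h^{\ge i+1}(\tau,-i-1)\ge n-i\text{ for }i\in[k,n-1]).
\]

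For the third stage, I would first establish the joint distributional identity $\{h^{\ge i+1}(\tau,-i-1)\}_{i=0}^{n-1}\deq \{h^{\ge 1}(\tau,-2i-1)\}_{i=0}^{n-1}$ (over the enlarged range including $i=0$, which avoids boundary issues from shifting a height function at position $-1$) and then restrict to $i\in[k,n-1]$ by marginalization. This identity follows by iterating Theorem \ref{thm: shift inv asep}: at step $t=1,2,\dots,n-1$, designate the height functions indexed by $i\ge t$ as shifted and those with $i<t$ as stayed, so that a single application increments $(i_l',x_l')$ by $(+1,+1)$. After $n-1$ iterations, the height function indexed by $i$ has been shifted exactly $i$ times, converting $(1,-2i-1)$ into $(i+1,-i-1)$. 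At each step the hypotheses of Theorem \ref{thm: shift inv asep} are satisfied: both the maximal stayed color and the common shifted color equal $t$, and the minimal stayed position $-t$ strictly exceeds the maximal shifted position $-t-2$, with all stayed positions $\le -1$.
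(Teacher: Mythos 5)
Your proposal is correct and follows essentially the same route as the paper: translate $\{T_i\le\tau\}$ into saturation of the finite-TASEP height functions, pass to the infinite TASEP via Proposition \ref{prop: fin inf couple}, and then iterate shift invariance (Theorem \ref{thm: shift inv asep}). You merely supply details (the freezing argument in stage one and the explicit iteration scheme in stage three) that the paper leaves implicit.
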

\begin{proof}
Note that $T_i\leq \tau$ if and only if $\widehat{h}^{\geq i+1}(\tau,-i-1)=n-i$. Then we have
	\begin{equation}
	\begin{split}
	&\PP(T_i\leq \tau\text{ for }i=k,\dotsc, n-1)
	\\=&\PP(\widehat{h}^{\geq i+1}(\tau,-i-1)=n-i\text{ for }i=k,\dotsc, n-1)
	\\=&\PP(h^{\geq i+1}(\tau,-i-1)\geq n-i\text{ for }i=k,\dotsc, n-1)
	\\=&\PP(h^{\geq 1}(\tau,-2i-1)\geq n-i\text{ for }i=k,\dotsc, n-1),
	\end{split}
	\end{equation}
where we use Proposition \ref{prop: fin inf couple} for the second equality and shift invariance (Theorem \ref{thm: shift inv asep}) for the third.
\end{proof}

\begin{proof}[Proof of Theorem \ref{thm: osp lpp same}]
Using Lemma \ref{lem: osp TASEP}, it suffices to express the event on the right hand side in terms of LPP. This is similar to the full space case, and uses a standard coupling between TASEP and LPP which we now explain.

Since we only care about height functions $h^{\geq 1}$, it suffices to consider a system with particles to the left of $-1$, and holes to the right of $1$ (where we recall that we work on $\pm \N$ and not $\Z$). We associate a growth process to this particle system (see Lemma 3.8 of \cite{BBCS18b} for example). We view particles as a segment of slope $-1$, and holes as a segment of slope $1$. The interface this defines at time $\tau$ is the same as the set of vertices $(x,y)$ with $L((0,0),(x,y))\leq\tau$ in the symmetric LPP. The event that $h^{\geq 1}(\tau,-2i-1)\geq n-i$ is then the same as the event that $L((0,0),(n+i-1,n-i-1))\leq \tau$. Together with Lemma \ref{lem: osp TASEP}, this establishes the claim.
\end{proof}

\section{Fusion}
\label{sec: fusion}
We now wish to take a collection of columns and rows within the half space six-vertex model, and view them as one column and row. This is similar to the fusion procedure for the full space six-vertex model, although some care needs to be taken due to the presence of negative colors, and the boundary needs to be studied separately. The goal of this section is to define the fused model and obtain a formula for the fused vertex weights, which we do by solving an explicit recurrence. Since the details are rather involved, we leave them for Appendix \ref{app: pf}.

\subsection{Definition of fused vertices}
A vertex in the fused (or higher spin) model is given by an assignment of incoming and outgoing colored arrows, with the restriction that at most $L$ arrows enter or exit horizontally, and at most $M$ arrows enter or exit vertically. For a bulk vertex, $L$ and $M$ may be different, but for a boundary vertex, $L=M$. For now, we set up notation, and leave the definition of the random model itself for later.

We will introduce a color $0$, so colors now lie in $\Z$, with the usual ordering. We let $(\Ab(x,y),\Bb(x,y),\Cb(x,y),\Db(x,y))$ denote the vector of colors of arrows at a vertex $(x,y)$, $\Ab(x,y)=(\Ab_i(x,y))_{i\in\Z}$, with $\Ab_i(x,y)$ denoting the number of incoming vertical arrows of color $i$, and the rest going clockwise around the vertex. We will call these \emph{color vectors}. Note that the color vectors lose the information of where arrows are, but we will see that the resulting model is still Markovian if certain assumptions are made. We will sometimes drop the coordinates $(x,y)$ when discussing an arbitrary vertex. We let $\Ab^+$ and $\Ab^-$ denote the positive and negative components, and $\Ab^{\geq i}$ and $\Ab^{\leq i}$ denote the components of color at least or at most $i$. We will use the notation $\Ab^\pm$ to refer to both $\Ab^+$ and $\Ab^-$ (and in particular not $\Ab$ with $0$ removed). We let $|\Ab|=\sum \Ab_i$, and so $|\Ab|=|\Ab^+|+|\Ab^-|+\Ab_0$. We will eventually stop keeping track of the color $0$, but for now, all color vectors will include a component counting arrows of color $0$.

We choose integers $L_i$ to denote the number of arrows for the $i$th row/column. We make the following specializations that will be in effect for the rest of the section. We set $t=0$, and we specialize the rapidities so that for each block of $L_i$ arrows, the rapidities are $z_i,qz_i,\dotsc, q^{L_i-1}z_i$. The incoming arrows will be color $i$ in the $i$th block on the left, and $-i$ in the $i$th block on the bottom. Thus, there are exactly $L_i$ arrows of color $\pm i$ for each $i$.

We now outline the motivation for this. With these specializations and boundary conditions, the vertices satisfy $q$-exchangeability, which allows us to forget the exact order of the incoming arrows and only remember the number of each color. With this, we can think of the model as consisting of a single row/column for each $L_i$, but where up to $L_i$ arrows can occupy it. We will then find formulas for the probabilities associated to these fused vertices. This model will ultimately be degenerated, analytically continued, and taken to a continuous limit in Section \ref{sec: higher spin}. In this section, we will focus on finding the formulas for the fused vertex weights.

Before working with the full model, we will need to find formulas for the vertex weights. For now, we forget about the full model, and instead work with a single vertex. Thus, we drop the coordinates $(x,y)$ in our notation. 

We define an \emph{(M,L)-fused bulk vertex} with \emph{spectral parameter $z$} to be $M$ columns and $L$ rows of bulk vertices, with column and row rapidities given by $\sqrt{z},q\sqrt{z},\dotsc,\allowbreak q^{M-1}\sqrt{z}$ and  $\sqrt{z},q\sqrt{z},\dotsc, q^{L-1}\sqrt{z}$ respectively. We define an \emph{(L,L)-fused boundary vertex} with \emph{spectral parameter $z$} to be $L$ columns and $L$ rows of bulk vertices, except on the diagonal where boundary vertices are placed, and the column and row rapidities are both given by $\sqrt{z},q\sqrt{z},\dotsc,q^{L-1}\sqrt{z}$.

\subsection{\texorpdfstring{$q$}{q}-exchangeability}
Given a distribution $\mu$ on arrangements $(a_1,\dotsc, a_L)$ of a fixed collection of colors, we say that $\mu$ is \emph{$q$-exchangeable} if
\begin{equation*}
    \mu(a_1,\dotsc, a_{i+1},a_{i},\dotsc, a_L)=q\mu(a_1,\dotsc, a_{i},a_{i+1},\dotsc, a_L)
\end{equation*}
if $a_{i+1}>a_i$. Fix color vectors $\Ab$ and $\Bb$. We say that a distribution on the incoming vertical arrows to a vertex, given that the number of each color is given by $\Ab$, is \emph{$q$-exchangeable in the columns} if the distribution of the colors, read from left to right, is $q$-exchangeable. Similarly, we say that a distribution of incoming horizontal arrows, given the color distribution $\Bb$, is \emph{$q$-exchangeable in the rows} if the distribution of colors, read from top to bottom, is $q$-exchangeable. Note that for a boundary vertex, we have a symmetry in the incoming colors, and this definition is consistent in that $q$-exchangeability of the rows and columns are equivalent for boundary vertices.

\begin{proposition}
\label{prop: q-exch bulk}
Consider an $(M,L)$-fused bulk vertex. Suppose that the distributions of incoming arrows are $q$-exchangeable in the rows and columns. Then the distributions of outgoing arrows are also $q$-exchangeable in the rows and columns.
\end{proposition}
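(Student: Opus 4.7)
The plan is to prove Proposition \ref{prop: q-exch bulk} by a Yang--Baxter sliding argument that mirrors the standard full-space fusion computation. Since an $(M,L)$-fused bulk vertex consists only of bulk six-vertex vertices, only the Yang--Baxter equation of Proposition \ref{prop:YB} is needed; the reflection equation does not enter.

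First I would exploit the geometric structure of the specialization: adjacent rows of the block have rapidities $q^{i-1}z$ and $q^i z$ (and analogously for columns), whose ratio is $q^{-1}$. The Yang--Baxter $R$-matrix between two such rows therefore has spectral parameter $q^{-1}$, and its weights simplify to $\mathbf{p}_1(q^{-1}) = (q^{-1}-1)/(q^{-1}-q) = 1/(1+q)$ and $q\mathbf{p}_1(q^{-1}) = q/(1+q)$. A direct calculation shows that the corresponding Hecke element $R_k(\mathbf{p}_1(q^{-1})) = \frac{1}{1+q}T_k + \frac{q}{1+q}$ is idempotent: using $(T_k+q)(T_k-1)=0$ one gets $T_k^2+2qT_k+q^2 = (1+q)(T_k+q)$, hence $R_k^2 = R_k$. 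This idempotent acts as a ``$q$-symmetrizer'' on the pair of adjacent rows in question, in the sense that it fixes every $q$-exchangeable two-row distribution and maps arbitrary two-row distributions onto $q$-exchangeable ones.

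Next, for each $1 \le i \le L-1$ I would insert this $R$-matrix as a Yang--Baxter vertex between outgoing rows $i$ and $i+1$ of the fused block, and use the Yang--Baxter equation repeatedly through the $M$ columns of the block to slide it to the incoming side. After sliding, the unordered pair $\{q^{i-1}z,\,q^i z\}$ of row rapidities inside the block is unchanged---the two rows are merely permuted---so the fused block reassembles to the same object up to relabeling rows $i$ and $i+1$. By $q$-exchangeability of the incoming distribution, the $R$-matrix on the incoming side acts as the identity; combined with the symmetry of the block under the $i\leftrightarrow i+1$ relabeling, one concludes that the $R$-matrix on the outgoing side also acts as the identity on the outgoing distribution. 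Since the $R$-matrix is the $q$-symmetrizer, this identity action is precisely the assertion that the outgoing marginal on rows $i$ and $i+1$ is $q$-exchangeable. Running this for every $i$ yields full $q$-exchangeability of outgoing horizontal arrows in the rows; the analogous argument, sliding an $R$-matrix between adjacent columns through the $L$ rows, yields $q$-exchangeability in the columns.

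The main obstacle is the careful bookkeeping in the sliding argument: one must track how the row/column rapidities permute, verify the $q$-symmetrizer property from the explicit six-vertex weights rather than just in the Hecke algebra, and ensure that the row and column sliding arguments are compatible so that $q$-exchangeability established in one direction is not destroyed by the other. These details are carried out fully in Appendix \ref{app: pf}.
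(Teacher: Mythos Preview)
Your approach is the correct one and coincides with the paper's: the paper does not give a standalone proof of Proposition~\ref{prop: q-exch bulk} but refers to the full-space argument and to its own proof of Proposition~\ref{prop: q-exch bd}, which is exactly the Yang--Baxter sliding argument with the idempotent $R_k\bigl(\mathbf p_1(q^{-1})\bigr)=\tfrac{1}{1+q}T_k+\tfrac{q}{1+q}$ that you identify. Your computation that this element is idempotent and acts as a $q$-symmetrizer is right.

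There is, however, a genuine gap in your middle paragraph. You insert the cross on the \emph{outgoing} side and slide it to the incoming side, obtaining $Y_V R = R\,Y_{V'}$ with $V'$ the block with row rapidities $i,i+1$ swapped. Applied to a $q$-exchangeable input $M_{in}$ (so $M_{in}R=M_{in}$) this gives
\[
M_{in}Y_V R \;=\; M_{in}Y_{V'},
\]
i.e.\ $R\mu_{out}=\mu_{out}'$, the output of the \emph{swapped} block. Your appeal to ``symmetry of the block under the $i\leftrightarrow i+1$ relabeling'' to conclude $\mu_{out}'=\mu_{out}$ is not valid as stated: $V'\ne V$ as operators, and a $q$-exchangeable input is \emph{not} invariant under the position swap $P$, so $V'\mu_{in}=PVP\mu_{in}\ne V\mu_{in}$ in general. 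The clean fix---and this is how the paper's proof of Proposition~\ref{prop: q-exch bd} is organised---is to insert the $q$-symmetrizer on the \emph{incoming} side (where it acts trivially by hypothesis) and slide it to the outgoing side: then $\mu_{out}=M_{in}Y_V=M_{in}RY_V=M_{in}Y_{V'}R$ exhibits $\mu_{out}$ as something followed by $R$, hence automatically $q$-exchangeable since $R$ is the projector. Alternatively, your direction can be salvaged by running the sliding argument once more on $V'$ (noting $(V')'=V$), which yields $R\mu_{out}'=\mu_{out}$ and hence $R\mu_{out}=R^2\mu_{out}=\mu_{out}$; but this second application is what your ``symmetry'' remark needs to be replaced by.

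Two small further points: the concern about compatibility between the row and column arguments is unnecessary (they are separate statements about marginals and do not interfere), and your reference to Appendix~\ref{app: pf} is misplaced---that appendix treats the fused boundary weight formula, not $q$-exchangeability.
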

Proposition \ref{prop: q-exch bulk} follows directly from what is known in the full space setting (see e.g. \cite[Proposition B.2.2]{BW18}, and note that our spectral parameter is the inverse of theirs), since up to a relabeling of the colors there is no distinction between the the $(M,L)$-fused bulk vertices and the vertices in the fused full space model. The proof is also very similar to the proof of Proposition \ref{prop: q-exch bd} giving $q$-exchangeability for boundary vertices, so we omit it.

\begin{proposition}
\label{prop: q-exch bd}
Consider an $(L,L)$-fused boundary vertex. Suppose that the distribution of incoming arrows is $q$-exchangeable in the rows and columns. Then the distribution of outgoing arrows is also $q$-exchangeable in the rows and columns.
\end{proposition}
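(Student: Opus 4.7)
The proof follows the same overall strategy as Proposition \ref{prop: q-exch bulk}, but uses the reflection equation (Proposition \ref{prop:refl}) in addition to the Yang--Baxter equation (Proposition \ref{prop:YB}) to pass through the boundary vertices on the diagonal. By the symmetry of the boundary vertex about the diagonal, $q$-exchangeability in the rows and $q$-exchangeability in the columns are equivalent statements for the outgoing distribution, so it suffices to verify only one. The goal therefore reduces to showing: for each $i \in \{0,1,\ldots,L-2\}$, exchanging the outgoing colors in rows $i$ and $i+1$ multiplies the probability of any fixed outgoing configuration by the factor dictated by $q$-exchangeability.

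To implement this, insert an auxiliary Yang--Baxter vertex at the right edge between rows $i$ and $i+1$, with spectral parameter equal to the ratio of the row rapidities, namely $q^i\sqrt{z}/q^{i+1}\sqrt{z} = q^{-1}$. By the symmetry constraint, this insertion is paired with an auxiliary Yang--Baxter vertex at the top edge between columns $i$ and $i+1$. The plan is to transport both auxiliaries through the entire fused block. Away from the diagonal, each Yang--Baxter move past an adjacent bulk vertex is a direct application of Proposition \ref{prop:YB}. When the row-side auxiliary together with the column-side auxiliary reach the two diagonal boundary vertices at positions $(i,i)$ and $(i+1,i+1)$ along with the intervening bulk vertex at $(i,i+1)$, the local four-vertex configuration matches the left-hand side of Proposition \ref{prop:refl} with $a = q^i\sqrt{z}$ and $b = q^{i+1}\sqrt{z}$; applying the reflection equation carries the auxiliaries across the diagonal. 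Further Yang--Baxter moves then transport them the rest of the way to the left and bottom edges.

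After the transport is complete, the rapidities of rows $i,i+1$ (and columns $i,i+1$) have been interchanged in the interior, but since the fused vertex depends only on the unordered block $\{\sqrt{z}, q\sqrt{z}, \ldots, q^{L-1}\sqrt{z}\}$ of row/column rapidities, the fused vertex itself is unchanged. The auxiliary vertex now acts on the incoming distribution, which is $q$-exchangeable by hypothesis. A direct local computation using the weights of the Yang--Baxter vertex at spectral parameter $q^{-1}$ and the defining relation of $q$-exchangeability on the input produces precisely the $q$-factor relating the original outgoing probability and its row-swapped counterpart, establishing $q$-exchangeability on the output.

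The main technical obstacle is the reflection-equation step at the diagonal. One must verify graphically that the four-vertex subconfiguration consisting of the two auxiliary Yang--Baxter vertices, the two adjacent boundary vertices, and the interposed bulk vertex exactly fits the schematic of Proposition \ref{prop:refl}, and that the forced symmetry tying the row-side and column-side auxiliaries (both inserted simultaneously and crossing the diagonal together) is consistent with a single application of the reflection equation. Once this graphical match is carried out, the remainder of the argument consists of standard Yang--Baxter manipulations away from the diagonal together with the local computation at the incoming boundary, both of which parallel what is done for Proposition \ref{prop: q-exch bulk}.
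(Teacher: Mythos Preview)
Your approach is essentially the same as the paper's: insert an auxiliary cross (Yang--Baxter) vertex between two adjacent rows, then use the Yang--Baxter and reflection equations to transport it through the fused boundary block, with the reflection equation handling the passage across the diagonal. The only structural difference is the direction of travel: the paper inserts the cross vertex on the \emph{input} (left) side and moves it to the output, whereas you insert it on the \emph{output} (right) side and move it to the input.

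That reversal, however, creates a gap in your write-up. After the transport, the interior rapidities of rows $i$ and $i+1$ have been swapped, so the block you end up with is a modified block $B'$, not the original $B$. You dispose of this by asserting that ``the fused vertex depends only on the unordered block $\{\sqrt z,\dots,q^{L-1}\sqrt z\}$ of row/column rapidities''. This claim is not obvious and you do not justify it; in fact it is exactly the kind of rapidity-permutation symmetry one normally \emph{derives} from a $q$-exchangeability argument rather than feeds into one. The paper's direction sidesteps this entirely: inserting the cross vertex on the input side leaves the $q$-exchangeable input unchanged, and after transport the output of $B$ is seen to factor through a cross vertex at the special parameter whose output is automatically $q$-exchangeable, \emph{regardless of what the modified interior block $B'$ does}. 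No comparison of $B$ with $B'$ is needed. Your argument is easily repaired by reversing the direction of insertion to match the paper; the identification of the reflection-equation step at the diagonal and the pairing of the row-side and column-side auxiliaries are correct as you describe them.
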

\begin{proof}
We use the Yang--Baxter and reflection equations in a similar manner to the full space case, see e.g. \cite[Proposition B.2.2]{BW18}. We first make the observation that for a Yang--Baxter vertex with parameter $q^{-1}$, the outgoing arrows are automatically $q$-exchangeable, no matter the incoming arrows. We note that since the incoming arrow distribution is $q$-exchangeable, we may introduce a Yang--Baxter vertex with parameter $q$ on the left, say at rows $i$ and $i+1$ (and bottom by symmetry), which does not change the distribution entering the fused boundary vertex, and thus does not affect the outgoing distribution. We can then use the Yang--Baxter and reflection equations to move the Yang--Baxter vertex to the right (and top), and note that this implies the distribution of the two outgoing edges $i$ and $i+1$ are $q$-exchangeable. Since this is true for any two adjacent rows, the distribution of outgoing arrows is $q$-exchangeable.
\end{proof}

Propositions \ref{prop: q-exch bulk} and \ref{prop: q-exch bd} allow us to forget the exact positions of the colored arrows, and just remember the number for each color. To see this, simply note that as the incoming arrows at the incoming vertices are of one color for each block, the incoming arrows are $q$-exchangeable in the rows and columns. Then Propositions \ref{prop: q-exch bulk} and \ref{prop: q-exch bd} allow us to conclude that the distributions of all incoming and outgoing arrows at all vertices are $q$-exchangeable. But then given the color vector, we can recover the original distribution by $q$-exchangeability. Thus, keeping track of only the color vectors, we can still recover all information in the original model. From now on, we will forget about the exact positions of colors and keep track of only the color vectors.

We will thus define the \emph{fused} bulk vertex weights $W_{M,L}(z,q;\Ab,\Bb,\Cb,\Db)$ and \emph{fused} boundary vertex weights $W_{L}(z,q,\nu;\Ab,\Bb,\Cb,\Db)$ to be the associated distribution on outgoing color vectors $\Cb$ and $\Db$, given that the incoming colors have color vectors $\Ab$ and $\Bb$ and are $q$-exchangeable. Our next goal is to derive formulas for these weights.

\subsection{Fusion for bulk weights}
We now wish to find formulas for the vertex weights as a function of the color vectors in this case. We begin with the bulk vertices, which is identical to the full space case.

\begin{proposition}
\label{prop: full space fused wt}
Consider an $(M,L)$-fused bulk vertex, and suppose that the distribution of the incoming colors is $q$-exchangeable in both the rows and columns. Then the distribution for $\Cb$ and $\Db$, given the incoming color vectors $\Ab$ and $\Bb$, is given by the fused vertex weight
\begin{equation}
\label{eq: fused wt bulk}
\begin{split}
    &W_{M,L}(z,q;\Ab,\Bb,\Cb,\Db)
    \\=&(q^{M-1}z)^{|\Db|-|\Bb|}q^{|\Ab|L-|\Db|M}
    \\&\qquad\sum_{\mathbf{P}}\Phi_q\left(\Cb-\mathbf{P},\Cb+\Db-\mathbf{P};q^{L-1}z,q^{-1}z\right)\Phi_q\left(\mathbf{P},\Bb;q^{-L-M+1}/z,q^{-L}\right)
\end{split}
\end{equation}
subject to the conditions $\Ab+\Bb=\Cb+\Db$, $|\Ab|=|\Cb|=M$, and $|\Bb|=|\Db|=L$, where the sum is over $0\leq \mathbf{P}_i\leq \max(\Bb_i,\Cb_i)$, and
\begin{equation*}
    \Phi_q(\Ab,\Bb;x,y)=\left(\frac{y}{x}\right)^{|\Ab|}\frac{(x;q)_{|\Ab|}(y/x;q)_{|\Bb|-|\Ab|}}{(y;q)_{|\Bb|}}q^{\sum_{i<j}(\Bb_i-\Ab_i)\Ab_j}\prod _i{\Bb_i\choose \Ab_i}_q.
\end{equation*}
\end{proposition}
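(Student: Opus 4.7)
The plan is to reduce this proposition to the known fusion computation for the colored stochastic six-vertex model in the full space. In our half-space model, the bulk vertices $(x,y)$ with $x\neq y$ carry exactly the weights \eqref{eq: weights}, which are identical to the vertex weights of the colored stochastic six-vertex model in the full space; the half-space structure only enters through the diagonal vertices $(x,x)$, which are absent from an $(M,L)$-fused bulk vertex. After relabeling negative colors as distinct fresh positive colors, the $(M,L)$-fused bulk vertex is literally an $M\times L$ block of full-space colored vertices with column rapidities $\sqrt{z}, q\sqrt{z},\dots, q^{M-1}\sqrt{z}$ and row rapidities $\sqrt{z}, q\sqrt{z},\dots, q^{L-1}\sqrt{z}$.

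The first step is to invoke Proposition \ref{prop: q-exch bulk} to confirm that the outgoing distribution depends only on the color vectors $(\Ab,\Bb)$, so that the fused weight $W_{M,L}(z,q;\Ab,\Bb,\Cb,\Db)$ is well-defined as a function of color multiplicities alone. Next, I would compute the weight by slicing the $M\times L$ block along a horizontal line. Introducing an intermediate color vector $\mathbf{P}$ for the arrows crossing the cut factorizes the weight as a sum over $\mathbf{P}$ of two pieces: an $L$-row fusion turning $\Bb$ into $\mathbf{P}$ at each column, and an $M$-column fusion combining $\mathbf{P}$ with $\Ab$ to produce $\Cb,\Db$. Each piece is a single-direction fusion whose probability is known to take the $q$-hypergeometric form $\Phi_q(\cdot,\cdot;x,y)$, with spectral parameters shifted by appropriate powers of $q$ that reflect the geometric progression of rapidities. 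This yields precisely the two $\Phi_q$ factors in the stated formula, and the prefactor $(q^{M-1}z)^{|\Db|-|\Bb|} q^{|\Ab|L-|\Db|M}$ arises when one collects the overall normalization coming from expanding the elementary weights $\mathbf{p}_{x,y}$ over the $M\times L$ rectangle.

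The main obstacle is the bookkeeping of $q$-Pochhammer symbols and powers of $z,q$ during the telescoping of elementary weights, particularly reconciling the sum over configurations with the closed-form $\Phi_q$ expression. However, this computation has been carried out in detail in the full-space colored fusion literature (see for example \cite[Proposition B.2.2]{BW18} and its proof, noting the convention that our spectral parameter is the inverse of theirs), and since bulk vertices do not see the boundary in any way, the full-space derivation transports verbatim to our setting. Accordingly, I would present this proof as essentially a citation of the full-space fusion identity, reserving the detailed work for the genuinely new case of fused boundary vertices, which requires the reflection equation and is handled separately in the appendix.
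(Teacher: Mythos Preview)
Your approach is essentially the same as the paper's: reduce the bulk vertex to the full-space colored model (which is legitimate since bulk weights are identical to the full-space weights and negative colors can be relabeled as fresh positive colors), then cite the known fused weight formula from the literature. The paper's proof is a one-line citation of \cite[Theorem~A.5]{BGW22}, together with the substitution $z\mapsto q^{M-1}z$ to account for the geometric progression of rapidities used here.

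One correction: you cite \cite[Proposition~B.2.2]{BW18} for the fused weight formula, but that result establishes $q$-exchangeability (it is what the paper invokes for Proposition~\ref{prop: q-exch bulk}), not the closed-form $\Phi_q$ expression. The formula \eqref{eq: fused wt bulk} with the sum over $\mathbf{P}$ and the two $\Phi_q$ factors is \cite[Theorem~A.5]{BGW22}; you should cite that instead. Also, your sketch of the ``horizontal slice with intermediate vector $\mathbf{P}$'' is not quite the structure of the actual derivation in \cite{BGW22} (the decomposition there is more subtle, with $\mathbf{P}$ tracking which incoming horizontal arrows pass through untouched), but since you ultimately defer to the literature this does not affect correctness.
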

\begin{proof}
This is given by Theorem A.5 in \cite{BGW22}, up to some differences in notation and the substitution of $q^{M-1}z$ for $z$. This replacement is needed because of our choice of column/row rapidities.
\end{proof}

\begin{remark}
The weight given in Proposition \ref{prop: full space fused wt} is not quite what we wish to study, because $0$ is included as a color. In particular, in the formula, $|\Ab|=|\Cb|=L$ and $|\Bb|=|\Db|=M$. This will cause issues when we wish to analytically continue the weights in $q^{-L}$ and $q^{-M}$. In the full space setting, it is easy to drop $0$ as a color, and the formula simplifies. In the half space setting, the formulas seem to become much messier, and so we do not fix this issue at this point. It turns out that after we degenerate the vertex weights by setting $z=q^{-L-M+1}$, it will become much easier to remove $0$ as a color.
\end{remark}

\subsection{Fusion for boundary weights}
In this section, we will state the analogue of Proposition \ref{prop: full space fused wt} for the boundary vertices, given by the following theorem. We give a sketch of the proof, and leave the formal verification for Appendix \ref{app: pf}.

\begin{theorem}
\label{thm: fused bd wt}
We have
\begin{equation}
\label{eq: fused wt bd 2}
\begin{split}
    &W_{L}(z,q;\nu;\Ab,\Bb,\Cb,\Db)
    \\=&q^{(1-L)|\Bb^+|}(-\nu)^{|\Bb^+|-|\Cb^+|}\sqrt{z}^{-|\Bb^+|-|\Cb^+|} \Phi_q\left(\Cb^+,\Bb^+;-q^{1-L}\nu/\sqrt{z},-\sqrt{z}\nu\right),
\end{split}
\end{equation}
subject to the constraints $\Ab+\Bb=\Cb+\Db$ and $|\Ab|=|\Bb|=|\Cb|=|\Db|=L$, where
\begin{equation*}
    \Phi_q(\Ab,\Bb;x,y)=\left(\frac{y}{x}\right)^{|\Ab|}\frac{(x;q)_{|\Ab|}(y/x;q)_{|\Bb|-|\Ab|}}{(y;q)_{|\Bb|}}q^{\sum_{i<j}(\Bb_i-\Ab_i)\Ab_j}\prod _i{\Bb_i\choose \Ab_i}_q.
\end{equation*}
\end{theorem}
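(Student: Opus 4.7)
The plan is to prove Theorem \ref{thm: fused bd wt} by induction on $L$, building up the $(L,L)$-fused boundary vertex one row-and-column pair at a time. Proposition \ref{prop: q-exch bd} tells us that the incoming and outgoing arrow distributions are $q$-exchangeable, so we can work entirely at the level of color vectors and freely choose a convenient ordering of the incoming arrows. For the base case $L=1$, the fused vertex is a single boundary vertex whose four probabilities are listed in Figure \ref{fig:vtx wts}; the $\Phi_q$ factor collapses to elementary rational expressions (using the recipe $t=0$ built into the fusion specialization and the definition of $\mathbf{p}_0$), and the four cases $|\Bb^+|,|\Cb^+|\in\{0,1\}$ should reproduce the tabulated weights after direct simplification.

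For the inductive step, I would view an $(L,L)$-fused boundary vertex as an $(L-1,L-1)$-fused boundary vertex with a new top row and new right column attached, each of rapidity $q^{L-1}\sqrt{z}$, meeting in a single new boundary vertex at the top-right of the internal diagonal. Using $q$-exchangeability, I would isolate the single ``extra'' arrow that enters the top row on the left (the input of the new row above the already-fused block): it traverses $L-1$ bulk vertices in the new top row, interacts with the new corner boundary vertex (possibly being reflected to its negative counterpart and sent downward), and then traverses $L-1$ bulk vertices in the new right column. Summing over the color of this tagged arrow at each stage, and combining with the inductive hypothesis for the smaller block, the one-row fused bulk weights (Proposition \ref{prop: full space fused wt}), and the elementary boundary-vertex weight at spectral parameter $q^{2(L-1)}z$, yields a convolutional recurrence for $W_L$ in terms of $W_{L-1}$. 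Substituting the ansatz into this recurrence reduces the problem to a combinatorial identity that, after factoring out common prefactors and using the definition of $\Phi_q$, should be a specialization of the $q$-Chu--Vandermonde summation (or equivalent $q$-hypergeometric identity).

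The main obstacle is bookkeeping the interaction of positive and negative colors at the new boundary vertex: an arrow of positive color entering on the left can leave as its negative counterpart and flow back into the existing fused block, so the recurrence intertwines the $\Bb^+$ and $\Cb^+$ data with the rest of the color vector in a way that has no direct analogue in the full-space fusion of Proposition \ref{prop: full space fused wt}. Checking that the apparent independence of the closed form from $\Ab^{-}$, $\Db^{-}$, and the color-$0$ components is consistent with the recurrence --- carefully using the constraint $\Ab+\Bb=\Cb+\Db$ together with the conservation $|\Ab|=|\Bb|=|\Cb|=|\Db|=L$ to eliminate the hidden variables --- is the heart of the calculation, and it is presumably why the detailed verification is deferred to Appendix \ref{app: pf}.
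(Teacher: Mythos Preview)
Your plan is essentially the paper's own: induct on $L$ using the recurrence of Lemma~\ref{lem: bd wt rec}, which peels off the top row (and, by symmetry, the right column) and expresses $W_{L+1}$ as a sum over the color $i$ of the top incoming arrow, the color $j$ exiting the row into the new corner boundary vertex, and the color $k$ emerging from that corner. Your observation that the closed form depends only on $\Bb^+$ and $\Cb^+$ is exactly the reduction the paper makes at the start of Appendix~\ref{app: pf}: since $t=0$, a positive color that drops below the diagonal never returns, so $\Cb^-$ is determined by $\Bb$ and $\Cb^+$, and one may restrict the recurrence to positive colors only.

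Two small corrections. First, your picture of the tagged arrow ``traversing $L-1$ bulk vertices in the new right column'' is not how the recurrence works: the right column is the mirror image of the top row and contributes no independent randomness, so the recurrence involves only $W_L$, a single $W_{L,1}$ row weight, and a single $W_1$ corner weight, summed over $(i,j,k)$. Second, and more substantively, the hope that the inductive identity collapses to $q$-Chu--Vandermonde is too optimistic. The paper's verification in Appendix~\ref{app: pf} splits the $(i,j,k)$ sum into ten cases according to the sign and ordering of $i,j,k$, computes each summand as an explicit rational function times a common prefactor, telescopes the inner sums, and then combines the pieces through several ad hoc algebraic cancellations; no single $q$-hypergeometric identity organizes the whole thing. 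So your strategy is correct, but the ``combinatorial identity'' you would need to prove is not a named one, and the computation is genuinely long.
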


\begin{remark}
Our fused vertex model is actually equivalent to certain solutions to the Yang--Baxter and reflection equation essentially found by Mangazeev and Lu \cite{ML19} (although they only studied the rank one case, which corresponds to a single color, it is easy to see how to generalize their formulas to the higher rank case). See also the related works \cite{KOY19a,KOY19b}. Mangazeev and Lu obtained the weights by solving a degenerate version of the reflection equation, and in fact obtained solutions even without the degeneration. These should correspond to the fused weights for the half-space six-vertex model without degenerating the parameter $t=0$. Unfortunately, the formulas are quite unwieldy and do not appear suitable for asymptotic analysis, and do not simplify when the bulk vertices are degenerated (as in Section \ref{sec: higher spin}). Additionally, there would be difficulties in extending the formulas to the colored case. Thus, we did not attempt to show that the more general weights can also be obtained with fusion in this work, and leave it as a conjecture.
\end{remark}

Unlike for the bulk vertices, the boundary vertex weights cannot be derived from what is known in the full space setting, and so we must explicitly set up recurrences and show that our formula for the weights solves those recurrences. We now sketch the important ideas in the proof.

To simplify our notation, notice that by symmetry, in the boundary vertices, we can just keep track of $\Bb$ and $\Cb$. For the rest of this section, we will write
\begin{equation*}
    W_{L}(z,q,\nu;\Ab,\Bb,\Cb,\Db)=W_{L}(z,q,\nu;\Bb,\Cb),
\end{equation*}
and simply note that as $\Ab+\Bb=\Cb+\Db$, and $\Ab_i=\Bb_{-i}$ and $\Cb_i=\Db_{-i}$, we lose no information.

Our starting point is to note that a boundary vertex with $L$ rows and columns can be viewed as a combination of a boundary vertex with $L-1$ rows and columns, a boundary vertex with $1$ row and column, and a bulk vertex with $1$ row and $L-1$ columns (see Figure \ref{fig:fusion rec}). We condition on the color of the arrow entering the top row, immediately giving the following lemma.

\begin{figure}
    \centering
    \includegraphics[scale=0.7]{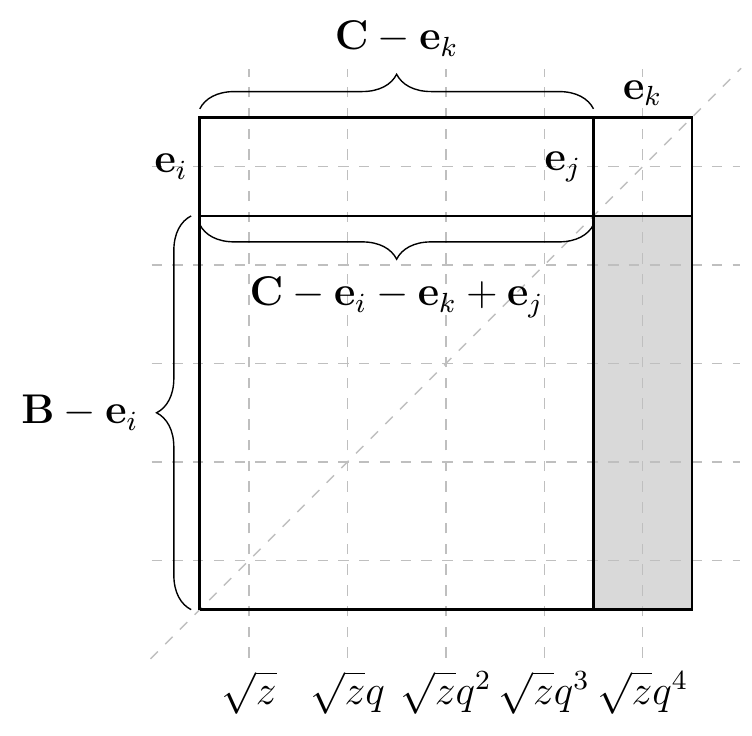}
    \caption{Recursion for boundary vertex weights when $L=4$. The rapidities are indicated, and the color vectors for colors crossing each boundary are noted.}
    \label{fig:fusion rec}
\end{figure}

\begin{lemma}
\label{lem: bd wt rec}
Let $\mathbf{e}_i$ denote the color vector corresponding to a single arrow of color $i$. We have the recurrence
\begin{equation*}
\begin{split}
    &W_{L+1}(z,q,\nu;\Bb,\Cb)
    \\=&\sum_{i,j,k} \PP(a_{L+1}=i)W_{L}(z,q,\nu;\Bb-\mathbf{e}_i,\Cb-\mathbf{e}_i-\mathbf{e}_k+\mathbf{e}_j) 
    \\&\qquad \times W_1(zq^{2L},q,\nu;j,k) W_{L,1}(zq^{L},q;\Cb-\mathbf{e}_i-\mathbf{e}_k+\mathbf{e}_j,i,\Cb-\mathbf{e}_k,j),
\end{split}
\end{equation*}
where $a_{L+1}$ is the color of the topmost incoming edge.
\end{lemma}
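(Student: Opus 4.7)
The plan is to geometrically decompose the $(L+1,L+1)$-fused boundary vertex into four sub-blocks, use $q$-exchangeability to reduce everything to a computation in color vectors, and then sum over the colors along the two internal single-edge interfaces. The row and column rapidities of the $(L+1,L+1)$-fused boundary vertex are $\sqrt z,q\sqrt z,\dotsc,q^L\sqrt z$, so the bottom-left $L\times L$ sub-square is an $(L,L)$-fused boundary vertex of spectral parameter $z$; the top row over columns $1,\dotsc,L$ has per-vertex spectral parameters $q^Lz,q^{L+1}z,\dotsc,q^{2L-1}z$ and thus matches an $(L,1)$-fused bulk vertex of spectral parameter $q^Lz$; the rightmost column is obtained from the top row by reflection across the diagonal and contributes no independent factor by the symmetry of the model; and the top-right corner is a single boundary vertex with $z_{L+1}=q^L\sqrt z$, hence spectral parameter $q^{2L}z$. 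These are exactly the three spectral parameters appearing in the recurrence.

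Next, condition on the color $i$ of the topmost incoming horizontal arrow, which by symmetry also fixes the rightmost incoming vertical arrow to have color $-i$. Because the original incoming distribution is $q$-exchangeable, the marginal $\PP(a_{L+1}=i)$ depends only on $\Bb$, and the conditional distribution of the remaining $L$ horizontal inputs is again $q$-exchangeable with color vector $\Bb-\mathbf{e}_i$; the analogous statement holds for the vertical inputs. Propositions \ref{prop: q-exch bulk} and \ref{prop: q-exch bd} then propagate $q$-exchangeability through each sub-block, so all internal distributions depend only on color vectors and the total probability factors as a sum over internal color configurations of products of the four fused weights. Let $j$ be the color on the internal edge between the top $(L,1)$-bulk block and the corner (by conservation at the corner, also the corner's left input), and let $k$ be the color exiting the corner at the top, contributing one unit of $\mathbf{e}_k$ to the observed $\Cb$. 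Conservation at the $(L,1)$-bulk block (with left input $i$ and right output $j$) forces its top output color vector to be $\Cb-\mathbf{e}_k$ and its bottom input color vector to be $\Cb-\mathbf{e}_i-\mathbf{e}_k+\mathbf{e}_j$, which is exactly the top output of the $(L,L)$-fused boundary sub-block, matching the second argument of $W_L$ in the claimed recurrence. Summing over $i$, $j$, $k$ yields the displayed formula.

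The main potential obstacle is less the decomposition itself than two bookkeeping points that deserve care: verifying that conditioning on $a_{L+1}=i$ preserves $q$-exchangeability of the remaining $L$ inputs, which is a standard but non-trivial property of $q$-exchangeable laws on finite color sequences coming from the closed-form description of such distributions via inversion counts; and verifying the three rapidity identifications $z$, $q^Lz$, $q^{2L}z$, which follow from computing the per-vertex spectral parameters in each sub-block and comparing with the definitions of fused bulk and boundary vertices. Once both points are in place, the recurrence is immediate from summing the product of fused weights at the four sub-blocks over the interface colors $i$, $j$, $k$.
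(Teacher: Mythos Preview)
Your proof is correct and follows the same approach as the paper: decompose the $(L+1,L+1)$-fused boundary vertex into the $(L,L)$ boundary sub-block, the top-row $(L,1)$ bulk block, and the corner $W_1$ boundary vertex (with the rightmost column determined by symmetry), then condition on the topmost incoming color and sum over the internal interface colors. The paper's own proof is essentially a one-line pointer to Figure~\ref{fig:fusion rec} together with the phrase ``condition on the color of the arrow entering the top row,'' so your write-up simply makes explicit the spectral-parameter matching and the preservation of $q$-exchangeability under conditioning on $a_{L+1}$ that the paper leaves implicit.
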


The following lemma gives the probability that the topmost incoming arrow in a boundary vertex is of color $i$. This is identical to the full space setting, see e.g. Section 8.6 in \cite{BGW22}, except that we look at the top rather than the bottom, which results in inverting $q$.
\begin{lemma}
\label{prop: q exch prob}
Suppose that a sequence of colors $(a_1,\dotsc, a_L)$ with color vector $\Ab$ has a $q$-exchangeable distribution. Then
\begin{equation*}
    \PP(a_L=i)=\frac{(1-q^{-\Ab_i})q^{-|\Ab^{\geq i+1}|}}{1-q^{-L}}.
\end{equation*}
\end{lemma}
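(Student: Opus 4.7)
The plan is to leverage the rigidity of $q$-exchangeable distributions. The defining relation, iterated through a sequence of adjacent transpositions, pins down $\mu$ entirely up to normalization: explicitly, $\mu(\pi) = C\, q^{N(\pi)}$, where $N(\pi)$ is the standard inversion statistic on arrangements with color vector $\Ab$ (the sign being fixed by the direction of the swap incurring the factor $q$ in the definition). The total mass is then the classical $q$-multinomial generating function,
\begin{equation*}
    Z_L(\Ab) \;:=\; \sum_\pi q^{N(\pi)} \;=\; \frac{(q;q)_L}{\prod_i (q;q)_{\Ab_i}},
\end{equation*}
which is a well-known identity.

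To pick off $\PP(a_L = i)$, I would partition the sum according to the value at the distinguished endpoint, writing each arrangement with $a_L = i$ as $\pi = \sigma \cdot i$, where $\sigma$ ranges over arrangements of the reduced color vector $\Ab - \mathbf{e}_i$. The inversion count transforms additively under this decomposition: $N(\pi) = N(\sigma) + c_i(\Ab)$, where $c_i(\Ab)$ counts the contribution from pairs $(j,L)$ with $j < L$ and depends only on $\Ab$, not on $\sigma$ (it records either the number of colors in $\sigma$ exceeding $i$ or the number below $i$, according to the sign convention for $N$). Summing,
\begin{equation*}
    \PP(a_L = i) \;=\; \frac{q^{c_i(\Ab)}\, Z_{L-1}(\Ab - \mathbf{e}_i)}{Z_L(\Ab)} \;=\; q^{c_i(\Ab)} \cdot \frac{1 - q^{\Ab_i}}{1 - q^L},
\end{equation*}
where the ratio of $q$-multinomials collapses via $(q;q)_L = (1 - q^L)(q;q)_{L-1}$ and $(q;q)_{\Ab_i} = (1 - q^{\Ab_i})(q;q)_{\Ab_i - 1}$.

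The final step is cosmetic: the elementary identity $\frac{1 - q^{-a}}{1 - q^{-L}} = q^{L-a}\cdot \frac{1 - q^a}{1 - q^L}$ converts between the two equivalent forms and produces the expression in the statement, matching the ``invert $q$'' comment that relates this lemma to the analogous computation in Section 8.6 of \cite{BGW22}. The only real obstacle is bookkeeping: aligning the sign convention on $N$ with the definition of $q$-exchangeability used here, and identifying which end of the sequence is playing the role of the ``topmost incoming edge'' in the recursion of Lemma \ref{lem: bd wt rec}. Once those conventions are fixed consistently, the computation reduces to routine manipulation of $q$-Pochhammer symbols.
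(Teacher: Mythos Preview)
Your proposal is correct and follows the standard $q$-multinomial computation that the paper invokes by citing \cite[Section 8.6]{BGW22}; the paper itself gives no argument beyond that citation together with the remark that looking at the top rather than the bottom amounts to inverting $q$. Your write-up in fact supplies the details the paper omits, and your caveat about fixing the indexing convention (so that $c_i(\Ab)=|\Ab^{\leq i-1}|$, which then converts to $q^{-|\Ab^{\geq i+1}|}(1-q^{-\Ab_i})/(1-q^{-L})$ via your identity) is exactly the ``invert $q$'' step the paper alludes to.
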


Since the weights should be the higher rank analogue of the formulas obtained in \cite{ML19} (up to some reparametrizations), we do not have to solve the recursion in Lemma \ref{lem: bd wt rec}, but are able to check that the higher rank analogue of the formulas obtained in \cite{ML19} satisfy the recursion. However, the proof is still quite long and tedious. We thus leave the formal proof for Appendix \ref{app: pf}, and note that it proceeds as expected, by directly checking the $L=1$ case, and then plugging in the formula for $W_L$ given by Theorem \ref{thm: fused bd wt} into the recurrence given by Lemma \ref{lem: bd wt rec}, and checking that it holds. Here, we will simply give the explicit formula for $W_{L,1}$, which in principle is enough to complete the proof. The following proposition is given by Theorem 8.2 in \cite{BGW22}. Note that due to differences in convention (both inversion of the spectral parameter and a difference in rapidities), we have the substitution $z=1/zq^{L-1}$.
\begin{proposition}
\label{prop: row vt}
Consider a bulk vertex with $L$ columns and $1$ row. We use $b$ and $d$ to represent the colors entering and exiting horizontally rather than the color vectors $\Bb$ and $\Db$. Then subject to $\Ab+\mathbf{e}_b=\Cb+\mathbf{e}_d$, we have
\begin{equation*}
    W_{L,1}(z,q;\Ab,b,\Cb,d)=(zq^{L-1})^{-I(b>d)}\frac{1-(zq^{L-1})^{-I(b=d)}q^{\Ab_d}}{1-z^{-1}q}q^{|\Ab^{\geq d+1}|},
\end{equation*}
where for a statement $P$, $I(P)=1$ if $P$ is true and $0$ otherwise.
\end{proposition}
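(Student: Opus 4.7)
The plan is to reduce to the stochastic higher-spin vertex weight formula already established in [BGW22, Theorem 8.2], since $W_{L,1}$ is exactly the weight for a single horizontal arrow traversing $L$ fused columns — a setting handled there. The main task is then to reconcile conventions: our column rapidities are $\sqrt{z}, q\sqrt{z}, \ldots, q^{L-1}\sqrt{z}$, and we use the spectral parameter convention of Section \ref{sec: 6vm} in which $\mathbf{p}_i(z)$ depends on the product of rapidities, whereas [BGW22] uses the inverse of our spectral parameter. I would start by writing their formula, then substitute $z \mapsto 1/(zq^{L-1})$ (as indicated in the statement), and track each factor to obtain the claimed closed form, paying particular attention to how the indicators $I(b>d)$ and $I(b=d)$ arise after the reparametrization.

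For a more self-contained argument, one can proceed by induction on $L$. The base case $L=1$ is a single unfused vertex whose four transition probabilities are recorded in Figure \ref{fig:vtx wts}; specializing to $\Ab=\mathbf{e}_a$ one reads off $W_{1,1}$ and verifies the formula by case analysis on the ordering of $a,b,d$. For the inductive step, peel off the leftmost column. By a $q$-exchangeability argument analogous to Lemma \ref{prop: q exch prob} (applied to the columns rather than the rows), the probability that the leftmost incoming vertical arrow has a specified color $a$ is explicit, and the conditional distribution of the remaining $L-1$ columns remains $q$-exchangeable. Conditioning on $a$ and on the intermediate horizontal color $c$ leaving column $1$ yields a recurrence of the form
\begin{equation*}
W_{L,1}(z,q;\Ab,b,\Cb,d) = \sum_{a,c}\PP_{\Ab}(a_1=a)\, w_1(\sqrt{z};a,b,\cdot,c)\, W_{L-1,1}(q^2 z,q;\Ab',c,\Cb',d),
\end{equation*}
where $w_1$ is an unfused vertex weight and $\Ab',\Cb'$ differ from $\Ab,\Cb$ by a unit adjustment in the appropriate coordinates. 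Substituting the proposed closed form on the right-hand side and simplifying via telescoping of $(z^{-1}q;q)_L$ then yields the formula for $W_{L,1}$.

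The main obstacle is organizing the case analysis $b>d$, $b=d$, $b<d$ simultaneously with the intermediate color $c$ (which may lie on either side of $b$) so that the indicators $I(b>d)$ and $I(b=d)$ in the proposed closed form emerge cleanly from the sum; several apparently distinct contributions must collapse via $q$-Pochhammer telescoping. A secondary subtlety is the rapidity shift $\sqrt{z}\mapsto q\sqrt{z}$ when passing from $L$ to $L-1$ columns, which changes the argument of the inductive hypothesis and must be tracked carefully when matching the factor $q^{|\Ab^{\geq d+1}|}$ and the prefactor $(zq^{L-1})^{-I(b>d)}$.
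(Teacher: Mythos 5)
Your first paragraph reproduces the paper's proof exactly: Proposition \ref{prop: row vt} is obtained by citing \cite[Theorem~8.2]{BGW22} and performing the reparametrization $z\mapsto 1/(zq^{L-1})$ to account for the inverted spectral-parameter convention and the shifted column rapidities. Nothing more is done in the paper, so the first half of your proposal is precisely the intended argument.

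Your sketched inductive alternative is a reasonable route, but as written it is incomplete and contains a small slip in the rapidity bookkeeping. Peeling off the leftmost column of an $(L,1)$-fused vertex with spectral parameter $z$ leaves columns of rapidities $q\sqrt{z},\dotsc,q^{L-1}\sqrt{z}$ against a row of rapidity $\sqrt{z}$; the surviving vertex spectral parameters are $qz,q^2z,\dotsc,q^{L-1}z$, which is an $(L-1,1)$-fused vertex of spectral parameter $qz$, not $q^2z$ as in your recurrence. (Equivalently, the fused spectral parameter is the product of the first column and first row rapidities, and only the column rapidity advances by one power of $q$ when you strip a column.) Beyond that, the argument is left at the level of a plan: the claimed telescoping and collapse of the case analysis is exactly where the real work of the proof would be, and it is not carried out. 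The citation route is shorter and, since the authors of \cite{BGW22} have already verified the closed form, avoids re-deriving it.
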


\begin{remark}
Fusion for models with a boundary has been previously considered, see for example \cite{MN92, FNR07, KS92}. It has been understood that solutions to the reflection equation have an algebraic origin, and one source are coideal subalgebras of quantum groups (these are the quantum group analogue of symmetric spaces in the classical setting), similar to how solutions to the Yang--Baxter equation originate from $R$-matrices associated to quantum groups, see e.g. \cite{KOY19a, KOY19b, BK19, KS92, CM20}. However, it is unclear to the author how the negative colors that have been introduced fit into this picture. It would be very interesting to see if the models and vertex weights studied in this section could be explained algebraically.
\end{remark}

\subsection{Color projection}
An important property of the fused vertex model is that adjacent colors can be combined to obtain a new version of the fused vertex model with one fewer color, like for the fused full space model. A new feature is that the colors $-1$ and $1$ can be simultaneously combined with $0$.
\begin{proposition}
\label{prop: col proj}
Consider an $(M,L)$-fused bulk vertex or an $(L,L)$-fused boundary vertex. The vertex weight formula remains valid if we take colors $i$ and $i+1$ (and also $-i$ and $-i-1$), and replace them with the single color $i$ (and $-i$). The same is true if we replace $-1$ and $1$ with $0$. More formally, given incoming color vectors $\Ab$ and $\Bb$, let $\Ab'$ and $\Bb'$ denote the color vectors obtained by taking $\Ab_j'=\Ab_j$ if $j\neq i,i+1$, and $\Ab_i'=\Ab_i+\Ab_{i+1}$ and $\Ab_{i+1}'=0$, and similarly for the negative colors and $\Bb'$. For any outgoing color vectors $\Cb'$ and $\Db'$ with $\Cb'_{i+1}=\Db'_{i+1}=0$, we have
\begin{equation*}
    \sum_{\substack{\Cb_i+\Cb_{i+1}=\Cb_i'\\\Db_i+\Db_{i+1}=\Db_i'}}W_{L,M}(z,q;\Ab,\Bb,\Cb,\Db)=W_{L,M}(z,q;\Ab',\Bb',\Cb',\Db'),
\end{equation*}
and
\begin{equation*}
    \sum_{\substack{\Cb_i+\Cb_{i+1}=\Cb_i'\\\Db_i+\Db_{i+1}=\Db_i'}}W_{L}(z,q,\nu;\Ab,\Bb,\Cb,\Db)=W_{L}(z,q,\nu;\Ab',\Bb',\Cb',\Db').
\end{equation*}
If we let $\Ab'_0=\Ab'_1+\Ab'_{-1}+\Ab_0$, and similarly for $\Bb'$, and take $\Cb'$, $\Db'$ such that $\Cb'_1=\Cb'_{-1}=\Db'_1=\Db'_{-1}=0$, then
\begin{equation*}
    \sum_{\substack{\Cb_1+\Cb_{-1}+\Cb_0=\Cb_0'\\\Db_1+\Db_{-1}+\Db_0=\Db_0'}}W_{L,M}(z,q;\Ab,\Bb,\Cb,\Db)=W_{L,M}(z,q;\Ab',\Bb',\Cb',\Db'),
\end{equation*}
and
\begin{equation*}
    \sum_{\substack{\Cb_1+\Cb_{-1}+\Cb_0=\Cb_0'\\\Db_1+\Db_{-1}+\Db_0=\Db_0'}}W_{L}(z,q,\nu;\Ab,\Bb,\Cb,\Db)=W_{L}(z,q,\nu;\Ab',\Bb',\Cb',\Db').
\end{equation*}
\end{proposition}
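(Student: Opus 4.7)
The plan is to deduce color projection for the fused weights from the corresponding property of the unfused half-space six-vertex model, which is immediate from Section 2.1.3: the weights in Figure \ref{fig:vtx wts} depend on the incoming pair of colors only through equality and relative order, so identifying adjacent colors gives a probability-preserving bijection between configurations, and similarly for the simultaneous identification of $-1,0,1$ into a single self-reflecting color $0$ at boundary vertices.

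By the definition established via Propositions \ref{prop: q-exch bulk} and \ref{prop: q-exch bd}, the fused weight $W_{L,M}$ (resp.\ $W_L$) is the distribution of outgoing color vectors obtained by running the unfused $L\times M$ bulk block (resp.\ $(L,L)$ boundary block) on any $q$-exchangeable incoming arrangement whose color vectors are $\Ab,\Bb$. Given this, the identity
\begin{equation*}
\sum_{\substack{\Cb_i+\Cb_{i+1}=\Cb_i'\\\Db_i+\Db_{i+1}=\Db_i'}} W_{L,M}(z,q;\Ab,\Bb,\Cb,\Db) = W_{L,M}(z,q;\Ab',\Bb',\Cb',\Db')
\end{equation*}
reduces to a block-level statement: running the unfused dynamics on a $q$-exchangeable arrangement with color vectors $\Ab,\Bb$ and then merging colors $i,i+1$ (and $-i,-i-1$) in the outgoing configuration yields the same distribution on merged color vectors as first merging the colors in the incoming arrangement and then running the dynamics of the merged-color unfused model. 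This in turn follows by propagating the vertex-level color projection of Section 2.1.3 through the block one vertex at a time, using the elementary observation that $q$-exchangeability is preserved under adjacent color merging (the defining local swap relation only involves the relative order of the two adjacent colors, which is invariant under identification).

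The merging of $-1,0,1$ into the single color $0$ works by the same argument, using the additional observation that a color $1$ meeting its reflection $-1$ at a boundary vertex becomes, after merging, the new color $0$ meeting itself, which is exactly the self-reflecting behavior already allowed at boundary vertices. I expect the main obstacle to be essentially bookkeeping: one must check that the sum on the left-hand side indexes exactly the set of configurations projecting to a fixed pair $(\Cb',\Db')$, and because the boundary weight formula \eqref{eq: fused wt bd 2} is written purely in terms of $\Bb^+$ and $\Cb^+$, one should confirm that the positive-color statistics transform consistently under each merging (in particular $|\Bb^+|$ decreases by $\Bb_1$ under the $-1,0,1$ merging, and the corresponding identity on the right-hand side uses $\Bb'^+$ computed from the merged vector). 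Once the block-level identity is in hand, these checks are purely combinatorial and do not require any new manipulation of the $q$-hypergeometric formulas.
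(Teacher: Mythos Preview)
Your proposal is correct and follows essentially the same route as the paper: establish color projection at the level of individual unfused vertices (bulk via the full-space property, boundary via the observation that $i$ and $\pm(i+1)$ cannot both enter, and that merging $-1,0,1$ collapses the boundary vertex to weight $1$), then observe that merging colors commutes with fusion because $q$-exchangeability is preserved under the merge. The paper's proof is slightly terser on the ``commutes with fusion'' step and does not spell out the $q$-exchangeability check, but the argument is the same; your final paragraph about verifying the positive-color statistics in the explicit formula is unnecessary for this approach (the paper relegates that alternative to a remark).
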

\begin{proof}
We first note that this property is true for a single (unfused) vertex. For bulk vertices, this follows from the property for the full space model, see \cite[Remark 6.1]{BGW22} for example. To see this at a boundary vertex, simply note that there is no effect if $i$ and $i+1$ are combined, since if $j$ enters a boundary vertex then so must $-j$, so $i$ and $\pm (i+1)$ can never enter together. If we combine $-1$, $0$ and $1$, then a boundary vertex with $1$ and $-1$ entering would have both outgoing states summed over giving a weight of $1$.

Next, we note that the process of combining colors commutes with fusion. Thus, the fact that the proposition holds for a single vertex immediately implies that the fused vertices also satisfy the proposition.
\end{proof}
\begin{remark}
Proposition \ref{prop: col proj} could also be proved by explicitly computing the sums using $q$-binomial identities.

Once we have combined adjacent colors for the entire vertex model, there is no harm in simply removing the missing color and reindexing the remaining colors, and we will do so implicitly whenever convenient.
\end{remark}

\subsection{Shift invariance}
We now state and prove a version of shift invariance for this fused model. We will let $\mathbf{L}=(L_1,\dotsc)$ denote the number of rows/columns fused, and write $\Ht^{\geq i}(x,\mathbf{L})$ to denote the height function associated to the fused stochastic six-vertex model, with $L_i$ incoming arrows of color $i$ in the $i$th (fused) row.

\begin{theorem}
\label{thm: shift inv for fused}
Let $i_k$, $i'_l$, $(x_k,y_k)$, and $(x_l',y_l')$ be two collections of color cutoffs and positions for height functions such that all height functions cross the diagonal, are ordered, $i_k\leq i_l'$ for all $k,l$, and $(x_k,y_k)$ is weakly northwest of $(x_l',y_l'+1)$ for all $k,l$. Then we have a joint distributional equality
\begin{equation*}
    \big\{\Ht^{\geq i_k}(x_k,y_k;\mathbf{L})\big\}_{k}\cup \big\{\Ht^{\geq i'_l}(x'_l,y'_l;\mathbf{L})\big\}_{l}\deq \big\{\Ht^{\geq i_k}(x_k;\mathbf{L}')\big\}_{k}\cup \big\{\Ht^{\geq i'_l+1}(x'_l,y'_l+1;\mathbf{L}')\big\}_{l},
\end{equation*}
where $\mathbf{L}'=(L_1',\dotsc)$ is obtained from $\mathbf{L}$ by setting $L_i'=L_i$ for $i< \min_l i_l'$ or $i>\max_l y_l'+1$, $L_{i+1}'=L_{i}$ if $\min_l i_l' \leq i\leq \max_l y_l'$, and $L_{\min_l i_l'}=L_{\max_l y_l'+1}$ (in other words, we move $L_{\max_l y_l'+1}$ below $L_{\min_l i_l'}$ and shift what is in between up).
\end{theorem}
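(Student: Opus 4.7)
The strategy is to reduce to unfused shift invariance (Theorem~\ref{thm: shift inv}) through the fusion correspondence built in Section~\ref{sec: fusion}. Recall that specializing $t=0$ and taking block~$i$ to consist of $L_i$ rows/columns with geometric-progression rapidities $z_i, qz_i,\ldots,q^{L_i-1}z_i$ together with $L_i$ incoming arrows of color $\pm i$ yields, via $q$-exchangeability (Propositions~\ref{prop: q-exch bulk} and~\ref{prop: q-exch bd}) and color projection (Proposition~\ref{prop: col proj}), the fused model. Writing $C_i=\sum_{j<i}L_j+1$, $X_x=\sum_{j\leq x}L_j$, $Y_y=\sum_{j\leq y}L_j$, and $\widetilde{\mathbf{z}}$ for the expanded rapidity sequence, the identification gives
\[
\Ht^{\geq i}(x,y;\mathbf{L})=\Ht^{\geq C_i}(X_x,Y_y;\widetilde{\mathbf{z}}),
\]
and analogously in $\mathbf{L}'$ with primed quantities and $\widetilde{\mathbf{z}}'$.

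Set $m=\min_l i'_l$ and $M=\max_l y'_l+1$; then $\mathbf{L}'$ differs from $\mathbf{L}$ only by a permutation of the blocks indexed $m,\ldots,M$, whose cumulative size is preserved. The key observation is that the hypotheses force every $x'_l\geq M$: reading ``all height functions cross the diagonal'' to include the shifted ones in $\mathbf{L}'$ gives $x'_l\geq y'_l+1$ for every $l$, and the SE-ordering of the $\{i'_l\}$ family propagates this through the element realizing $y'_l=M-1$ (which simultaneously realizes $\min_l x'_l$) to every $l$. The weakly-northwest condition with the $\{i_k\}$ family similarly forces $x_k,y_k\geq M$ and $i_k\leq m$. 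Consequently every unfused coordinate $X_k$, $Y_k$, $C_{i_k}$, $X'_l$ is unchanged when computed in $\mathbf{L}$ versus $\mathbf{L}'$, since they involve only indices $\leq m$ or $\geq M$ together with $\sum_{j\leq M}L_j=\sum_{j\leq M}L'_j$, and direct computation yields the identities
\[
\sum_{j\leq y'_l+1}L'_j=L_M+Y'_l,\qquad \sum_{j\leq i'_l}L'_j+1=L_M+C_{i'_l}\qquad (m\leq i'_l\leq M-1),
\]
where $L_M=L_{\max_l y'_l+1}$.

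I would then apply the basic $(0,1)$ unfused shift of Theorem~\ref{thm: shift inv} a total of $L_M$ times to the $\{i'_l\}$ family. Each iteration moves a single rapidity from the row immediately above the current maximum $y$-coordinate of the family down to the row immediately below its minimum cutoff, while shifting those $y$-coordinates and cutoffs by~$1$. The $L_M$ units of vertical headroom provided by $x'_l\geq M$ keep the crossing-the-diagonal condition satisfied at every intermediate step, and the weakly-northwest condition with $\{i_k\}$ persists because $\{i_k\}$ is fixed with $y_k\geq M$ and $i_k\leq m$; the ordering hypothesis of Theorem~\ref{thm: shift inv} (in the form $C_k\preceq C'_l+(0,1)$) likewise propagates through all $L_M$ iterations. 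The cumulative effect is precisely the rapidity rearrangement $\widetilde{\mathbf{z}}\to\widetilde{\mathbf{z}}'$, landing the $\{i'_l\}$ cuts at $(X'_l,Y'_l+L_M)$ with cutoffs $C_{i'_l}+L_M$; by the two identities above these are exactly the unfused forms of the fused heights $\Ht^{\geq i'_l+1}(x'_l,y'_l+1;\mathbf{L}')$, while the unchanged $\{i_k\}$ family matches $\Ht^{\geq i_k}(x_k,y_k;\mathbf{L}')$, yielding the desired joint distributional identity.

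The main technical obstacle is the alignment observation $x'_l\geq M$ in the second paragraph, which is what allows the unfused shift to translate cleanly through the fusion correspondence into $\mathbf{L}'$; without it the unfused shift would land the $\{i'_l\}$ cuts at x-coordinates that do not correspond to fused block boundaries of $\mathbf{L}'$, breaking the identification.
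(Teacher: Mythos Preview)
Your proposal is correct and follows essentially the same approach as the paper: translate fused height functions to unfused ones via the cumulative block sums, then invoke the unfused shift invariance (Theorem~\ref{thm: shift inv}) a total of $L_M$ times with $M=\max_l y'_l+1$. The paper's proof is a single sentence to this effect; you have supplied the verification that the intermediate shifts remain valid and that the final unfused cut positions land exactly on block boundaries of $\mathbf{L}'$, which the paper leaves implicit.
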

\begin{proof}
We consider the cuts $C_k=(\overline{i_k-1}+1,0,\overline{x_k^{(2)}},\overline{x_k^{(1)}})$ and similarly for $C_l'$ in terms of the $j_l$ and $y_l$, where $\overline{x}=\sum_{k=1}^x L_k$. We have $\Ht^{\geq i_k}(x_k;\mathbf{L})=\Ht_A(C_k)$, where $A$ is the symmetric skew domain generated by all these cuts and the point $(1,1)$. The theorem then follows immediately from Theorem \ref{thm: shift inv} applied $L_{\max y_l^{(2)}+1}$ many times.
\end{proof}

\section{Half space higher spin six-vertex model}
\label{sec: higher spin}
In this section, we describe how to take the fused model from Section \ref{sec: fusion}, and analytically continue it after degenerating certain parameters to obtain a higher-spin version of the half-space colored stochastic six-vertex model, with no bounds on the number of arrows on an edge. We then take a limit to obtain a model with a continuous number of arrows. Eventually, we will identify this continuous vertex model with the beta polymer of Section \ref{sec: intro beta}. This basic strategy is the same as in the full space setting, see \cite{BGW22} for example, but additional technical difficulties arise.

\subsection{Degeneration and analytic continuation}
We will reindex the rows to start at $0$ rather than $1$. We now set $z_x=q^{-L_x+1/2}$ for all $x\geq 1$ and $z_0=q^{1/2}z_0$ (we will see why $z_0$ needs to be treated specially). The reason for this specialization is that the bulk weights \eqref{eq: fused wt bulk} degenerate, since $\Phi_q(\mathbf{P},\Bb;1,z)=\delta_{\mathbf{P}=0}$ (and note that $|\mathbf{P}|$ can be less than $L$ or $M$). Our bulk and boundary weights now simplify to
\begin{equation}
\label{eq: fused wt bulk degen}
    W_{M,L}(q^{-L-M+1},q;\Ab,\Bb,\Cb,\Db)=\Phi_q(\Cb,\Ab+\Bb;q^{-M},q^{-M-L})
\end{equation}
(where recall that we're still keeping track of $0$ as a color) and
\begin{equation}
\label{eq: fused wt bd degen}
    W_{L}(q^{-2L+1},\nu;\Ab,\Bb,\Cb,\Db)=\nu^{|\Bb^+-\Cb^+|}q^{-L|\Cb^+|} \Phi_q(\Cb^+,\Bb^+;\nu,\nu q^{-L}),
\end{equation}
where we've absorbed a $-q^{1/2}$ factor into $\nu$ for convenience. Here, we're implicitly assuming that $|\Ab|=|\Cb|=M$ and $|\Bb|=|\Db|=L$, and the constraint $\Ab+\Bb=\Cb+\Db$ holds.

Our first step is to remove $0$ as a color. For the boundary weights, there is nothing to do, since $\Bb_0$ and $\Cb_0$ do not appear in \eqref{eq: fused wt bd degen}. The following lemma gives the weight for a bulk vertex.

\begin{lemma}
We have
\begin{equation}
\label{eq: bulk wt no 0}
\begin{split}
    &W_{M,L}(q^{-L-M+1},q;\Ab,\Bb,\Cb,\Db)
    \\=&q^{-|\Cb^-|L-|\Db^+|M-|\Cb^-||\Db^+|}\frac{(q^{-M};q)_{|\Cb^-+\Cb^+|}(q^{-L};q)_{|\Db^-+\Db^+|}}{(q^{-L-M};q)_{|\Cb^-+\Cb^++\Db^-+\Db^+|}}
    \\&\qquad \qquad\times q^{\sum _{i<j<0}\Db_i \Cb_j+\sum _{0<i<j}\Db_i\Cb_j}\prod _{i\neq 0}{\Cb_i+\Db_i\choose \Cb_i}_q.
\end{split}
\end{equation}
\end{lemma}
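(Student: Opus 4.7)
The plan is to start from \eqref{eq: fused wt bulk degen} and directly expand, isolating the color $0$ contribution so that only nonzero colors remain on the right. Using $|\Cb|=M$, $|\Ab+\Bb|=L+M$, and the conservation identities $\Ab_i+\Bb_i=\Cb_i+\Db_i$ (which also give $\sum_{i<j}(\Ab_i+\Bb_i-\Cb_i)\Cb_j=\sum_{i<j}\Db_i\Cb_j$), the definition of $\Phi_q$ yields
\[
W_{M,L} = q^{-ML}\,\frac{(q^{-M};q)_M (q^{-L};q)_L}{(q^{-L-M};q)_{L+M}}\, q^{\sum_{i<j}\Db_i\Cb_j}\,\prod_i \binom{\Cb_i+\Db_i}{\Cb_i}_q.
\]

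Next, I would separate out the $i=0$ contribution in both the product and the sum. The product factors as $\binom{\Cb_0+\Db_0}{\Cb_0}_q\prod_{i\neq 0}\binom{\Cb_i+\Db_i}{\Cb_i}_q$, and partitioning the sum $\sum_{i<j}\Db_i\Cb_j$ according to whether each index is negative, zero, or positive gives
\[
\sum_{i<j}\Db_i\Cb_j = \sum_{i<j<0}\Db_i\Cb_j + \sum_{0<i<j}\Db_i\Cb_j + |\Db^-|\Cb_0 + |\Db^-||\Cb^+| + \Db_0|\Cb^+|.
\]
The two extreme sums and the product over $i\neq 0$ already match the corresponding factors on the right-hand side. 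What remains is to show that the prefactor $q^{-ML}$, the three Pochhammer symbols in negative powers of $q$, the $q$-binomial $\binom{\Cb_0+\Db_0}{\Cb_0}_q$, and the three remaining linear/bilinear boundary terms reassemble into the prefactor on the right-hand side.

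For this step I would substitute the conservation relations $\Cb_0 = M - |\Cb^-|-|\Cb^+|$ and $\Db_0 = L - |\Db^-|-|\Db^+|$, and apply the standard rewrite
\[
(q^{-N};q)_k = (-1)^k q^{-kN+\binom{k}{2}}\frac{(q;q)_N}{(q;q)_{N-k}}
\]
to both sides to convert every $(q^{-\bullet};q)_\bullet$ symbol into an ordinary $(q;q)_\bullet$ symbol with an explicit tracked $q$-power. After this conversion, the ordinary Pochhammer symbols on the LHS, combined with $\binom{\Cb_0+\Db_0}{\Cb_0}_q$, collapse (via $\binom{L+M}{M+L-c-d,M-c,L-d,\ldots}$-type cancellations) to exactly those on the RHS, and the claim reduces to a purely algebraic identity between $q$-exponents. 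Expanding $\Cb_0$ and $\Db_0$ in the boundary terms and collecting, almost every cross term cancels in pairs, leaving precisely the target exponent. The only real obstacle is the careful bookkeeping of $q$-powers arising from the repeated use of the Pochhammer identity above; there is no conceptual subtlety beyond that.
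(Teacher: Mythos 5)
Your proposal takes essentially the same route as the paper: degenerate at $z=q^{-L-M+1}$, expand $\Phi_q(\Cb,\Ab+\Bb;q^{-M},q^{-M-L})$, isolate the color-$0$ contributions from both the binomial product and the sum $\sum_{i<j}\Db_i\Cb_j$, and reassemble the prefactor. Your decomposition of $\sum_{i<j}\Db_i\Cb_j$ is exactly what the paper does. The only cosmetic difference is how the Pochhammer bookkeeping is organized: the paper packages it as the single compact identity
\begin{equation*}
    \frac{(q^{-M};q)_{M}(q^{-L};q)_{L}}{(q^{-L-M};q)_{L+M}}{\Cb_0+\Db_0\choose \Cb_0}_q=q^{\Cb_0\Db_0}\frac{(q^{-M};q)_{c}(q^{-L};q)_{d}}{(q^{-L-M};q)_{c+d}},
\end{equation*}
with $c=|\Cb^-+\Cb^+|$, $d=|\Db^-+\Db^+|$, whereas you propose to convert every $(q^{-\bullet};q)_\bullet$ to a $(q;q)_\bullet$-symbol and track powers. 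These are equivalent; your conversion route is exactly what one does to prove the paper's identity.

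One genuine caution: you assert rather than verify that the exponents ``cancel in pairs, leaving precisely the target exponent.'' If you actually carry the bookkeeping through, the prefactor exponent you obtain is $-|\Cb^-|L-|\Db^+|M+|\Cb^-||\Db^+|$, with the cross term coming in with the opposite sign to what \eqref{eq: bulk wt no 0} displays. A small check makes this concrete: with $M=2$, $L=1$, $\Cb=\mathbf{e}_{-1}+\mathbf{e}_0$, $\Db=\mathbf{e}_1$ (so $|\Cb^-|=|\Db^+|=1$, and the remaining binomials and index sums all trivialize), the degenerate weight $\Phi_q(\Cb,\Cb+\Db;q^{-2},q^{-3})$ evaluates to $(q^2+q+1)^{-1}$, while \eqref{eq: bulk wt no 0} as printed gives $q^{-2}(q^2+q+1)^{-1}$. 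This is a sign typo in the stated formula (and its descendants \eqref{eq: bulk wt analytic}, \eqref{eq: sample step 1}); the $q\to 1$ expression \eqref{eq: cont weight poch form}, which carries $e^{-|\gamma^-||\delta^+|/\varepsilon}$, is in fact consistent with the $+$ sign. The moral is that the step you wave through is exactly where all the content (and the typo) lives — carry the cancellation out in full rather than asserting it.
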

\begin{proof}
We start with \eqref{eq: fused wt bulk degen}, and substitute $\Cb_0=M-|\Cb^-+\Cb^+|$ and $\Db_0=L-|\Db^-+\Db^+|$. Since $|\Cb|=M$ and $|\Db|=L$, we have
\begin{equation*}
    \frac{(q^{-M};q)_{|\Cb|}(q^{-L};q)_{|\Db|}}{(q^{-L-M};q)_{|\Cb+\Db|}}{\Cb_0+\Db_0\choose \Cb_0}=q^{\Cb_0\Db_0}\frac{(q^{-M};q)_{|\Cb^-+\Cb^+|}(q^{-L};q)_{|\Db^-+\Db^+|}}{(q^{-L-M};q)_{|\Cb^-+\Cb^++\Db^-+\Db^+|}}.
\end{equation*}
We then take the factors in $q^{\sum_{i<j}\Db_i\Cb_j}$ with either $i=0$ or $j=0$, as well as the factors with $i<0<j$, and collect them with the other powers of $q$. This gives the desired formula.
\end{proof}
From now on, color vectors will not track the number of arrows of color $0$. We now wish to remove the restrictions on the number of arrows in the rows and columns by analytically continuing the model in $q^{-L_i}$. However, the first column and row of the model will be treated differently, because otherwise an infinite number of arrows would be entering our model. For convenience, we will refer to the bulk vertices in the first column and row as \emph{incoming vertices}.

The first step is to set the color of the arrows entering in the $0$th row and column to be $0$. Once this is done, we see that there is no issue in analytically continuing $q^{-L_0}=\mathfrak{m}$ to a complex parameter. We now wish to do the same for $q^{-L_i}$, but some care is needed in the remaining vertices in the $0$th column and row, as a potentially infinite number of arrows would be entering. The treatment is identical to the full space setting, but for convenience we state the needed result. The following follows from Lemma 6.8 of \cite{BGW22} after forgetting the negative colors (which is fine since negative colors will never enter the first column) and setting $z=z_0q^{-L}$.

\begin{lemma}[{\hspace{1sp}\cite[Lemma 6.8]{BGW22}}]
Take the weight for a bulk vertex \eqref{eq: fused wt bulk}, and assume that $\Ab$ has no negative colors and no colors greater than $i$, and that $\Bb_i=L$, which means $\Bb_j=0$ for $j\neq i$. The for $q\in (0,1)$, we have
\begin{equation*}
    \lim_{\mathfrak{m}\to 0}W_{\infty, L}(q^{-L+1}z_0,q;\Ab,\Bb,\Cb,\Db)=\begin{cases}\frac{(z_0;q)_\infty(q^{-L};q)_d}{(z_0q^{-L};q)_{\infty}(q;q)_d}z_0^d&\text{if }\Db=d\mathbf{e}_i, d\geq 0,
    \\0&\text{else},
    \end{cases}
\end{equation*}
where
\begin{equation*}
    W_{\infty,L}(z,q;\Ab,\Bb,\Cb,\Db)
\end{equation*}
is defined by taking \eqref{eq: fused wt bulk}, and replacing $q^{-M}$ with $\mathfrak{m}$.
\end{lemma}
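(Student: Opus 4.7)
The plan is to reduce the claim to the full-space analogue \cite[Lemma 6.8]{BGW22} by exploiting the fact that no negative colors ever appear at this vertex. First I would note that under the hypotheses $\Ab \geq 0$ and $\Bb = L\mathbf{e}_i$, the conservation law $\Ab + \Bb = \Cb + \Db$ forces $\Cb$ and $\Db$ to be supported in $\{1, \dots, i\}$ (any negative component would make some $q$-binomial factor in $\Phi_q$ vanish). Consequently, the negative colors play no role in the formula \eqref{eq: fused wt bulk}, and $W_{\infty,L}(q^{-L+1}z_0,q;\Ab,\Bb,\Cb,\Db)$ agrees with the analytically continued fused full-space weight studied in \cite{BGW22}, up to the spectral-parameter relabeling dictated by our choice of column rapidities $z_0, qz_0, \dots, q^{L-1}z_0$.

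Next I would substitute $z = q^{-L+1}z_0$ and $q^{-M} = \mathfrak{m}$ into the two $\Phi_q$ factors in \eqref{eq: fused wt bulk}, whose spectral pairs become $(z_0, q^{-L}z_0)$ and $(\mathfrak{m}/z_0, q^{-L})$ respectively. Since $\Bb = L\mathbf{e}_i$, the sum over $\mathbf{P}$ collapses to $\mathbf{P} = p\mathbf{e}_i$ with $0 \leq p \leq L$. In the limit $\mathfrak{m} \to 0$, the factor $\Phi_q(\mathbf{P},\Bb;\mathfrak{m}/z_0,q^{-L})$ produces a leading term $(q^{-L}z_0/\mathfrak{m};q)_{L-p}$ of order $\mathfrak{m}^{-(L-p)}$, which must cancel against the $\mathfrak{m}^{|\Db|-|\Bb|}$ contribution from the prefactor $(q^{M-1}z)^{|\Db|-|\Bb|}q^{|\Ab|L-|\Db|M}$; matching powers of $\mathfrak{m}$ forces $|\Db| = L - p + (\text{stuff})$ and, combined with the constraint from the first $\Phi_q$ that $\Cb - \mathbf{P}$ and $\Cb + \Db - \mathbf{P}$ agree outside color $i$, pins down $\Db = d\mathbf{e}_i$ for some $d \geq 0$. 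Any $\Db$ not of this form leaves an uncancelled positive power of $\mathfrak{m}$ and vanishes in the limit.

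For $\Db = d\mathbf{e}_i$, the remaining finite-order terms assemble into a single ${}_2\phi_1$-type sum over $p$, which by a standard $q$-binomial/$q$-Gauss identity (equivalently, the $q$-binomial theorem applied to $(z_0;q)_\infty / (z_0 q^{-L};q)_\infty$) evaluates to $\frac{(z_0;q)_\infty(q^{-L};q)_d}{(z_0 q^{-L};q)_\infty (q;q)_d}z_0^d$, as claimed. The main obstacle is purely bookkeeping: tracking the powers of $\mathfrak{m}$, $q$, and $z_0$ through the prefactor and the two $\Phi_q$ factors to confirm that the divergent $\mathfrak{m}$-dependence cancels exactly. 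Once this is done, the stated limit agrees on the nose with the full-space computation in \cite[Lemma 6.8]{BGW22}, with the half-space setting contributing nothing beyond the observation that the negative-color sector is inert under these boundary conditions.
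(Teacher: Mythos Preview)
Your proposal is correct and matches the paper's approach exactly: the paper does not give an independent proof but simply remarks that the statement ``follows from Lemma 6.8 of \cite{BGW22} after forgetting the negative colors (which is fine since negative colors will never enter the first column) and setting $z=z_0q^{-L}$.'' Your reduction---observing that the hypotheses force the negative-color sector to be inert so that the half-space fused weight coincides with the full-space one, then invoking \cite[Lemma 6.8]{BGW22} with the appropriate spectral-parameter substitution---is precisely this, with the added (but unnecessary for the present paper) sketch of how the computation in \cite{BGW22} itself proceeds.
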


Next, we notice that formulas \eqref{eq: fused wt bulk degen} and \eqref{eq: fused wt bd degen} are independent of the number of arrows of color $0$, and are meromorphic functions in $q^{-M}$ and $q^{-L}$. Thus, after forgetting about the arrows of color $0$, we can analytically continue to parameters $\mathfrak{m}=q^{-M}$ and $\mathfrak{l}=q^{-L}$, and remove the restriction on the number of arrows in each row and column.

We will thus let $\mathfrak{l}_x=q^{-L_x}$ for $x>0$. These will act as the new parameters for our fully fused vertex model. We will not track arrows in the first column and row, and instead only keep track of the number entering the rest of the system from them. We also reindex the rows/columns to remove the first one. We summarize the fully fused and analytically continued vertex model:
\begin{itemize}
    \item We start with parameters $\mathfrak{l}_x$ for $x\in\N$ attached to rows/columns, $\nu$ corresponding to the diagonal, and $z_0$ for the first row/column.
    \item The distribution for the incoming arrows at row $x$ is given by
    \begin{equation*}
        \PP(\Bb(1,x)=d\mathbf{e}_x)=\frac{(z_0;q)_\infty(\mathfrak{l}_x;q)_d}{(z_0\mathfrak{l}_x;q)_{\infty}(q;q)_d}z_0^d
    \end{equation*}
    and $0$ otherwise. The same number of arrows of color $-x$ enter at column $x$.
    \item Given $\Ab(x,y)$ and $\Bb(x,y)$, we sample $\Cb(x,y)$ and $\Db(x,y)$ by either
    \begin{equation}
\label{eq: bulk wt analytic}
\begin{split}
    &W_{\mathfrak{l}_x,\mathfrak{l}_y}(q;\Ab,\Bb,\Cb,\Db)
    \\=&\mathfrak{l}^{|\Db^+|}_x\mathfrak{l}^{|\Cb^-|}_yq^{-|\Cb^-||\Db^+|}\frac{(\mathfrak{l}_x;q)_{|\Cb^-+\Cb^+|}(\mathfrak{l}_y;q)_{|\Db^-+\Db^+|}}{(\mathfrak{l}_x\mathfrak{l}_y;q)_{|\Cb^-+\Cb^++\Db^-+\Db^+|}}
    \\&\qquad\qquad\times q^{\sum _{i<j<0}\Db_i \Cb_j+\sum _{0<i<j}\Db_i\Cb_j}\prod _{i\neq 0}{\Cb_i+\Db_i\choose \Cb_i}_q,
\end{split}
\end{equation}
    if $x<y$, or
    \begin{equation*}
        W_{\mathfrak{l}_x}(q,\nu;\Ab,\Bb,\Cb,\Db)=\nu^{|\Bb^+-\Cb^+|}\mathfrak{l}_x^{|\Cb^+|} \Phi_q(\Cb^+,\Bb^+;\nu,\nu \mathfrak{l}_x),
    \end{equation*}
    if $x=y$. Arrows travel through vertices $(x,y)$ with $x>y$ by symmetry. That is, $\Cb_i(x,y)=\Db_{-i}(y,x)$ for all $(x,y)$ and $i$ (this is automatic if $x=y$ by definition of the boundary weight).
\end{itemize}

We now state and prove a version of shift invariance for this analytically continued model.

\begin{theorem}
\label{thm: shift inv analytic}
Let $i_k$, $i'_l$, $(x_k,y_k)$, and $(x_l',y_l')$ be two collections of color cutoffs and positions for height functions such that all height functions cross the diagonal, are ordered, $i_k\leq i_l'$ for all $k,l$, and $(x_k,y_k)$ is weakly northwest of $(x_l',y_l'+1)$ for all $k,l$. Then we have a joint distributional equality
\begin{equation*}
    \big\{\Ht^{\geq i_k}(x_k,y_k;\mathfrak{l})\big\}_{k}\cup \big\{\Ht^{\geq i'_l}(x'_l,y'_l;\mathfrak{l})\big\}_{l}\deq \big\{\Ht^{\geq i_k}(x_k;\mathfrak{l}')\big\}_{k}\cup \big\{\Ht^{\geq i'_l+1}(x'_l,y'_l+1;\mathfrak{l}')\big\}_{l},
\end{equation*}
where $\mathfrak{l}'=(\mathfrak{l}_1',\dotsc)$ is obtained from $\mathbf{z}$ by setting $\mathfrak{l}_i'=\mathfrak{l}_i$ for $i< \min_l i_l'$ or $i>\max_l y_l'+1$, $\mathfrak{l}_{i+1}'=\mathfrak{l}_{i}$ if $\min_l i_l' \leq i\leq \max_l y_l'$, and $\mathfrak{l}_{\min_l i_l'}=\mathfrak{l}_{\max_l y_l'+1}$ (in other words, we move $\mathfrak{l}_{\max_l y_l'+1}$ below $\mathfrak{l}_{\min_l i_l'}$ and shift what is in between up).
\end{theorem}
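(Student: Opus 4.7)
The plan is to derive Theorem~\ref{thm: shift inv analytic} from Theorem~\ref{thm: shift inv for fused} by analytic continuation in the variables $\mathfrak{l}_i = q^{-L_i}$. The bulk weights in \eqref{eq: bulk wt analytic} and the corresponding boundary weights from Theorem~\ref{thm: fused bd wt} are manifestly rational functions of the $\mathfrak{l}_i$, and the incoming arrow distribution at row $x$ is an explicit meromorphic function of $\mathfrak{l}_x$ built from $q$-Pochhammer symbols. Consequently, after clearing a common denominator of $q$-Pochhammer factors, both sides of the desired joint distributional identity become, for any fixed tuple of target values of the height functions, meromorphic in $(\mathfrak{l}_1,\mathfrak{l}_2,\dotsc)$ on a common domain. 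Theorem~\ref{thm: shift inv for fused} asserts that the two functions coincide whenever each $\mathfrak{l}_i$ takes the value $q^{-L_i}$ for a positive integer $L_i$, and the rest of the argument is to promote this lattice of equalities to an identity of meromorphic functions.

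To make this concrete, I would fix a target tuple of values $(n_k),(n_l')$ for the height functions, and use the (fused analogue of the) independence Lemma~\ref{lem: key ind lemma} to localize everything inside a finite symmetric skew domain $A^\star$ containing all the cuts $C_k, C_l'$. Conditioning on the vector of incoming arrow multiplicities $(d_i)$ for the finitely many rows meeting $A^\star$, the conditional probability of the target tuple is a finite sum of products of bulk and boundary weights, hence rational in the $\mathfrak{l}_i$ involved. Multiplying by the incoming weights $\prod_i \tfrac{(z_0;q)_\infty(\mathfrak{l}_i;q)_{d_i}}{(z_0\mathfrak{l}_i;q)_\infty(q;q)_{d_i}} z_0^{d_i}$ and summing the resulting $q$-hypergeometric-type series over the $d_i$ yields a meromorphic function of $(\mathfrak{l}_1,\dotsc,\mathfrak{l}_r)$; clearing the global denominators $(z_0\mathfrak{l}_i;q)_\infty$ together with the finitely many $(\mathfrak{l}_i\mathfrak{l}_j;q)_k$-type factors appearing inside $A^\star$ leaves a rational expression. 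An identical construction on the shifted side produces another rational expression in $(\mathfrak{l}_1',\dotsc)$, and the cut conditions in the hypothesis guarantee that the two constructions use the same shifted set of parameters.

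With meromorphy in hand, the identity reduces to equality of two meromorphic (after denominator clearing, rational) functions of $(\mathfrak{l}_1,\dotsc,\mathfrak{l}_r)$ that agree on the infinite lattice $\{q^{-L}:L\in\N\}^r$. Freezing all but one variable at lattice points produces a one-variable rational function with infinitely many zeros, hence identically zero; iterating across variables yields the identity for generic $\mathfrak{l}_i$, and in particular wherever both sides give honest probabilities. The main obstacle I expect is not the structural reduction but the bookkeeping in the second step: confirming that the infinite sum over $(d_i)$ converges uniformly in a complex neighborhood of the relevant parameter values, and that the chosen common denominator is uniformly non-vanishing there, so that the final rational comparison is genuine. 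This is a concrete estimate on $q$-Pochhammer symbols and on the explicit closed forms from Theorem~\ref{thm: fused bd wt} and \eqref{eq: bulk wt analytic}, so I expect no structural difficulty, only careful tracking of poles.
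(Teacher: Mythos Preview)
Your proposal is correct and takes essentially the same approach as the paper: both deduce the identity by analytic continuation in the $\mathfrak{l}_i$ from the fused result (Theorem~\ref{thm: shift inv for fused}), using that the joint distribution functions are holomorphic in the $\mathfrak{l}_i$ and coincide on the set $\{q^{-L}: L\in\N\}$, which has an accumulation point. The paper's argument is much terser---it simply asserts holomorphy, calling the distribution a ``finite sum'' of vertex weights, without explicitly addressing the sum over incoming multiplicities---so your more careful treatment of convergence and denominator-clearing is really just filling in details the paper elides rather than a different method.
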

\begin{proof}
We note that even if the $\mathfrak{l}_x$ are allowed to take complex values, the distribution of the height functions (no longer required to be non-negative real numbers), viewed as functions of the $\mathfrak{l}_x$, are actually holomorphic in a neighbourhood of $0$. Indeed, only finitely many arrows of a given color ever enter the system, and any given vertex can only have finitely many colors entering it (since only colors of value at or below it can reach it), so the formulas for the vertex weights are holomorphic functions in a neighbourhood of $0$. Moreover, the distribution of the height functions is a finite sum over these vertex weights, and so is also holomorphic. 

Theorem \ref{thm: shift inv analytic} states that these distribution functions are equal when $\mathfrak{l}_x=q^{-L_x}$ for positive integers $L_x$. Since $0$ is an accumulation point for this sequence, and the functions are holomorphic, this means that they must be equal as holomorphic functions (here, it is helpful to note that the distribution of any finite collection of height functions is only a function of finitely many of the $\mathfrak{l}_x$). Thus, the stated identity continues to hold for complex parameters $\mathfrak{l}_x$.
\end{proof}

\subsection{Continuous vertex model}
In this section, we wish to scale the model and take a limit as $q\to 1$. We thus define parameters $\sigma_i,\omega,\rho$, let $\varepsilon>0$, and define
\begin{equation}
\label{eq: eps assumptions}
q=\exp(-\varepsilon),\qquad \mathfrak{l}_i=q^{\sigma_i},\qquad \nu=q^{\omega},\qquad z_0=q^{\rho}.
\end{equation}

We begin with the simplest vertices to analyze, which are the incoming vertices. The following proposition follows immediately from Proposition 6.16 in \cite{BGW22} after a change of variables.
\begin{proposition}
\label{prop: incoming limit}
As $\varepsilon\to 0$, $\exp(-\varepsilon\Bb_x(1,x))$ (where recall that $\Bb_x(1,x)$ is the number of incoming arrows at row $x$) converges weakly to a $\Beta(\rho,\sigma_x)$ random variable.
\end{proposition}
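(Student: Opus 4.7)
The plan is to recognize the stated mass function as a $q$-analog of the Beta distribution and to pass to the limit via the $q \to 1$ asymptotics of the $q$-Gamma function; the computation is essentially identical to Proposition~6.16 of \cite{BGW22} after a relabeling of parameters.

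First I would substitute $Y_\varepsilon = q^{\Bb_x(1,x)} = \exp(-\varepsilon \Bb_x(1,x))$, which is supported on the geometric grid $\{q^d : d \in \N_0\} \subset (0,1]$ with spacing $q^d(1-q) \sim \varepsilon y$ at the point $y = q^d$. Using the identity $(a;q)_d = (a;q)_\infty/(aq^d;q)_\infty$, the mass function rewrites as
\begin{equation*}
\PP(Y_\varepsilon = q^d) = \frac{(z_0;q)_\infty (\mathfrak{l}_x;q)_\infty}{(z_0 \mathfrak{l}_x;q)_\infty (q;q)_\infty} \cdot z_0^d \cdot \frac{(qy;q)_\infty}{(\mathfrak{l}_x y;q)_\infty}.
\end{equation*}

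Next I would compute the two factors separately. For the $\varepsilon$-dependent prefactor, rewriting each $(q^a;q)_\infty = (1-q)^{1-a}(q;q)_\infty/\Gamma_q(a)$ and using $\Gamma_q(a) \to \Gamma(a)$, the $(1-q)^{\bullet}$ exponents telescope to a single factor $(1-q)^{1}$, so the prefactor is asymptotic to $\varepsilon/B(\rho,\sigma_x)$. For the $d$-dependent part, the log-expansion $\log(y;q)_\infty = -\sum_{n\geq 1} y^n/(n(1-q^n))$ together with $(q^n - q^{n\sigma_x})/(1-q^n) \to \sigma_x - 1$ yields $(qy;q)_\infty/(\mathfrak{l}_x y;q)_\infty \to (1-y)^{\sigma_x - 1}$, while $z_0^d = y^\rho$. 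Combining, $\PP(Y_\varepsilon = q^d) \sim \varepsilon \cdot y^\rho (1-y)^{\sigma_x-1}/B(\rho,\sigma_x)$, and dividing by the grid spacing $\varepsilon y$ recovers the Beta density $y^{\rho-1}(1-y)^{\sigma_x-1}/B(\rho,\sigma_x)$ on $(0,1)$. A standard Riemann-sum argument then upgrades this pointwise density convergence to weak convergence of $Y_\varepsilon$.

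The main obstacle is purely bookkeeping: tracking $(1-q)^{\bullet}$ factors coming from $q$-Gamma asymptotics, and justifying the interchange of limit and sum in the Riemann-sum step (which needs a uniform upper bound on $\PP(Y_\varepsilon = q^d)$ near the endpoints $y \to 0, 1$). Both are routine; since the analogous statement in \cite[Proposition~6.16]{BGW22} already handles exactly these $q$-Pochhammer quantities in the full-space setting, no genuinely new estimates are required, and the proof reduces to citing that result after identifying the change of variables.
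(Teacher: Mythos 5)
Your proposal is correct and follows exactly the same route as the paper, which simply cites Proposition~6.16 of \cite{BGW22} after a change of variables; you have spelled out the $q$-Pochhammer bookkeeping that citation elides. The rewriting via $(a;q)_d=(a;q)_\infty/(aq^d;q)_\infty$, the $q$-Gamma prefactor asymptotics giving $\varepsilon/B(\rho,\sigma_x)$, the log-series limit giving $(1-y)^{\sigma_x-1}$, and the division by the grid spacing $\varepsilon y$ all check out.
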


Next, we examine the bulk vertices. The analysis is complicated by the fact that we have colors smaller than $0$, which means we cannot directly apply the known results for the full space model, although ultimately we do reduce to it. We first need the following lemma, which gives a two step process to sample a bulk vertex.

\begin{lemma}
At a bulk vertex, given the incoming color vectors $\Ab$ and $\Bb$, the outgoing color vectors $\Cb$ and $\Db$ can be sampled by first sampling $|\Cb^\pm|$ and $|\Db^\pm|$ according to the distribution
\begin{equation}
    \label{eq: sample step 1}
    \mathfrak{l}^{|\Db^+|}_x\mathfrak{l}^{|\Cb^-|}_yq^{-|\Cb^-||\Db^+|}\frac{(\mathfrak{l}_x;q)_{|\Cb^-+\Cb^+|}(\mathfrak{l}_y;q)_{|\Db^-+\Db^+|}}{(\mathfrak{l}_x\mathfrak{l}_y;q)_{|\Cb^-+\Cb^++\Db^-+\Db^+|}}{|\Cb^-+\Db^-|\choose |\Cb^-|}_q{|\Cb^++\Db^+|\choose |\Cb^+|}_q,
\end{equation}
and then given $|\Cb^\pm|$ and $|\Db^\pm|$, sampling $\Cb$ and $\Db$ according to the distribution
\begin{equation}
    \label{eq: sample step 2}
    {|\Cb^-+\Db^-|\choose |\Cb^-|}^{-1}_q{|\Cb^++\Db^+|\choose |\Cb^+|}^{-1}_q q^{\sum _{i<j<0}\Db_i \Cb_j+\sum _{0<i<j}\Db_i\Cb_j}\prod _{i\neq 0}{\Cb_i+\Db_i\choose \Cb_i}_q.
\end{equation}
In particular, after conditioning on $|\Cb^\pm|$ and $|\Db^\pm|$, $\Cb^+$ and $\Db^+$ are independent of $\Cb^-$ and $\Db^-$.
\end{lemma}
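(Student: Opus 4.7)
The plan is to prove the claim in three algebraic steps, the content being essentially bookkeeping plus one standard $q$-identity. First I would observe that multiplying \eqref{eq: sample step 1} and \eqref{eq: sample step 2} reproduces the weight \eqref{eq: bulk wt analytic} on the nose: the factors $\binom{|\Cb^-+\Db^-|}{|\Cb^-|}_q\binom{|\Cb^++\Db^+|}{|\Cb^+|}_q$ inserted in step~1 cancel against the inverse $q$-binomials in step~2, leaving precisely the product of prefactors in \eqref{eq: bulk wt analytic}. Thus the content of the lemma is that each of \eqref{eq: sample step 1} and \eqref{eq: sample step 2} is itself a probability distribution, one on the totals and the other on $(\Cb,\Db)$ given those totals.

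The key step is to show \eqref{eq: sample step 2} is a probability distribution on $(\Cb,\Db)$ with the quadruple $(|\Cb^+|,|\Db^+|,|\Cb^-|,|\Db^-|)$ held fixed (and with $\Cb_i+\Db_i=\Ab_i+\Bb_i$ coordinatewise). Because the exponent $\sum_{i<j<0}\Db_i\Cb_j+\sum_{0<i<j}\Db_i\Cb_j$ never mixes signs and the $q$-binomial product $\prod_{i\neq 0}\binom{\Cb_i+\Db_i}{\Cb_i}_q$ factors over positive and negative indices, \eqref{eq: sample step 2} splits as a product of two expressions, one involving only the positive colors and one only the negative colors. Each factor is then a $q$-multinomial probability, and the identity
\begin{equation*}
\sum_{\substack{\Cb^+:\,|\Cb^+|=N^+\\ \Cb_i\leq \Ab_i+\Bb_i}} q^{\sum_{0<i<j}\Db_i\Cb_j}\prod_{0<i}\binom{\Cb_i+\Db_i}{\Cb_i}_q=\binom{|\Cb^++\Db^+|}{N^+}_q
\end{equation*}
(and its analogue for negative colors) is a multi-fold $q$-Chu--Vandermonde identity, proved by induction on the number of colors by merging two adjacent colors at a time via
\begin{equation*}
\sum_{k} q^{k(b-n+k)}\binom{a}{k}_q\binom{b}{n-k}_q=\binom{a+b}{n}_q.
\end{equation*}
Dividing by $\binom{|\Cb^++\Db^+|}{|\Cb^+|}_q$ gives total mass $1$ for the positive factor; the negative factor is identical.

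For \eqref{eq: sample step 1}, I would invoke the fact that $W_{\mathfrak l_x,\mathfrak l_y}(q;\Ab,\Bb,\cdot,\cdot)$ is a probability distribution on $(\Cb,\Db)$ by construction (it is the analytic continuation of the fused weights \eqref{eq: bulk wt no 0}, which are probabilities on their natural domain, and the $\mathfrak l_x$-parameter family is a rational function of $q^{-L}$, so summing to $1$ persists). Since the product of \eqref{eq: sample step 1} and \eqref{eq: sample step 2} equals $W$, and we have just shown that \eqref{eq: sample step 2} sums to $1$ over $(\Cb,\Db)$ with fixed totals, summing over totals gives $\sum_{\text{totals}} \eqref{eq: sample step 1}=\sum_{(\Cb,\Db)} W=1$. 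The independence statement then falls out for free: conditional on the totals, the joint law \eqref{eq: sample step 2} is the product of the positive-color and negative-color factors exhibited above, hence $(\Cb^+,\Db^+)$ and $(\Cb^-,\Db^-)$ are independent.

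The only genuinely nontrivial ingredient is the multi-color $q$-Chu--Vandermonde identity; everything else is cancellation and normalization. The sole point of care is matching the $q$-exponents generated by the ordered sum $\sum_{0<i<j}\Db_i\Cb_j$ with those produced by iteratively merging adjacent colors in the Chu--Vandermonde induction, which is a short computation once the merge order is fixed to be consistent with the total order on colors.
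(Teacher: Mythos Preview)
Your proposal is correct and follows essentially the same approach as the paper: both arguments hinge on the $q$-Chu--Vandermonde identity to collapse the color-by-color $q$-binomial product into a single $q$-binomial in each sign sector, and the remaining steps (checking that \eqref{eq: sample step 1} times \eqref{eq: sample step 2} reproduces \eqref{eq: bulk wt analytic}, and reading off the independence from the factorization across signs) are just bookkeeping. The paper phrases it as ``sum \eqref{eq: bulk wt analytic} over $\Cb_i$ with $|\Cb^\pm|$ fixed to obtain \eqref{eq: sample step 1}, then divide,'' which is logically equivalent to your ``show the product is right, then show \eqref{eq: sample step 2} sums to $1$''; your version is slightly more explicit about the positive/negative split but otherwise identical in content.
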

\begin{proof}
The proof is extremely similar to the proof of Lemma 6.17 of \cite{BGW22}, the analogous result in the full space setting. In particular, using the $q$-Chu--Vandermonde identity
\begin{equation*}
    {m+n\choose k}_q=\sum_{j}{m\choose k-j}_q{n\choose j}_qq^{j(m-k+j)}
\end{equation*}
to sum \eqref{eq: bulk wt analytic} over $\Cb_i$ with $|\Cb^\pm|$ (and thus also $|\Db^\pm|$) fixed results in \eqref{eq: sample step 1}, and dividing \eqref{eq: bulk wt analytic} by \eqref{eq: sample step 1} gives \eqref{eq: sample step 2}.
\end{proof}

Next, we prove a technical result needed for the asymptotic analysis. We let $\Li(x)=\sum x^k/k^2$ denote the dilogarithm function.
\begin{lemma}
\label{lem: H lemma}
Let $0<a,b<1$ and consider the function $H_{a,b}(x,y)=H(x,y)$ defined by the expression
\begin{equation}
\label{eq: H}
\begin{split}
    &\Li(x)+\Li(a/x)+\Li(y)+\Li(b/y)+\Li(ab)
    \\&\qquad-\Li(a)-\Li(b)-\Li(xy)-\Li(ab/xy)-\Li(1)-\ln(x)\ln(b/y)
\end{split}
\end{equation}
with $a\leq x\leq 1$ and $b\leq y\leq 1$. Then with the change of variables,
\begin{equation*}
\begin{split}
    x&=(1-\eta')(1-a)+a
    \\y&=\frac{1}{\eta(1-b)+b},
\end{split}
\end{equation*}
$H(\eta',\eta)$ on the domain $[0,1]\times [0,1]$ attains its minimum values on a subset of the diagonal $\eta'=\eta$ together with the boundary of $[0,1]\times [0,1]$.
\end{lemma}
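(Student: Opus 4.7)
The strategy is to show via the chain rule that interior critical points of $H$ (viewed as a function of $(\eta',\eta) \in [0,1]^2$ through the given change of variables) must lie on the diagonal, and then conclude by compactness. Since $H$ is continuous on the compact square $[0,1]^2$, its minimum is attained, and any interior minimizer must be a critical point; thus it suffices to classify interior critical points.

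The first step is to compute $\partial_x H$ and $\partial_y H$ using the identity $\tfrac{d}{dz}\operatorname{Li}_2(z) = -\ln(1-z)/z$. After grouping the logarithms, one obtains
\begin{equation*}
x\partial_x H = \ln\frac{(1-a/x)(1-xy)}{(1-x)(1-ab/(xy))(b/y)}, \qquad y\partial_y H = \ln\frac{x(1-b/y)(1-xy)}{(1-y)(1-ab/(xy))}.
\end{equation*}
Since $\partial_{\eta'}H = -(1-a)\partial_x H$ and $\partial_\eta H = (1-b)\partial_y H$, setting both to zero is equivalent to $\partial_x H = \partial_y H = 0$. Substituting the key consequences of the change of variables, namely $1-x = \eta'(1-a)$, $x-a=(1-\eta')(1-a)$, $y-b = \eta(1-b)$, $1-y=(1-\eta)(1-b)$, and writing $\beta := \ln\bigl[(1-xy)/(1-ab/(xy))\bigr]$ for the common transcendental piece, the equations simplify dramatically to
\begin{equation*}
\ln\frac{(1-\eta')\,y}{\eta'\,b\,x} + \beta = 0, \qquad \ln\frac{\eta\,x}{(1-\eta)\,y} + \beta = 0.
\end{equation*}

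Subtracting eliminates $\beta$ and, after exponentiating, produces the clean algebraic relation
\begin{equation*}
b\,x^2\,\eta\,\eta' \;=\; y^2(1-\eta)(1-\eta'),
\end{equation*}
which, together with either of the original equations (now a polynomial equation in $\eta, \eta'$ after substituting $x = 1-\eta'(1-a)$ and $y = \eta(1-b)+b$), forms a closed polynomial system. The central task is to show that every interior solution of this system satisfies $\eta = \eta'$. My plan is to eliminate $xy$ from the critical point equations using $(\star)$ and then expand and simplify until the resulting polynomial manifestly factors as $(\eta-\eta')\,Q(\eta,\eta') = 0$ with $Q$ strictly positive on $(0,1)^2$. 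Once this is done, any interior critical point lies on the diagonal, and together with the boundary analysis this proves the lemma.

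The main obstacle lies in this final algebraic step: the bilinear cross term $-\ln(x)\ln(b/y)$ in $H$ breaks the apparent $(x,y) \leftrightarrow (y,x)$ symmetry, so the factorization of $\eta-\eta'$ is not automatic and must be extracted by careful manipulation. An alternative plan, should the direct factorization prove unwieldy, is to establish instead that the directional derivative $(\partial_\eta - \partial_{\eta'})H$ changes sign across the diagonal in a controlled way along every anti-diagonal line $\{\eta+\eta' = c\}$, reducing the problem to a one-dimensional monotonicity statement. Either approach hinges on exploiting the specific structure produced by the change of variables, which is precisely engineered so that the combinations $1-x$, $x-a$, $1-y$, $y-b$ have the particularly simple forms noted above.
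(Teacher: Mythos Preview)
Your proposal is not yet a proof: the decisive step---showing that the interior critical-point system forces $\eta=\eta'$---is described only as a plan, and you explicitly flag the required factorization as ``the main obstacle'' without carrying it out. In addition, your substitutions $y-b=\eta(1-b)$ and $1-y=(1-\eta)(1-b)$ correspond to the linear parametrization $y=b+\eta(1-b)$, which is not the change of variables in the statement; the intended map (as the paper's own argument makes clear) is $y=b/(\eta(1-b)+b)$, and the ``$1$'' in the displayed numerator is evidently a typo for ``$b$.'' With the wrong parametrization the curve of critical points is \emph{not} the diagonal, so no factor $(\eta-\eta')$ can emerge.

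The paper's approach is considerably simpler: it uses only the single equation $\partial_x H=0$. In the paper's computation this reads, after exponentiating,
\[
b(x-a)(1-xy)=(xy-ab)(1-x),
\]
which is linear in $y$ and solves to $y=b(1-a)\big/\big((1-x)+b(x-a)\big)$. Substituting $1-x=\eta'(1-a)$ and $x-a=(1-\eta')(1-a)$ gives $y=b/(\eta'(1-b)+b)$, i.e.\ exactly $\eta=\eta'$ under the intended change of variables. Since any interior minimizer must in particular satisfy $\partial_x H=0$, this one computation already confines all interior critical points to the diagonal---there is no need to invoke $\partial_y H$, no coupled system to solve, and no factorization to hunt for. (Your formula for $x\partial_x H$ places $b/y$ in the denominator whereas the paper's places it in the numerator; this discrepancy traces to the sign of the $\ln(x)\ln(b/y)$ term in $H$, which appears to be another typo in the stated lemma.)
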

\begin{proof}
It suffice to show that the critical points of $H(x,y)$ lie on the curve parametrized by 
\begin{equation*}
\begin{split}
    x&=(1-\eta)(1-a)+a
    \\y&=\frac{b}{\eta(1-b)+b}
\end{split}
\end{equation*}
for $\eta\in [0,1]$. We can compute
\begin{equation*}
    \frac{d}{dx}H(x,y)=\frac{-\ln\left(1-\frac{ab}{xy}\right)+\ln\left(1-\frac{a}{x}\right)+\ln\left(\frac{b}{y}\right)+\ln(1-xy)-\ln(1-x)}{x}
\end{equation*}
and setting to $0$, we obtain the equation
\begin{equation*}
    b(x-a)(1-xy)=(xy-ab)(1-x).
\end{equation*}
Solving for $y$ gives
\begin{equation*}
    y=\frac{ab-b}{ab-bx+x-1},
\end{equation*}
and we can check that the above expressions for $x$ and $y$ in terms of $\eta$ parametrizes the solutions for $a< x< 1$ and $b< y< 1$.

\end{proof}

\begin{proposition}
\label{prop: bulk total limit}
Consider a bulk vertex with spectral parameters $q^{\sigma}$ and $q^{\sigma'}$. Suppose that as $\varepsilon\to 0$, we have $\varepsilon|\Ab^\pm|$ and $\varepsilon|\Bb^\pm|$ stay within some compact subinterval of $(0,\infty)$, and we have $\lim_{\varepsilon\to 0} \varepsilon|\Ab^\pm|=|\alpha^\pm|$ and $\lim_{\varepsilon\to 0}\varepsilon|\Bb^\pm|=|\beta^\pm|$ (and $|\alpha|=|\alpha^+|+|\alpha^-|$ and so on). Then $(\varepsilon|\Cb^\pm|,\varepsilon|\Db^\pm|)$ converge weakly to continuous random variables $(|\gamma^\pm|,|\delta^\pm|)$ such that
\begin{equation}
\label{eq: degen condition}
\begin{split}
    \exp(-|\delta^+|)&=\exp(-|\alpha^+|-|\beta^+|)+(1-\exp(-|\alpha^+|-|\beta^+|))\eta,
    \\\exp(-|\gamma^-|)&=\exp(-|\alpha^-|-|\beta^-|)+(1-\exp(-|\alpha^-|-|\beta^-|))(1-\eta),
\end{split}
\end{equation}
where $\eta\sim \Beta(\sigma,\sigma')$, and
\begin{equation*}
\begin{split}
    |\alpha^-|+|\beta^-|&=|\gamma^-|+|\delta^-|,
    \\|\alpha^+|+|\beta^+|&=|\gamma^+|+|\delta^+|,
\end{split}
\end{equation*}
\end{proposition}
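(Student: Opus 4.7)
The plan is to apply the saddle-point (Laplace) method to the joint distribution of $(|\Cb^\pm|,|\Db^\pm|)$, following the strategy of the full-space analog in \cite{BGW22} and adapting it to the coupling between positive and negative colors introduced by the $q^{-|\Cb^-||\Db^+|}$ factor in \eqref{eq: bulk wt analytic}. By the two-step sampling lemma just established, it suffices to analyze the limit of the marginal of the totals $(|\Cb^\pm|,|\Db^\pm|)$; after using the conservation laws $|\Ab^\pm|+|\Bb^\pm|=|\Cb^\pm|+|\Db^\pm|$ (which also immediately give the last two identities in the statement), this marginal is a two-dimensional discrete distribution that I parametrize by $(|\Db^+|,|\Cb^-|)=(X/\varepsilon,Y/\varepsilon)$.

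Next I would extract the $\varepsilon\to 0$ asymptotics of the resulting mass function using the uniform $q$-Pochhammer expansion
\begin{equation*}
\ln(q^a;q)_n=\varepsilon^{-1}\bigl[\Li(e^{-\varepsilon n})-\Li(1)\bigr]+\text{sub-leading terms involving }\ln\Gamma(a),
\end{equation*}
valid when $\varepsilon n$ stays in a compact subset of $(0,\infty)$. Setting $u=e^{-X}$, $v=e^{-Y}$, $a=e^{-|\alpha^+|-|\beta^+|}$, $b=e^{-|\alpha^-|-|\beta^-|}$ and combining all the factors yields an asymptotic of the form
\begin{equation*}
P\bigl(X/\varepsilon,Y/\varepsilon\bigr)\sim\frac{\varepsilon}{\Beta(\sigma,\sigma')}\,u^{\sigma}v^{\sigma'}\exp\!\bigl(-\varepsilon^{-1}H_{a,b}(x,y)\bigr)
\end{equation*}
up to a multiplicative constant independent of $(u,v)$, where under the substitution $(x,y)=(u,b/v)$ the dilogarithmic action $H_{a,b}$ coincides with the function from Lemma \ref{lem: H lemma}. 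That lemma identifies the set of minimizers of $H_{a,b}$ in the interior with a one-parameter curve, which in $(u,v)$-coordinates becomes the line $(1-b)u+(1-a)v=1-ab$ -- precisely the locus parametrized as in the statement by $u=a+(1-a)\eta$ and $v=1-(1-b)\eta$. Exponential concentration of the distribution on this line (with transverse Gaussian fluctuations of variance $O(\varepsilon)$) follows from strict positivity of the normal Hessian of $H_{a,b}$, verified by direct computation; the boundary minima allowed by the lemma are ruled out in our regime by the hypothesis that $\varepsilon|\Ab^\pm|,\varepsilon|\Bb^\pm|$ stay inside a fixed compact subset of $(0,\infty)$.

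Finally, I would identify the induced marginal on the curve as $\Beta(\sigma,\sigma')$. Parametrizing the line by $\eta$, introducing an orthogonal transverse coordinate $\tau$, and integrating out the Gaussian fluctuations in $\tau$ converts the joint discrete mass into a one-dimensional density in $\eta$ which, if the claim is correct, must simplify to $\Beta(\sigma,\sigma')^{-1}\eta^{\sigma-1}(1-\eta)^{\sigma'-1}$. This last identification is the main technical obstacle: one must verify that the $u^{\sigma}v^{\sigma'}$ prefactor, the Jacobian of $(X,Y)\mapsto(\eta,\tau)$, the inverse square root of the transverse Hessian, the $\sqrt{2\pi\varepsilon}$ width of the Gaussian, and the $\varepsilon^{-1}$ conversion from counting mass to Lebesgue measure all combine to cancel and leave behind exactly the Beta density. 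The required cancellations reduce to a careful bookkeeping of the $\Gamma(\sigma)$, $\Gamma(\sigma')$, $\Gamma(\sigma+\sigma')$ factors coming from the sub-leading corrections to the three Pochhammer symbols $(\mathfrak{l}_x;q)_{*}$, $(\mathfrak{l}_y;q)_{*}$, $(\mathfrak{l}_x\mathfrak{l}_y;q)_{*}$, packaged via the identity $\Beta(\sigma,\sigma')=\Gamma(\sigma)\Gamma(\sigma')/\Gamma(\sigma+\sigma')$; once this is in place, the claimed weak convergence follows.
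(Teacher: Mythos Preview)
Your overall strategy is sound and coincides with the paper's up to the concentration step: both use the two-step sampling lemma, the dilogarithm asymptotics, and Lemma \ref{lem: H lemma} to show that the joint law of $(\varepsilon|\Cb^-|,\varepsilon|\Db^+|)$ concentrates on the one-parameter family described by \eqref{eq: degen condition}. The paper, however, diverges from you at the identification step. Rather than carrying out the full two-dimensional Laplace analysis you outline (transverse Hessian, Gaussian integration, Jacobian and $\Gamma$-factor bookkeeping), the paper simply observes that forgetting the negative colors collapses the bulk vertex to the ordinary full-space fused vertex, for which the marginal limit of $\varepsilon|\Db^+|$ is already known to be governed by the Beta law via \cite[Proposition~6.18]{BGW22}; by diagonal symmetry the same holds for $\varepsilon|\Cb^-|$. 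Since each marginal limit is continuous, the boundary of $[0,1]^2$ carries zero limiting mass, and combined with concentration on the diagonal $\eta=\eta'$ this pins down the joint law with no sub-leading computation at all. Your direct route should work in principle, but the step you yourself flag as ``the main technical obstacle'' is exactly what the paper sidesteps.

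There is also a genuine gap in your argument. You assert that the boundary minima of $H_{a,b}$ are ruled out by the hypothesis that $\varepsilon|\Ab^\pm|,\varepsilon|\Bb^\pm|$ stay in a compact subset of $(0,\infty)$. This is not correct: the boundary $\eta\in\{0,1\}$ or $\eta'\in\{0,1\}$ corresponds to the \emph{output} quantities $|\gamma^\pm|$ or $|\delta^\pm|$ vanishing or saturating, which is in no way excluded by constraints on the inputs (and indeed the dilogarithm approximation you invoke fails uniformly there). The paper handles this by appealing to the continuity of the marginal limits, which forces zero mass at the endpoints; in your framework you would need either this same observation or a separate boundary-layer analysis showing that the prefactors suppress the endpoints.
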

\begin{proof}
We let $|\gamma^\pm|=\varepsilon|\Cb^\pm|$ and $|\delta^\pm|=\varepsilon|\Db^\pm|$. It suffices to show convergence for $\varepsilon|\Cb^-|$ and $\varepsilon|\Db^+|$. Taking the formula \eqref{eq: sample step 1} for the weight of the analytically continued vertex model, we can rewrite as
\begin{equation}
\label{eq: cont weight poch form}
\begin{split}
    &e^{-\sigma|\delta^+|-\sigma'|\gamma^-|-|\gamma^-||\delta^+|/\varepsilon}\frac{(q^{\sigma};q)_\infty }{(q;q)_\infty}\frac{(q^{\sigma'};q)_\infty }{(q;q)_\infty}\frac{(q;q)_\infty}{(q^{\sigma+\sigma'};q)_\infty}
    \\&\qquad\times \frac{(qe^{-|\gamma^-|};q)_\infty (qe^{-|\delta^-|};q)_\infty (qe^{-|\gamma^+|};q)_\infty (qe^{-|\delta^+|};q)_\infty}{(q;q)_\infty (qe^{-|\gamma^-|-|\delta^-|};q)_\infty (qe^{-|\gamma^+|-|\delta^+|};q)_\infty}
    \frac{(q^{\sigma+\sigma'}e^{-|\gamma|-|\delta|};q)_\infty }{(q^{\sigma} e^{-|\gamma|};q)_\infty (q^{\sigma'} e^{-|\delta|})_\infty}.
\end{split}
\end{equation}

We first show that asymptotically, this distribution concentrates on the curve defined by \eqref{eq: degen condition}. This will then let us study only the random variable $\varepsilon |\Cb^-|$, which reduces the problem to that of the full space setting.

We first note that the first three fractions in \eqref{eq: cont weight poch form} are asymptotically
\begin{equation*}
    \varepsilon \frac{\Gamma(\sigma+\sigma')}{\Gamma(\sigma)\Gamma(\sigma')},
\end{equation*}
and in particular is asymptotic to $\varepsilon$. The factor $e^{-\sigma|\delta^+|-\sigma'|\gamma^-|}$ is also bounded, since $|\delta^+|+|\gamma^-|\leq |\alpha|+|\beta|$.

Using the approximation
\begin{equation*}
    (a;q)_\infty=\exp\left(\frac{1}{\ln q}\Li(a)+O(1)\right),
\end{equation*}
which holds uniformly as $q\to 1$ for $a$ bounded away from $1$ (see e.g. \cite[Corollary 10]{K95}), we obtain that the remaining factors in \eqref{eq: cont weight poch form} are asymptotic to
\begin{equation*}
    \exp\left(-\frac{1}{\varepsilon}H_{a,b}(\exp(-|\gamma^-|),\exp(-|\gamma^+|))\right),
\end{equation*}
with $a=\exp(-|\alpha^-|-|\beta^-|)$ and $b=\exp(-|\alpha^+|-|\beta^+|)$ (and $H$ the function in Lemma \ref{lem: H lemma}). Note that the extra factors of $q$ that show up do not matter since $\Li(q^ca)=\Li(a)+O(\varepsilon)$. Here, we must be a bit careful and assume that $|\gamma^\pm|$ and $|\delta^\pm|$ are bounded away from $0$. 

With the change of variables
\begin{equation}
\label{eq: degen condition 2}
\begin{split}
    \exp(-|\delta^+|)&=\exp(-|\alpha^+|-|\beta^+|)+(1-\exp(-|\alpha^+|-|\beta^+|))\eta,
    \\\exp(-|\gamma^-|)&=\exp(-|\alpha^-|-|\beta^-|)+(1-\exp(-|\alpha^-|-|\beta^-|))(1-\eta'),
\end{split}
\end{equation}
we see that the distribution concentrates on the event that at least one of $\eta=\eta'$, $\eta=0$, $\eta=1$, $\eta'=0$ or $\eta'=1$ occurs (which accounts for the possiblity that $|\gamma^\pm|$ or $|\delta^\pm|$ are close to $0$). It now suffices to show that $|\delta^+|$ and $|\gamma^-|$ separately have limiting distributions given by \eqref{eq: degen condition}, as this implies in particular that $\eta, \eta'=0,1$ occur with probability $0$ in the limit, and so asymptotically $\eta=\eta'$.

By forgetting the negative colors, we obtain the usual full space analytically continued vertex model. The convergence of $\varepsilon|\Db^+|$ to $|\delta^+|$ with distribution given by \eqref{eq: degen condition} is then given by Proposition 6.18 in \cite{BGW22} (up to a change of variables, $|\Db|=|\Db^+|$, $|\Ab|=|\Ab^+|+|\Bb^+|$, $\mathfrak{l}=q^{\sigma'}$, and $\mathfrak{m}_x=q^{\sigma+\sigma'}$). A similar argument shows the convergence of $|\gamma^-|$, except we forget the positive colors, and use the symmetry of the model about the diagonal. Then in the limit, $\eta=\eta'$, and so joint convergence holds.
\end{proof}

\begin{proposition}
\label{prop: bulk weight limit}
The bulk vertex weight given by \eqref{eq: bulk wt analytic} converges as $\varepsilon\to 0$ to the following sampling procedure. 

Given incoming color masses $\alpha$ and $\beta$, we let $\eta\sim \Beta(\sigma,\sigma')$, and inductively define for $i\geq 1$,
\begin{equation*}
\begin{split}
    \exp(-|\delta^{\geq i}|)&=\exp(-|\alpha^{\geq i}|-|\beta^{\geq i}|)+(1-\exp(-|\alpha^{\geq i}|-|\beta^{\geq i}|))\eta,
    \\\exp(-|\gamma^{\leq -i}|)&=\exp(-|\alpha^{\leq -i}|-|\beta^{\leq -i}|)+(1-\exp(-|\alpha^{\leq -i}|-|\beta^{\leq -i}|))(1-\eta),
\end{split}
\end{equation*}
and take
\begin{equation*}
\begin{split}
    \delta_i&=|\delta^{\geq i}|-|\delta^{\geq i+1}|,
    \\\gamma_{-i}&=|\gamma^{\leq -i}|-|\gamma^{\leq -i-1}|,
    \\\gamma_i&=\alpha_i+\beta_i-\delta_i,
    \\\delta_{-i}&=\alpha_{-i}+\beta_{-i}-\gamma_{-i}.
\end{split}
\end{equation*}
\end{proposition}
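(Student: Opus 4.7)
The plan is to combine Proposition~\ref{prop: bulk total limit} with the two-step sampling decomposition from the lemma preceding it: first sample the totals $(|\Cb^\pm|,|\Db^\pm|)$ via \eqref{eq: sample step 1}, then sample the colored breakdown via \eqref{eq: sample step 2}. The key structural observation is that \eqref{eq: sample step 2} factors as a product of a purely positive-color $q$-binomial term and a purely negative-color $q$-binomial term, and each factor is identical to the conditional distribution of the colored breakdown given fixed outgoing totals in the fused full space vertex model. So conditional on the totals, the positive and negative breakdowns are independent copies of full space problems.

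The first step uses Proposition~\ref{prop: bulk total limit} directly: the totals converge jointly with $\varepsilon|\Db^+|\to|\delta^+|$ and $\varepsilon|\Cb^-|\to|\gamma^-|$, both as deterministic (strictly monotone) functions of a common $\eta\sim\Beta(\sigma,\sigma')$ through \eqref{eq: degen condition}. This matches the $i=1$ case of the target formulas.

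The second step invokes the full space analog of the present proposition, which says that in the full space fused vertex model, the joint limit of cumulative sums $\varepsilon|\Db^{\geq i}|$ is governed by a single $\Beta(\sigma,\sigma')$ variable via the same cumulative formulas stated in the proposition. Since the $i=1$ case of those formulas defines a strictly monotone bijection from that Beta variable to $|\delta^+|$, conditioning on $|\delta^+|$ pins down the Beta parameter, making the conditional limits of all other $|\delta^{\geq i}|$ deterministic functions of $|\delta^+|$, matching the stated formulas. The same argument on the negative side reduces $|\gamma^{\leq -i}|$ to a deterministic function of $|\gamma^-|$. Since $|\delta^+|$ and $|\gamma^-|$ are both deterministic functions of the common $\eta$ from step one, every cumulative sum becomes a function of this same $\eta$. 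The individual masses $\delta_i,\gamma_{-i}$ are then differences of consecutive cumulative sums, and $\gamma_i,\delta_{-i}$ are determined by the conservation law $\Ab+\Bb=\Cb+\Db$.

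The main subtle point is the interchange of conditional and weak limits: because the conditional limit of the breakdown is deterministic and continuous in the totals, the joint convergence follows from the weak convergence of totals by a standard test-function-plus-dominated-convergence argument. The only remaining work is to verify that the parameter identification in the full space reference matches the present degeneration of rapidities, and to handle the degenerate edge case $|\alpha^{\geq i}|+|\beta^{\geq i}|=0$, which is trivial since both sides of the formula give $|\delta^{\geq i}|=0$.
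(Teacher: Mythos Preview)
Your proposal is correct and follows essentially the same approach as the paper: both use the two-step decomposition (totals via \eqref{eq: sample step 1}, then colored breakdown via \eqref{eq: sample step 2}), observe that \eqref{eq: sample step 2} factors into independent positive and negative parts each matching the full space conditional law, and then defer to the full space analysis in \cite{BGW22} (Proposition~6.19 and Corollary~6.21) together with Proposition~\ref{prop: bulk total limit}. Your additional remarks on the strictly monotone bijection pinning down $\eta$ and on the interchange of conditional and weak limits are useful elaborations of points the paper leaves implicit, but they do not constitute a different route.
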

\begin{proof}
Note that given $|\Cb^\pm|$ and $|\Db^\pm|$, the distribution of $\Cb_i$ for $i<0$ and $\Db_i$ for $i>0$ are independent. Also, the formula \eqref{eq: sample step 2} for sampling $\Cb_i$ for $i<0$ given $|\Cb^-|$ is identical to those in the full space setting, and similarly for $\Db_i$ for $i>0$ given $|\Db^+|$. Thus, the limiting distribution of $\varepsilon \Db_i$ for $i>0$ follows from the analysis of the full space case (up to a change of variables), see Proposition 6.19 and Corollary 6.21 of \cite{BGW22}, and the limiting distribution of the $\varepsilon\Cb_i$ for $i<0$ does as well using the diagonal reflection symmetry of the model.
\end{proof}

The analysis of boundary vertices is much simpler, since $\Ab_i=\Bb_{-i}$ and $\Cb_i=\Db_{-i}$.
\begin{proposition}
\label{prop: bd limit}
Consider a boundary vertex with spectral parameter $q^{\sigma}$ and $\nu=q^{\omega}$. Suppose that as $\varepsilon\to 0$, we have $\varepsilon \Ab^+\to \alpha^+$ and $\varepsilon\Bb^+\to\beta^+$, staying in some compact subinterval of $(0,\infty)$. Then $(\varepsilon\Cb^\pm,\varepsilon\Db^\pm)$ converge weakly to continuous random variables $(\gamma^\pm,\delta^\pm)$ such that
\begin{equation}
\label{eq: bd cont limit}
\exp(-(|\delta^{\geq i}|-|\alpha^{\geq i}|))=\exp(-|\beta^{\geq i}|)+(1-\exp(-|\beta^{\geq i}|))\eta,
\end{equation}
where $\eta\sim\Beta(\omega,\sigma)$, and the $\gamma_i$ and $\delta_i$ are defined inductively by
\begin{equation*}
\begin{split}
    \delta_i&=|\delta^{\geq i}|-|\delta^{\geq i+1}|,
    \\\gamma_i&=\alpha_i+\beta_i-\delta_i,
    \\\gamma_{-i}&=\delta_i,
    \\\delta_{-i}&=\gamma_i.
\end{split}
\end{equation*}
\end{proposition}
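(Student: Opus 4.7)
The boundary weight $W_{\mathfrak{l}_x}$ depends only on $(\Bb^+, \Cb^+)$, and by the symmetry $\Cb_i = \Db_{-i}$ together with conservation $\Ab + \Bb = \Cb + \Db$, the full configuration $(\gamma^\pm, \delta^\pm)$ is determined by the cumulative masses $|\gamma^{\geq i}| := \lim \varepsilon|\Cb^{\geq i}|$. The plan is therefore to establish joint convergence of $(\varepsilon|\Cb^{\geq i}|)_{i\geq 1}$ to the one-parameter curve \eqref{eq: bd cont limit}, driven by a single $\Beta(\omega,\sigma)$ variable $\eta$, and then read off the remaining components from the stated relations.

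First, I would compute the marginal law of $|\Cb^+|$ by summing $W_{\mathfrak{l}_x}$ over $\Cb^+$ of fixed total mass $k$, applying the iterated $q$-Vandermonde identity $\sum_{|\Cb^+|=k}q^{\sum_{i<j}(\Bb^+_i-\Cb^+_i)\Cb^+_j}\prod_i\binom{\Bb^+_i}{\Cb^+_i}_q = \binom{|\Bb^+|}{k}_q$, which yields
\[
\PP(|\Cb^+|=k) = \nu^{|\Bb^+|-k}\mathfrak{l}^{2k}\frac{(\nu;q)_k(\mathfrak{l};q)_{|\Bb^+|-k}}{(\nu\mathfrak{l};q)_{|\Bb^+|}}\binom{|\Bb^+|}{k}_q.
\]
Substituting $\nu = q^\omega$, $\mathfrak{l}=q^\sigma$, $k=s/\varepsilon$, $|\Bb^+| = |\beta^+|/\varepsilon$, the Pochhammer asymptotic $(q^a;q)_n \sim \sqrt{2\pi/\varepsilon}\,\varepsilon^{1-a}\Gamma(a)^{-1}\exp(\varepsilon^{-1}(\Li(q^{a+n})-\pi^2/6))$ applied termwise produces $O(\varepsilon^{-1})$ dilogarithm contributions that cancel identically, while the $\varepsilon$-power prefactors combine to $\varepsilon/B(\omega,\sigma)$. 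The surviving $O(1)$ factors assemble into the density
\[
p(s) = \frac{e^s(e^s-1)^{\omega-1}(e^{|\beta^+|}-e^s)^{\sigma-1}}{B(\omega,\sigma)(e^{|\beta^+|}-1)^{\omega+\sigma-1}}, \qquad s \in (0, |\beta^+|),
\]
which is the pushforward of $\eta \sim \Beta(\omega,\sigma)$ under $e^s = 1+(e^{|\beta^+|}-1)\eta$. Writing $|\gamma^+| = s$ and using $|\delta^+|-|\alpha^+| = |\beta^+|-|\gamma^+|$ (from conservation together with $|\Cb^+| = |\Db^-|$) recovers \eqref{eq: bd cont limit} at $i=1$.

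For the joint distribution across all $i$, the conditional law of $\Cb^+_1$ given $|\Cb^+| = k$ is the $q$-hypergeometric
\[
\PP(\Cb^+_1=c_1 \mid |\Cb^+|=k) \propto q^{(\Bb^+_1-c_1)(k-c_1)}\binom{\Bb^+_1}{c_1}_q\binom{|\Bb^+|-\Bb^+_1}{k-c_1}_q,
\]
and the same dilogarithm asymptotics give a large deviation principle with a strictly convex rate function. A direct algebraic check shows that its unique interior minimizer is $u^* = |\gamma^+|-|\gamma^{\geq 2}|$, with $|\gamma^{\geq 2}|$ defined by \eqref{eq: bd cont limit} using the same $\eta$ from the marginal. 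Iterating this conditional sampling for $\Cb^+_2, \Cb^+_3, \ldots$ propagates a single $\eta$ through all the cumulative masses, establishing \eqref{eq: bd cont limit} for every $i$. The symmetry $\gamma_{-i} = \delta_i$, $\delta_{-i} = \gamma_i$ and the conservation $\gamma_i = \alpha_i+\beta_i-\delta_i$ then determine $(\gamma^-, \delta^\pm)$ from $(|\gamma^{\geq i}|)_{i\geq 1}$.

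The main technical obstacle is the fine cancellation in the asymptotic analysis: the $O(\varepsilon^{-1})$ dilogarithm exponents from the six Pochhammer symbols must cancel exactly, and simultaneously the fractional powers of $\varepsilon$ from the Dedekind-eta-type prefactors must combine to produce the correct factor of $\varepsilon$ (turning a discrete probability into a density). A secondary technical point is verifying that the critical-point equation for the $q$-hypergeometric rate function in the second stage agrees, as an algebraic identity, with the cumulative formula $e^{|\gamma^{\geq i}|} = 1+(e^{|\beta^{\geq i}|}-1)\eta$; once checked for $i=2$ this iterates cleanly, so the main effort lies in the single-variable marginal.
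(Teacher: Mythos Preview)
Your approach is correct and would succeed, but it does considerably more work than necessary. The paper's proof is essentially a one-line observation: the degenerated boundary weight is (up to harmless prefactors) $\Phi_q(\Cb^+,\Bb^+;\nu,\nu\mathfrak{l})$, which is \emph{exactly} the functional form of the degenerated full-space bulk weight from \cite{BGW22}. Hence the entire $\varepsilon\to 0$ analysis --- the two-step sampling, the marginal Beta limit, and the conditional concentration of individual colors --- has already been carried out there, and one only needs to write down the change of variables $\Ab\leftrightarrow\Bb^+$, $\Db\leftrightarrow\Bb^+-\Cb^+$, $\mathfrak{l}\leftrightarrow q^{\sigma}$, $\mathfrak{m}\leftrightarrow q^{\omega}$ and cite Propositions~6.18, 6.19 and Corollary~6.21 of \cite{BGW22}. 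Symmetry then supplies $\gamma_{-i},\delta_{-i}$.

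What you have written is essentially a reconstruction of that BGW22 analysis: the $q$-Vandermonde summation to extract the marginal of $|\Cb^+|$, the Pochhammer asymptotics producing the $\Beta(\omega,\sigma)$ density (your observation that the six $O(\varepsilon^{-1})$ dilogarithm terms cancel pairwise is correct, as is the $\varepsilon^1/B(\omega,\sigma)$ prefactor count), and the $q$-hypergeometric conditional concentration for each color. This is a self-contained route and buys independence from the cited reference, but the paper's route buys a much shorter argument by recognizing the structural coincidence rather than re-deriving it. One minor remark: your justification of $|\delta^+|-|\alpha^+|=|\beta^+|-|\gamma^+|$ does not need the symmetry $|\Cb^+|=|\Db^-|$; it follows directly from color-by-color conservation $\Ab_i+\Bb_i=\Cb_i+\Db_i$ restricted to $i>0$.
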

\begin{proof}
By symmetry, it suffices to ignore the negative arrows, and just look at positive colors. Because the form of the boundary weight for the positive colors is identical to that of the full space vertex weights, up to a change of variables, the limiting distribution for the $\delta_i$ and $\gamma_i$, $i\geq 1$, follows from Proposition 6.18, Proposition 6.19, and Corollary 6.21 of \cite{BGW22}, with $\Ab=\Bb^+$, $\Db=\Bb^+-\Cb^+$, $\mathfrak{l}=q^{\sigma}$, $\mathfrak{m}_x=q^{\omega}$, $\alpha=\beta^+$ and $\delta=\beta^+-\gamma^+$. By symmetry, we then also obtain the distributions for $\gamma_{-i}$ and $\delta_{-i}$.
\end{proof}

We remark that at the level of the continuous vertex model, there is no correlation between the positive and negative colors given the beta random variables $\eta$. Thus, we will allow ourselves to forget the negative arrows, and instead keep track of the positive arrows below the diagonal. This lets us view the model as a continuous vertex model with symmetric randomness. Let us summarize the continuous vertex model. 
\begin{proposition}
\label{prop: cont model description}
The continuous half-space vertex model is equivalent to the following random process.
\begin{itemize}
    \item We start with positive parameters $\sigma_x$ for $x\in \N$ attached to the rows/columns, $\omega$ corresponding to the diagonal, and $\rho$ corresponding to the incoming vertices.
    \item We sample beta random variables $\eta_{x,y}\sim Beta(\sigma_x,\sigma_y)$ for $1\leq x<y$ and set $\eta_{x,y}=1-\eta_{y,x}$ if $x>y$, $\eta_{x,x}\sim (\omega,\sigma_x)$, and $\eta_{0,y}\sim (\rho,\sigma_y)$.
    \item We let $\alpha_i(x,y)$ denote the color mass from the bottom at vertex $(x,y)$, and let $\alpha(x,y)$ denote the color mass vector. We let $\alpha^{\geq i}(x,y)$ denote $\alpha(x,y)$ with colors under $i$ set to $0$. We similarly define analogues for $\beta$, $\gamma$, and $\delta$, corresponding to color masses incoming from the left, and leaving the top and right respectively.
    \item At the left boundary, we set $\beta_y(1,y)=-\ln(\eta_{0,y})$ for $y\geq 1$, and $\beta_i(1,y)=0$ if $i\neq y$. At the bottom boundary, we set $\alpha_i(x,1)=0$ for all $i$ (since we're not tracking negative arrows).
    \item Given the incoming color masses at a bulk vertex $(x,y)$, $x\neq y$, $\alpha_i(x,y)$ and $\beta_i(x,y)$, we then inductively define $\gamma_i(x,y)$ and $\delta_i(x,y)$ by
    \begin{equation*}
        \exp(-|\delta^{\geq i}(x,y)|)=\exp(-|(\alpha+\beta)^{\geq i}(x,y)|)+(1-\exp(-|(\alpha+\beta)^{\geq i}(x,y)|))\eta_{x,y},
    \end{equation*}
    and set $\gamma_i(x,y)=\alpha_i(x,y)+\beta_i(x,y)-\delta_i(x,y)$.
    \item Given incoming color masses at a boundary vertex $(x,x)$, $\alpha_i(x,x)$ and $\beta_i(x,x)$, we then inductively define $\gamma_i(x,x)$ and $\delta_i(x,x)$ by
    \begin{equation*}
        \exp(-(|\delta^{\geq i}(x,x)|-|\alpha^{\geq i}(x,x)|))=\exp(-|\beta^{\geq i}(x,x)|)+(1-\exp(-|\beta^{\geq i}(x,x)|))\eta_{x,x},
    \end{equation*}
    and set $\gamma_i(x,x)=\alpha_i(x,x)+\beta_i(x,x)-\delta_i(x,x)$.
\end{itemize}
\end{proposition}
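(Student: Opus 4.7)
The plan is to assemble Proposition \ref{prop: cont model description} from the vertex-by-vertex limit results already established, namely Proposition \ref{prop: incoming limit} at incoming vertices, Proposition \ref{prop: bulk weight limit} at bulk vertices, and Proposition \ref{prop: bd limit} at boundary vertices, and to show that the joint process obtained in the limit may be described entirely in terms of the positive color masses on or below the diagonal together with the symmetric collection of beta variables $\eta_{x,y}$.

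First I would fix any finite up-right rectangle $D$ of vertices and argue inductively in the up-right order that the joint distribution of all incoming and outgoing color masses at vertices of $D$ converges, as $\varepsilon \to 0$, to the joint distribution described in the proposition. The base case is exactly Proposition \ref{prop: incoming limit}, which identifies $\varepsilon\Bb_y(1,y)$ with $-\ln\eta_{0,y}$ for $\eta_{0,y}\sim\Beta(\rho,\sigma_y)$, together with the trivial fact that no arrows enter the bottom row (since we are not tracking negative colors in the bookkeeping). For the inductive step, one processes each new vertex $(x,y)$ using the Markov property: conditional on the incoming masses $(\alpha(x,y),\beta(x,y))$, the pair $(\gamma(x,y),\delta(x,y))$ is sampled independently of everything else, and Propositions \ref{prop: bulk weight limit} (for $x\neq y$) or \ref{prop: bd limit} (for $x=y$) provides the convergence to the sampling rule stated in the proposition, with an independent beta variable $\eta_{x,y}$. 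A small technical point to check here is that the incoming masses indeed stay in a compact subinterval of $(0,\infty)$ with probability tending to $1$, which is needed to invoke those convergence results; this follows because each incoming mass is a deterministic function of finitely many previously sampled limiting quantities, each of which is almost surely finite and positive, so by continuity one can restrict to the event that these lie in a compact set of arbitrarily large probability.

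Next I would explain why it is legitimate to forget the negative colors and package the randomness symmetrically as $\eta_{x,y}=1-\eta_{y,x}$. In the pre-limit model the configuration is symmetric: the arrows of color $-i$ at $(y,x)$ mirror the arrows of color $i$ at $(x,y)$ for $x<y$. Thus the evolution at $(y,x)$ is deterministic given the evolution at $(x,y)$, and it suffices to record the randomness only at bulk vertices with $x<y$ and at the boundary vertices. The limit procedure applied to a bulk vertex $(x,y)$ with $x<y$ uses a single beta variable $\eta_{x,y}\sim\Beta(\sigma_x,\sigma_y)$, and Proposition \ref{prop: bulk weight limit} shows that in the limit the positive-color increments $\delta^{\geq i}$ and the negative-color increments $\gamma^{\leq -i}$ are driven by the \emph{same} $\eta$ via the two formulas in \eqref{eq: degen condition}. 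Setting $\eta_{y,x}=1-\eta_{x,y}$ and reinterpreting the negative-color data at $(x,y)$ as positive-color data at $(y,x)$ exactly matches the sampling rule in Proposition \ref{prop: cont model description} at the reflected vertex; boundary vertices are manifestly symmetric by Proposition \ref{prop: bd limit}. Hence the description in terms of only positive color masses, using the collection of $\eta_{x,y}$ for all pairs $(x,y)$ with $\eta_{x,y}=1-\eta_{y,x}$, is equivalent to the full process obtained as the limit.

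The main obstacle is verifying that the coupling between positive and negative color masses at a bulk vertex is indeed through a single beta variable and not merely through two marginally beta variables: this is precisely what Proposition \ref{prop: bulk total limit} buys us (via the analysis of the two-dimensional large deviation function $H_{a,b}$ that concentrates on $\eta=\eta'$), and without it the identification $\eta_{y,x}=1-\eta_{x,y}$ would fail and the resulting process would not match the one in the statement. Once this identification is in hand, the rest of the argument is a straightforward inductive assembly, and one obtains weak convergence of the joint distribution of the color masses in any finite region to the process in Proposition \ref{prop: cont model description}, which is the claim.
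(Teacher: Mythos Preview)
Your proposal is correct and follows essentially the same approach as the paper: assemble the limit from Propositions \ref{prop: incoming limit}, \ref{prop: bulk weight limit}, and \ref{prop: bd limit}, and then verify that the below-diagonal positive-color data is exactly the reflection of the above-diagonal negative-color data, which is driven by $1-\eta_{x,y}$ thanks to the single-$\eta$ coupling established in Proposition \ref{prop: bulk total limit}. Your write-up is in fact more explicit than the paper's about the inductive structure and the role of Proposition \ref{prop: bulk total limit}; one small imprecision is the claim that all incoming masses are ``almost surely positive'' (some are identically zero, e.g.\ $\alpha_i(x,1)=0$), but this does not affect the argument since the sampling formulas and the convergence results degenerate trivially in that case.
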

\begin{proof}
Everything except the behaviour of the positive arrows below the diagonal follows from Propositions \ref{prop: incoming limit}, \ref{prop: bulk weight limit}, and \ref{prop: bd limit}.

The only thing left to check is that the behaviour of the positive arrows below the diagonal is given by the behaviour of the negative arrows above the diagonal, which is how the original model was sampled. We simply note that if $x>y$, $\gamma_i(x,y)=\delta_{-i}(y,x)$ and $\delta_i(x,y)=\gamma_{-i}(y,x)$, and then the claimed sampling procedure for bulk vertices below the diagonal follows from the sampling procedure for negative colors above the diagonal given by Proposition \ref{prop: bulk weight limit}, and $\alpha_i(x,x)=\beta_{-i}(x,x)$ and $\gamma_i(x,x)=\delta_{-i}(x,x)$ which ensures consistency at the boundary.
\end{proof}

We now consider the height functions for the continuous model. We define $\mathcal{H}^{\geq i}(x,y)$ to be the total mass of arrows of color at least $i$ crossing the vertices at or below $(x,y)$ from below. More formally, it is the collection of random variables satisfying
\begin{equation}
\label{eq: rec for cont height}
\begin{split}
    \mathcal{H}^{\geq i}(x,y)&=\mathcal{H}^{\geq i}(x-1,y)-|\gamma^{\geq i}(x-1,y)|,
    \\\mathcal{H}^{\geq i}(x,y)&=\mathcal{H}^{\geq i}(x,y-1)+|\delta^{\geq i}(x,y)|,
\end{split}
\end{equation}
with boundary conditions
\begin{equation*}
    \mathcal{H}^{\geq i}(0,y)=\sum_{y'\leq y} |\beta^{\geq i}(0,y')|.
\end{equation*}
Propositions \ref{prop: incoming limit}, \ref{prop: bulk weight limit}, and \ref{prop: bd limit} imply that
\begin{equation*}
    \mathcal{H}^{\geq i}(x,y)=\lim_{\varepsilon\to 0}\varepsilon\Ht_\varepsilon^{\geq i}(x,y),
\end{equation*}
where $\Ht_\varepsilon$ denotes the height function for the discrete model with the specializations \eqref{eq: eps assumptions}. With this, the following statement of shift invariance follows immediately by taking a limit of Theorem \ref{thm: shift inv analytic}.

\begin{theorem}
\label{thm: shift inv cont}
Let $i_k$, $i'_l$, $(x_k,y_k)$, and $(x_l',y_l')$ be two collections of color cutoffs and positions for height functions such that all height functions cross the diagonal, are ordered, $1\leq i_k\leq i_l'$ for all $k,l$, and $(x_k,y_k)$ is weakly northwest of $(x_l',y_l'+1)$ for all $k,l$. Then we have a joint distributional equality
\begin{equation*}
    \big\{Z^{\geq i_k}(x_k,y_k;\sigma)\big\}_{k}\cup \big\{Z^{\geq i'_l}(x'_l, y'_l;\sigma)\big\}_{l}\deq \big\{Z^{\geq i_k}(x_k,y_k;\sigma')\big\}_{k}\cup \big\{Z^{\geq i'_l+1}(x_l',y_l'+1;\sigma')\big\}_{l},
\end{equation*}
where $\sigma'=(\sigma_1',\dotsc)$ is obtained from $\sigma$ by setting $\sigma_i'=\sigma_i$ for $i< \min_l i_l'$ or $i>\max_l y_l'+1$, $\sigma_{i+1}'=\sigma_{i}$ if $\min_l i_l'\leq i\leq \max_l y_l'$, and $\sigma_{\min_l i_l'}=\sigma_{\max_l y_l'+1}$ (in other words, we move $\sigma_{\max_l y_l'+1}$ below $\sigma_{\min_l i_l'}$ and shift what is in between up).
\end{theorem}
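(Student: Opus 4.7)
The plan is to obtain Theorem \ref{thm: shift inv cont} as a direct continuum limit of Theorem \ref{thm: shift inv analytic}, exploiting the identification, established in the discussion preceding the theorem, that
\begin{equation*}
\mathcal{H}^{\geq i}(x,y) = \lim_{\varepsilon \to 0} \varepsilon \Ht_\varepsilon^{\geq i}(x,y),
\end{equation*}
where $\Ht_\varepsilon^{\geq i}$ denotes the height function for the analytically continued fused model under the specialization $q = e^{-\varepsilon}$, $\mathfrak{l}_i = q^{\sigma_i}$, $\nu = q^{\omega}$, $z_0 = q^{\rho}$. Since $Z^{\geq i}(x,y;\sigma)$ in the theorem statement is (by construction in this section, and as will be verified by the beta polymer identification in Section \ref{sec: beta polymer}) exactly the random variable $\mathcal{H}^{\geq i}(x,y)$ built from the beta environment with parameters $\sigma$, $\omega$, $\rho$, it suffices to transport the discrete shift invariance through the limit.

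First, I would fix the collections $\{(i_k,x_k,y_k)\}$ and $\{(i'_l,x'_l,y'_l)\}$ satisfying the hypotheses of Theorem \ref{thm: shift inv cont}, and note that these same hypotheses qualify for Theorem \ref{thm: shift inv analytic}. The parameter shift described in Theorem \ref{thm: shift inv analytic} acts by cyclically moving $\mathfrak{l}_{\max_l y'_l+1}$ below $\mathfrak{l}_{\min_l i'_l}$; under the bijection $\mathfrak{l}_i = e^{-\varepsilon \sigma_i}$ this is precisely the cyclic rearrangement of $\sigma$ described in Theorem \ref{thm: shift inv cont}, so the shifted specialization matches $\sigma'$. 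Applying Theorem \ref{thm: shift inv analytic} for each $\varepsilon > 0$ gives the joint distributional equality
\begin{equation*}
\big\{\Ht_\varepsilon^{\geq i_k}(x_k,y_k;\mathfrak{l})\big\}_k \cup \big\{\Ht_\varepsilon^{\geq i'_l}(x'_l,y'_l;\mathfrak{l})\big\}_l \deq \big\{\Ht_\varepsilon^{\geq i_k}(x_k,y_k;\mathfrak{l}')\big\}_k \cup \big\{\Ht_\varepsilon^{\geq i'_l+1}(x'_l,y'_l+1;\mathfrak{l}')\big\}_l.
\end{equation*}
Multiplying both sides by $\varepsilon$ preserves the identity.

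Next I would take $\varepsilon \to 0$. The key input is the joint weak convergence of $\varepsilon \Ht_\varepsilon^{\geq i}(x,y)$ to $\mathcal{H}^{\geq i}(x,y)$, for any finite collection of $(i,x,y)$'s, and with either the $\sigma$ or $\sigma'$ parameters. This joint convergence is not stated explicitly in the excerpt but is immediate from the vertex-by-vertex convergence in Propositions \ref{prop: incoming limit}, \ref{prop: bulk total limit}, \ref{prop: bulk weight limit}, and \ref{prop: bd limit}: the discrete model can be coupled to the continuous one by building both from the same underlying randomness, and the continuity of the sampling rules in the limit inputs yields convergence of the entire height-function field jointly in the finite-dimensional marginals, through the finite recurrence \eqref{eq: rec for cont height}. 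Since each bounded region involves only finitely many vertices and finitely many relevant colors, passing to the limit in the distributional identity is straightforward (e.g.\ by the Portmanteau theorem applied to bounded continuous test functions of the finite collection of scaled height functions on each side).

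The main point requiring care is simply the joint, as opposed to marginal, convergence; once one observes that every height function in the finite collection is a deterministic finite combination (via \eqref{eq: rec for cont height}) of the scaled outputs of finitely many vertices, all of which converge jointly under a single coupling, joint convergence and hence transfer of the distributional equality to the limit is automatic. There is no additional analytic obstacle, which is why the excerpt asserts that the theorem ``follows immediately.''
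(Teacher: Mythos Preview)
Your approach is correct and matches the paper's: the theorem is obtained by taking the $\varepsilon\to 0$ limit of Theorem \ref{thm: shift inv analytic}, using the vertex-by-vertex weak convergence results (Propositions \ref{prop: incoming limit}, \ref{prop: bulk total limit}, \ref{prop: bulk weight limit}, \ref{prop: bd limit}) to transport the distributional identity. One minor slip: you write that $Z^{\geq i}(x,y;\sigma)$ is ``exactly the random variable $\mathcal{H}^{\geq i}(x,y)$,'' whereas Lemma \ref{lem: beta cont same} shows $Z^{\geq i}=\exp(-\mathcal{H}^{\geq i})$; this is harmless, since applying the bijection $\exp(-\cdot)$ componentwise to both sides of a joint distributional equality preserves it.
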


\section{Half space beta polymer}
\label{sec: beta polymer}
In this section, we show that the continuous vertex model described by Proposition \ref{prop: cont model description} is equivalent to a half space version of the beta polymer, or random walk in beta random environment. This model was actually introduced recently by Barraquand and Rychnovsky \cite{BR22}. However, they only considered the model above the diagonal. We show that their model can be extended to below the diagonal, which is necessary for shift invariance to hold. This model has equivalent formulations as either a polymer or a random walk. The original definition in Section \ref{sec: intro beta} is more natural from the polymer point of view, but here we give an alternative formulation which is closer to the random walk model.

\subsection{Alternative model definition}
\begin{figure}
    \centering
    \includegraphics[scale=0.6]{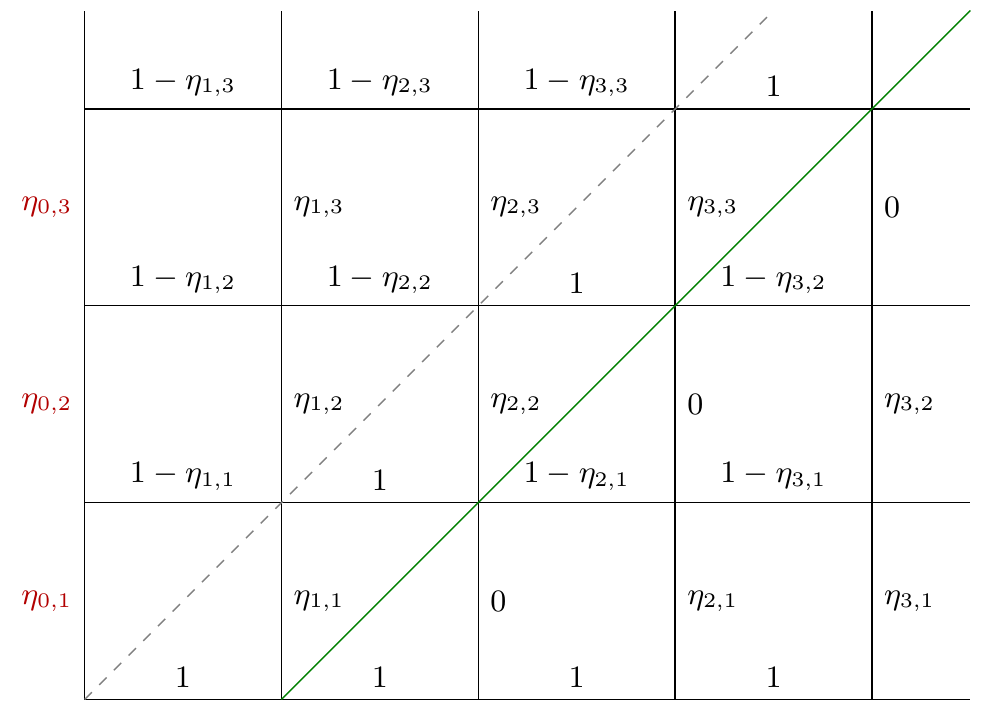}
    \caption{Model on $\HH^2$.}
    \caption{The equivalent formulations of the beta polymer. The model in Figure \ref{fig:beta 2} is obtained from the model in Figure \ref{fig:beta 1} by contracting the edges of weight $1$ between the dashed and green lines, with no effect on the partition functions.}
    \label{fig:beta 1}
\end{figure}
We first give an alternative description of the half space beta polymer that is more closely related to the random walk model. This is equivalent to the original definition in Section \ref{sec: intro beta}, as can be seen by checking the recurrence relations in Lemma \ref{lem: beta rec}. We fix the same parameters $\sigma_x$, $\omega$, and $\rho$ and random variables $\eta_{x,y}$ as in Section \ref{sec: intro beta}. We use a slightly strange coordinate system for $\N_0^2$, but it will become clear why we do so.

If $x\leq y$, then $(x,y)$ will refer to the usual point $(x,y)$ in $\N_0^2$. If $x>y$, then $(x,y)$ will refer to the point $(x+1,y)$. When needed, we will refer to the remaining points, of the form $(x+1,x)$, by $(x+1/2,x)$. To avoid confusion, we will refer to $\N^2$ with these coordinates as $\HH^2$.

To the edges of $\HH^2$, we associate weights as follows:
\begin{itemize}
    \item For all $(x,y)\in \N^2$, we attach a weight of $\eta_{x,y}$ to the edge $(x,y-1)\to (x,y)$, and a weight of $1-\eta_{x,y}$ to the edge $(x-1,y)\to (x,y)$.
    \item We attach a weight of $\eta_{0,y}$ to the edge $(0,y-1)\to (0,y)$.
    \item We attach a weight of $1$ to all edges $(x,0)\to (x+1,0)$.
    \item We attach a weight of $1$ to all edges $(x,x)\to (x+1/2,x)$, and a weight of $0$ to all edges $(x,x-1)\to (x+1/2,x)$.
\end{itemize}
See Figure \ref{fig:beta 1} for a visual representation of this model.

We will be interested in studying the partition function
\begin{equation*}
    Z^{\geq i}(x,y)=\sum_{\gamma:(0,i)\to (x,y)}\wt(\gamma), 
\end{equation*}
where the sum is over all up right paths from $(0,i)$ to $(x,y)$ in $\HH^2$, and $\wt(\gamma)$ is the product of the edge weights over all edges in $\gamma$.

We note the following recurrence for $Z^{\geq i}(x,y)$, which is immediate from the definition.
\begin{lemma}
\label{lem: beta rec}
The partition function $Z^{\geq i}(x,y)$ satisfies
\begin{equation}
\label{eq: beta rec bulk}
    Z^{\geq i}(x,y)=(1-\eta_{x,y})Z^{\geq i}(x-1,y)+\eta_{x,y}Z^{\geq i}(x,y-1)
\end{equation}
if $x\neq y$, and
\begin{equation}
\label{eq: beta rec bd}
    Z^{\geq i}(x,x)=(1-\eta_{x,x})Z^{\geq i}(x-1,x)+\eta_{x,x}Z^{\geq i}(x-1,x-1),
\end{equation}
with boundary conditions
\begin{equation}
\label{eq: beta bc}
Z^{\geq i}(x,0)=1,\qquad Z^{\geq i}(0,y)=\prod_{i\leq u\leq y} \eta_{0,u}.
\end{equation}
\end{lemma}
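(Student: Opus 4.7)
The plan is to verify each assertion directly from the definition of $Z^{\geq i}(x, y)$ as a weighted sum over up-right paths in $\HH^2$. Both recurrences reduce to the standard observation that any path to a vertex must enter through one of its incoming edges, so the sum decomposes into contributions indexed by the last edge used; the boundary conditions then follow by unwinding the unique path along each axis.

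For the bulk recurrence \eqref{eq: beta rec bulk} with $x\neq y$, the integer vertex $(x,y)$ has exactly two incoming edges in $\HH^2$: the horizontal edge $(x-1,y)\to(x,y)$ of weight $1-\eta_{x,y}$ and the vertical edge $(x,y-1)\to(x,y)$ of weight $\eta_{x,y}$. Partitioning the up-right paths by which of these two they end with, and factoring out the weight of the final edge, immediately yields the recurrence. For the diagonal recurrence \eqref{eq: beta rec bd}, paths to $(x,x)$ either terminate in the horizontal edge $(x-1,x)\to(x,x)$ of weight $1-\eta_{x,x}$, or else arrive via the half-point $(x-1/2,x-1)$. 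Only the weight-$1$ edge $(x-1,x-1)\to(x-1/2,x-1)$ feeds the half-point nontrivially (the edge from $(x,x-1)$ has weight $0$), and the continuation on to $(x,x)$ carries weight $\eta_{x,x}$; thus the half-point channel contributes $\eta_{x,x}Z^{\geq i}(x-1,x-1)$, and adding the two cases gives \eqref{eq: beta rec bd}.

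The boundary conditions \eqref{eq: beta bc} then follow by reading off the unique path along each axis: the weight-$1$ edges $(x,0)\to(x+1,0)$ along the bottom yield $Z^{\geq i}(x,0)=1$, and the weight-$\eta_{0,u}$ edges $(0,u-1)\to(0,u)$ along the left column produce $Z^{\geq i}(0,y)=\prod_{i\leq u\leq y}\eta_{0,u}$. There is no real obstacle, as the whole lemma is a routine unpacking of the path-sum definition; the only point requiring care is the role of the half-points, which are engineered precisely so that the diagonal channel from $(x-1,x-1)$ contributes with effective weight exactly $\eta_{x,x}$, while any would-be vertical contribution at the diagonal is annihilated by the weight-$0$ edge into the half-point.
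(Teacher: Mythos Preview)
Your proof is correct and matches the paper's approach, which simply notes that the recurrence is immediate from the definition of the partition function as a path sum. The only minor slip is a coordinate label in the diagonal case: the weight-$0$ edge feeding the half-point $(x-1/2,x-1)$ comes from $(x-1,x-2)$ in $\HH^2$ coordinates (apply the rule $(x,x-1)\to(x+1/2,x)$ with $x\mapsto x-1$), not from $(x,x-1)$, but this does not affect the argument.
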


\begin{remark}
This model is indeed an extension of the one considered in \cite{BR22}. To see this, note that by reversing time (which is the diagonal axis), the partition function can be viewed as the transition probability for a random walk in the random environment defined by the $\eta_{x,y}$. Moreover, the weights below the diagonal do not contribute to $Z(x,y)$ if $x\leq y$, as once a path goes below the diagonal, the edges of weight $0$ prevent it from ever going back above the diagonal. In the random walk formulation, our model is a random walk on the full line, but where the origin is partially reflecting, so walkers on the positive side can never cross, but walkers on the negative side can move to the positive side. It is unclear whether this extension makes the model significantly more interesting, but it is necessary to formulate shift invariance.
\end{remark}

\begin{remark}
\label{rmk: beta other form}
The strange coordinate system for $\HH^2$ could be removed by contracting the edges with weight $1$, which gives our original definition of the model, see Figures \ref{fig:beta 2} and \ref{fig:beta 1}. The alternative description on $\HH^2$ given above seems to be the most natural from the random walk viewpoint, and so we give both descriptions.
\end{remark}

\subsection{Shift invariance for the beta polymer}
In this section, we show that the beta polymer is equivalent to the continuous vertex model, and thus derive shift invariance for the beta polymer.

\begin{lemma}
\label{lem: beta cont same}
Consider the height functions $\mathcal{H}^{\geq i}(x,y)$ for the continuous vertex model as described in Proposition \ref{prop: cont model description}, and the partition functions $Z^{\geq i}(x,y)$ for the beta polymer. Then
\begin{equation*}
    \left(\exp(-\mathcal{H}^{\geq i}(x,y))\right)_{i,x,y}\deq\left(Z^{\geq i}(x,y)\right)_{i,x,y}.
\end{equation*}
\end{lemma}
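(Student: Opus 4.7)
The plan is to couple the beta polymer and the continuous vertex model on the same probability space by using a common family of beta random variables $\{\eta_{x,y}\}$ (Proposition \ref{prop: cont model description} and the polymer definition specify the same distributions), and then show that $U^{\geq i}(x,y):=\exp(-\mathcal{H}^{\geq i}(x,y))$ satisfies the same deterministic recurrence \eqref{eq: beta rec bulk}--\eqref{eq: beta bc} as $Z^{\geq i}(x,y)$. Since those equations uniquely determine $Z^{\geq i}$ from the $\eta_{x,y}$ by induction on $x+y$, this will actually give the almost sure identity $U^{\geq i}=Z^{\geq i}$, which is strictly stronger than the claimed distributional equality.

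Boundary conditions are immediate: from $\mathcal{H}^{\geq i}(0,y)=\sum_{y'\leq y}|\beta^{\geq i}(1,y')|$ together with $\beta_{y'}(1,y')=-\ln\eta_{0,y'}$ (with all other components zero) one gets $U^{\geq i}(0,y)=\prod_{i\leq y'\leq y}\eta_{0,y'}$, matching \eqref{eq: beta bc}; and $\mathcal{H}^{\geq i}(x,0)=0$ gives $U^{\geq i}(x,0)=1$.

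The core is a path-independence identity for $\mathcal{H}^{\geq i}$. Going around the square with corners $(x-1,y-1)$ and $(x,y)$ using the two defining recurrences \eqref{eq: rec for cont height}, together with the consistency identifications $\alpha(x,y)=\gamma(x,y-1)$, $\beta(x,y)=\delta(x-1,y)$ between adjacent vertices and the conservation $|\alpha|+|\beta|=|\gamma|+|\delta|$ at each vertex, one obtains
\begin{equation*}
\mathcal{H}^{\geq i}(x-1,y)-\mathcal{H}^{\geq i}(x,y-1)
= |(\alpha+\beta)^{\geq i}(x,y)|,
\end{equation*}
equivalently $U^{\geq i}(x-1,y)=U^{\geq i}(x,y-1)\exp(-|(\alpha+\beta)^{\geq i}(x,y)|)$. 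For $x\neq y$, the bulk vertex rule in Proposition \ref{prop: cont model description} rearranges to $\exp(-|\delta^{\geq i}(x,y)|)=(1-\eta_{x,y})\exp(-|(\alpha+\beta)^{\geq i}(x,y)|)+\eta_{x,y}$; multiplying this by $U^{\geq i}(x,y-1)$ and using $U^{\geq i}(x,y)=U^{\geq i}(x,y-1)\exp(-|\delta^{\geq i}(x,y)|)$ on the left yields $U^{\geq i}(x,y)=(1-\eta_{x,y})U^{\geq i}(x-1,y)+\eta_{x,y}U^{\geq i}(x,y-1)$, exactly \eqref{eq: beta rec bulk}; the sub-diagonal case $x>y$ is handled verbatim using $\eta_{x,y}=1-\eta_{y,x}$. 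At a diagonal vertex $(x,x)$, the boundary vertex rule rearranges to $\exp(-|\delta^{\geq i}(x,x)|)=\exp(-|\alpha^{\geq i}(x,x)|)[(1-\eta_{x,x})\exp(-|\beta^{\geq i}(x,x)|)+\eta_{x,x}]$, and combining this with the one-edge identities $U^{\geq i}(x,x-1)\exp(-|\alpha^{\geq i}(x,x)|)=U^{\geq i}(x-1,x-1)$ and $U^{\geq i}(x-1,x-1)\exp(-|\beta^{\geq i}(x,x)|)=U^{\geq i}(x-1,x)$ (consequences of $\alpha(x,x)=\gamma(x,x-1)$, $\beta(x,x)=\delta(x-1,x)$ and \eqref{eq: rec for cont height}) produces \eqref{eq: beta rec bd}.

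The only obstacle is bookkeeping---translating between the $(\alpha,\beta,\gamma,\delta)$-notation at a vertex and the partition-function form, and tracking the reflection convention $\eta_{x,y}=1-\eta_{y,x}$ in the sub-diagonal case. Once the path-independence identity $\mathcal{H}^{\geq i}(x-1,y)-\mathcal{H}^{\geq i}(x,y-1)=|(\alpha+\beta)^{\geq i}(x,y)|$ is set up, the vertex rules produce the polymer recurrences mechanically.
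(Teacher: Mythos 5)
Your proposal is correct and follows essentially the same route as the paper's proof: couple the two models by using the same family of beta random variables, derive the identity $\mathcal{H}^{\geq i}(x-1,y)-\mathcal{H}^{\geq i}(x,y-1)=|(\alpha+\beta)^{\geq i}(x,y)|$ from the height-function recurrence and mass conservation, and then verify that $\exp(-\mathcal{H}^{\geq i})$ satisfies the polymer recurrences \eqref{eq: beta rec bulk}, \eqref{eq: beta rec bd}, and \eqref{eq: beta bc}, yielding the almost-sure equality under the coupling. The only difference is expository (you make the sub-diagonal case and the telescoping one-edge identities at the boundary explicit, and you note the stronger pathwise conclusion), which matches the paper's intent.
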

\begin{proof}
We use the same beta random variables $\eta_{x,y}$ and $\eta_y$ to define both the continuous vertex model and the beta polymer. It then suffices to check that $\exp(-\mathcal{H}^{\geq i}(x,y))$ satisfies the same recurrence and boundary conditions given in Lemma \ref{lem: beta rec}. The boundary conditions \eqref{eq: beta bc} are easy to check, so we will check the recurrence.

We have that at vertex $(x,y)$ for $x\neq y$,
\begin{equation*}
\begin{split}
    |\delta^{\geq i}(x,y)|&=\mathcal{H}^{\geq i}(x,y)-\mathcal{H}^{\geq i}(x,y-1),
    \\|\gamma^{\geq i}(x,y)|&=\mathcal{H}^{\geq i}(x-1,y)-\mathcal{H}^{\geq i}(x,y),
\end{split}
\end{equation*}
and as $|(\gamma^{\geq i}+\delta^{\geq i})(x,y)|=|(\alpha^{\geq i}+\beta^{\geq i})(x,y)|$, this means
\begin{equation*}
    |(\alpha^{\geq i}+\beta^{\geq i})(x,y)|=\mathcal{H}^{\geq i}(x-1,y)-\mathcal{H}^{\geq i}(x,y-1).
\end{equation*}
Thus, by the sampling procedure at $(x,y)$,
\begin{equation*}
\begin{split}
    &\exp(-\mathcal{H}^{\geq i}(x,y))
    \\=&\exp(-\mathcal{H}^{\geq i}(x,y-1)-|\delta^{\geq i}(x,y)|)
    \\=&\exp(-\mathcal{H}^{\geq i}(x,y-1))\left(\exp(-|(\alpha+\beta)^{\geq i}(x,y)|)+(1-\exp(-|(\alpha+\beta)^{\geq i}(x,y)|))\eta_{x,y}\right)
    \\=&\exp(-\mathcal{H}^{\geq i}(x-1,y))+(\exp(-\mathcal{H}^{\geq i}(x,y-1)-\exp(-\mathcal{H}^{\geq i}(x-1,y)))\eta_{x,y}
    \\=&(1-\eta_{x,y})\exp(-\mathcal{H}^{\geq i}(x-1,y))+\eta_{x,y}\exp(-\mathcal{H}^{\geq i}(x,y-1)).
\end{split}
\end{equation*}

Similarly, at a vertex $(x,x)$, we have
\begin{equation*}
    \beta^{\geq i}(x,x)=\mathcal{H}^{\geq i}(x-1,x)-\mathcal{H}^{\geq i}(x-1,x-1),
\end{equation*}
and so
\begin{equation*}
\begin{split}
    &\exp(-\mathcal{H}^{\geq i}(x,x))
    \\=&\exp(-\mathcal{H}^{\geq i}(x-1,x-1)-(|(\delta-\alpha)^{\geq i}(x,x)|))
    \\=&\exp(-\mathcal{H}^{\geq i}(x-1,x-1))\left(\exp(-|\beta^{\geq i}(x,x)|)+(1-\exp(-|\beta^{\geq i}(x,x)|))\eta_{x,x}\right)
    \\=&(1-\eta_{x,x})\exp(-\mathcal{H}^{\geq i}(x-1,x))+\eta_{x,x}\exp(-\mathcal{H}^{\geq i}(x-1,x-1)).
\end{split}
\end{equation*}
Thus, $\exp(-\mathcal{H}^{\geq i}(x,y))=Z^{\geq i}(x,y)$ for all $i$ and $(x,y)$ under this coupling, so they have the same distribution.
\end{proof}

\begin{remark}
It is clear from the fusion procedure to obtain the beta polymer that when we set $t=0$, we introduced an asymmetry into the model responsible for the edges of weight $0$. It would be interesting to see if the restriction that $t=0$ could be lifted. In \cite{ML19}, solutions to the reflection equation are found without this restriction. However, they only study the rank $1$ case, and moreover also do not distinguish negative colors (essentially, they treat the case when the only colors are $\pm 1$). It would be interesting to see if their techniques could be extended to study the boundary vertex weights in the colored case while keeping track of negative colors. We conjecture that such a formula would converge in the $\varepsilon\to 0$ limit to a distribution on the boundary of the half space beta polymer where all three edges associated to vertices $(x,x)$ have non-zero probabilities.
\end{remark}

The Theorem \ref{thm: shift inv beta} on shift invariance for the beta polymer is now an immediate corollary of Theorem \ref{thm: shift inv cont} and Lemma \ref{lem: beta cont same}.

\section*{Acknowledgments}
The author would like to thank Alexei Borodin for his guidance throughout this project. The author also thanks Guillaume Barraquand, Sergei Korotkikh, and Dominik Schmid for helpful discussions.

\bibliography{bibliography}{}
\bibliographystyle{abbrvurl}

\appendix

\section{Proof of Theorem \ref{thm: fused bd wt}}
\label{app: pf}
In this appendix, we provide the proof of Theorem \ref{thm: fused bd wt}. Conceptually, the proof is straightforward. Since Lemma \ref{lem: bd wt rec} gives a recurrence which characterizes the fused boundary vertex weights, we can simply check that the claimed formula also satisfies the same recurrence and has the same value for $L=1$. Unfortunately, the details are quite tedious and involved.

Before we begin, let us remark that the lack of explicit dependence on the negative colors in \eqref{eq: fused wt bd 2} should be expected. This is because we set $t=0$, which means that positive colors, once below the diagonal, can never return. Thus, knowledge of $\Bb$ and $\Cb^+$ is enough to recover $\Cb^-$. We simply take $\Bb^-$ and add in a negative color for each color in $\Bb^+-\Cb^+$.

In particular, this means that we may forget about the negative and $0$ arrows in the recurrence given by Lemma \ref{lem: bd wt rec}, and simply keep track of a system with positive arrows. Thus, for the rest of the proof, we set $\Bb=\Bb^+$ and $\Cb=\Cb^+$, restrict the sums to those over $i,j,k>0$, and treat any negative colors as $0$.

\subsection{Recurrence for fused boundary weights}
We first recall that Lemma \ref{lem: bd wt rec} gives the recurrence
\begin{equation}
\label{eq: fused wt bd rec}
\begin{split}
    &W_{L+1}(z,q,\nu;\Bb,\Cb)
    \\=&\sum_{i,j,k} \frac{(1-q^{-\Bb_i})q^{-|\Bb^{\geq i+1}|}}{1-q^{-L}}W_{L}(z,q,\nu;\Bb-\mathbf{e}_i,\Cb-\mathbf{e}_i-\mathbf{e}_k+\mathbf{e}_j) 
    \\&\qquad \times W_1(zq^{2L},q,\nu;j,k) W_{L,1}(zq^{L},q;\Cb-\mathbf{e}_i-\mathbf{e}_k+\mathbf{e}_j,i,\Cb-\mathbf{e}_k,j)
\end{split}
\end{equation}
for $W_{L}$. We can verify that if $L=1$, $W_L$ evaluates to
\begin{alignat*}{2}
    &W_1(z,q,\nu; b,b,b,b)=1, &&W_1(z,q,\nu;b,-b,-b,b)=1,
    \\&W_1(z,q,\nu;-b,b,-b,b)=\frac{z-1}{z+\sqrt{z}/\nu}, \qquad   &&W_1(z,q,\nu;-b,b,b,-b)=\frac{1+\sqrt{z}/\nu}{z+\sqrt{z}/\nu},
\end{alignat*}
where $b>0$, and is $0$ for all other cases. This indeed matches the vertex weights for the unfused model with $t=0$.

We now turn to checking the recurrence. We note that the triple sum in \eqref{eq: fused wt bd rec} is essentially a double sum, since either $j=k$ or $k=0$. Thus, we split our sum into the following cases:
\begin{enumerate}
    \item $j=k=0$:
    \begin{enumerate}[label=(\theenumi\alph*)]
        \item $i=j=k=0$,
        \item $i>0$, $j=k=0$,
    \end{enumerate}
    \item $k=j>0$:
    \begin{enumerate}[label=(\theenumi\alph*)]
        \item $i=0$, $k=j>0$,
        \item $k=j>i>0$,
        \item $i=j=k>0$,
        \item $i>j=k>0$,
    \end{enumerate}
    \item $k=0$, $j>0$:
    \begin{enumerate}[label=(\theenumi\alph*)]
        \item $i=0$, $j>0$, $k=0$,
        \item $j>i>0$, $k=0$,
        \item $i=j>0$, $k=0$,
        \item $i>j>0$, $k=0$.
    \end{enumerate}
\end{enumerate}
We will let $S_{i,j,k}$ denote the summands on the right hand side of \eqref{eq: fused wt bd rec} divided by $W_{L+1}(z,q,v;\Bb,\Cb)$, and compute $S_{i,j,k}$ in each case. We'll then show
\begin{equation*}
    \sum _{i,j,k}S_{i,j,k}=1,
\end{equation*}
which will prove that the recurrence holds for the formula \eqref{eq: fused wt bd 2}.

\subsection{Useful identities}
Before we attempt to evaluate the $S_{i,j,k}$, we provide some formulas and identities that will be needed. We first recall that
\begin{equation*}
    W_{L,1}(z,q;\Ab,b,\Cb,d)=(zq^{L-1})^{-I(b>d)}\frac{1-(zq^{L-1})^{-I(b=d)}q^{\Ab_d}}{1-z^{-1}q}q^{|\Ab^{\geq d+1}|}
\end{equation*}
by Proposition \ref{prop: row vt}. Next, we note the following recurrence relations for $\Phi_q$, all following easily from the definition
\begin{equation*}
    \Phi_q(\Ab,\Bb;x,y)=\left(\frac{y}{x}\right)^{|\Ab|}\frac{(x;q)_{|\Ab|}(y/x;q)_{|\Bb|-|\Ab|}}{(y;q)_{|\Bb|}}q^{\sum_{i<j}(\Bb_i-\Ab_i)\Ab_j}\prod _i{\Bb_i\choose \Ab_i}_q.
\end{equation*}
We have
\begin{equation*}
    \Phi_q(\Ab-\mathbf{e}_i,\Bb-\mathbf{e}_i;x,y)=\frac{x}{y}q^{-\sum_{j<i}\Bb_j-\Ab_j}\left(\frac{1-yq^{|\Bb|-1}}{1-xq^{|\Ab|-1}}\right)\left(\frac{1-q^{\Ab_i}}{1-q^{\Bb_i}}\right)\Phi_q(\Ab,\Bb;x,y),
\end{equation*}

\begin{equation*}
    \Phi_q(\Ab,\Bb;qx,y)=q^{-|\Ab|}\left(\frac{1-q^{|\Ab|}x}{1-x}\right)\left(\frac{1-y/qx}{1-q^{|\Bb|-|\Ab|-1}y/x}\right)\Phi_q(\Ab,\Bb;x,y),
\end{equation*}
and
\begin{equation*}
    \Phi_q(\Ab+\mathbf{e}_j,\Bb;x,y)=q^{|\Bb^{\leq j-1}|+\Ab_j-|\Ab|}\left(\frac{1-q^{|\Ab|}x}{1-q^{|\Bb-\Ab|-1}y/x}\right)\left(\frac{1-q^{\Bb_j-\Ab_j}}{1-q^{\Ab_j+1}}\right)\Phi_q(\Ab,\Bb;x,y).
\end{equation*}

Let
\begin{equation*}
    \Lambda_{1}=q^{|\Bb^+-\Cb^+|}\left(\frac{1-zq^{L-1}}{1-zq^{|\Bb^+-\Cb^+|-1+L}}\right)\left(\frac{1+\sqrt{z}\nu q^{|\Bb^+|-1}}{1+q^{-L}\nu/\sqrt{z}}\right),
\end{equation*}
and
\begin{equation*}
    \Lambda_2=zq^{L-1}(-\nu \sqrt{z})^{-1}\Lambda_1\left(\frac{1+q^{|\Cb^+|-L}\nu/\sqrt{z}}{1-q^{|\Bb^+-\Cb^+|+L-2}z}\right).
\end{equation*}

When $j=k$ and $i=0$, we have
\begin{equation}
\begin{split}
    \frac{W_L(z,q,\nu;\Bb-\mathbf{e}_0,\Cb-\mathbf{e}_0)}{W_{L+1}(z,q,\nu;\Bb,\Cb)}&=q^{|\Bb^+-\Cb^+|}\left(\frac{1+q^{|\Cb^+|-L}\nu/\sqrt{z}}{1+q^{-L}\nu/\sqrt{z}}\right)\left(\frac{1-zq^{L-1}}{1-zq^{|\Bb^+-\Cb^+|-1+L}}\right)
    \\&=\Lambda_1\left(\frac{1+q^{|\Cb^+|-L}\nu/\sqrt{z}}{1+\sqrt{z}\nu q^{|\Bb^+|-1}}\right).
\end{split}
\end{equation}

When $j=k$ and $i>0$, we have
\begin{equation}
\begin{split}
    \frac{W_L(z,q,\nu;\Bb-\mathbf{e}_i,\Cb-\mathbf{e}_i)}{W_{L+1}(z,q,\nu;\Bb,\Cb)}&=z q^{L+|\Bb^+|-1}\frac{\Phi_q(\Cb-\mathbf{e}_i,\Bb-\mathbf{e}_i;-q^{1-L}\nu/\sqrt{z},-\sqrt{z}\nu)}{\Phi_q(\Cb,\Bb;-q^{-L}\nu/\sqrt{z},-\sqrt{z}\nu)}
    \\&=\Lambda_1 q^{-\sum_{0<l<i}\Bb_l-\Cb_l}\left(\frac{1-q^{\Cb_i}}{1-q^{\Bb_i}}\right),
\end{split}
\end{equation}
when $k=0$ and $i,j> 0$, we have
\begin{equation*}
\begin{split}
    &\frac{W_L(z,q,\nu;\Bb-\mathbf{e}_i,\Cb-\mathbf{e}_i-\mathbf{e}_0+\mathbf{e}_j)}{W_{L+1}(z,q,\nu;\Bb,\Cb)}
    \\=&(-\nu\sqrt{z})^{-1}q^{\sum_{0<l<j}\Bb_l+\Cb_j-|\Cb^+|+I(i>j)}\left(\frac{1+q^{|\Cb^+|-L}\nu/\sqrt{z}}{1-q^{|\Bb^+-\Cb^+|+L-2}z}\right)\left(\frac{1-q^{\Bb_j-\Cb_j}}{1-q^{\Cb_j+1-I(i=j)}}\right)
    \\&\qquad\qquad  \times\frac{W_L(z,q,\nu;\Bb-\mathbf{e}_i,\Cb-\mathbf{e}_i)}{W_{L+1}(z,q,\nu;\Bb,\Cb)}
    \\=&\Lambda_1 (-\nu\sqrt{z})^{-1}q^{\sum_{0<l<j}\Bb_l+\Cb_j-|\Cb^+|+I(i>j)}\left(\frac{1+q^{|\Cb^+|-L}\nu/\sqrt{z}}{1-q^{|\Bb^+-\Cb^+|+L-2}z}\right)
    \\&\qquad \qquad \times\left(\frac{1-q^{\Bb_j-\Cb_j}}{1-q^{\Cb_j+1-I(i=j)}}\right)q^{-\sum_{0<l<i}\Bb_l-\Cb_l}\left(\frac{1-q^{\Cb_i}}{1-q^{\Bb_i}}\right)
    \\=&\Lambda_2  q^{\sum_{0<l<j}\Bb_l+\Cb_j-|\Cb^+|+I(i>j)}\left(\frac{1-q^{\Bb_j-\Cb_j}}{1-q^{\Cb_j+1-I(i=j)}}\right)q^{-\sum_{0<l<i}\Bb_l-\Cb_l}\left(\frac{1-q^{\Cb_i}}{1-q^{\Bb_i}}\right)
    \\=&\Lambda_2  q^{\sum_{0<l<j}\Bb_l-\Cb_l-|\Cb^{\geq j+1}|+I(i>j)}\left(\frac{1-q^{\Bb_j-\Cb_j}}{1-q^{\Cb_j+1-I(i=j)}}\right)q^{\sum_{0<l<i}\Cb_l-|\Bb^+|+|\Bb^{\geq i}|}\left(\frac{1-q^{\Cb_i}}{1-q^{\Bb_i}}\right),
\end{split}
\end{equation*}
and when $k=i=0$ and $j>0$, we have
\begin{equation*}
\begin{split}
    &\frac{W_L(z,q,\nu;\Bb-\mathbf{e}_0,\Cb-\mathbf{e}_0-\mathbf{e}_0+\mathbf{e}_j)}{W_{L+1}(z,q,\nu;\Bb,\Cb)}
    \\=&(-\nu\sqrt{z})^{-1}q^{\sum_{0<l<j}\Bb_l+\Cb_j-|\Cb^+|}\left(\frac{1+q^{|\Cb^+|+1-L}\nu/\sqrt{z}}{1-q^{|\Bb^+-\Cb^+|+L-2}z}\right)\left(\frac{1-q^{\Bb_j-\Cb_j}}{1-q^{\Cb_j+1}}\right)
    \\&\qquad\qquad\times\frac{W_L(z,q,\nu;\Bb-\mathbf{e}_0,\Cb-\mathbf{e}_0)}{W_{L+1}(z,q,\nu;\Bb,\Cb)}
    \\=&\Lambda_1 (-\nu\sqrt{z})^{-1}q^{\sum_{0<l<j}\Bb_l+\Cb_j-|\Cb^+|}\left(\frac{1+q^{|\Cb^+|+1-L}\nu/\sqrt{z}}{1-q^{|\Bb^+-\Cb^+|+L-2}z}\right)\left(\frac{1-q^{\Bb_j-\Cb_j}}{1-q^{\Cb_j+1}}\right)
    \\&\qquad\qquad\times\left(\frac{1+q^{|\Cb^+|-L}\nu/\sqrt{z}}{1+\sqrt{z}\nu q^{|\Bb^+|-1}}\right)
    \\=&\Lambda_2 q^{\sum_{0<l<j}\Bb_l+\Cb_j-|\Cb^+|}\left(\frac{1+q^{|\Cb^+|+1-L}\nu/\sqrt{z}}{1+q^{|\Bb^+|-1}\sqrt{z}\nu}\right)\left(\frac{1-q^{\Bb_j-\Cb_j}}{1-q^{\Cb_j+1}}\right).
\end{split}
\end{equation*}

\subsection{Evaluation of summands}
We will now evaluate $S_{i,j,k}$ in each of the ten cases. We will always implicitly assume that the color vectors have non-negative entries (i.e. that $i,j,k$ are specialized to colors for which the weight is non-zero).

We begin with the case $j=k=0$. Then $W_1(z,q,\nu;j,k)=1$, and we have:

{\bf Case (1a):} $i=j=k=0$
\begin{equation}
\label{eq: 1a}
    S_{0,0,0}=\frac{1-q^{L+1-|\Bb^+|}}{1-q^{L+1}}\Lambda_1\left(\frac{1+q^{|\Cb^+|-L}\nu/\sqrt{z}}{1+\sqrt{z}\nu q^{|\Bb^+|-1}}\right)\left(\frac{q^{|\Cb^+|}-q^{1-L}/z}{1-q^{1-L}/z}\right).
\end{equation}

{\bf Case (1b):} $i>0$, $j=k=0$
\begin{equation}
\label{eq: 1b}
    S_{i,0,0}=\frac{1-q^{\Cb_i}}{1-q^{L+1}}\Lambda_1 q^{L+1-|\Bb^{\geq i}|-\sum_{0<l<i}\Bb_l-\Cb_l}\left(\frac{1-q^{L-|\Cb^+|}}{1-q^{1-L}/z}\right)q^{|\Cb^+|-2L}/z
\end{equation}
We can sum this over $i$ to obtain
\begin{equation}
\begin{split}
    \frac{1-q^{|\Cb^+|}}{1-q^{L+1}}\Lambda_1 q^{1-L+|\Cb^+-\Bb^+|}\left(\frac{1-q^{L+1-|\Cb^+|}}{1-q^{1-L}/z}\right)/z.
\end{split}
\end{equation}

We let
\begin{equation*}
    \Lambda_3=\left(\frac{1+q^L\sqrt{z}/\nu}{q^{2L}z+q^L\sqrt{z}/\nu}\right)\Lambda_1(1-q^{L+1})^{-1}.
\end{equation*}

Next, we consider the case $k=j>0$. Then we have:

{\bf Case (2a):} $i=0$, $k=j>0$
\begin{equation}
\label{eq: 2a}
    S_{0,j,j}=(1-q^{L+1-|\Bb^+|})\Lambda_3\left(\frac{1+q^{|\Cb^+|-L}\nu/\sqrt{z}}{1+\sqrt{z}\nu q^{|\Bb^+|-1}}\right)\left(\frac{1-q^{\Cb_j}}{1-q^{1-L}/z}\right)q^{|\Cb^{\geq j+1}|}.
\end{equation}
Summing this over $j$ gives
\begin{equation}
    (1-q^{L+1-|\Bb^+|})\Lambda_3\left(\frac{1+q^{|\Cb^+|-L}\nu/\sqrt{z}}{1+\sqrt{z}\nu q^{|\Bb^+|-1}}\right)\left(\frac{1-q^{|\Cb^+|}}{1-q^{1-L}/z}\right).
\end{equation}

{\bf Case (2b):} $k=j>i>0$
\begin{equation}
\label{eq: 2b}
    S_{i,j,j}=\Lambda_3 q^{L+1-|\Bb^{\geq i}|} q^{-\sum_{0<l<i}\Bb_l-\Cb_l}\left(1-q^{\Cb_i}\right)\left(\frac{1-q^{\Cb_j}}{1-q^{1-L}/z}\right)q^{|\Cb^{\geq j+1}|}.
\end{equation}

{\bf Case (2c):} $i=j=k>0$
\begin{equation}
\label{eq: 2c}
    S_{i,i,i}=\Lambda_3 q^{L+1-|\Bb^{\geq i}|} q^{-\sum_{0<l<i}\Bb_l-\Cb_l}\left(1-q^{\Cb_i}\right)\left(\frac{1-q^{\Cb_j-2L}/z}{1-q^{1-L}/z}\right)q^{|\Cb^{\geq j+1}|}.
\end{equation}

{\bf Case (2d):} $i>j=k>0$
\begin{equation}
\label{eq: 2d}
    S_{i,j,j}=\Lambda_3 q^{L+1-|\Bb^{\geq i}|} q^{-\sum_{0<l<i}\Bb_l-\Cb_l}\left(1-q^{\Cb_i}\right)\left(\frac{(zq^{2L})^{-1}-q^{\Cb_j-2L}/z}{1-q^{1-L}/z}\right)q^{|\Cb^{\geq j+1}|}.
\end{equation}

We can first sum over $i$ and $j$ in cases (2b), (2c), and (2d), noting that the sums factor. The sum over $j$ telescopes, giving
\begin{equation*}
    \frac{1-q^{|\Cb^+|-2L}/z}{1-z^{-1}q^{1-L}},
\end{equation*}
and the sum over $i$ telescopes, giving $1-q^{|\Cb^+|}$. Thus, the sum over cases (2b), (2c), and (2d) gives
\begin{equation}
\label{eq: 2b-2d}
    \Lambda_3 q^{L+1-|\Bb^+|}(1-q^{|\Cb^+|})\frac{1-q^{|\Cb^+|-2L}/z}{1-z^{-1}q^{1-L}}.
\end{equation}
We can now sum case (2a) with \eqref{eq: 2b-2d}, giving
\begin{equation}
\label{eq: case 2}
\begin{split}
    &\frac{\Lambda_3(1-q^{|\Cb^+|})}{(1-q^{1-L}/z)(1+\sqrt{z}\nu q^{|\Bb^+|-1})}\Bigg((q^{L+1-|\Bb^+|}-q^{|\Cb^+-\Bb^+|+1-L}/z)(1+\sqrt{z}\nu q^{|\Bb^+|-1})
    \\&\hspace{6.5cm}+(1-q^{L+1-|\Bb^+|})(1+q^{|\Cb^+|-L}\nu/\sqrt{z})\Bigg)
    \\=&\frac{\Lambda_3(1-q^{|\Cb^+|})(1-q^{|\Cb^+-\Bb^+|+1-L}/z)(1+\sqrt{z}\nu q^L)}{(1-q^{1-L}/z)(1+\sqrt{z}\nu q^{|\Bb^+|-1})}
    \\=&\Lambda_1\frac{(1-q^{|\Cb^+-\Bb^+|+1-L}/z)(1+q^{-L}\nu/\sqrt{z})}{(1-q^{L+1})(1-q^{1-L}/z)(1+\sqrt{z}\nu q^{|\Bb^+|-1})}(1-q^{|\Cb^+|})
\end{split}
\end{equation}

Let
\begin{equation}
    \Lambda_4=\Lambda_2(1-q^{L+1})^{-1}\left(\frac{q^{2L}z-1}{q^{2L}z+q^L\sqrt{z}/\nu}\right).
\end{equation}

Finally, we consider the case $k=0$ and $j>0$.

{\bf Case (3a):} $i=0$, $j>0$, $k=0$
\begin{equation}
\label{eq: 3a}
    S_{0,j,0}=\Lambda_4 (1-q^{L+1-|\Bb^+|}) q^{\sum_{0<l<j}\Bb_l-\Cb_l}\left(\frac{1+q^{|\Cb^+|-L+1}\nu/\sqrt{z}}{1+q^{|\Bb^+|-1}\sqrt{z}\nu}\right)\left(\frac{1-q^{\Bb_j-\Cb_j}}{1-q^{1-L}/z}\right).
\end{equation}
We can sum \eqref{eq: 3a} over $j>0$ which telescopes, giving
\begin{equation}
    \Lambda_4 (1-q^{L+1-|\Bb^+|}) \left(\frac{1+q^{|\Cb^+|+1-L}\nu/\sqrt{z}}{1+q^{|\Bb^+|-1}\sqrt{z}\nu}\right)\left(\frac{1-q^{|\Bb^+-\Cb^+|}}{1-q^{1-L}/z}\right).
\end{equation}

{\bf Case (3b):} $j>i>0$, $k=0$
\begin{equation}
\label{eq: 3b}
    S_{i,j,0}=\Lambda_4  q^{L+1-|\Bb^+|+\sum_{0<l<j}\Bb_l-\Cb_l+\sum_{0<l<i}\Cb_l}\left(1-q^{\Cb_i} \right)\left(\frac{1-q^{\Bb_j-\Cb_j}}{1-q^{1-L}/z}\right).
\end{equation}

{\bf Case (3c):} $i=j>0$, $k=0$
\begin{equation}
\label{eq: 3c}
    S_{i,i,0}=\Lambda_4  q^{L+1-|\Bb^+|+\sum_{0<l<i}\Bb_l}\left(\frac{1-q^{\Bb_i-\Cb_i}}{1-q^{1-L}/z}\right)(1-q^{\Cb_i}q^{-2L+1}/z).
\end{equation}

{\bf Case (3d):} $i>j>0$, $k=0$
\begin{equation}
\label{eq: 3d}
    S_{i,j,0}=\Lambda_4  q^{L+1-|\Bb^+|+\sum_{0<l<j}\Bb_l-\Cb_l+\sum_{0<l<i}\Cb_l}\left(1-q^{\Cb_i} \right)\left(\frac{1-q^{\Bb_j-\Cb_j}}{1-q^{1-L}/z}\right)q^{-2L+1}/z.
\end{equation}
Summing cases (3b), (3c) and (3d) over $i$ gives
\begin{equation*}
    \Lambda_4  q^{L-|\Bb^+|+\sum_{0<l<j}\Bb_l-\Cb_l}\left(1-q^{|\Cb^+|}q^{-2L+1}/z \right)\left(\frac{1-q^{\Bb_j-\Cb_j}}{1-q^{1-L}/z}\right),
\end{equation*}
after which we can sum over $j$ and obtain
\begin{equation*}
    \Lambda_4 q^{L+1-|\Bb^+|}\left(1-q^{|\Cb^+|-2L+1}/z \right)\left(\frac{1-q^{|\Bb^+-\Cb^+|}}{1-q^{1-L}/z}\right),
\end{equation*}
and summing with case (3a) gives
\begin{equation}
\label{eq: case 3}
\begin{split}
    &\frac{\Lambda_4(1-q^{|\Bb^+-\Cb^+|})(1-q^{|\Cb^+-\Bb^+|+2-L}/z)(1+\sqrt{z}\nu q^L)}{(1-q^{1-L}/z)(1+\sqrt{z}\nu q^{|\Bb^+|-1})}
    \\=&\frac{\Lambda_1(1+q^{|\Cb^+|-L}\nu/\sqrt{z})}{(1-q^{L+1})(1-q^{1-L}/z)(1+\sqrt{z}\nu q^{|\Bb^+|-1})}(q^{2L}z-1)(1-q^{|\Bb^+-\Cb^+|})z^{-1}q^{|\Cb^+-\Bb^+|+1-L}.
\end{split}
\end{equation}
Adding \eqref{eq: case 3} to \eqref{eq: 1a}, the cancellation comes from
\begin{equation*}
\begin{split}
    &(q^{2L}z-1)(1-q^{|\Bb^+-\Cb^+|})z^{-1}q^{|\Cb^+-\Bb^+|+1-L}+(1-q^{L+1-|\Bb^+|})(q^{|\Cb^+|}-q^{1-L}/z)
    \\=&q^{|\Cb^+|}(1-q^{1-L-|\Bb^+|}/z)(1-q^{L+1-|\Cb^+|}),
\end{split}
\end{equation*}
giving a total of
\begin{equation*}
    \frac{\Lambda_1(1-q^{L+1-|\Cb^+|})q^{|\Cb^+|}}{(1-q^{L+1})(1-q^{1-L}/z)}\frac{(1-q^{1-L-|\Bb^+|}/z)(1+q^{|\Cb^+|-L}\nu/\sqrt{z})}{(1+\sqrt{z}\nu q^{|\Bb^+|-1})}.
\end{equation*}
We then add this to \eqref{eq: 1b}, using the cancellation
\begin{equation*}
\begin{split}
    &\frac{(1-q^{1-L-|\Bb^+|}/z)(1+q^{|\Cb^+|-L}\nu/\sqrt{z})}{(1+\sqrt{z}\nu q^{|\Bb^+|-1})}+(1-q^{|\Cb^+|})q^{1-2L-|\Bb^+|}/z
    \\=&\frac{(1-q^{1-L+|\Cb^+-\Bb^+|}/z)(1+q^{-L}\nu/\sqrt{z})}{(1+\sqrt{z}\nu q^{|\Bb^+|-1})}
\end{split}
\end{equation*}
to obtain
\begin{equation*}
    \frac{\Lambda_1(q^{|\Cb^+|}-q^{L+1})}{(1-q^{L+1})(1-q^{1-L}/z)}\frac{(1-q^{1-L+|\Cb^+-\Bb^+|}/z)(1+q^{-L}\nu/\sqrt{z})}{(1+\sqrt{z}\nu q^{|\Bb^+|-1})}.
\end{equation*}
Finally, we add this to \eqref{eq: case 2}, and obtain $1$ as desired.

\end{document}